
\documentclass[twoside, 11pt]{amsart}


\usepackage[english]{babel}


\usepackage[T1]{fontenc}
\usepackage[lf]{Baskervaldx} 
\usepackage[bigdelims,vvarbb]{newtxmath} 
\usepackage[cal=boondoxo]{mathalfa} 


\usepackage{latexsym}
\usepackage{tensor}
\usepackage{mathrsfs}
\usepackage{times,amsmath,amsthm}
\usepackage{graphicx}
\usepackage{a4wide}
\usepackage[all]{xy}
\usepackage{paralist}
\usepackage{subfigure}
\usepackage{multicol}
\usepackage{tikz}
\usepackage{tikz-cd}
\usetikzlibrary{decorations.pathmorphing,arrows,arrows.meta,calc,shapes,shapes.geometric,decorations.pathreplacing, fit, positioning, matrix}
\tikzcdset{arrow style=tikz, diagrams={>=stealth}}
\usepackage{comment}
\tikzset{
  optree/.style={scale=.5,thick,grow'=up,level distance=10mm,inner sep=1pt},
  comp/.style={draw=none,circle,fill,line width=0,inner sep=0pt},
  dot/.style={draw,circle,fill,inner sep=0pt,minimum width=3pt},
  circ/.style={draw,circle,inner sep=1pt,minimum width=4mm},
  emptycirc/.style={draw,circle,inner sep=1pt,minimum width=2mm},
  root/.style={level distance=10mm,inner sep=1pt},
  leaf/.style={draw=none,circle,fill,line width=0,inner sep=0pt},
  nodot/.style={draw,circle,inner sep=1pt},
}

\usepackage{color}
\definecolor{Chocolat}{rgb}{0.36, 0.2, 0.09}
\definecolor{BleuTresFonce}{rgb}{0.215, 0.215, 0.36}

\usepackage[colorlinks,final,hyperindex, ]{hyperref}
\hypersetup{citecolor=BleuTresFonce, linkcolor=Chocolat, urlcolor  = BleuTresFonce}
\usepackage[noabbrev,capitalize]{cleveref}

\usepackage[normalem]{ulem}


\usepackage{todonotes}


\let\oldtocsection=\tocsection
\let\oldtocsubsection=\tocsubsection

\renewcommand{\tocsection}[2]{\hspace{0em}\vspace{0.1em}\rule{0pt}{14pt}\oldtocsection{#1}{#2}\bf}
\renewcommand{\tocsubsection}[2]{\hspace{2em}\oldtocsubsection{#1}{#2}}


\def\Liegra{\mathrm{Lie}\textrm{-}\mathrm{gra}}
\def\Liemgra{\mathrm{Lie}\textrm{-}\mathrm{mgra}}
\def\Liencgra{\mathrm{Lie}\textrm{-}\mathrm{ncgra}}
\def\Com{\mathrm{Com}}
\def\c{\mathrm{c}}
\def\ln{\mathrm{log}}
\def\k{\mathbb{k}}

\def\F{\mathrm{F}}
\def\d{\mathrm{d}}
\def\P{\mathcal{P}}
\def\C{\mathcal{C}}
\def\oC{\overline{\mathcal{C}}}
\def\1{\mathbb{1}}
\def\I{\mathcal{I}}
\def\id{\mathrm{id}}
\def\End{\mathrm{End}}
\def\eend{\mathrm{end}}
\def\gra{\mathrm{g}}
\newcommand{\Sy}{\mathbb{S}}
\def\dcGra{\mathsf{dsGra}}
\def\dmGra{\mathsf{dmGra}}
\def\dsncGra{\mathsf{dsncGra}}
\def\ldcGra{2\textsf{-}\mathsf{dsGra}}
\def\ldcncGra{2\textsf{-}\mathsf{dsncGra}}
\def\3ldcncGra{3\textsf{-}\mathsf{dsncGra}}
\def\multildcGra#1{#1\textsf{-}\mathsf{dsGra}}
\def\kdcGra{\k\mathsf{dsGra}}
\def\btGra{{\Join}\textsf{-}{\dcGra}_{\Sy}}
\def\Lieadm{\mathrm{Lie}\textrm{-}\mathrm{adm}}
\def\Prelie{\mathrm{pre}\textrm{-}\mathrm{Lie}}
\def\Lie{\mathrm{Lie}}
\def\ibt{\underset{\scriptscriptstyle (1,1)}{\boxtimes}}
\newcommand{\ZZ}{\mathbb{Z}}
\newcommand{\NN}{\mathbb{N}}
\newcommand{\ac}{\scriptstyle \text{\rm !`}}
\def\G{\mathfrak{G}}
\def\g{\mathfrak{g}}
\def\a{\mathfrak{a}}
\def\Hom{\mathrm{Hom}}
\newcommand{\BCH}{\mathrm{BCH}}
\newcommand{\Sbimod}{\Sy\mbox{-}\mathsf{bimod}}
\newcommand{\dcGraph}{\mathsf{dcGra}}
\def\trunc{\mathrm{barb}}
\def\barb{\mathrm{barb}}

\def\cc{\circledcirc}
\def\ad{\mathrm{ad}}
\def\MC{\mathrm{MC}}
\def\pap#1#2#3{#1\stackrel{#2}{\vcenter{\hbox{\text{\scalebox{1.2}{$\Join$}}}}}#3}
\def\ba{\bar{\alpha}}
\def\QQ{\mathbb{Q}}
\def\RR{\mathbb{R}}

\def\Aut{\mathrm{Aut}}




\setlength{\parindent}{0pt}

\numberwithin{equation}{section}
\theoremstyle{plain}
\newtheorem{proposition}[equation]{Proposition}
\newtheorem{theorem}[equation]{Theorem}

\newtheorem{lemma}[equation]{Lemma}

\theoremstyle{definition}
\newtheorem{definition}[equation]{Definition}

\newtheorem{remark}[equation]{\sc Remark}
\newtheorem{example}[equation]{\sc Example}
\newtheorem{cexample}[equation]{\sc Counterexample}
\newtheorem{examples}[equation]{\sc Examples}

\setcounter{tocdepth}{2}
\setcounter{secnumdepth}{5}
\makeatletter

\makeatother


\title{Integration theory of Lie-graph algebras}

\date{\today}

\author{Ricardo Campos}
\address{Universit\'e de Toulouse, Institut de Math\'ematiques de Toulouse, CNRS, UMR 5219,  Toulouse, France}
\email{ricardo.campos@math.univ-toulouse.fr}

\author{Bruno Vallette}
\address{Universit\'e Sorbonne Paris Nord, Laboratoire de G\'eom\'etrie, Analyse et Applications, CNRS, UMR 7539, Villetaneuse, France}
\email{vallette@math.univ-paris13.fr}

\keywords{Lie theory, deformation theory, gauge group, operads, properads, graphs}
\thanks{2020 \emph{Mathematics Subject Classification.}
Primary 22E65; Secondary  18M70, 18M85, 14D15, 16T10.
\newline
The authors are supported by ANR-20-CE40-0016 HighAGT}

\begin{document}

\maketitle

\begin{abstract}
We develop the Lie theory of Lie-admissible algebras whose product is enriched with higher operations modeled on directed graphs with a view to apply it to  the deformation theories controlled by this kind of Lie algebras. 
We produce effective formulas for their exponential map, their gauge group structure and the action on Maurer--Cartan elements. 
The main motivation and range of applications lies in the deformation theory of types of bialgebras which is done in a sequel article. 
This work extends the case of pre-Lie algebra structures which appear in the deformation theory of operadic algebras. 
\end{abstract}

\tableofcontents

\section*{Introduction}

\paragraph*{\bf Integration theory of {properadic deformation}  Lie algebras} 
Lie's third theorem tells us how to integrate the infinitesimal symmetries structured as a Lie algebra into global symmetries composing a group. This integration process plays a key role in deformation theories that are controlled by differential graded Lie algebras where this group, called the \emph{gauge group}, encodes the possible equivalences between the Maurer--Cartan elements. These data form the so-called \emph{Deligne groupoid}.

\medskip

The integration of differential graded Lie algebras arising from deformation complexes of properadic structures has remained an open problem for several years, despite the growing interest in 
{algebraic structures having operations with several inputs and outputs \cite{MerkulovVallette09I, MerkulovVallette09II, CFL15, KS18, HLV19, LV22, KTV25}.}
This stands in stark contrast to the operadic setting, where an additional \emph{pre-Lie structure} on the deformation complex allows one to produce \emph{effective} formulas for the Deligne groupoid of the governing differential graded Lie algebra \cite{DotsenkoShadrinVallette16, DotsenkoShadrinVallette22}.

\medskip

In this paper we address this gap {in two steps. First, we introduce a new type of algebraic structure, called 
\emph{Lie-graph algebras}, which is parametrised by directed graphs and which induces a Lie bracket. Then, we settle the complete integration theory for this new class of algebras of Lie-type.
Unlike the associative and pre-Lie cases, where the {entire} structure can be {recovered} by iterating a single generating operation, the operad governing Lie-graph algebras is not finitely generated (\cref{prop:nofinitepresentation}): one must take \emph{all} operations into account simultaneously.  
This lack of finite presentability makes both the theoretical analysis and the explicit computation of their Deligne groupoids substantially more challenging.

\medskip

Our approach exploits the combinatorics of the underlying graph structure to produce explicit formulas for the gauge group of a Lie-graph algebra and for its gauge action on the Maurer--Cartan {elements}.  
This framework applies in particular to the deformation complexes of gebras over properads, for which we obtain a complete and effective description of the associated Deligne groupoid, see \cite[Section~1]{CV25II}.

\medskip

\paragraph*{\bf Context and historical overview} A classical heuristic in deformation theory, dating back to Deligne, Drinfeld, Gerstenhaber, Quillen, and others \cite{Drinfeld14}, states that deformation problems over a field characteristic zero are governed by differential graded Lie algebras. This heuristic has been formalised via the equivalence {of $\infty$-categories} between {differential graded} Lie algebras and formal moduli problems by Lurie and Pridham \cite{Lurie11, pridham2010unifying}, see also the recent operadic proof providing point-set models by Le Grignou--Roca i Lucio \cite{LGRC23}.   

\medskip

Under this correspondence, given a {differential graded} Lie algebra $(\g, d , [\,,])$, the {set} of objects to be deformed is given by the set of Maurer--Cartan elements of $\g$
\[ {\MC(\g)\coloneq \left\{  \alpha \in \g_{-1}\, | \, d\alpha + \tfrac12 [\alpha, \alpha]=0\right\}~, }\] while equivalent deformations are detected by the action of the gauge group $\Gamma$ on $\MC(\g)$, which arises as 
the integration of the {degree 0 Lie sub-algebra} {in}to a Lie group by means of exponentiation. The data of {the gauge group} $\G$ (morphisms) together with its action on {the set of Maurer--Cartan elements} $\MC(\g)$ (objects)
{forms} the \emph{Deligne groupoid} of $\g$.\\

While one can describe the Deligne groupoid in theoretical terms, in practice, for an arbitrary {differential graded Lie algebra} $\g$, one cannot provide very effective formulas for either the gauge group or its action since both rely on the Baker--Campbell--Hausdorff formula \cite{BF12}.
One case where the Deligne groupoid can be made fully explicit is in the situation where the Lie bracket on $\g$ actually arises as the {skew}-symmetrisation of an associative product $\star$, {that is} $[x,y] = x\star y - y\star x$. In this situation, the gauge group is actually obtained by the classical exponentiation of the elements of $\g$, 
\[\exp(x) = 1 + x + \tfrac{1}{2!}x\star x+\tfrac{1}{3!}x\star x\star x +\dots \in \G=\exp(\g_0)~.\]

\medskip

In fact,  it {happens frequently in the context of deformations of algebraic structures that the Lie bracket is obtained by skew-symmetrisation}  of a {binary} product $\star$, {but} which is not necessarily associative.
In this case, the Maurer--Cartan equation takes up the particularly nice form
\[d\alpha+\alpha\star \alpha = 0~.\] 
 For instance, the deformation complex of {homotopy} $\P$-algebra structures, {for a Koszul operad $\P$}, on a fixed chain complex, {carries} the structure of a pre-Lie algebra, see \cite{DotsenkoShadrinVallette16}. 
The exponential analogy turns out to be fruitful: in \emph{loc. cit.}, the second author, together with Dotsenko and Shadrin, {used} the concept of \emph{pre-Lie exponentials} \cite{AgrachevGamkrelidze80}, which allowed them to address in an {effective} way the integration {theory} of pre-Lie algebras which has since then been applied {successfully} to 
rational homotopy theory~\cite{campos2019lie}, 
algebraic topology and algebraic geometry \cite{Emprin24}, 
numerical analysis \cite{BLBM23}, and renormalisation theory \cite{BHZ19, BD24}.

\medskip

\paragraph*{\bf Lie-graph algebras and their exponential groups}

In the properadic context, one can identify {a binary product} 
whose {skew}-symmetrisation yields the Lie bracket of interest. {However, this product is just Lie-admissible and not pre-Lie in general.}
It is therefore natural to ask how much further can the exponentiation techniques be pushed.
One first key observation is that, contrary to the pre-Lie case, all the possible iterations of this Lie-admissible product are not enough to produce a meaningful exponential map.
So the first main insight of this paper is to identify additional structural operations and to encode them in a new type of algebraic structure:}
{we introduce the notion of a \emph{Lie-graph algebra},
which is made up of operations parametrised by directed simple graphs.}
{This new type of algebras} is governed by an operad that we denote by  $\Liegra$, of which the operad $\Lie$ (and even $\Prelie$) is a quotient. The crucial result from Sections \ref{subsec:LiegraConv} and \ref{subsec:complete} is that \emph{the deformation complexes of properadic structures carry a canonical Lie-graph algebra structure}.

\medskip

{This range of applications motivates the development of an effective integration theory of Lie-graph algebras.
This forms the first main result of this paper: an explicit description of the gauge group of Lie algebras arising
from Lie-graph algebras via \emph{Lie-graph exponential and logarithm maps}.}

\medskip

\textbf{Theorem \ref{thm:ExpIsoMorph}.}
\emph{
	For any 
	complete Lie-graph algebra $\g_0$, there are isomorphism of {complete} topological groups between the gauge group and the {so-called} deformation gauge group:
	\[\exp \ : \Gamma=\big( \g_0,  \BCH, 0\big)
	\stackrel{\cong}{\leftrightarrows}
	\left( \1 + \g_0, \circledcirc, \1\right)=\G\ : \ \ln
	\ ,\]
where both the exponential map $\exp$ and the associative product $\circledcirc$ are explicitly given as sums of {directed (respectively $2$-leveled) graphs}.
}
\[\exp(x)\ =\ \1 \  + \
	\vcenter{\hbox{\begin{tikzpicture}[scale=0.4]
				\draw[thick]	(0,0)--(1,0)--(1,1)--(0,1)--cycle;
				\draw (0.5,0.5) node {{$x$}} ; 
	\end{tikzpicture}}}
	\ + \
	\scalebox{1.1}{$\frac{1}{2}$}\ 
	\vcenter{\hbox{\begin{tikzpicture}[scale=0.4]
				\draw[thick]	(0,0)--(1,0)--(1,1)--(0,1)--cycle;
				\draw[thick]	(0,2)--(1,2)--(1,3)--(0,3)--cycle;
				\draw[thick]	(0.5,1)--(0.5,2);
				\draw (0.5,0.5) node {{$x$}} ; 
				\draw (0.5,2.5) node {{$x$}} ; 
	\end{tikzpicture}}}
	\ + \
	\scalebox{1.1}{$\frac{1}{6}$}\ 
	\vcenter{\hbox{\begin{tikzpicture}[scale=0.4]
				\draw[thick]	(0,0)--(1,0)--(1,1)--(0,1)--cycle;
				\draw[thick]	(2,0)--(3,0)--(3,1)--(2,1)--cycle;
				\draw[thick]	(1,2)--(2,2)--(2,3)--(1,3)--cycle;
				\draw[thick]	(0.5,1)--(1.33,2);
				\draw[thick]	(2.5,1)--(1.66,2);
				\draw (1.5,2.5) node {{$x$}} ; 
				\draw (0.5,0.5) node {{$x$}} ; 
				\draw (2.5,0.5) node {{$x$}} ; 	
	\end{tikzpicture}}}
	\ + \ 
	\scalebox{1.1}{$\frac{1}{6}$}\ 
	\vcenter{\hbox{\begin{tikzpicture}[scale=0.4]
				\draw[thick]	(0,2)--(1,2)--(1,3)--(0,3)--cycle;
				\draw[thick]	(2,2)--(3,2)--(3,3)--(2,3)--cycle;
				\draw[thick]	(1,0)--(2,0)--(2,1)--(1,1)--cycle;
				\draw[thick]	(0.5,2)--(1.33,1);
				\draw[thick]	(2.5,2)--(1.66,1);
				\draw (1.5,0.5) node {{$x$}} ; 
				\draw (0.5,2.5) node {{$x$}} ; 
				\draw (2.5,2.5) node {{$x$}} ; 	
	\end{tikzpicture}}}
	\ + \ 
	\scalebox{1.1}{$\frac{1}{6}$}\ 
	\vcenter{\hbox{\begin{tikzpicture}[scale=0.4]
				\draw[thick]	(0,0)--(1,0)--(1,1)--(0,1)--cycle;
				\draw[thick]	(0,2)--(1,2)--(1,3)--(0,3)--cycle;
				\draw[thick]	(0,4)--(1,4)--(1,5)--(0,5)--cycle;
				\draw[thick]	(0.5,1)--(0.5,2);
				\draw[thick]	(0.5,3)--(0.5,4);	
				\draw (0.5,0.5) node {{$x$}} ; 
				\draw (0.5,2.5) node {{$x$}} ; 
				\draw (0.5,4.5) node {{$x$}} ; 
	\end{tikzpicture}}}
	\ + \ 
	\scalebox{1.1}{$\frac{1}{6}$}\ 
	\vcenter{\hbox{\begin{tikzpicture}[scale=0.4]
				\draw[thick]	(0,0)--(1,0)--(1,1)--(0,1)--cycle;
				\draw[thick]	(0,2)--(1,2)--(1,3)--(0,3)--cycle;
				\draw[thick]	(0,4)--(1,4)--(1,5)--(0,5)--cycle;
				\draw[thick]	(0.33,1)--(0.33,2);
				\draw[thick]	(0.33,3)--(0.33,4);
				\draw[thick]	(0.66,1) to[out=60,in=270] (1.5,2.5) to[out=90,in=300] (0.66,4);	
				\draw (0.5,0.5) node {{$x$}} ; 
				\draw (0.5,2.5) node {{$x$}} ; 
				\draw (0.5,4.5) node {{$x$}} ; 
	\end{tikzpicture}}}
	\ + \
	\scalebox{1.1}{$\frac{1}{8}$}\ 
	\vcenter{\hbox{\begin{tikzpicture}[scale=0.4]
				\draw[thick]	(0,0)--(1,0)--(1,1)--(0,1)--cycle;
				\draw[thick]	(2,0)--(3,0)--(3,1)--(2,1)--cycle;
				\draw[thick]	(1,2)--(2,2)--(2,3)--(1,3)--cycle;
				\draw[thick]	(0,-2)--(1,-2)--(1,-1)--(0,-1)--cycle;	
				\draw[thick]	(0.5,1)--(1.33,2);
				\draw[thick]	(2.5,1)--(1.66,2);
				\draw[thick]	(0.5,0)--(0.5,-1);	
				\draw (0.5,-1.5) node {{$x$}} ; 
				\draw (1.5,2.5) node {{$x$}} ; 
				\draw (0.5,0.5) node {{$x$}} ; 
				\draw (2.5,0.5) node {{$x$}} ; 	
	\end{tikzpicture}}}
	\ + \ 
	\scalebox{1.1}{$\frac{1}{24}$}\ 
	\vcenter{\hbox{\begin{tikzpicture}[scale=0.4]
				\draw[thick]	(0.66,2)--(2.33,1);	
				\draw[draw=white,double=black,double distance=2*\pgflinewidth,thick]	(0.66,1)--(2.33,2);		
				\draw[thick]	(0,0)--(1,0)--(1,1)--(0,1)--cycle;
				\draw[thick]	(2,0)--(3,0)--(3,1)--(2,1)--cycle;
				\draw[thick]	(0,2)--(1,2)--(1,3)--(0,3)--cycle;
				\draw[thick]	(2,2)--(3,2)--(3,3)--(2,3)--cycle;	
				\draw[thick]	(0.33,1)--(0.33,2);
				\draw[thick]	(2.66,1)--(2.66,2);
				\draw (0.5,2.5) node {{$x$}} ; 
				\draw (2.5,2.5) node {{$x$}} ; 	
				\draw (0.5,0.5) node {{$x$}} ; 
				\draw (2.5,0.5) node {{$x$}} ; 	
	\end{tikzpicture}}}
	\ + \ \cdots\]
	\[(\1+x)\circledcirc (\1+y)\ =\ \1 \  + \
	\vcenter{\hbox{\begin{tikzpicture}[scale=0.4]
				\draw[thick]	(0,0)--(1,0)--(1,1)--(0,1)--cycle;
				\draw (0.5,0.5) node {{$x$}} ; 
	\end{tikzpicture}}}
	\ + \
	\vcenter{\hbox{\begin{tikzpicture}[scale=0.4]
				\draw[thick]	(0,0)--(1,0)--(1,1)--(0,1)--cycle;
				\draw (0.5,0.5) node {{$y$}} ; 
	\end{tikzpicture}}}
	\ + \
	\vcenter{\hbox{\begin{tikzpicture}[scale=0.4]
				\draw[thick]	(0,0)--(1,0)--(1,1)--(0,1)--cycle;
				\draw[thick]	(0,2)--(1,2)--(1,3)--(0,3)--cycle;
				\draw[thick]	(0.5,1)--(0.5,2);
				\draw (0.5,0.5) node {{$x$}} ; 
				\draw (0.5,2.5) node {{$y$}} ; 
	\end{tikzpicture}}}
	\ + \
	\scalebox{1.1}{$\frac{1}{2}$}\ 
	\vcenter{\hbox{\begin{tikzpicture}[scale=0.4]
				\draw[thick]	(0,0)--(1,0)--(1,1)--(0,1)--cycle;
				\draw[thick]	(2,0)--(3,0)--(3,1)--(2,1)--cycle;
				\draw[thick]	(1,2)--(2,2)--(2,3)--(1,3)--cycle;
				\draw[thick]	(0.5,1)--(1.33,2);
				\draw[thick]	(2.5,1)--(1.66,2);
				\draw (1.5,2.5) node {{$y$}} ; 
				\draw (0.5,0.5) node {{$x$}} ; 
				\draw (2.5,0.5) node {{$x$}} ; 	
	\end{tikzpicture}}}
	\ + \ 
	\scalebox{1.1}{$\frac{1}{2}$}\ 
	\vcenter{\hbox{\begin{tikzpicture}[scale=0.4]
				\draw[thick]	(0,2)--(1,2)--(1,3)--(0,3)--cycle;
				\draw[thick]	(2,2)--(3,2)--(3,3)--(2,3)--cycle;
				\draw[thick]	(1,0)--(2,0)--(2,1)--(1,1)--cycle;
				\draw[thick]	(0.5,2)--(1.33,1);
				\draw[thick]	(2.5,2)--(1.66,1);
				\draw (1.5,0.5) node {{$x$}} ; 
				\draw (0.5,2.5) node {{$y$}} ; 
				\draw (2.5,2.5) node {{$y$}} ; 	
	\end{tikzpicture}}}
	\ + \ 
	\scalebox{1.1}{$\frac{1}{4}$}\ 
	\vcenter{\hbox{\begin{tikzpicture}[scale=0.4]
				\draw[thick]	(0.66,2)--(2.33,1);	
				\draw[draw=white,double=black,double distance=2*\pgflinewidth,thick]	(0.66,1)--(2.33,2);		
				\draw[thick]	(0,0)--(1,0)--(1,1)--(0,1)--cycle;
				\draw[thick]	(2,0)--(3,0)--(3,1)--(2,1)--cycle;
				\draw[thick]	(0,2)--(1,2)--(1,3)--(0,3)--cycle;
				\draw[thick]	(2,2)--(3,2)--(3,3)--(2,3)--cycle;	
				\draw[thick]	(0.33,1)--(0.33,2);
				\draw[thick]	(2.66,1)--(2.66,2);
				\draw (0.5,2.5) node {{$y$}} ; 
				\draw (2.5,2.5) node {{$y$}} ; 	
				\draw (0.5,0.5) node {{$x$}} ; 
				\draw (2.5,0.5) node {{$x$}} ; 	
	\end{tikzpicture}}}
	\ + \ \cdots\]

\medskip

While such a result might look similar in form to analogous classical one for associative and even  pre-Lie algebras, the 
{conceptual approach leading to these} 
 maps is very different. In the classical associative and pre-Lie cases, the exponential maps were obtained by summing over repeated iterations of the generating binary operation $\star$ divided by some factor. On the other hand, in the Lie-graph setting, we {have to} sum over \emph{all possible graphical operations}, divided by an appropriate combinatorial factor, see Definition \ref{def:exp}. {There is} a way to interpret this {discrepancy:} while the associative and pre-Lie operads are generated by their respective binary operations, that is not the case for the operad $\Liegra$, {which is} not even finitely generated (Proposition~\ref{prop:nofinitepresentation}). Retrospectively, one can get the associative and pre-Lie exponential maps by the same sum over all the elements of the respective operads.
 
 \medskip
 
 {Pursuing  this direction, we are able to provide the literature with an effective formula for the action of the (deformation) gauge group on Maurer--Cartan elements. This represents the second main result of the present paper.}

\medskip

\textbf{Theorem \ref{thm:Action}.}
\emph{
	For any complete Lie-graph algebra $\g$, the action of the (deformation) gauge group on the set of Maurer--Cartan elements $\MC(\g)$ {is given by} the explicit formula
	\[\exp(\lambda)\cdot \alpha\coloneq \lambda.\alpha =
	\pap{\exp(\lambda)}{\alpha}{\exp(-\lambda)} \ ,\]
	where 
		\begin{align*}
			\pap{(\1+x)}{\alpha}{(\1+y)}\ = \ &
			\vcenter{\hbox{\begin{tikzpicture}[scale=0.4]
						\draw[thick]	(0,0)--(1,0)--(1,1)--(0,1)--cycle;
						\draw (0.5,0.5) node {{$\alpha$}} ; 
			\end{tikzpicture}}}
			\ + \
			\vcenter{\hbox{\begin{tikzpicture}[scale=0.4]
						\draw[thick]	(0,0)--(1,0)--(1,1)--(0,1)--cycle;
						\draw[thick]	(0,2)--(1,2)--(1,3)--(0,3)--cycle;
						\draw[thick]	(0.5,1)--(0.5,2);
						\draw (0.5,0.5) node {{$x$}} ; 
						\draw (0.5,2.5) node {{$\alpha$}} ; 
			\end{tikzpicture}}}
			\ + \
			\vcenter{\hbox{\begin{tikzpicture}[scale=0.4]
						\draw[thick]	(0,0)--(1,0)--(1,1)--(0,1)--cycle;
						\draw[thick]	(0,2)--(1,2)--(1,3)--(0,3)--cycle;
						\draw[thick]	(0.5,1)--(0.5,2);
						\draw (0.5,0.5) node {{$\alpha$}} ; 
						\draw (0.5,2.5) node {{$y$}} ; 
			\end{tikzpicture}}}
			\ + \
			\scalebox{1.1}{$\frac{1}{2}$}\ 
			\vcenter{\hbox{\begin{tikzpicture}[scale=0.4]
						\draw[thick]	(0,0)--(1,0)--(1,1)--(0,1)--cycle;
						\draw[thick]	(2,0)--(3,0)--(3,1)--(2,1)--cycle;
						\draw[thick]	(1,2)--(2,2)--(2,3)--(1,3)--cycle;
						\draw[thick]	(0.5,1)--(1.33,2);
						\draw[thick]	(2.5,1)--(1.66,2);
						\draw (1.5,2.5) node {{$\alpha$}} ; 
						\draw (0.5,0.5) node {{$x$}} ; 
						\draw (2.5,0.5) node {{$x$}} ; 	
			\end{tikzpicture}}}
			\ + \ 
			\scalebox{1.1}{$\frac{1}{2}$}\ 
			\vcenter{\hbox{\begin{tikzpicture}[scale=0.4]
						\draw[thick]	(0,2)--(1,2)--(1,3)--(0,3)--cycle;
						\draw[thick]	(2,2)--(3,2)--(3,3)--(2,3)--cycle;
						\draw[thick]	(1,0)--(2,0)--(2,1)--(1,1)--cycle;
						\draw[thick]	(0.5,2)--(1.33,1);
						\draw[thick]	(2.5,2)--(1.66,1);
						\draw (1.5,0.5) node {{$\alpha$}} ; 
						\draw (0.5,2.5) node {{$y$}} ; 
						\draw (2.5,2.5) node {{$y$}} ; 	
			\end{tikzpicture}}}
			\ + \ \\&
			\vcenter{\hbox{\begin{tikzpicture}[scale=0.4]
						\draw[thick]	(0,0)--(1,0)--(1,1)--(0,1)--cycle;
						\draw[thick]	(0,2)--(1,2)--(1,3)--(0,3)--cycle;
						\draw[thick]	(0,4)--(1,4)--(1,5)--(0,5)--cycle;
						\draw[thick]	(0.5,1)--(0.5,2);
						\draw[thick]	(0.5,3)--(0.5,4);	
						\draw (0.5,0.5) node {{$x$}} ; 
						\draw (0.5,2.5) node {{$\alpha$}} ; 
						\draw (0.5,4.5) node {{$y$}} ; 
			\end{tikzpicture}}}
			\ + \  
			\vcenter{\hbox{\begin{tikzpicture}[scale=0.4]
						\draw[thick]	(0,0)--(1,0)--(1,1)--(0,1)--cycle;
						\draw[thick]	(0,2)--(1,2)--(1,3)--(0,3)--cycle;
						\draw[thick]	(0,4)--(1,4)--(1,5)--(0,5)--cycle;
						\draw[thick]	(0.33,3)--(0.33,4);
						\draw[thick]	(0.66,1) to[out=60,in=270] (1.5,2.5) to[out=90,in=300] (0.66,4);	
						\draw (0.5,0.5) node {{$x$}} ; 
						\draw (0.5,2.5) node {{$\alpha$}} ; 
						\draw (0.5,4.5) node {{$y$}} ; 
			\end{tikzpicture}}}
			\ + \
			\vcenter{\hbox{\begin{tikzpicture}[scale=0.4]
						\draw[thick]	(0,0)--(1,0)--(1,1)--(0,1)--cycle;
						\draw[thick]	(0,2)--(1,2)--(1,3)--(0,3)--cycle;
						\draw[thick]	(0,4)--(1,4)--(1,5)--(0,5)--cycle;
						\draw[thick]	(0.33,1)--(0.33,2);
						\draw[thick]	(0.66,1) to[out=60,in=270] (1.5,2.5) to[out=90,in=300] (0.66,4);	
						\draw (0.5,0.5) node {{$x$}} ; 
						\draw (0.5,2.5) node {{$\alpha$}} ; 
						\draw (0.5,4.5) node {{$y$}} ; 
			\end{tikzpicture}}}
			\ + \  
			\vcenter{\hbox{\begin{tikzpicture}[scale=0.4]
						\draw[thick]	(0,0)--(1,0)--(1,1)--(0,1)--cycle;
						\draw[thick]	(0,2)--(1,2)--(1,3)--(0,3)--cycle;
						\draw[thick]	(0,4)--(1,4)--(1,5)--(0,5)--cycle;
						\draw[thick]	(0.33,1)--(0.33,2);
						\draw[thick]	(0.33,3)--(0.33,4);
						\draw[thick]	(0.66,1) to[out=60,in=270] (1.5,2.5) to[out=90,in=300] (0.66,4);	
						\draw (0.5,0.5) node {{$x$}} ; 
						\draw (0.5,2.5) node {{$\alpha$}} ; 
						\draw (0.5,4.5) node {{$y$}} ; 
			\end{tikzpicture}}}
			\ + \ 
			\scalebox{1.1}{$\frac{1}{2}$}\ 
			\vcenter{\hbox{\begin{tikzpicture}[scale=0.4]
						\draw[thick]	(0,0)--(1,0)--(1,1)--(0,1)--cycle;
						\draw[thick]	(2,0)--(3,0)--(3,1)--(2,1)--cycle;
						\draw[thick]	(1,2)--(2,2)--(2,3)--(1,3)--cycle;
						\draw[thick]	(1,4)--(2,4)--(2,5)--(1,5)--cycle;	
						\draw[thick]	(0.5,1)--(1.33,2);
						\draw[thick]	(2.5,1)--(1.66,2);
						\draw[thick]	(1.5,3)--(1.5,4);		
						\draw (1.5,2.5) node {{$\alpha$}} ; 
						\draw (0.5,0.5) node {{$x$}} ; 
						\draw (2.5,0.5) node {{$x$}} ; 	
						\draw (1.5,4.5) node {{$y$}} ; 	
			\end{tikzpicture}}}
			\ + \ 
			\vcenter{\hbox{\begin{tikzpicture}[scale=0.4]
						\draw[thick] (0.66,1)--(1.33,2);
						\draw[thick] (2.33,1)--(1.66,2);
						\draw[thick] (2.66,1) to (2.66,2.5) to[out=90,in=300]  (1.66,4);
						\draw[thick]	(0,0)--(1,0)--(1,1)--(0,1)--cycle;
						\draw[thick]	(2,0)--(3,0)--(3,1)--(2,1)--cycle;
						\draw[thick]	(1,2)--(2,2)--(2,3)--(1,3)--cycle;
						\draw[thick]	(1,4)--(2,4)--(2,5)--(1,5)--cycle;	
						\draw[thick]	(1.5,3)--(1.5,4);	
						\draw (1.5,4.5) node {{$y$}} ; 
						\draw (1.5,2.5) node {{$\alpha$}} ; 
						\draw (0.5,0.5) node {{$x$}} ; 
						\draw (2.5,0.5) node {{$x$}} ; 	
			\end{tikzpicture}}}
			\ + \ \cdots
		\end{align*}
{is the sum of \emph{bow-tie} graphs, that is $3$-leveled directed graphs with only one middle vertex labeled by the Maurer--Cartan element $\alpha$}~.
}

\medskip		

 Once restricted to ladders and to rooted trees, this formula recovers respectively the associative and the pre-Lie cases but in a more conceptual form. The methods used in the various proofs here are worth mentioning since they are based on the classical real and finite dimensional Lie theory, unlike the previous works on this domain. In order to settle these new universal formulas, we consider the free Lie-graph algebra over the field of rational numbers, naturally provided by the operadic calculus. We embed it into the one with real coefficients that we truncate under its canonical filtration in order to obtain a finite dimensional real Lie algebra.
Finally, in order to prove the last aforementioned theorem, we give a new conceptual treatment of the \emph{differential trick}, which is a method to internalise the differential map into a given algebraic structure. 

\medskip

{
\paragraph*{\bf Applications} 
In a sequel article \cite{CV25II}, we apply this new universal integration theory to the 
differential graded Lie algebras coming from properadic deformation theory. 
We describe there the Deligne groupoid. We give a gauge theoretical explanation of the homotopy transfer theorem, emphasizing its perturbative nature. And, we generalise the Koszul hierarchy and the twisting procedure from the operadic case to the properadic case. 
Theses results apply to a wide range of recently studied properadic structures appearing in algebra, geometry, topology, and mathematical physics.

\medskip

\paragraph*{\bf Conventions} 
For a uniform and light presentation, we work over a ground field $\k$ of characteristic $0$ though many results hold under weaker hypotheses. 
The underlying category is that of differential $\ZZ$-graded $\k$-vector spaces, using the homological degree convention, for which differentials have degree $-1$.  
The symmetric groups are denoted by $\Sy_n$.
We use the conventions of \cite{MerkulovVallette09I, MerkulovVallette09II, HLV19} for  properads: for instance, we draw graphs with inputs at the top and outputs at the bottom. All the graphs present here will be directed by a global flow going from top to bottom.

\medskip

\paragraph*{\bf Acknowledgements} We would like to express our appreciation to 
Ga\"etan Borot, Fr\'ed\'eric Chapoton, Vladimir Dotsenko, Coline Emprin, Johan Leray, and Thomas Willwacher 
for interesting and helpful discussions. The second author is grateful to the IH\'ES for the long term invitation and the excellent working conditions. 

\section{Algebraic structures on convolution algebras}

In the first subsection, we start by recalling the main elements of the properadic calculus. 
After that, we introduce a new algebraic structure, dubbed \emph{Lie-graph algebra} for its connection with Lie algebras, which encodes all the natural operations acting on the totalisation of the underlying space of a properad. 
In the last subsection, we introduce compatible complete topologies which will allow us to develop the integration theory of 
Lie-graph algebras in the next section. 

\subsection{Properadic calculus} 
Recall that an $\Sy$-bimodule is a collection $\{\mathcal{M}(m,n)\}_{m,n\in \NN}$ of right $\Sy_m^{\mathrm{op}}\times \Sy_n$-modules; they are meant to encode operations with $n$ inputs and $m$ outputs. 
We consider the monoidal category 
$\big(\Sbimod,\allowbreak \boxtimes, \allowbreak \I\big)$ of $\Sy$-bimodules  equipped with the composition product based on 2-leveled connected graphs directed by a global flow, see for instance \cite[Section~1]{HLV19} for more details. 

\begin{figure*}[h]
	\begin{tikzpicture}[scale=0.8]
		\draw[thick]	(0.8,2)--(2.4,1);	
		\draw[thick]	(0.6,2)--(2.2,1);	
		\draw[draw=white,double=black,double distance=2*\pgflinewidth,thick]	(0.66,1)--(2.33,2);		
		\draw[thick]	(0,0)--(1,0)--(1,1)--(0,1)--cycle;
		\draw[thick]	(2,0)--(3,0)--(3,1)--(2,1)--cycle;
		\draw[thick]	(0,2)--(1,2)--(1,3)--(0,3)--cycle;
		\draw[thick]	(2,2)--(3,2)--(3,3)--(2,3)--cycle;	
		\draw[thick]	(0.25,1)--(0.25,2);
		\draw[thick]	(0.4,1)--(0.4,2);
		\draw[thick]	(2.66,1)--(2.66,2);
		\draw[thick]	(0.33,3)--(0.33,3.5);	
		\draw[thick]	(0.66,3)--(0.66,3.5);		
		\draw[thick]	(2.5,3)--(2.5,3.5);		
		\draw[thick]	(2.25,0)--(2.25,-0.5);				
		\draw[thick]	(0.5,0)--(0.5,-0.5);				
		\draw[thick]	(2.75,0)--(2.75,-0.5);						
		\draw[thick]	(2.5,0)--(2.5,-0.5);							
		\draw (0.5,2.5) node {{$1$}} ; 
		\draw (2.5,2.5) node {{$2$}} ; 	
		\draw (0.5,0.5) node {{$3$}} ; 
		\draw (2.5,0.5) node {{$4$}} ; 	
		\draw (0.33,3.5) node[above] {{\tiny$1$}} ; 		
		\draw (0.66,3.5) node[above] {{\tiny$3$}} ; 			
		\draw (2.5,3.5) node[above] {{\tiny$2$}} ; 		
		\draw (0.5,-0.5) node[below] {{\tiny$3$}} ; 			
		\draw (2.25,-0.5) node[below] {{\tiny$1$}} ; 				
		\draw (2.5,-0.5) node[below] {{\tiny$2$}} ; 				
		\draw (2.75,-0.5) node[below] {{\tiny$4$}} ; 		
		\draw [->] (4,3.5) -- (4,-0.5); 					
	\end{tikzpicture}
	\caption{Examples of 2-leveled  connected graph directed from top to bottom.}
	\label{Fig:2LevDCGraph}
\end{figure*}

\begin{definition}[Properad and coproperad]
	A   \emph{properad} (resp. \emph{coproperad}) is a monoid (resp. \emph{comonoid}) in the monoidal category 
	$\big(\Sbimod,\allowbreak \boxtimes, \allowbreak \I\big)$.
\end{definition}

\begin{examples}\leavevmode
	\begin{description}
		\item[$\diamond$ \ \sc Endomorphism properad]
		The  \emph{endomorphism properad} of a chain complex $A$ is made up of all its multilinear maps:
		$$\End_A\coloneq\left\{\Hom\left(A^{\otimes n}, A^{{\otimes} m}\right)\right\}_{n, m\in \NN}\ .$$
		\item[$\diamond$ \ \sc Convolution properad] 
		For any coproperad $\mathcal{C}$ and any properad $\mathcal{P}$, the mapping 
		$\Sy$-bimodule 
		\[\{\Hom(\mathcal{C}(m,n), \mathcal{P}(m,n)\}_{m,n\in \NN}\] is endowed with a canonical natural properad structure called the \emph{convolution properad}, see \cite[Section~2.1]{MerkulovVallette09I}.
	\end{description}
\end{examples}

Let us now recall various properties and constructions of properads and coproperads, see \cite[Section~2]{HLV19} for more details. 
We denote  the  set of \emph{connected graphs directed by a global flow} by $\dcGraph$~. By convention, we  exclude the trivial  graph  $|$ from living in $\dcGraph$. 
For any $\Sy$-bimodule $\mathcal M$, the \emph{graph module} $\gra(\mathcal{M})$ is the $\Sy$-module obtained by labelling each vertex of $\gra$ by an element of $\mathcal{M}$ of corresponding arity. 
By convention, we set $|(\mathcal{M})\coloneq \I$. 
The notion of a properad is equivalently defined as an algebra over the  monad of graphs 
$$\mathcal{G}(\mathcal{M}) \coloneq \bigoplus_{\gra \in \dcGraph\cup \{|\}} \gra(\mathcal{M}) \ , $$
whose structure product is given by the substitution of graphs.
For any $k \geqslant 1$, we denote by $\dcGraph^{(k)}$ the set of connected directed graphs with $k$ vertices. 
We consider the \emph{infinitesimal composition product} 
\[\mathcal{M} \ibt \mathcal{M}\coloneq \bigoplus_{\gra \in \dcGraph^{(2)}} \gra(\mathcal{M}) \]
made up of labelled graphs with 2 vertices and an arbitrary number of edges connecting them. 
The \emph{infinitesimal composition map} 
\[
\begin{tikzcd}
	\gamma_{(1,1)} \ : \ \mathcal{P}\ibt \mathcal{P} \ar[r, hook]  &
	\mathcal{P} \boxtimes \mathcal{P} \ar[r,"\gamma"]  &
	\mathcal{P}
\end{tikzcd}
\]
of a properad $(\mathcal{P}, \gamma, \eta)$ amounts to composing only two operations at a time.

\medskip

One way to get a single chain complex from an $\Sy$-bimodule is to consider the total sum of the invariant parts of its components: 
\[\bigoplus_{m,n\in \NN} \mathcal{P}(m,n)^{\Sy_m^{\mathrm{op}}\times \Sy_n}~.\]
Recall that a binary product $\star$ is called \emph{Lie-admissible product} when its commutator 
$[\ ,\, ] \coloneq \ \star\ -\ \star ^{(12)}$ satisfies the Jacobi identity. 

\begin{lemma}[{\cite[Proposition~6]{MerkulovVallette09I}}]\label{lem:TotLieAdm}
	The infinitesimal composition map of a dg properad
	$(\mathcal{P}, \allowbreak \d, \allowbreak \gamma, \allowbreak \eta)$ 
	induces a dg Lie-admissible algebra structure 
	$\left(
	\oplus_{m,n\in \NN} \mathcal{P}(m,n)^{\Sy_m^{\mathrm{op}}\times \Sy_n}, 
	\d, \star
	\right)
	$
	on
	the total sum of the invariant parts of its components.
\end{lemma}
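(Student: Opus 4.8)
The plan is to make the product $\star$ explicit from the infinitesimal composition, to get the derivation property for free from the compatibility of $\gamma$ with differentials, and then to establish the Jacobi identity for the commutator by computing the associator of $\star$ out of the associativity of the properadic composition.

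First I would spell out the product. Write the totalisation as $\mathcal{T}\coloneq \bigoplus_{m,n}\mathcal{P}(m,n)^{\Sy_m^{\mathrm{op}}\times\Sy_n}$. For invariants $p,q$, set $p\star q\coloneq \gamma_{(1,1)}(p\otimes q)$, where $\gamma_{(1,1)}\colon \mathcal{P}\ibt\mathcal{P}\to\mathcal{P}$ is applied to the element of $\mathcal{P}\ibt\mathcal{P}=\bigoplus_{g\in\dcGraph^{(2)}}g(\mathcal{P})$ obtained by decorating the two vertices of \emph{every} connected two-vertex directed graph by $q$ and $p$, i.e. by summing over all ways of connecting some outputs of $q$ to some inputs of $p$. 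Since $p$ and $q$ are invariant and the sum runs over all connection patterns, the residual $\Sy_m^{\mathrm{op}}\times\Sy_n$-action on the external legs of the result merely permutes the summands; hence $p\star q$ is again invariant and $\star$ is well defined on $\mathcal{T}$. The derivation property $\d(p\star q)=(\d p)\star q+(-1)^{|p|}p\star(\d q)$ is then immediate from the fact that $\gamma$, and therefore $\gamma_{(1,1)}$, is a morphism of chain complexes in a dg properad, the induced differential on $\mathcal{P}\ibt\mathcal{P}$ acting by the Leibniz rule on the two decorations.

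Next I would reduce Lie-admissibility to an identity on the associator $\mathrm{as}(x,y,z)\coloneq (x\star y)\star z-x\star(y\star z)$. A direct expansion of the commutator (tracking the Koszul signs coming from the homological degrees) gives
\[\left[[x,y],z\right]+\left[[y,z],x\right]+\left[[z,x],y\right]=\sum_{\sigma\in\Sy_3}\mathrm{sgn}(\sigma)\,\mathrm{as}\big(x_{\sigma(1)},x_{\sigma(2)},x_{\sigma(3)}\big)~,\]
so it suffices to prove that this sign-antisymmetrised associator vanishes. The key computation of $\mathrm{as}$ uses the associativity of $\gamma$: iterating the infinitesimal composition, one identifies $(p\star q)\star r$ with the sum over all connected directed three-vertex graphs on $p,q,r$ carrying at least one $p$--$q$ edge, and symmetrically $p\star(q\star r)$ with the sum over those carrying at least one $q$--$r$ edge, each such graph occurring with multiplicity one because its splitting into a first and a second composition is determined by its edges. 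The graphs having both a $p$--$q$ and a $q$--$r$ edge cancel in the difference, leaving
\[\mathrm{as}(p,q,r)=C(p;q,r)-C(r;p,q)~,\]
where $C(a;b,c)$ denotes the sum of connected three-vertex graphs in which the central vertex $a$ is joined to both $b$ and $c$ while $b$ and $c$ are not joined to one another. On the totalisation one has $C(a;b,c)=C(a;c,b)$ (up to the appropriate Koszul sign), because summing over all connection patterns is manifestly symmetric under interchanging the two outer operations.

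Finally I would conclude by grouping the six summands of the antisymmetrised associator according to which operation sits at the central vertex: each group consists of a permutation $\sigma$ and its product with the transposition of the two outer positions, which carry opposite signs but, by the symmetry of $C$, equal values, and hence cancel in pairs. Thus the antisymmetrised associator vanishes, the commutator satisfies the (graded) Jacobi identity, and $(\mathcal{T},\d,\star)$ is a dg Lie-admissible algebra. I expect the main obstacle to be the rigorous bookkeeping in the computation of $\mathrm{as}$: justifying through the associativity axiom and the combinatorics of $\dcGraph^{(2)}$ that the iterated infinitesimal compositions reassemble into the stated sums over three-vertex graphs with the correct multiplicities, and checking that the equivariance arguments on invariants and the Koszul signs are mutually compatible so that the cancellation is exact.
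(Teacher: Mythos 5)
Your argument is correct, and it is essentially the intended one: the paper does not prove this lemma itself but quotes it from \cite[Proposition~6]{MerkulovVallette09I}, and your reconstruction — identifying $(p\star q)\star r$ and $p\star(q\star r)$ with sums over connected three-vertex directed graphs having, respectively, a $p$--$q$ edge and a $q$--$r$ edge, so that the associator reduces to a difference of two ``cherry'' sums, each graded-symmetric in two of its arguments, whence the antisymmetrised associator and hence the Jacobiator vanish — is precisely that standard argument. It is also consistent with the graph-operadic viewpoint the paper develops afterwards in \cref{prop:TotdsGra}, where this computation becomes a shadow of the operad morphism $\Lieadm\to\Liegra$.
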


Unfortunately, the construction of the Lie-admissible product $\star$ is not well defined on the total \emph{product} of the components of a properad, since it might produce infinite sums of elements. For this to hold, we need to restrict ourselves to proper subclasses of properads, like the convolution properads  $\Hom(\C, \P)$, for instance. 

\medskip

The properties of coproperads are not exactly dual to the properties of properads. 
For instance, coalgebras over the  graphs comonad 
$$\mathcal{G}^c(\mathcal{M}) \coloneq \bigoplus_{\gra \in \mathsf{dcGra}} \gra(\mathcal{M}) \ , $$
whose structure coproduct is given by the partitions of graphs, 
do not encompass all coproperads: they 
induce coaugmented coproperad structures that are called \emph{conilpotent}, see \cite[Proposition~2.18]{HLV19}. 
Given a coaugmented coproperad $\mathcal C$, we denote by $\oC$ the cokernel of the coaugmentation map and we denote by
\[\widetilde{\Delta}\ :\ \oC \to \mathcal{G}^c\left(\oC\right) \]
the comonadic decomposition map of conilpotent coproperads.
The \emph{infinitesimal decomposition map} of a coaugmented  coproperad $(\mathcal{C}, \Delta)$ is given by 
\[
\begin{tikzcd}
	\Delta_{(1,1)} \ : \   \oC 
	\ar[r, "\Delta"]  &
	\mathcal{C} \boxtimes \mathcal{C} \ar[r,->>] &
	\oC \ibt \oC\ ,
\end{tikzcd}
\]
see \cite[Definition~2.20]{HLV19} for more details. 

\begin{definition}[Infinitesimal coproperad]
	An \emph{infinitesimal coproperad} is an $\Sy$-bimodule $\oC$ equipped with a morphism of $\Sy$-bimodules
	\[\Delta_{(1,1)} \colon\oC \to \oC \ibt \oC~,\] 
	which satisfies the properties of the restriction of a comonadic product, see  \cite[Section~2]{MerkulovVallette09I}.
\end{definition}

Any coaugmented coproperad $\C$ carries a canonical infinitesimal coproperad structure on its coaugmentation coideal $\oC$ by composing its 2-levels decomposition map with the counit everywhere expect for one place above and one place below. 
Any comonadic coproperad $\C$ carries a canonical infinitesimal coproperad structure by restriction of the decomposition to directed graphs with two vertices. Any infinitesimal coproperad $\oC$ can be coaugmented by adding freely a counit $\C \coloneq \oC \oplus \I$. 
In the other way round, an infinitesimal coproperad might fail producing a coproperad or a comonadic coproperad by iterating its infinitesimal decomposition maps since this process might produce infinite sums of elements. When the image of any element under all the possible iterations of the infinitesimal decomposition map produces a finite sum, one gets a notion equivalent to a conilpotent coproperad. To summarise, under the conilpotent condition, the three notions of coproperad, comonadic coproperad, and infinitesimal coproperad are equivalent.

\medskip

The main object of study of this work will be the following algebraic structure. 
\begin{proposition}[Convolution algebra {\cite[Proposition~11]{MerkulovVallette09I}}]\label{prop:ConLiead}
	For any infinitesimal dg coproperad $\oC$ and any dg module $A$, 
	the data 
	\[
	\g_{\mathcal{C}, A}:=\left(\Hom_{\Sy}\left(\oC, \End_A\right), \partial, \star
	\right)\ ,\]
	where 
	\[\Hom_{\Sy}\left(\oC, \End_A\right)\coloneq \prod_{n,m\geqslant 0} 
	\Hom_{\Sy_m\times\Sy_n^{\mathrm{op}}}\left(\oC(m,n), \Hom\left(A^{{\otimes} n}, A^{{\otimes} m}\right)\right)\]
	and 
	\[f\star g \ : \ \oC \xrightarrow{\Delta_{(1,1)}} \oC \ibt  \oC \xrightarrow{f\ibt  g} \End_A \ibt  \End_A \xrightarrow{\gamma_{(1,1)}} \End_A\ 
	\]
	forms a Lie-admissible algebra, called the \emph{convolution algebra}.   
\end{proposition}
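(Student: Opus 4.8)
The plan is to check three things: that $\star$ is a well-defined operation on $\g_{\C,A}$, that $\partial$ satisfies the Leibniz rule for $\star$, and that the commutator $[\,,]\coloneq\star-\star^{(12)}$ obeys the Jacobi identity. The first two are formal. Well-definedness holds because the infinitesimal coproperad axiom makes $\Delta_{(1,1)}(c)$ a \emph{finite} sum in $\oC\ibt\oC$ for each $c$, so that $f\star g=\gamma_{(1,1)}\circ(f\ibt g)\circ\Delta_{(1,1)}$ is a well-defined $\Sy$-equivariant map $\oC\to\End_A$; since the underlying space is a \emph{product} over arities, no convergence issue arises, in contrast with the total product of a general properad discussed right after \cref{lem:TotLieAdm}. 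That $\partial$ is a derivation follows from $\Delta_{(1,1)}$ and $\gamma_{(1,1)}$ being morphisms of dg $\Sy$-bimodules, so that $\star$ is a chain map. Everything thus reduces to the Jacobi identity, for which I would use the standard reformulation: with $\mathrm{as}(f,g,h)\coloneq(f\star g)\star h-f\star(g\star h)$ the associator, the Jacobiator of $[\,,]$ equals $\sum_{\sigma\in\Sy_3}\mathrm{sgn}(\sigma)\,\mathrm{as}\big(f_{\sigma(1)},f_{\sigma(2)},f_{\sigma(3)}\big)$, so it suffices to prove that this total antisymmetrisation vanishes.

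To compute the associator I would iterate the definition of $\star$. Both $(f\star g)\star h$ and $f\star(g\star h)$ precompose $f\ibt g\ibt h$ with an iterated infinitesimal decomposition of $\oC$ and postcompose with the full three-vertex composition $\gamma$ of $\End_A$, which is well defined and identical in both cases thanks to the associativity of the properad $\End_A$; explicitly they equal $\gamma\circ(f\ibt g\ibt h)\circ\Delta^{L}$ and $\gamma\circ(f\ibt g\ibt h)\circ\Delta^{R}$, where $\Delta^{L}=(\Delta_{(1,1)}\ibt\id)\circ\Delta_{(1,1)}$ and $\Delta^{R}=(\id\ibt\Delta_{(1,1)})\circ\Delta_{(1,1)}$ both land in $\bigoplus_{\gra\in\dcGraph^{(3)}}\gra(\oC)$. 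Hence the associator is entirely controlled by the difference $\Delta^{L}-\Delta^{R}$ on the decomposition side. Since $f$, $g$, $h$ occupy the top, middle, and bottom vertices in both cases, only graphs with flow order $f\succ g\succ h$ occur; the coassociativity coherence of an infinitesimal coproperad guarantees that $\Delta^{L}$ and $\Delta^{R}$ agree on the graphs in which the middle operation $g$ joins both others (those carrying an $f$-$g$ \emph{and} a $g$-$h$ edge), so these cancel. What remains is $\mathrm{as}(f,g,h)=A(f,g,h)-B(f,g,h)$, where $A(f,g,h)$ runs over the ``hub'' graphs in which $f$ is joined to both $g$ and $h$ while $g$ and $h$ are unlinked, and $B(f,g,h)$ over the ``sink'' graphs in which $h$ receives from both $f$ and $g$ while $f$ and $g$ are unlinked.

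The key observation finishing the argument is that in $A(f,g,h)$ the two operations $g$ and $h$ hang from the common source $f$ on disjoint sets of edges, so that summing over all such graphs yields an element \emph{symmetric in its last two arguments}, $A(f,g,h)=A(f,h,g)$; dually, $B$ is \emph{symmetric in its first two arguments}, $B(f,g,h)=B(g,f,h)$. Since the antisymmetrisation over $\Sy_3$ of any function symmetric in two of its slots vanishes---pairing each $\sigma$ with $\sigma\cdot(2\,3)$, respectively $\sigma\cdot(1\,2)$, flips the sign while preserving the value---both the $A$-part and the $B$-part of $\sum_{\sigma}\mathrm{sgn}(\sigma)\,\mathrm{as}\big(f_{\sigma(1)},f_{\sigma(2)},f_{\sigma(3)}\big)$ are separately zero, proving the Jacobi identity. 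As a consistency check, in the operadic single-output case the ``hub'' graphs $A$ cannot exist, only $B$ survives, and its symmetry in two arguments is exactly a pre-Lie relation, recovering the pre-Lie structure of \cite{DotsenkoShadrinVallette16}.

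The step I expect to be the main obstacle is precisely the bookkeeping underlying the previous two paragraphs: carefully matching the two nested infinitesimal decompositions $\Delta^{L}$ and $\Delta^{R}$, while accounting for the global flow and the multiplicities of edges between vertices, in order to justify both the cancellation of the ``$g$ in the middle'' graphs and the two symmetries of $A$ and $B$. This is the convolution counterpart of the combinatorial mechanism making the invariant total space of a properad Lie-admissible in \cref{lem:TotLieAdm}. Once the antisymmetrised associator is shown to vanish, the graded antisymmetry of $[\,,]$ being immediate from its definition, one concludes that $\g_{\C,A}$ is a dg Lie-admissible algebra.
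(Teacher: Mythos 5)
Your proof is correct; note that the paper itself gives no proof of this statement but imports it from \cite[Proposition~11]{MerkulovVallette09I}, and your argument is precisely the standard one behind that reference: well-definedness from the finiteness of the infinitesimal decomposition, the Leibniz rule from $\Delta_{(1,1)}$ and $\gamma_{(1,1)}$ being chain maps, and Lie-admissibility from sorting the associator by three-vertex graph shapes, where the shapes containing both an $f$-$g$ and a $g$-$h$ edge cancel between the two parenthesizations by coassociativity, and the remaining hub and sink shapes are each symmetric in two slots and hence vanish under total antisymmetrisation. The only detail worth adding is that in the dg setting the symmetry of $A$ in $(g,h)$ and of $B$ in $(f,g)$, as well as the antisymmetrisation over $\Sy_3$, must be taken with Koszul signs, which does not affect the structure of the argument.
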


When the infinitesimal coproperad comes from a coaugmented coproperad, the convolution Lie-admissible algebra
 is contained into the bigger dg module  
\[
\Hom_{\Sy}\left(\oC, \End_A\right)
\subset 
\Hom_{\Sy}\left(\C, \End_A\right)
\ ,\]
which admits the following two distinguished elements $\1$ of degree $0$ and $\delta$ of degree $-1$ defined respectively by 
\[
\begin{tikzcd}[column sep=normal, row sep=tiny]
	\1 \ : \ \C \ar[r,"\varepsilon"]  & \I \ar[r,"\id_A"] & \End_A  &\text{and} &
	\delta \ : \ \C \ar[r,"\varepsilon"]  & \I \ar[r,"\partial_A"] & \End_A \\
	\\
	\ \ \ \ \ \ \id \ar[rr, mapsto] && \id_A\ , &&
	\ \ \ \  \ \ \id \ar[rr, mapsto] && \partial_A\ , 
\end{tikzcd} 
\]
where $\varepsilon$ stands for the counit of the coproperad $\C$ and where $\partial_A$ stands for the differential of $\End_A$~. 

\begin{definition}[Convolution monoid]\label{def:ConMono}
	The 
	\emph{convolution monoid} 
	associated to any dg coproperad $\C$
	and any dg module $A$, is defined on the degree $0$ elements of $\Hom_{\Sy}\left(\C, \End_A\right)$ by
	\[
	{\a_{\mathcal{C}, A}:=\big(\Hom_{\Sy}\left(\C, \End_A\right)_0, \circledcirc, \1\big)}\ ,\]
	equipped with  the following associative and unital product
	\[
	f\circledcirc g \ : \ \C \xrightarrow{\Delta} 
	\C \boxtimes \C \xrightarrow{f\boxtimes g}  
	\End_A\boxtimes \End_A \xrightarrow{\gamma}
	\End_A \  ,
	\]
	where $\Delta$ and $\gamma$ stand respectively for the decomposition  map of the coproperad $\C$ and the  composition map of  the endomorphism operad $\End_A$\ . 
\end{definition}

\begin{definition}[Maurer--Cartan element]
	A \emph{Maurer--Cartan element} is a degree $-1$ element $\bar\alpha$ of a dg Lie-admissible algebra $(\g, \d, \star)$ satisfying the equation 
	\[\d \bar\alpha+\bar\alpha \star \bar\alpha=0\ .\]
	We denote the set of Maurer--Cartan elements by $\MC(\g)$~. 
\end{definition}

\begin{proposition}[{\cite[Proposition~17]{MerkulovVallette09I}}]\label{prop:MCInfGebra}
	For any infinitesimal coaugmented dg coproperad $\oC$
	and any  dg module $A$, the 
	Maurer--Cartan elements of the convolution algebra $\g_{\mathcal{C}, A}$
	are  in natural one-to-one correspondence with $\Omega \C$-gebra structures on $A$.
\end{proposition}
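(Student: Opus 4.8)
This statement is an instance of the classical adjunction (Rosetta-stone) identifying twisting morphisms with morphisms out of the cobar construction, so the plan is to reconstruct that correspondence at the level of the convolution algebra. Recall that the cobar construction of the coaugmented coproperad $\C=\oC\oplus\I$ is the quasi-free properad
\[\Omega\C \ \coloneq\ \big(\Gm(s^{-1}\oC),\ d=d_1+d_2\big)~,\]
freely generated by the desuspension $s^{-1}\oC$ of the coaugmentation coideal, where $d_1$ is the unique derivation extending the internal differential of $\oC$ and $d_2$ is the unique derivation extending, up to suspension signs, the infinitesimal decomposition map $\Delta_{(1,1)}\colon\oC\to\oC\ibt\oC$, whose image lies in the two-vertex summand $\Gm(s^{-1}\oC)^{(2)}$. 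By definition, an $\Omega\C$-gebra structure on $A$ is a morphism of dg properads $\Phi\colon\Omega\C\to\End_A$.

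First I would establish the correspondence on the underlying graded level. Since $\Gm(s^{-1}\oC)$ is free as a graded properad on $s^{-1}\oC$, a morphism of graded properads $\Phi$ is uniquely determined by, and may be freely prescribed by, its restriction to the generators, namely a degree-$0$ $\Sy$-equivariant map $s^{-1}\oC\to\End_A$. Desuspending converts this into a degree-$(-1)$ equivariant map $\bar\alpha\colon\oC\to\End_A$, that is, precisely a degree-$(-1)$ element of the convolution algebra $\g_{\C,A}=\Hom_{\Sy}(\oC,\End_A)$. This yields a bijection between graded properad morphisms $\Gm(s^{-1}\oC)\to\End_A$ and degree-$(-1)$ elements $\bar\alpha$ of $\g_{\C,A}$.

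Next I would impose compatibility with the differentials. Because $\Phi$ is a morphism of graded properads, the map $\partial_{\End_A}\circ\Phi-\Phi\circ d_{\Omega\C}$ is a $\Phi$-derivation, so it vanishes globally if and only if it vanishes on the generators $s^{-1}\oC$; this is the standard reduction for morphisms out of a quasi-free properad. Evaluating the chain-map identity on generators and desuspending, the contribution of $\partial_{\End_A}$ together with that of $d_1$ (which records precomposition with the internal differential of $\oC$) assemble into $\partial\bar\alpha$, the differential of $\bar\alpha$ in the convolution algebra, while the contribution of $d_2$ factors through the two-vertex part, where $\Phi$ is computed as $\gamma_{(1,1)}\circ(\bar\alpha\ibt\bar\alpha)\circ\Delta_{(1,1)}$. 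By the very definition of the product $\star$ in \cref{prop:ConLiead}, this composite equals $\bar\alpha\star\bar\alpha$. Hence the chain-map condition on generators is exactly
\[\partial\bar\alpha+\bar\alpha\star\bar\alpha=0~,\]
the Maurer--Cartan equation, so $\Phi$ is a morphism of dg properads if and only if $\bar\alpha\in\MC(\g_{\C,A})$.

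The point requiring most care is the sign bookkeeping relating the generators $s^{-1}\oC$ to the desuspended element $\bar\alpha$, together with the observation that the cobar differential $d_2$ only sees the two-vertex decomposition $\Delta_{(1,1)}$ — which is exactly what matches the infinitesimal nature of $\star$, built solely from $\Delta_{(1,1)}$ and $\gamma_{(1,1)}$, rather than from the full decomposition and composition. Naturality of the correspondence in $A$ (and in $\C$) is then immediate from the functoriality of the cobar construction and of $\Hom_{\Sy}(-,\End_A)$, which recovers \cite[Proposition~17]{MerkulovVallette09I}.
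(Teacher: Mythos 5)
Your argument is correct and is precisely the standard quasi-free/generators argument used in the cited source \cite[Proposition~17]{MerkulovVallette09I}: the paper itself does not reprove this statement but simply recalls it. Reducing the chain-map condition to the generators $s^{-1}\oC$ and matching $d_2$ with $\Delta_{(1,1)}$ to produce the term $\bar\alpha\star\bar\alpha$ is exactly how the correspondence is established there, so there is nothing to add beyond the sign bookkeeping you already flag.
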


\begin{example}
	The main range of applications for this result is when one considers the Koszul dual coproperad $\C=\P^{\ac}$~. In this case, Maurer--Cartan elements of $\g_{\P^{\ac}, A}$ coincide with homotopy $\P$-gebra structures on $A$~.
\end{example}

\subsection{Lie-graph algebras}\label{subsec:DsGRAoperad}
The convolution algebra actually carries a more refined algebraic structure governed by a new operad that we introduce now. 

\begin{definition}[Directed simple graph] 
	We call \emph{directed simple graph} a connected graph $\gra$, directed by a global flow from top to bottom, with at least one vertex, with no input, no output and at most one edge between two vertices. 
	The number of vertices of a graph $\gra$ is denoted by $|\gra|$ and they are  labeled bijectively by $1,\ldots, |\gra|$. The set of directed simple graphs is denoted by $\dcGra$; examples are given in \cref{Fig:SimpleGraph}. 
\end{definition}
\begin{figure*}[h!]
	\begin{tikzpicture}[scale=0.6]
		\draw[thick]	(0,0)--(1,0)--(1,1)--(0,1)--cycle;
		\draw (0.5,0.5) node {{$1$}} ; 
	\end{tikzpicture} 
	\quad 
	\begin{tikzpicture}[scale=0.6]
		\draw[thick]	(0,0)--(1,0)--(1,1)--(0,1)--cycle;
		\draw[thick]	(0,2)--(1,2)--(1,3)--(0,3)--cycle;
		\draw[thick]	(0.5,1)--(0.5,2);
		\draw (0.5,0.5) node {{$2$}} ; 
		\draw (0.5,2.5) node {{$1$}} ; 
	\end{tikzpicture}
	\quad 
	\begin{tikzpicture}[scale=0.6]
		\draw[thick]	(0,0)--(1,0)--(1,1)--(0,1)--cycle;
		\draw[thick]	(2,0)--(3,0)--(3,1)--(2,1)--cycle;
		\draw[thick]	(1,2)--(2,2)--(2,3)--(1,3)--cycle;
		\draw[thick]	(0.5,1)--(1.33,2);
		\draw[thick]	(2.5,1)--(1.66,2);
		\draw (1.5,2.5) node {{$1$}} ; 
		\draw (0.5,0.5) node {{$2$}} ; 
		\draw (2.5,0.5) node {{$3$}} ; 	
	\end{tikzpicture}
	\quad 
	\begin{tikzpicture}[scale=0.6]
		\draw[thick]	(0,2)--(1,2)--(1,3)--(0,3)--cycle;
		\draw[thick]	(2,2)--(3,2)--(3,3)--(2,3)--cycle;
		\draw[thick]	(1,0)--(2,0)--(2,1)--(1,1)--cycle;
		\draw[thick]	(0.5,2)--(1.33,1);
		\draw[thick]	(2.5,2)--(1.66,1);
		\draw (1.5,0.5) node {{$1$}} ; 
		\draw (0.5,2.5) node {{$2$}} ; 
		\draw (2.5,2.5) node {{$3$}} ; 	
	\end{tikzpicture}
	\quad 
	\begin{tikzpicture}[scale=0.6]
		\draw[thick]	(0,0)--(1,0)--(1,1)--(0,1)--cycle;
		\draw[thick]	(0,2)--(1,2)--(1,3)--(0,3)--cycle;
		\draw[thick]	(0,4)--(1,4)--(1,5)--(0,5)--cycle;
		\draw[thick]	(0.5,1)--(0.5,2);
		\draw[thick]	(0.5,3)--(0.5,4);	
		\draw (0.5,0.5) node {{$3$}} ; 
		\draw (0.5,2.5) node {{$2$}} ; 
		\draw (0.5,4.5) node {{$1$}} ; 
	\end{tikzpicture}
	\quad 
	\begin{tikzpicture}[scale=0.6]
		\draw[thick]	(0,0)--(1,0)--(1,1)--(0,1)--cycle;
		\draw[thick]	(0,2)--(1,2)--(1,3)--(0,3)--cycle;
		\draw[thick]	(0,4)--(1,4)--(1,5)--(0,5)--cycle;
		\draw[thick]	(0.33,1)--(0.33,2);
		\draw[thick]	(0.33,3)--(0.33,4);
		\draw[thick]	(0.66,1) to[out=60,in=270] (1.5,2.5) to[out=90,in=300] (0.66,4);	
		\draw (0.5,0.5) node {{$3$}} ; 
		\draw (0.5,2.5) node {{$2$}} ; 
		\draw (0.5,4.5) node {{$1$}} ; 
	\end{tikzpicture}
	\quad
	\begin{tikzpicture}[scale=0.6]
		\draw[thick]	(0.66,2)--(2.33,1);	
		\draw[draw=white,double=black,double distance=2*\pgflinewidth,thick]	(0.66,1)--(2.33,2);		
		\draw[thick]	(0,0)--(1,0)--(1,1)--(0,1)--cycle;
		\draw[thick]	(2,0)--(3,0)--(3,1)--(2,1)--cycle;
		\draw[thick]	(0,2)--(1,2)--(1,3)--(0,3)--cycle;
		\draw[thick]	(2,2)--(3,2)--(3,3)--(2,3)--cycle;	
		\draw[thick]	(0.33,1)--(0.33,2);
		\draw[thick]	(2.66,1)--(2.66,2);
		\draw (0.5,2.5) node {{$1$}} ; 
		\draw (2.5,2.5) node {{$2$}} ; 	
		\draw (0.5,0.5) node {{$3$}} ; 
		\draw (2.5,0.5) node {{$4$}} ; 	
	\end{tikzpicture}
	\quad 
	\begin{tikzpicture}[scale=0.6]
		\draw[thick]	(0.66,2)--(2.33,1);	
		\draw[thick]	(0,0)--(1,0)--(1,1)--(0,1)--cycle;
		\draw[thick]	(2,0)--(3,0)--(3,1)--(2,1)--cycle;
		\draw[thick]	(0,2)--(1,2)--(1,3)--(0,3)--cycle;
		\draw[thick]	(2,2)--(3,2)--(3,3)--(2,3)--cycle;	
		\draw[thick]	(1,4)--(2,4)--(2,5)--(1,5)--cycle;
		\draw[thick]	(0.33,1)--(0.33,2);
		\draw[thick]	(2.66,1)--(2.66,2);
		\draw[thick]	(0.5,3)--(1.33,4);
		\draw[thick]	(2.5,3)--(1.66,4);	
		\draw (1.5,4.5) node {{$1$}} ; 
		\draw (0.5,2.5) node {{$2$}} ; 
		\draw (2.5,2.5) node {{$3$}} ; 	
		\draw (0.5,0.5) node {{$4$}} ; 
		\draw (2.5,0.5) node {{$5$}} ; 	
	\end{tikzpicture}
	\caption{The first  directed simple graphs.}
	\label{Fig:SimpleGraph}
\end{figure*}

The canonical action of the symmetric groups on the labels of the vertices of directed simple graphs endow the set $\dcGra$ 
with a set-theoretical $\Sy$-module structure. We denote by $\kdcGra$  the $\k$-linear $\Sy$-module spanned by $\dcGra$~. We endow it with the partial composition products $\gra_1 \circ_i \gra_2$ defined by the sum of graphs obtained by 
\begin{enumerate}
	\item inserting the graph $\gra_2$ at the vertex $i$ of the graph $\gra_1$, 
	\item for any edge joining a vertex $j$ to the vertex $i$ in $\g_1$, considering the sum of all the possible 
	ways to join with edges the vertex $j$ to at least one vertex of $\gra_2$,
	\item keeping the labels $1, \ldots, i-1$ of the graph $\gra_1$, 
	\item shifting the labels of $\gra_2$ by $i-1$, 
	\item and shifting the remaining vertices of $\gra_1$ by $|\gra_2|-1$\ . 
\end{enumerate}
\begin{figure*}[h!]
	\begin{align*}
		\vcenter{\hbox{\begin{tikzpicture}[scale=0.6]
					\draw[thick]	(0,0)--(1,0)--(1,1)--(0,1)--cycle;
					\draw[thick]	(2,0)--(3,0)--(3,1)--(2,1)--cycle;
					\draw[thick]	(1,2)--(2,2)--(2,3)--(1,3)--cycle;
					\draw[thick]	(0.5,1)--(1.33,2);
					\draw[thick]	(2.5,1)--(1.66,2);
					\draw (1.5,2.5) node {{$2$}} ; 
					\draw (0.5,0.5) node {{$1$}} ; 
					\draw (2.5,0.5) node {{$3$}} ; 	
		\end{tikzpicture}}}
		& \ \circ_2\ 
		\vcenter{\hbox{
				\begin{tikzpicture}[scale=0.6]
					\draw[thick]	(0,0)--(1,0)--(1,1)--(0,1)--cycle;
					\draw[thick]	(0,2)--(1,2)--(1,3)--(0,3)--cycle;
					\draw[thick]	(0.5,1)--(0.5,2);
					\draw (0.5,0.5) node {{$2$}} ; 
					\draw (0.5,2.5) node {{$1$}} ; 
				\end{tikzpicture}
		}} = \ 
		\vcenter{\hbox{\begin{tikzpicture}[scale=0.6]
					\draw[thick] (0.66,1)--(1.33,2);
					\draw[thick] (2.33,1)--(1.66,2);
					\draw[thick]	(0,0)--(1,0)--(1,1)--(0,1)--cycle;
					\draw[thick]	(2,0)--(3,0)--(3,1)--(2,1)--cycle;
					\draw[thick]	(1,2)--(2,2)--(2,3)--(1,3)--cycle;
					\draw[thick]	(1,4)--(2,4)--(2,5)--(1,5)--cycle;	
					\draw[thick]	(1.5,3)--(1.5,4);	
					\draw (1.5,4.5) node {{$2$}} ; 
					\draw (1.5,2.5) node {{$3$}} ; 
					\draw (0.5,0.5) node {{$1$}} ; 
					\draw (2.5,0.5) node {{$4$}} ; 	
		\end{tikzpicture}}}
		\ + \ \vcenter{\hbox{\begin{tikzpicture}[scale=0.6]
					\draw[thick] (0.66,1)--(1.33,2);
					\draw[thick] (2.66,1) to (2.66,2.5) to[out=90,in=300]  (1.66,4);
					\draw[thick]	(0,0)--(1,0)--(1,1)--(0,1)--cycle;
					\draw[thick]	(2,0)--(3,0)--(3,1)--(2,1)--cycle;
					\draw[thick]	(1,2)--(2,2)--(2,3)--(1,3)--cycle;
					\draw[thick]	(1,4)--(2,4)--(2,5)--(1,5)--cycle;	
					\draw[thick]	(1.5,3)--(1.5,4);	
					\draw (1.5,4.5) node {{$2$}} ; 
					\draw (1.5,2.5) node {{$3$}} ; 
					\draw (0.5,0.5) node {{$1$}} ; 
					\draw (2.5,0.5) node {{$4$}} ; 	
		\end{tikzpicture}}}
		\ + \ \vcenter{\hbox{\begin{tikzpicture}[scale=0.6]
					\draw[thick] (0.33,1) to (0.33, 2.5) to[out=90,in=240] (1.33,4);
					\draw[thick] (2.33,1)--(1.66,2);
					\draw[thick]	(0,0)--(1,0)--(1,1)--(0,1)--cycle;
					\draw[thick]	(2,0)--(3,0)--(3,1)--(2,1)--cycle;
					\draw[thick]	(1,2)--(2,2)--(2,3)--(1,3)--cycle;
					\draw[thick]	(1,4)--(2,4)--(2,5)--(1,5)--cycle;	
					\draw[thick]	(1.5,3)--(1.5,4);	
					\draw (1.5,4.5) node {{$2$}} ; 
					\draw (1.5,2.5) node {{$3$}} ; 
					\draw (0.5,0.5) node {{$1$}} ; 
					\draw (2.5,0.5) node {{$4$}} ; 	
		\end{tikzpicture}}}
		\ + \ \vcenter{\hbox{\begin{tikzpicture}[scale=0.6]
					\draw[thick] (0.33,1) to (0.33, 2.5) to[out=90,in=240] (1.33,4);
					\draw[thick] (2.66,1) to (2.66,2.5) to[out=90,in=300]  (1.66,4);
					\draw[thick]	(0,0)--(1,0)--(1,1)--(0,1)--cycle;
					\draw[thick]	(2,0)--(3,0)--(3,1)--(2,1)--cycle;
					\draw[thick]	(1,2)--(2,2)--(2,3)--(1,3)--cycle;
					\draw[thick]	(1,4)--(2,4)--(2,5)--(1,5)--cycle;	
					\draw[thick]	(1.5,3)--(1.5,4);	
					\draw (1.5,4.5) node {{$2$}} ; 
					\draw (1.5,2.5) node {{$3$}} ; 
					\draw (0.5,0.5) node {{$1$}} ; 
					\draw (2.5,0.5) node {{$4$}} ; 	
		\end{tikzpicture}}} \\ &
		\ + \ \vcenter{\hbox{\begin{tikzpicture}[scale=0.6]
					\draw[thick] (0.66,1)--(1.33,2);
					\draw[thick] (0.33,1) to (0.33, 2.5) to[out=90,in=240] (1.33,4);
					\draw[thick] (2.33,1)--(1.66,2);
					\draw[thick]	(0,0)--(1,0)--(1,1)--(0,1)--cycle;
					\draw[thick]	(2,0)--(3,0)--(3,1)--(2,1)--cycle;
					\draw[thick]	(1,2)--(2,2)--(2,3)--(1,3)--cycle;
					\draw[thick]	(1,4)--(2,4)--(2,5)--(1,5)--cycle;	
					\draw[thick]	(1.5,3)--(1.5,4);	
					\draw (1.5,4.5) node {{$2$}} ; 
					\draw (1.5,2.5) node {{$3$}} ; 
					\draw (0.5,0.5) node {{$1$}} ; 
					\draw (2.5,0.5) node {{$4$}} ; 	
		\end{tikzpicture}}}
		\ + \ \vcenter{\hbox{\begin{tikzpicture}[scale=0.6]
					\draw[thick] (0.66,1)--(1.33,2);
					\draw[thick] (0.33,1) to (0.33, 2.5) to[out=90,in=240] (1.33,4);
					\draw[thick] (2.66,1) to (2.66,2.5) to[out=90,in=300]  (1.66,4);
					\draw[thick]	(0,0)--(1,0)--(1,1)--(0,1)--cycle;
					\draw[thick]	(2,0)--(3,0)--(3,1)--(2,1)--cycle;
					\draw[thick]	(1,2)--(2,2)--(2,3)--(1,3)--cycle;
					\draw[thick]	(1,4)--(2,4)--(2,5)--(1,5)--cycle;	
					\draw[thick]	(1.5,3)--(1.5,4);	
					\draw (1.5,4.5) node {{$2$}} ; 
					\draw (1.5,2.5) node {{$3$}} ; 
					\draw (0.5,0.5) node {{$1$}} ; 
					\draw (2.5,0.5) node {{$4$}} ; 	
		\end{tikzpicture}}}
		\ + \ \vcenter{\hbox{\begin{tikzpicture}[scale=0.6]
					\draw[thick] (0.66,1)--(1.33,2);
					\draw[thick] (2.33,1)--(1.66,2);
					\draw[thick] (2.66,1) to (2.66,2.5) to[out=90,in=300]  (1.66,4);
					\draw[thick]	(0,0)--(1,0)--(1,1)--(0,1)--cycle;
					\draw[thick]	(2,0)--(3,0)--(3,1)--(2,1)--cycle;
					\draw[thick]	(1,2)--(2,2)--(2,3)--(1,3)--cycle;
					\draw[thick]	(1,4)--(2,4)--(2,5)--(1,5)--cycle;	
					\draw[thick]	(1.5,3)--(1.5,4);	
					\draw (1.5,4.5) node {{$2$}} ; 
					\draw (1.5,2.5) node {{$3$}} ; 
					\draw (0.5,0.5) node {{$1$}} ; 
					\draw (2.5,0.5) node {{$4$}} ; 	
		\end{tikzpicture}}}
		\ + \ \vcenter{\hbox{\begin{tikzpicture}[scale=0.6]
					\draw[thick] (0.33,1) to (0.33, 2.5) to[out=90,in=240] (1.33,4);
					\draw[thick] (2.33,1)--(1.66,2);
					\draw[thick] (2.66,1) to (2.66,2.5) to[out=90,in=300]  (1.66,4);
					\draw[thick]	(0,0)--(1,0)--(1,1)--(0,1)--cycle;
					\draw[thick]	(2,0)--(3,0)--(3,1)--(2,1)--cycle;
					\draw[thick]	(1,2)--(2,2)--(2,3)--(1,3)--cycle;
					\draw[thick]	(1,4)--(2,4)--(2,5)--(1,5)--cycle;	
					\draw[thick]	(1.5,3)--(1.5,4);	
					\draw (1.5,4.5) node {{$2$}} ; 
					\draw (1.5,2.5) node {{$3$}} ; 
					\draw (0.5,0.5) node {{$1$}} ; 
					\draw (2.5,0.5) node {{$4$}} ; 	
		\end{tikzpicture}}}
		\ + \ \vcenter{\hbox{\begin{tikzpicture}[scale=0.6]
					\draw[thick] (0.66,1)--(1.33,2);
					\draw[thick] (0.33,1) to (0.33, 2.5) to[out=90,in=240] (1.33,4);
					\draw[thick] (2.33,1)--(1.66,2);
					\draw[thick] (2.66,1) to (2.66,2.5) to[out=90,in=300]  (1.66,4);
					\draw[thick]	(0,0)--(1,0)--(1,1)--(0,1)--cycle;
					\draw[thick]	(2,0)--(3,0)--(3,1)--(2,1)--cycle;
					\draw[thick]	(1,2)--(2,2)--(2,3)--(1,3)--cycle;
					\draw[thick]	(1,4)--(2,4)--(2,5)--(1,5)--cycle;	
					\draw[thick]	(1.5,3)--(1.5,4);	
					\draw (1.5,4.5) node {{$2$}} ; 
					\draw (1.5,2.5) node {{$3$}} ; 
					\draw (0.5,0.5) node {{$1$}} ; 
					\draw (2.5,0.5) node {{$4$}} ; 	
		\end{tikzpicture}}}
	\end{align*}
	\caption{Example of a partial composition in the operad $\Liegra$.}
	\label{Fig:PartCompo}
\end{figure*}

\begin{lemma}\label{lem:Lie-gra is an operad}
	The data $(\kdcGra, \{\circ_i\}, \vcenter{\hbox{\begin{tikzpicture}[scale=0.3]
				\draw[thick]	(0,0)--(1,0)--(1,1)--(0,1)--cycle;
				\draw (0.5,0.5) node {{\tiny$1$}} ; 
	\end{tikzpicture}}} 
	)$ forms an algebraic operad. 
\end{lemma}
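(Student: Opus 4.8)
The plan is to verify directly the defining axioms of an operad presented through partial compositions (equivariance, associativity and unitality), the only substantial point being the two associativity relations, which I would reduce to a clean combinatorial identity for the edge-distribution rule of steps (1)--(5).

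First I would check that each $\gra_1\circ_i\gra_2$ is a well-defined element of $\kdcGra$, i.e.\ that every graph appearing in the sum is again a directed simple graph on $|\gra_1|+|\gra_2|-1$ vertices. Simplicity is immediate, since the reconnection procedure creates at most one edge between a vertex $j$ of $\gra_1$ and any given vertex of $\gra_2$, while all other pairs retain their original adjacency. The global flow is preserved because an edge incident to $i$ from above is reattached to vertices of $\gra_2$ by downward edges, and symmetrically from below, so the two acyclic pieces cannot combine into an oriented cycle. Connectedness is exactly where the clause \emph{``to at least one vertex of $\gra_2$''} is used: as $\gra_2$ is connected and every edge formerly incident to $i$ is reattached to a \emph{nonempty} set of vertices, the blown-up graph remains connected. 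The label conventions (3)--(5) are the standard operadic ones, so equivariance for the $\Sy$-action is built into the definition; the unit axioms hold because inserting the one-vertex graph $u$ (the unit displayed in the statement) forces every incident edge to be reattached to its unique vertex, giving back $\gra_1$, and $u\circ_1\gra_2=\gra_2$ trivially.

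The heart of the proof is the sequential associativity relation $\gra_1\circ_i(\gra_2\circ_j\gra_3)=(\gra_1\circ_i\gra_2)\circ_{i+j-1}\gra_3$. I would prove it by following a single edge $e$ of $\gra_1$ incident to $i$, say issued from a vertex $v$, since distinct edges are redistributed independently and the internal edges of $\gra_2$ incident to $j$ are redistributed over $\gra_3$ identically on both sides (in the factor $\gra_2\circ_j\gra_3$ on the left, and in the final $\circ_{i+j-1}\gra_3$ on the right). On the left, $e$ is attached to a nonempty subset $U$ of the vertex set $W=(V(\gra_2)\setminus\{j\})\sqcup V(\gra_3)$ of $\gra_2\circ_j\gra_3$. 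On the right, $e$ is first attached to a nonempty subset $S\subseteq V(\gra_2)$, and if $j\in S$ the part landing on $j$ is then spread over a nonempty subset $T\subseteq V(\gra_3)$. The assignment sending $U$ to $\big(S,T\big)$ with $T=U\cap V(\gra_3)$ and $S=(U\cap(V(\gra_2)\setminus\{j\}))\cup\{j\}$ when $U$ meets $V(\gra_3)$, and to $S=U$ with no $T$ otherwise, is a bijection between the two indexing sets, so each resulting graph occurs once on both sides. This is precisely the associativity of the rule ``attach to a nonempty subset,'' and the ``at least one vertex'' constraint is exactly what makes the bijection exhaust each case once.

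For the parallel associativity relation $(\gra_1\circ_i\gra_2)\circ_k\gra_3=(\gra_1\circ_k\gra_3)\circ_i\gra_2$ (with the appropriate index shifts, for $i<k$), the insertions at $i$ and at $k$ act on disjoint data except for the edges joining $i$ and $k$ in $\gra_1$. Such an edge is blown up, in either order, into a bipartite edge set $E\subseteq V(\gra_2)\times V(\gra_3)$; performing ``$\gra_2$ then $\gra_3$'' parametrises these by $S=\mathrm{proj}_{V(\gra_2)}(E)$ together with the nonempty fibres $T_s=\{t:(s,t)\in E\}$, while ``$\gra_3$ then $\gra_2$'' parametrises them by $T=\mathrm{proj}_{V(\gra_3)}(E)$ with nonempty fibres, and both enumerate each nonempty bipartite $E$ exactly once. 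As the remaining edges incident to $i$ or to $k$ are distributed identically in both orders, the two iterated compositions agree. The routine but delicate point throughout is the simultaneous bookkeeping of external edges from $\gra_1$, internal edges of $\gra_2$, and the induced label shifts; I expect the interplay between the ``nonempty subset'' clause and connectedness to be the conceptual crux, since it is exactly this clause that renders the edge-distribution rule both associative and connectivity-preserving, and hence makes $(\kdcGra,\{\circ_i\},u)$ an operad. (Alternatively, one could transport the operad structure through the faithful action of these graphical operations on convolution algebras $\g_{\mathcal{C},A}$, but the combinatorial verification above is self-contained.)
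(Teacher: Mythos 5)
Your proof is correct and follows the same route as the paper, namely a direct verification of unitality, equivariance, and the sequential and parallel associativity axioms for the partial compositions $\circ_i$. The paper's own proof merely asserts that these checks are straightforward, whereas you supply the actual content --- well-definedness (simplicity, acyclicity, connectedness via the ``at least one vertex'' clause) and the explicit bijections between edge-redistribution data that underlie the two associativity relations --- so your argument is a fully worked-out version of the verification the paper leaves to the reader.
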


\begin{proof}
	The element 
	$\vcenter{\hbox{\begin{tikzpicture}[scale=0.3]
				\draw[thick]	(0,0)--(1,0)--(1,1)--(0,1)--cycle;
				\draw (0.5,0.5) node {{\tiny$1$}} ; 
	\end{tikzpicture}}}$
	is clearly a unit for the partial composition products. The parallel and sequential axioms are straightforward to check. The various compatibilities with the action of the symmetric groups are also automatic. 
\end{proof}

\begin{definition}[Lie-graph algebra]
	We call \emph{Lie-graph algebra} an algebra over the operad 
	\[\Liegra\coloneq (\kdcGra, \{\circ_i\}, \vcenter{\hbox{\begin{tikzpicture}[scale=0.3]
				\draw[thick]	(0,0)--(1,0)--(1,1)--(0,1)--cycle;
				\draw (0.5,0.5) node {{\tiny$1$}} ; 
	\end{tikzpicture}}} 
	)~.\]
\end{definition}

The terminology chosen here comes from the relationship with the following various Lie-type algebraic structures. 
As we will see in \cref{prop:TotdsGra} and \cref{prop:conv algebras are dsGra}, this operad encodes  all the natural  operations acting on the totalisation of properad and thus on the deformation complex of morphisms of properads, which is necessary to adequately express the gauge group. 
Contrary to operads such as $\Lie$, $\mathrm{Ass}$, $\Prelie$, which are finitely presented and easily described in terms of generators and relations, no such description is available for $\Liegra$. Informally, this means that we cannot bypass operad theory to even define $\Liegra$-algebras by a finite set of structural operations satisfying some relations. This can be formalised by the proposition below, whose proof we postpone to \cref{AppA}. 

\begin{proposition}\label{prop:nofinitepresentation}
The operad $\Liegra$ is not finitely generated. 
\end{proposition}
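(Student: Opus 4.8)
The plan is to argue that any finite subset of operations cannot generate the whole operad $\Liegra$, and the natural invariant to exploit is the \emph{number of edges} of a directed simple graph. First I would observe that the partial composition products $\gra_1 \circ_i \gra_2$ can only ever \emph{increase} the edge count in a controlled way: inserting $\gra_2$ into a vertex of $\gra_1$ produces graphs whose edges come from (a) the original edges of $\gra_1$ not incident to vertex $i$, (b) the edges internal to $\gra_2$, and (c) the redistributed edges replacing each edge that was incident to $i$. Crucially, each single edge incident to $i$ in $\gra_1$ gets replaced by \emph{at least one} edge in the composite, so the edge count is subadditive-from-below: every graph appearing in $\gra_1 \circ_i \gra_2$ has at least $e(\gra_1) + e(\gra_2)$ edges, where $e(-)$ denotes the number of edges (using that $\gra_1$ has at least one vertex besides $i$ contributing, and edges internal to the inserted copy are preserved). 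More usefully for the argument, I would track a quantity that edge-density bounds \emph{cannot} be forced to grow: the key is to find a family of indecomposable generators.

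The heart of the proof is to exhibit, for every $n$, a directed simple graph $G_n$ that is \emph{indecomposable} — i.e.\ it does not appear in any nontrivial partial composite $\gra_1 \circ_i \gra_2$ with both $|\gra_1|, |\gra_2| \geq 1$ and $(\gra_1,\gra_2)$ not the trivial unit pairing. The natural candidates are the graphs maximizing edge count relative to vertex count: for a graph on $n$ vertices, the directed simple graph can have at most $\binom{n}{2}$ edges (at most one edge between any two vertices, directed by the global flow). I would take $G_n$ to be a graph on $n$ vertices realizing a large number of edges, arranged so that it is \emph{2-connected} in an appropriate sense and cannot be cut into an insertion pattern. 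The second step is then to show that if $G_n$ \emph{were} expressible via partial compositions of strictly smaller graphs, the edge-redistribution rule (step (2) in the definition) would force either fewer edges than $G_n$ has, or would necessarily produce a graph failing the simplicity (at-most-one-edge) condition in a way incompatible with $G_n$'s edge pattern. Concretely, in any decomposition $G_n = \gra_1 \circ_i \gra_2$, the sub-vertices coming from $\gra_2$ form a set $S$ such that every edge of $G_n$ entering $S$ from a fixed outside vertex $j$ must all have originated from a \emph{single} edge $j \to i$ in $\gra_1$; this places a severe structural constraint (a "common origin" condition on the neighborhoods) that I would engineer $G_n$ to violate for all proper nonempty $S$.

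Granting such a family $\{G_n\}_{n \geq 1}$ of indecomposables of unbounded arity, the conclusion is immediate: a finite generating set consists of operations of bounded arity, say arity at most $N$; every operation obtained from these by composition and symmetric-group action is a $\k$-linear combination of iterated partial composites, none of which can produce the indecomposable $G_{N+1}$ as a summand (since $G_{N+1}$ cannot be written as any nontrivial composite and is not itself among the generators). Hence $G_{N+1}$ lies outside the suboperad generated, contradicting the assumption that the finite set generates. Therefore $\Liegra$ is not finitely generated.

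I expect the main obstacle to be the \textbf{indecomposability argument} in the second paragraph, specifically making precise and verifying the "common origin" constraint. The subtlety is that the partial composition is a \emph{sum} over all ways of reconnecting edges (as \cref{Fig:PartCompo} illustrates dramatically — a single $\circ_2$ produces nine terms), so to show $G_n$ is not a summand of any composite I must rule out \emph{every} reconnection pattern producing $G_n$, and I must also handle composites of more than two factors by an induction on the number of vertices. The cleanest route is likely to phrase the obstruction as: a graph is \emph{decomposable} only if it admits a nonempty proper vertex subset $S$ that is "contractible" — meaning contracting $S$ to a single vertex yields a valid directed simple graph and the edges between $S$ and its complement satisfy the single-origin/single-target compatibility forced by the composition rule — and then to choose $G_n$ (for instance a suitably "twisted" or generically-wired graph on many vertices whose every proper subset has incompatible boundary connections) so that no such $S$ exists. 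Verifying this last combinatorial non-existence for an explicit family is where the real work lies.
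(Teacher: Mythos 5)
Your strategy hinges on producing, for arbitrarily large $n$, a directed simple graph $G_n$ that does not occur as a summand of \emph{any} nontrivial partial composite. Unfortunately no such graph exists once $n\geqslant 3$, so the plan cannot be carried out. Indeed, your own reformulation is the right one: $G$ is a summand of $\gra_1\circ_i\gra_2$ exactly when $G$ admits a connected induced subgraph on a vertex set $S$ with $2\leqslant |S|\leqslant |G|-1$ such that every external vertex has all its edges to $S$ pointing the same way and the contraction $G/S$ remains acyclic. But every connected directed simple graph on at least $3$ vertices admits such an $S$ of size $2$: fix a topological order $\sigma$ of the vertices (the graphs are acyclic since they are directed by a global flow), and pick an edge $u\to v$ minimizing $\sigma(v)-\sigma(u)$. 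Any other directed path from $u$ to $v$ would have to begin with an edge $u\to w$ with $\sigma(u)<\sigma(w)<\sigma(v)$, contradicting minimality; so the edge $u\to v$ is the unique directed path from $u$ to $v$, which rules out both an incoherent external vertex ($u\to w\to v$) and a cycle in the contraction, while the reverse obstructions would force a directed cycle in $G$ itself. Hence $\{u,v\}$ is contractible and $G$ is a summand of $(G/\{u,v\})\circ_i(u\to v)$. So there are no ``indecomposable'' basis graphs in the sense you need, and the obstruction to finite generation is invisible at the level of individual graphs: it lives only in the linear span, i.e.\ the composites of lower-arity elements span a proper subspace of $\Liegra(n)$ even though every single graph appears as a summand of some composite. (This is already visible in arity $3$, where the composites of binary operations span at most a $12$-dimensional subspace of the $18$-dimensional space $\Liegra(3)$, yet every $3$-vertex graph occurs in some composite.)

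The paper's proof sidesteps this entirely by a dimension count: it shows $\dim\Liegra(n)=|\dcGra(n)|\geqslant c^{n^2}$ for some $c>1$ (by decorating a linear graph with an arbitrary subset of long edges), whereas a free operad on generators concentrated in arities at most $N$ has arity-$n$ component of dimension at most $6^n\, n!\, b^{n-1}\leqslant (an)^{n+1/2}$, which is eventually smaller. No explicit non-generated element is ever exhibited. If you want to salvage a constructive approach, you would have to produce explicit \emph{linear combinations} of graphs lying outside the span of all composites, which is a substantially harder problem than the one you set yourself; the first paragraph's edge-count observations, while correct, do not help with this.
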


First, the binary operation of $\Liegra$ given by the graph with two vertices satisfies the Lie-admissible relation, i.e. we have a morphism of operads 
\[ \begin{array}{ccc}
	\Lieadm & \to &\Liegra\\
	\star&\mapsto&  
	\vcenter{\hbox{
			\begin{tikzpicture}[scale=0.4]
				\draw[thick]	(0,0)--(1,0)--(1,1)--(0,1)--cycle;
				\draw[thick]	(0,2)--(1,2)--(1,3)--(0,3)--cycle;
				\draw[thick]	(0.5,1)--(0.5,2);
				\draw (0.5,0.5) node {{\scalebox{0.8}{$2$}}} ; 
				\draw (0.5,2.5) node {{\scalebox{0.8}{$1$}}} ; 
	\end{tikzpicture}}}\ \ .
\end{array}\]
It is worth noticing that this  morphism of operads  \emph{fails to be surjective} since for instance 
\[\dim \Lieadm(3)=11 \neq \dim \Liegra(3)=18~.\] This means that the data of a Lie-graph algebra structure is significantly richer than that of a Lie-admissible product. 

\medskip

The operation of skew-symmetrising a Lie-admissible product produces a Lie bracket; this amounts to a morphism of operads 
$\Lie \rightarrowtail  \Lieadm$~, which is injective after \cite{MarklRemm06}. So, any Lie-graph algebra structure contains a Lie-admissible product which yields a Lie bracket under  skew-symmetrisation; that induces the following composite of operadic morphisms:
\[\Lie \rightarrowtail \Lieadm \to \Liegra \ .\]

The notion of a Lie-graph algebra generalises the main type of algebraic structure governing the operadic deformation theory: pre-Lie algebras. Let us recall that a pre-Lie product is a binary operation whose associator is right symmetric, see \cite[Section~13.4]{LodayVallette12}. The associated binary quadratic operad admits a graphical interpretation \cite{ChapotonLivernet01}: it is isomorphic to the operad of rooted trees 
\[\Prelie\cong \mathrm{RT}~,\] 
where the operadic composition map is similar to the one of the operad $\Liegra$ (insertion at a vertex and then grafting), except that one does not allow the number of edges to increase, see \cref{Fig:PartCompoPreLie}. 
\begin{figure*}[h!]
	\begin{align*}
		\vcenter{\hbox{\begin{tikzpicture}[scale=0.6]
					\draw[thick]	(0,2)--(1,2)--(1,3)--(0,3)--cycle;
					\draw[thick]	(2,2)--(3,2)--(3,3)--(2,3)--cycle;
					\draw[very thick]	(1,0)--(2,0)--(2,1)--(1,1)--cycle;
					\draw[thick]	(0.5,2)--(1.33,1);
					\draw[thick]	(2.5,2)--(1.66,1);
					\draw (1.5,0.5) node {{$2$}} ; 
					\draw (0.5,2.5) node {{$1$}} ; 
					\draw (2.5,2.5) node {{$3$}} ; 	
		\end{tikzpicture}}}
		& \ \circ_2\ 
		\vcenter{\hbox{
				\begin{tikzpicture}[scale=0.6]
					\draw[very thick]	(0,0)--(1,0)--(1,1)--(0,1)--cycle;
					\draw[thick]	(0,2)--(1,2)--(1,3)--(0,3)--cycle;
					\draw[thick]	(0.5,1)--(0.5,2);
					\draw (0.5,0.5) node {{$2$}} ; 
					\draw (0.5,2.5) node {{$1$}} ; 
				\end{tikzpicture}
		}}\ = \ \,
		\vcenter{\hbox{\begin{tikzpicture}[scale=0.6]
					\draw[thick]	(-1,2)--(0,2)--(0,3)--(-1,3)--cycle;
					\draw[thick]	(3,2)--(4,2)--(4,3)--(3,3)--cycle;
					\draw[very thick]	(1,0)--(2,0)--(2,1)--(1,1)--cycle;
					\draw[thick]	(1,2)--(2,2)--(2,3)--(1,3)--cycle;	
					\draw[thick]	(-0.5,2)--(1.33,1);
					\draw[thick]	(3.5,2)--(1.66,1);
					\draw[thick]	(1.5,2)--(1.5,1);	
					\draw (-0.5,2.5) node {{$1$}} ; 
					\draw (1.5,0.5) node {{$3$}} ; 
					\draw (1.5,2.5) node {{$2$}} ; 
					\draw (3.5,2.5) node {{$4$}} ; 	
		\end{tikzpicture}}}
		\ +\ 
		\vcenter{\hbox{\begin{tikzpicture}[scale=0.6]
					\draw[thick]	(0,2)--(1,2)--(1,3)--(0,3)--cycle;
					\draw[thick]	(2,2)--(3,2)--(3,3)--(2,3)--cycle;
					\draw[thick]	(1,0)--(2,0)--(2,1)--(1,1)--cycle;
					\draw[very thick]	(1,-2)--(2,-2)--(2,-1)--(1,-1)--cycle;	
					\draw[thick]	(0.5,2)--(1.33,1);
					\draw[thick]	(2.5,2)--(1.66,1);
					\draw[thick]	(1.5,-1)--(1.5,0);
					\draw (1.5,-1.5) node {{$3$}} ; 
					\draw (1.5,0.5) node {{$2$}} ; 
					\draw (0.5,2.5) node {{$1$}} ; 
					\draw (2.5,2.5) node {{$4$}} ; 	
		\end{tikzpicture}}}
		\ + \ 
		\vcenter{\hbox{\begin{tikzpicture}[scale=0.6]
					\draw[thick]	(0,2)--(1,2)--(1,3)--(0,3)--cycle;
					\draw[thick]	(2,2)--(3,2)--(3,3)--(2,3)--cycle;
					\draw[thick]	(2,4)--(3,4)--(3,5)--(2,5)--cycle;	
					\draw[very thick]	(1,0)--(2,0)--(2,1)--(1,1)--cycle;
					\draw[thick]	(0.5,2)--(1.33,1);
					\draw[thick]	(2.5,2)--(1.66,1);
					\draw[thick]	(2.5,4)--(2.5,3);	
					\draw (1.5,0.5) node {{$3$}} ; 
					\draw (0.5,2.5) node {{$1$}} ; 
					\draw (2.5,2.5) node {{$2$}} ; 	
					\draw (2.5,4.5) node {{$4$}} ; 		
		\end{tikzpicture}}}
		\ + \ 
		\vcenter{\hbox{\begin{tikzpicture}[scale=0.6]
					\draw[thick]	(0,2)--(1,2)--(1,3)--(0,3)--cycle;
					\draw[thick]	(2,2)--(3,2)--(3,3)--(2,3)--cycle;
					\draw[thick]	(0,4)--(1,4)--(1,5)--(0,5)--cycle;		
					\draw[very thick]	(1,0)--(2,0)--(2,1)--(1,1)--cycle;
					\draw[thick]	(0.5,2)--(1.33,1);
					\draw[thick]	(2.5,2)--(1.66,1);
					\draw[thick]	(0.5,4)--(0.5,3);	
					\draw (1.5,0.5) node {{$3$}} ; 
					\draw (0.5,2.5) node {{$2$}} ; 
					\draw (2.5,2.5) node {{$4$}} ; 	
					\draw (0.5,4.5) node {{$1$}} ; 		
		\end{tikzpicture}}}
	\end{align*}
	
	\caption{Example of a partial composition in the operad $\mathrm{RT}$.}
	\label{Fig:PartCompoPreLie}
\end{figure*}

In the same range of ideas, the operad $\mathrm{Ass}$ governing associative algebras can also to described in a graphical way as follows. 
We consider the subset of rooted trees made up of \emph{ladders}, that is rooted trees such that every vertex has at most one input. 
We endow this $\Sy$-set with the partial composition products $\circ_i$ where one first inserts the second ladder at the place of the $i$-th vertex of the first ladder and where  one then grafts the root vertex and the top vertex of the second ladder in order to produce a final ladder, see \cref{Fig:PartCompoLad}. 
\begin{figure*}[h!]
	\begin{align*}
		\vcenter{\hbox{\begin{tikzpicture}[scale=0.5]
					\draw[thick]	(0,0)--(1,0)--(1,1)--(0,1)--cycle;
					\draw[thick]	(0,2)--(1,2)--(1,3)--(0,3)--cycle;
					\draw[thick]	(0,-2)--(1,-2)--(1,-1)--(0,-1)--cycle;
					\draw[thick]	(0.5,1)--(0.5,2);
					\draw[thick]	(0.5,-1)--(0.5,0);
					\draw (0.5,0.5) node {{$1$}} ; 
					\draw (0.5,2.5) node {{$2$}} ; 
					\draw (0.5,-1.5) node {{$3$}} ; 
		\end{tikzpicture}}}\ \,
		& \ \circ_1\ 
		\vcenter{\hbox{
				\begin{tikzpicture}[scale=0.5]
					\draw[thick]	(0,0)--(1,0)--(1,1)--(0,1)--cycle;
					\draw[thick]	(0,2)--(1,2)--(1,3)--(0,3)--cycle;
					\draw[thick]	(0.5,1)--(0.5,2);
					\draw (0.5,0.5) node {{$2$}} ; 
					\draw (0.5,2.5) node {{$1$}} ; 
				\end{tikzpicture}
		}}\ = \ \,
		\vcenter{\hbox{\begin{tikzpicture}[scale=0.5]
					\draw[thick]	(0,0)--(1,0)--(1,1)--(0,1)--cycle;
					\draw[thick]	(0,2)--(1,2)--(1,3)--(0,3)--cycle;
					\draw[thick]	(0,-2)--(1,-2)--(1,-1)--(0,-1)--cycle;
					\draw[thick]	(0,-4)--(1,-4)--(1,-3)--(0,-3)--cycle;	
					\draw[thick]	(0.5,1)--(0.5,2);
					\draw[thick]	(0.5,-1)--(0.5,0);
					\draw[thick]	(0.5,-3)--(0.5,-2);
					\draw (0.5,2.5) node {{$3$}} ; 
					\draw (0.5,0.5) node {{$1$}} ; 
					\draw (0.5,-1.5) node {{$2$}} ; 
					\draw (0.5,-3.5) node {{$4$}} ; 	
		\end{tikzpicture}}}
	\end{align*}
	\caption{Example of a partial composition of ladders.}
	\label{Fig:PartCompoLad}
\end{figure*}
Denoting by $\mathrm{Lad}$ the $\k$-linear operad spanned by the set of ladders, one gets the canonical isomorphism of operads
\[ \mathrm{Ass}\cong \mathrm{Lad}~.\]

The projection of rooted trees onto ladders induces a surjection  of operads 
\[
\Prelie\cong \mathrm{RT}
\twoheadrightarrow
\mathrm{Lad}\cong \mathrm{Ass}~,\]
whose pullback coincides with the fact that an associative product gives a pre-Lie product. A similar relation holds between the operads $\Liegra$ and $\mathrm{RT}$. Since all the edges of a directed simple graph are oriented, they induce a partial order on the set of vertices, where we say that a vertex at the source of an edge is greater than the vertex lying at the target of that edge. 

\begin{proposition}
	The projection of directed simple graphs onto the ones of genus 0 with only one minimum vertex defines an epimorphism of operads
	\[\Liegra \twoheadrightarrow \mathrm{RT}\cong \Prelie\ .\]
\end{proposition}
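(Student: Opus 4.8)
The plan is to define the map on the underlying $\Sy$-sets and then verify it respects the operad structure. First I would fix, for a directed simple graph $\gra$, the target: a rooted tree is obtained by contracting the information we discard. Concretely, I would send $\gra$ to zero unless $\gra$ has genus $0$ (i.e. its underlying undirected graph is a tree) and a unique minimal vertex for the partial order induced by the edge orientations; when both conditions hold, I would read off the associated rooted tree by taking the unique minimal vertex as the root and orienting all edges toward it, which is exactly the shape of an element of $\mathrm{RT}$. The labels of the vertices carry over directly, so this is manifestly $\Sy$-equivariant. The unit $\vcenter{\hbox{\begin{tikzpicture}[scale=0.3]\draw[thick](0,0)--(1,0)--(1,1)--(0,1)--cycle;\draw (0.5,0.5) node {{\tiny$1$}};\end{tikzpicture}}}$ goes to the single-vertex rooted tree, so the unit is preserved.

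The heart of the argument is compatibility with the partial compositions $\circ_i$. Here I would compare the two composition recipes term by term. Recall that in $\Liegra$, forming $\gra_1 \circ_i \gra_2$ inserts $\gra_2$ at vertex $i$ and then, for every edge into $i$, sums over \emph{all} ways of reconnecting to at least one vertex of $\gra_2$; in $\mathrm{RT}\cong\Prelie$ the analogous operation grafts but \emph{without} allowing the edge count to increase, i.e. each incoming edge reconnects to exactly one vertex of $\gra_2$ (see \cref{Fig:PartCompoPreLie} compared with \cref{Fig:PartCompo}). The key combinatorial observation is that among the summands of $\gra_1 \circ_i \gra_2$ in $\Liegra$, precisely those in which every reconnection uses a single edge and in which the result remains genus $0$ with a single minimal vertex survive under the projection; all other summands (those creating a cycle, multiple edges in aggregate, or a second minimum) are sent to zero. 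Matching the surviving summands with the terms of the pre-Lie composition is then a bookkeeping check on which vertex of $\gra_2$ receives the grafted edge.

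I would organise this verification by first treating the case where $\gra_1$ and $\gra_2$ are themselves trees with unique minima, since the projection is zero on all other generators and the partial composition is $\k$-linear. In that case the genus-$0$ and unique-minimum conditions on the composite translate into the statement that each incoming edge at vertex $i$ must attach to exactly one vertex of (the inserted) $\gra_2$ and that no new minimum is created below the root of $\gra_2$; verifying that these are exactly the surviving terms and that they biject with the pre-Lie grafting terms is the substance of the proof. Surjectivity is immediate because every rooted tree is itself a directed simple graph of genus $0$ with a unique minimal vertex, mapping to itself.

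The hard part will be the bookkeeping in the compatibility check: one must show carefully that the multi-edge reconnections and the cycle-creating reconnections in the $\Liegra$ composition are \emph{exactly} the terms annihilated by the projection, and that no surviving term is overcounted. In particular I would need to confirm that attaching an incoming edge to two or more vertices of $\gra_2$ either raises the genus or destroys the unique-minimum property, so that such terms always map to zero, while single-edge attachments to distinct vertices of $\gra_2$ reproduce precisely the distinct pre-Lie grafting summands. Once this correspondence of summands is established, the relation $\Liegra \twoheadrightarrow \mathrm{RT}\cong\Prelie$ follows, and the fact that it is a morphism of operads is just the statement that this summand-matching is compatible with $\circ_i$.
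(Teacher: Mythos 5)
Your proposal is correct and follows essentially the same route as the paper: identify the genus-$0$, unique-minimum directed simple graphs with rooted trees, observe that the projection kills exactly the composition summands that create cycles or extra minima, and match the surviving single-edge reconnections with the pre-Lie grafting terms. The paper's proof is terser but relies on the same identification and the same ``killed extra terms'' observation (adding only the explicit remark, provable by induction, that every vertex of such a graph has at most one output edge).
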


\begin{proof}
	First we notice that the subset of genus 0 directed simple graphs with only one minimum vertex is in one-to-one correspondence with the set of rooted trees. In one way, rooted trees are directed simple graphs: for such trees, one orients the edges from the leaves to the root. 
	In the other way around, one can prove, for instance by induction on the number of vertices, that every vertex of a genus 0 directed simple  graph with only one minimum vertex has at most one output edge. 
	Then one can see that the projection of directed simple graphs onto rooted trees preserves the respective operadic partial composition products: its kills the extra terms appearing in the partial composition product of the operad $\Liegra$ and not in the one of the operad $\mathrm{RT}$~. 
\end{proof}

It is worth noticing that the canonical inclusions of graphs in the other way round, that is rooted trees into directed simple graphs and ladders into rooted trees, \emph{do not respect} the operadic structures and so they fail to define morphisms of operads. In the end, we get the following relations between these various algebraic structures:
\[\Lie \rightarrowtail \Lieadm \to \Liegra \twoheadrightarrow \Prelie \twoheadrightarrow \mathrm{Ass} \ ,\]
where $\Liegra$ is the biggest operad. 

\begin{remark}\label{rem:other graph operads}
	Despite an apparent similar flavour, the operad $\Liegra$ {is quite different} from the graphs operads considered in the study of the little discs operad, the configuration spaces of points, and the Grothendieck--Teichm\"uller Lie algebra, see  \cite{Kontsevich97, LambrechtsVolic14, Willwacher15, Idrissi19, CW16} and the survey \cite[Chapter~6]{DotsenkoShadrinVallette22}: they all share the property that their composition preserves the number of edges. 
\end{remark}

More generally, we consider the set $\dsncGra$ of \emph{directed simple non-necessarily connected graphs}, see 
	\cref{Fig:SimpleNCGraph} for examples. We endow them with the partial composition products $\gra_1 \circ_i \gra_2$ defined similarly by the sum of graphs obtained by first inserting the graph $\gra_2$ at the vertex $i$ of the graph $\gra_1$, 
	then joining any vertex $j$ of $\gra_1$, attached previously to $i$, in all possible ways to vertices of $\gra_2$ with at least one edge each time,
	and finally relabeling the vertices accordingly.

	\begin{figure*}[h!]
		\begin{tikzpicture}[scale=0.6]
			\draw[thick]	(0,0)--(1,0)--(1,1)--(0,1)--cycle;
			\draw[thick]	(2,0)--(3,0)--(3,1)--(2,1)--cycle;	
			\draw[thick]	(4,0)--(5,0)--(5,1)--(4,1)--cycle;		
			\draw (0.5,0.5) node {{$1$}} ; 
			\draw (2.5,0.5) node {{$2$}} ; 
			\draw (4.5,0.5) node {{$3$}} ; 		
		\end{tikzpicture} 
		\qquad \qquad \qquad
		\begin{tikzpicture}[scale=0.6]
			\draw[thick]	(-2,0)--(-1,0)--(-1,1)--(-2,1)--cycle;
			\draw[thick]	(-2,2)--(-1,2)--(-1,3)--(-2,3)--cycle;	
			\draw[thick]	(-2,4)--(-1,4)--(-1,5)--(-2,5)--cycle;		
			\draw[thick]	(0,0)--(1,0)--(1,1)--(0,1)--cycle;
			\draw[thick]	(2,0)--(3,0)--(3,1)--(2,1)--cycle;
			\draw[thick]	(1,2)--(2,2)--(2,3)--(1,3)--cycle;
			\draw[thick]	(0.5,1)--(1.33,2);
			\draw[thick]	(2.5,1)--(1.66,2);
			\draw[thick]	(-1.5,1)--(-1.5,2);
			\draw[thick]	(-1.5,3)--(-1.5,4);	
			\draw (-1.5,0.5) node {{$4$}} ; 
			\draw (-1.5,2.5) node {{$5$}} ; 	
			\draw (-1.5,4.5) node {{$1$}} ; 		
			\draw (1.5,2.5) node {{$3$}} ; 
			\draw (0.5,0.5) node {{$2$}} ; 
			\draw (2.5,0.5) node {{$6$}} ; 	
		\end{tikzpicture}
		\caption{Examples of  directed simple non-necessarily connected graphs.}
		\label{Fig:SimpleNCGraph}
	\end{figure*}

\begin{proposition}\label{prop:LiencgraDistLaw}
		The data $(\k\dsncGra, \{\circ_i\}, \vcenter{\hbox{\begin{tikzpicture}[scale=0.3]
					\draw[thick]	(0,0)--(1,0)--(1,1)--(0,1)--cycle;
					\draw (0.5,0.5) node {{\tiny$1$}} ; 
		\end{tikzpicture}}} 
		)$ defines an algebraic operad. The identification of $\Com(n)$ with the span of \quad 
		\begin{tikzpicture}[scale=0.4,baseline=.1cm]
			\draw[thick]	(0,0)--(1,0)--(1,1)--(0,1)--cycle;
			\draw[thick]	(3,0)--(4,0)--(4,1)--(3,1)--cycle;		
			\draw (0.5,0.5) node {{$1$}} ; 
			\draw (2.1,0.5) node {{$\dots$}} ; 
			\draw (3.5,0.5) node {{$n$}} ; 		
		\end{tikzpicture} 
		$\in \k\dsncGra(n)$
		induces a distributive law 
		$ \colon \Liegra\circ\mathrm{Com} \to \mathrm{Com}\circ \Liegra$
		between the operads 
		$\mathrm{Com}$ and $\Liegra$~, which is isomorphic to $(\k\dsncGra, \{\circ_i\}, \vcenter{\hbox{\begin{tikzpicture}[scale=0.3]
					\draw[thick]	(0,0)--(1,0)--(1,1)--(0,1)--cycle;
					\draw (0.5,0.5) node {{\tiny$1$}} ; 
		\end{tikzpicture}}} 
		)$~. 
\end{proposition}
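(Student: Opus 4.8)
The plan is to deduce all three assertions simultaneously from the standard recognition criterion for distributive laws of operads \cite[Section~8.6]{LodayVallette12}: if an operad $\mathcal{R}$ is equipped with two operad morphisms $i\colon \Com \to \mathcal{R}$ and $j\colon \Liegra \to \mathcal{R}$ such that the induced composite $\Com \circ \Liegra \xrightarrow{i\circ j} \mathcal{R}\circ \mathcal{R} \xrightarrow{\gamma_{\mathcal{R}}} \mathcal{R}$ is an isomorphism of $\Sy$-modules, then $\mathcal{R}$ is canonically isomorphic to $\Com \circ_\lambda \Liegra$ for a unique distributive law $\lambda\colon \Liegra \circ \Com \to \Com \circ \Liegra$. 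I would take $\mathcal{R} = \k\dsncGra$, and realise $i$ and $j$ as the inclusions of the edgeless graphs and of the connected graphs, respectively.

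First I would check that $(\k\dsncGra, \{\circ_i\}, \vcenter{\hbox{\begin{tikzpicture}[scale=0.3]\draw[thick](0,0)--(1,0)--(1,1)--(0,1)--cycle;\draw (0.5,0.5) node {{\tiny$1$}};\end{tikzpicture}}})$ is an operad, which is done exactly as in \cref{lem:Lie-gra is an operad}: the one-vertex graph is a two-sided unit, and the sequential and parallel axioms together with the equivariance under the symmetric groups follow from a direct bookkeeping of the edge-redistribution rule, now carried out for arbitrary, not necessarily connected, graphs; nothing in the disconnected case obstructs the argument used for $\Liegra$. Next I would identify the two structural morphisms. The inclusion $\Liegra = \kdcGra \hookrightarrow \k\dsncGra$ of connected graphs is a morphism of operads: the only point to observe is that inserting a connected graph $\gra_2$ at a vertex $i$ of a connected graph $\gra_1$ and redistributing, by the rule of \cref{subsec:DsGRAoperad}, every edge formerly incident to $i$ onto at least one vertex of $\gra_2$ yields again a connected graph, so the $\gra_1\circ_i \gra_2$ of $\k\dsncGra$ restricts to the $\circ_i$ of $\Liegra$. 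Dually, sending the generator of $\Com(n)$ to the edgeless graph on $n$ vertices defines a morphism $\Com \to \k\dsncGra$: permuting the labels of an edgeless graph leaves it unchanged, matching the trivial representation $\Com(n)$, and since such a graph carries no edges to redistribute, its partial composition merely juxtaposes edgeless graphs, reproducing $\mu_{k}\circ_i \mu_{n_i} = \mu_{k+n_i-1}$.

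Finally I would verify the recognition criterion. The induced composite sends an element of $\Com\circ\Liegra(n)$, namely an outer commutative vertex with connected directed simple graphs $\delta_1, \ldots, \delta_\ell$ plugged into its inputs, to their disjoint union $\delta_1 \sqcup \cdots \sqcup \delta_\ell$ inside $\k\dsncGra$, since no redistribution occurs at the edgeless outer vertex. Conversely, every directed simple non-necessarily connected graph decomposes uniquely into its unordered connected components, each an element of $\Liegra$, which produces the inverse map. Hence the composite $\Com\circ\Liegra \to \k\dsncGra$ is a bijection on bases, thus an isomorphism of $\Sy$-modules, and the criterion applies: $\k\dsncGra \cong \Com\circ_\lambda\Liegra$ for a unique $\lambda$. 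Tracing through the isomorphism, this $\lambda$ is explicitly given by blowing up each commutative-decorated vertex of a connected graph into several vertices and summing over all admissible edge redistributions, which is precisely the partial composition $\gra_1\circ_i(\text{edgeless graph})$ read through the connected-component decomposition.

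The main obstacle is the purely combinatorial check in the second paragraph that $\k\dsncGra$ is an operad, that is, the associativity of the edge-redistribution composition in the disconnected setting; once this is established, the connectivity-preservation observation for $\Liegra$ and the unique decomposition into connected components make the recognition criterion, and hence the existence of the distributive law and the final isomorphism, immediate.
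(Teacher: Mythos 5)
Your proof is correct, but it takes a genuinely different route from the paper. The paper proceeds constructively: it writes down the distributive law $\lambda\colon \Liegra\circ\Com \to \Com\circ\Liegra$ explicitly (blowing up each vertex labelled by a block of a partition into an edgeless cluster and summing over all edge redistributions), asserts that the axioms $\mathrm{(I)}$, $\mathrm{(II)}$, $\mathrm{(i)}$, $\mathrm{(ii)}$ of a distributive law hold, and then separately identifies $\Com\circ_\lambda\Liegra$ with $\k\dsncGra$ via the connected-components decomposition. You instead invoke the abstract recognition criterion (Beck's theorem for monoids in a monoidal category): you verify that $\k\dsncGra$ is an operad, that the connected graphs and the edgeless graphs give two sub-operad morphisms $\Liegra\to\k\dsncGra$ and $\Com\to\k\dsncGra$, and that the induced composite $\Com\circ\Liegra\to\k\dsncGra$ is an isomorphism of $\Sy$-modules because a non-necessarily connected graph decomposes uniquely into its unordered connected components. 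This buys you the existence and uniqueness of $\lambda$, together with the isomorphism of operads, without ever verifying the distributive-law axioms by hand — the burden is shifted entirely onto the operad axioms for $\k\dsncGra$, which both proofs must check anyway; and tracing the isomorphism recovers exactly the paper's explicit formula for $\lambda$. One caution on the citation: since $\Liegra$ is not finitely generated (\cref{prop:nofinitepresentation}), none of the rewriting-rule or diamond-lemma recognition statements for quadratic presentations in \cite[Section~8.6]{LodayVallette12} apply here (the paper itself remarks on this); you must invoke the purely monadic form of the recognition criterion, which is the one you state, so make sure the reference points to that general statement rather than to the presentation-based versions.
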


	\begin{proof}
		Like in the proof of \cref{lem:Lie-gra is an operad}, the unit, parallel, sequential, and equivariance axioms are straightforward to prove. 
		
		\medskip
		
		Let us make fully explicit the distributive law. To any directed simple graph $\gra(\pi_1, \ldots, \pi_k)$ with $k$ vertices labelled bijectively by the components of a partition 
		$\pi_1\sqcup \ldots \sqcup \pi_k$
		of $\{1,\ldots, n\}$, we associate the sum of directed simple non-necessarily connected graphs obtained by 
		replacing all the numbers of the components of the partition by a vertex labeled by that number, and by replacing the edges joining two vertices in the graph $\gra$ by the sum of way to join these new vertices with at least one edge each time: 
		\[
		\vcenter{\hbox{\begin{tikzpicture}[scale=0.5]
					\coordinate (A) at (0.83, 1);
					\draw ($(A)+(0.5,0.5)$) node {{$2,4$}} ; 
					\draw[thick] (0.33,1)--(2.33,1)--(2.33,2)--(0.33,2)--cycle;
					\coordinate (A) at (3.5, 4.5);
					\draw ($(A)$) node {{$3,6,7$}} ; 
					\draw[thick] (2,4)--(5,4)--(5,5)--(2,5)--cycle;
					\coordinate (A) at (1, 7);
					\draw ($(A)+(0.5,0.5)$) node {{$1,5,8$}} ; 	
					\draw[thick] (0,7)--(3,7)--(3,8)--(0,8)--cycle;	
					\draw[thick] (1,2)--(1,7);
					\draw[thick] (1.66,2)--(3.5,4);	
					\draw[thick] (2,7)--(3.5,5);	
		\end{tikzpicture}}}
		\quad \mapsto\quad 
		\vcenter{\hbox{\begin{tikzpicture}[scale=0.5]
					\coordinate (A) at (0.5, 0.5);
					\draw[thick]	($(A)+(0, 0)$)--($(A)+(0, 1)$)--($(A)+(1,1)$)--($(A)+(1, 0)$)--cycle;
					\draw ($(A)+(0.5,0.5)$) node {{$2$}} ; 
					\coordinate (A) at (2, 0.5);
					\draw[thick]	($(A)+(0, 0)$)--($(A)+(0, 1)$)--($(A)+(1,1)$)--($(A)+(1, 0)$)--cycle;
					\draw[thick, dashed] (0,0)--(3.5,0)--(3.5,2)--(0,2)--cycle;
					\draw ($(A)+(0.5,0.5)$) node {{$4$}} ; 
					\coordinate (A) at (2, 4);
					\draw[thick]	($(A)+(0, 0)$)--($(A)+(0, 1)$)--($(A)+(1,1)$)--($(A)+(1, 0)$)--cycle;
					\draw ($(A)+(0.5,0.5)$) node {{$3$}} ; 
					\coordinate (A) at (3.5, 4);
					\draw[thick]	($(A)+(0, 0)$)--($(A)+(0, 1)$)--($(A)+(1,1)$)--($(A)+(1, 0)$)--cycle;
					\draw ($(A)+(0.5,0.5)$) node {{$6$}} ; 	
					\coordinate (A) at (5, 4);
					\draw[thick]	($(A)+(0, 0)$)--($(A)+(0, 1)$)--($(A)+(1,1)$)--($(A)+(1, 0)$)--cycle;
					\draw ($(A)+(0.5,0.5)$) node {{$7$}} ; 
					\draw[thick, dashed] (1.5,3.5)--(6.5,3.5)--(6.5,5.5)--(1.5,5.5)--cycle;
					\draw[thick, dashed] (0,7)--(5,7)--(5,9)--(0,9)--cycle;
					\coordinate (A) at (0.5, 7.5);
					\draw[thick]	($(A)+(0, 0)$)--($(A)+(0, 1)$)--($(A)+(1,1)$)--($(A)+(1, 0)$)--cycle;
					\draw ($(A)+(0.5,0.5)$) node {{$1$}} ; 	
					\coordinate (A) at (2, 7.5);
					\draw[thick]	($(A)+(0, 0)$)--($(A)+(0, 1)$)--($(A)+(1,1)$)--($(A)+(1, 0)$)--cycle;
					\draw ($(A)+(0.5,0.5)$) node {{$5$}} ; 	
					\coordinate (A) at (3.5, 7.5);
					\draw[thick]	($(A)+(0, 0)$)--($(A)+(0, 1)$)--($(A)+(1,1)$)--($(A)+(1, 0)$)--cycle;	
					\draw ($(A)+(0.5,0.5)$) node {{$8$}} ; 	
					\draw[thick] (1,2)--(1,7);
					\draw[thick] (2.5,2)--(4,3.5);	
					\draw[thick] (4,7)--(4,5.5);	
		\end{tikzpicture}}}
		\quad\mapsto \quad \sum 
		\vcenter{\hbox{\begin{tikzpicture}[scale=0.5]
					\coordinate (A) at (0.5, 0.5);
					\draw[thick]	($(A)+(0, 0)$)--($(A)+(0, 1)$)--($(A)+(1,1)$)--($(A)+(1, 0)$)--cycle;
					\draw ($(A)+(0.5,0.5)$) node {{$2$}} ; 
					\coordinate (A) at (2, 0.5);
					\draw[thick]	($(A)+(0, 0)$)--($(A)+(0, 1)$)--($(A)+(1,1)$)--($(A)+(1, 0)$)--cycle;
					\draw ($(A)+(0.5,0.5)$) node {{$4$}} ; 
					\coordinate (A) at (3.5, 4);
					\draw[thick]	($(A)+(0, 0)$)--($(A)+(0, 1)$)--($(A)+(1,1)$)--($(A)+(1, 0)$)--cycle;
					\draw ($(A)+(0.5,0.5)$) node {{$3$}} ; 
					\coordinate (A) at (5, 4);
					\draw[thick]	($(A)+(0, 0)$)--($(A)+(0, 1)$)--($(A)+(1,1)$)--($(A)+(1, 0)$)--cycle;
					\draw ($(A)+(0.5,0.5)$) node {{$6$}} ; 	
					\coordinate (A) at (6.5, 4);
					\draw[thick]	($(A)+(0, 0)$)--($(A)+(0, 1)$)--($(A)+(1,1)$)--($(A)+(1, 0)$)--cycle;
					\draw ($(A)+(0.5,0.5)$) node {{$7$}} ; 
					\coordinate (A) at (0.5, 7.5);
					\draw[thick]	($(A)+(0, 0)$)--($(A)+(0, 1)$)--($(A)+(1,1)$)--($(A)+(1, 0)$)--cycle;
					\draw ($(A)+(0.5,0.5)$) node {{$1$}} ; 	
					\coordinate (A) at (2, 7.5);
					\draw[thick]	($(A)+(0, 0)$)--($(A)+(0, 1)$)--($(A)+(1,1)$)--($(A)+(1, 0)$)--cycle;
					\draw ($(A)+(0.5,0.5)$) node {{$5$}} ; 	
					\coordinate (A) at (3.5, 7.5);
					\draw[thick]	($(A)+(0, 0)$)--($(A)+(0, 1)$)--($(A)+(1,1)$)--($(A)+(1, 0)$)--cycle;	
					\draw ($(A)+(0.5,0.5)$) node {{$8$}} ; 	
					\draw[thick] (1,1.5)--(1,7.5);
					\draw[thick] (2.33,1.5) to[out=90,in=270] (2.25,7.5);	
					\draw[thick] (2.66,1.5) to[out=90,in=270] (5.5,4);		
					\draw[thick] (4,5) to[out=90,in=270] (2.5,7.5);
					\draw[thick] (5.33,5) to[out=90,in=270] (2.75,7.5);	
					\draw[thick] (5.66,5) to[out=90,in=270] (4,7.5);		
		\end{tikzpicture}}}~.
		\]
		
		The axioms $\mathrm{(I)}$, $\mathrm{(II)}$, $\mathrm{(i)}$, and, $\mathrm{(ii)}$, see \cite[Section~8.6.1]{LodayVallette12}, defining the notion of a distributive law are straightforward to check. Finally, it is easy to see that these two operads are isomorphic since the composite product $\mathrm{Com} \circ \Liegra$ is given by disjoint unions of direct simple graphs and since the respective two composition maps are the same. 
	\end{proof}

\begin{remark}
Notice that, since the operad  $\Liegra$ is not defined by generators and relations, this distributive law is not induced by a rewriting rule, which is rather unusual.  
\end{remark}

\begin{definition}[Lie-ncgraph algebra]\label{def:Poisgra}
		We call \emph{Lie-ncgraph algebra} an algebra over the operad 
		\[\Liencgra\coloneq (\k\dsncGra, \{\circ_i\}, \vcenter{\hbox{\begin{tikzpicture}[scale=0.3]
					\draw[thick]	(0,0)--(1,0)--(1,1)--(0,1)--cycle;
					\draw (0.5,0.5) node {{\tiny$1$}} ; 
		\end{tikzpicture}}} 
		)~.\]
\end{definition}

The inclusion $\dcGra\rightarrowtail \dsncGra$ defines an embedding of operads $\Liegra \rightarrowtail \Liencgra$~.

\begin{remark}		
On the other hand, the natural projection $\dsncGra \twoheadrightarrow \dcGra$ does not induce a morphism of operads, as the potential creation of edges means that inserting disconnected graphs can produce connected graphs.  		
For the same reason, the commutative product and the Lie bracket do not satisfy the Leibniz relation and therefore the operad 
$\Liencgra$ does not contain the Poisson operad. This is in contrast with the graph operads mentioned in \cref{rem:other graph operads}. \end{remark}

\subsection{Lie-graph algebra structure on properadic convolution algebra}\label{subsec:LiegraConv}
Conversely, one might wonder whether it is possible to lift Lie-admissible structures to Lie-graph algebra structures. 
In the cases of the sum of the components of a properad and of properadic  convolution algebras, we solve this question as follows. 
We consider the \emph{barber} map 
\[ \begin{array}{lccc}
	\barb
	\ : & \mathsf{dcGra} & \to &\dcGra\\
	&\gra&\mapsto& \trunc(\gra) \\
	& 
	\vcenter{\hbox{
			\begin{tikzpicture}[scale=0.6]
				\draw[thick]	(0.8,2)--(2.4,1);	
				\draw[thick]	(0.6,2)--(2.2,1);	
				\draw[draw=white,double=black,double distance=2*\pgflinewidth,thick]	(0.66,1)--(2.33,2);		
				\draw[thick]	(0,0)--(1,0)--(1,1)--(0,1)--cycle;
				\draw[thick]	(2,0)--(3,0)--(3,1)--(2,1)--cycle;
				\draw[thick]	(0,2)--(1,2)--(1,3)--(0,3)--cycle;
				\draw[thick]	(2,2)--(3,2)--(3,3)--(2,3)--cycle;	
				\draw[thick]	(0.25,1)--(0.25,2);
				\draw[thick]	(0.4,1)--(0.4,2);
				\draw[thick]	(2.66,1)--(2.66,2);
				\draw[thick]	(0.33,3)--(0.33,3.5);	
				\draw[thick]	(0.66,3)--(0.66,3.5);		
				\draw[thick]	(2.5,3)--(2.5,3.5);		
				\draw[thick]	(2.25,0)--(2.25,-0.5);				
				\draw[thick]	(0.5,0)--(0.5,-0.5);				
				\draw[thick]	(2.75,0)--(2.75,-0.5);						
				\draw[thick]	(2.5,0)--(2.5,-0.5);							
				\draw (0.5,2.5) node {{$1$}} ; 
				\draw (2.5,2.5) node {{$2$}} ; 	
				\draw (0.5,0.5) node {{$3$}} ; 
				\draw (2.5,0.5) node {{$4$}} ; 	
				\draw (0.33,3.5) node[above] {{\tiny$1$}} ; 		
				\draw (0.66,3.5) node[above] {{\tiny$3$}} ; 			
				\draw (2.5,3.5) node[above] {{\tiny$2$}} ; 		
				\draw (0.5,-0.5) node[below] {{\tiny$3$}} ; 			
				\draw (2.25,-0.5) node[below] {{\tiny$1$}} ; 				
				\draw (2.5,-0.5) node[below] {{\tiny$2$}} ; 				
				\draw (2.75,-0.5) node[below] {{\tiny$4$}} ; 						
	\end{tikzpicture}}} &\mapsto 
	&
	\vcenter{\hbox{
			\begin{tikzpicture}[scale=0.6]
				\draw[thick]	(0.66,2)--(2.33,1);	
				\draw[draw=white,double=black,double distance=2*\pgflinewidth,thick]	(0.66,1)--(2.33,2);		
				\draw[thick]	(0,0)--(1,0)--(1,1)--(0,1)--cycle;
				\draw[thick]	(2,0)--(3,0)--(3,1)--(2,1)--cycle;
				\draw[thick]	(0,2)--(1,2)--(1,3)--(0,3)--cycle;
				\draw[thick]	(2,2)--(3,2)--(3,3)--(2,3)--cycle;	
				\draw[thick]	(0.33,1)--(0.33,2);
				\draw[thick]	(2.66,1)--(2.66,2);
				\draw (0.5,2.5) node {{$1$}} ; 
				\draw (2.5,2.5) node {{$2$}} ; 	
				\draw (0.5,0.5) node {{$3$}} ; 
				\draw (2.5,0.5) node {{$4$}} ; 	
	\end{tikzpicture}}}\ ,
\end{array} \]
which amounts to cutting out all the global inputs and outputs and keeping just one edge between pairs of vertices that have at least one.

\begin{proposition}\label{prop:TotdsGra}
	The total sum $\oplus_{m,n\in \NN} \mathcal{P}(m,n)^{\Sy_m^{\mathrm{op}}\times \Sy_n}$ of the invariant parts of the components of any  dg properad $(\P, \d, \gamma, \eta)$ carries a natural dg Lie-graph algebra structure 
	\[
	\gra(\mu_1, \ldots, \mu_k)\coloneq 
	\sum_{\widetilde{\gra}\in \mathsf{dcGra} \atop \trunc(\widetilde{\gra})=\gra}
	\gamma\big(
	\widetilde{\gra}\left(\mu_1, \ldots, \mu_k\right)\big)~,
	\]
	which extends the Lie-admissible product of \cref{lem:TotLieAdm}. 
\end{proposition}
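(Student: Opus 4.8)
The plan is to realise the claimed structure as a morphism of operads $\Phi\colon\Liegra\to\End_V$, where I write $V\coloneq\bigoplus_{m,n\in\NN}\P(m,n)^{\Sy_m^{\mathrm{op}}\times\Sy_n}$ for the total sum of the invariant parts. The directed simple graphs form a $\k$-linear basis of $\Liegra$, and — crucially — $\Liegra$ is \emph{not} finitely generated (\cref{prop:nofinitepresentation}), so one cannot define $\Phi$ on generators and extend; instead I would define $\Phi$ directly on this basis by the stated formula $\Phi(\gra)(\mu_1,\dots,\mu_k)\coloneq\sum_{\trunc(\widetilde{\gra})=\gra}\gamma\big(\widetilde{\gra}(\mu_1,\dots,\mu_k)\big)$ and verify the operad axioms. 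Well-definedness is the first routine point: by multilinearity reduce to homogeneous $\mu_i$ of fixed arity, so that every vertex of $\widetilde{\gra}$ has prescribed arity; then the total number of internal edge-endpoints is bounded, the fibre $\trunc^{-1}(\gra)$ compatible with those arities is finite, and equivariance of $\gamma$ together with stability of this fibre under relabelling the global legs guarantees that the sum lands in $V$.

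The core of the proof is compatibility with the partial compositions, namely $\Phi(\gra_1\circ_i\gra_2)=\Phi(\gra_1)\circ_i\Phi(\gra_2)$ in $\End_V$. Unfolding the right-hand side, one first forms $\nu\coloneq\Phi(\gra_2)(\dots)=\sum_{\trunc(\widetilde{\gra_2})=\gra_2}\gamma(\widetilde{\gra_2}(\dots))$ and substitutes it at vertex $i$. By associativity of the properadic composition $\gamma$, the nested composite $\gamma\big(\widetilde{\gra_1}(\dots,\gamma(\widetilde{\gra_2}(\dots)),\dots)\big)$ equals $\gamma\big((\widetilde{\gra_1}\circ_i\widetilde{\gra_2})(\mu_\bullet)\big)$, where $\widetilde{\gra_1}\circ_i\widetilde{\gra_2}$ is the single directed graph obtained by substituting the graph $\widetilde{\gra_2}$ into vertex $i$ of $\widetilde{\gra_1}$ (legs of $\widetilde{\gra_2}$ matching the edges at $i$). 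Thus both sides are sums of evaluations $\gamma(\widetilde{\gra}(\mu_\bullet))$, and it remains only to match the indexing graphs.

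The main obstacle is therefore the purely combinatorial bijection
\[
\big\{(\widetilde{\gra_1},\widetilde{\gra_2}):\trunc(\widetilde{\gra_1})=\gra_1,\ \trunc(\widetilde{\gra_2})=\gra_2\big\}\ \xrightarrow{\ \sim\ }\ \bigsqcup_{\gra\ \text{term of}\ \gra_1\circ_i\gra_2}\trunc^{-1}(\gra),\qquad (\widetilde{\gra_1},\widetilde{\gra_2})\longmapsto\widetilde{\gra_1}\circ_i\widetilde{\gra_2}\, .
\]
I would prove this by exhibiting the inverse. Given $\widetilde{\gra}$ with $\trunc(\widetilde{\gra})=\gra$ for some term $\gra$ of $\gra_1\circ_i\gra_2$, the vertices originating from $\gra_2$ form a distinguished block $B$; taking $\widetilde{\gra_2}$ to be the full subgraph of $\widetilde{\gra}$ on $B$ (its external edges becoming global legs) and $\widetilde{\gra_1}$ to be the contraction of $B$ to a single vertex $i$ recovers the pair, and this is manifestly inverse to the substitution map once $B$ is known. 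The verification that $\trunc(\widetilde{\gra_2})=\gra_2$ and $\trunc(\widetilde{\gra_1})=\gra_1$ is exactly the defining rule of $\circ_i$ in $\Liegra$: the internal edges of $B$ barber to $\gra_2$, while a neighbour $j$ of $i$ in $\gra_1$ is joined to at least one vertex of $B$ precisely because $j$ and $i$ share at least one edge in $\widetilde{\gra_1}$, and collapsing the multi-edges between $j$ and $B$ after contraction returns the single edge $j\!\to\! i$ of $\gra_1$. This matches the two sums term by term.

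Finally I would collect the lighter axioms. The single-vertex graph evaluates to the identity, since $\gamma$ on one vertex is the identity and $\trunc$ only strips its legs; $\Sy$-equivariance of $\Phi$ is immediate, as permuting the labels of $\gra$ permutes the inputs $\mu_i$; and $\Phi$ is a chain map — so the structure is dg — because each $\gamma(\widetilde{\gra}(\mu_\bullet))$ is assembled from the chain map $\gamma$ and the $\mu_i$, whence $\d$ acts by the graded Leibniz rule. That the structure extends the Lie-admissible product of \cref{lem:TotLieAdm} is the special case of the two-vertex graph with a single edge: its fibre under $\trunc$ consists of all two-vertex connected directed graphs with at least one edge (directed from the top to the bottom vertex by the global flow), so $\sum_{\trunc(\widetilde{\gra})=\gra}\gamma(\widetilde{\gra}(\mu_1,\mu_2))$ is precisely the image of $\mu_1\ibt\mu_2$ under $\gamma_{(1,1)}$, that is $\mu_1\star\mu_2$.
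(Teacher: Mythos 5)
Your proposal is correct and follows essentially the same route as the paper's proof: well-definedness via finiteness of the arity-constrained fibre of $\barb$, the unit via $\barb^{-1}$ of the one-vertex graph, compatibility with partial compositions by matching the graphs $\widetilde{\gra}$ of barber type $\gra_1\circ_i\gra_2$ against nested substitutions, and the identification of the two-vertex graph with $\gamma_{(1,1)}$. Your explicit bijection (block $B$, subgraph, contraction) is just an unpacking of what the paper dispatches by invoking the algebra axiom of $\P$ over the graph monad $\G$, which is also where the implicit identification of leg-labellings of $\widetilde{\gra}_2$ with attachment slots at vertex $i$ is taken care of.
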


\begin{proof}
	Let us first notice that the set of connected directed graphs $\widetilde{\gra}$ satisfying $\trunc(\widetilde{\gra})=\gra$ is infinite but since each $\mu_i$ is a finite linear combination, the output of 
	$\gra(\mu_1, \ldots, \mu_k)$ is actually a finite sum, so it is well defined.  
	Since 
	$\barb^{-1}(\vcenter{\hbox{\begin{tikzpicture}[scale=0.3]
				\draw[thick]	(0,0)--(1,0)--(1,1)--(0,1)--cycle;
				\draw (0.5,0.5) node {{\tiny$1$}} ; 
	\end{tikzpicture}}})=\{c_{m,n}\}_{m,n\in \NN}$, where $c_{m,n}$ stands for the 1-vertex graph with $n$ inputs and $m$-outputs, 
	the 1-vertex graph 
	$\vcenter{\hbox{\begin{tikzpicture}[scale=0.3]
				\draw[thick]	(0,0)--(1,0)--(1,1)--(0,1)--cycle;
				\draw (0.5,0.5) node {{\tiny$1$}} ; 
	\end{tikzpicture}}}$ 
	acts as the identity:
	\[\vcenter{\hbox{\begin{tikzpicture}[scale=0.3]
				\draw[thick]	(0,0)--(1,0)--(1,1)--(0,1)--cycle;
				\draw (0.5,0.5) node {{\tiny$1$}} ; 
	\end{tikzpicture}}}(\mu)=\sum_{m,n\in \NN} \mu(m,n)=\mu~.\]
	For any pair $\gra_1, \gra_2\in \dcGra$ of directed simple graphs and for elements $\mu_1, \ldots , \mu_{s+t-1}\in \allowbreak \oplus_{m,n\in \NN} \allowbreak \mathcal{P}(m,n)^{\Sy_m^{\mathrm{op}}\times \Sy_n}$, where $s=|\gra_1|$ and $t=|\gra_2|$, we have to show that 
	\[
	\gra_1(\mu_1, \ldots, \mu_{i-1}, \gra_2(\mu_i,\ldots, \mu_{i+s-1}), \mu_{i+s}, \ldots, \mu_{s+t-1})=(\gra_1 \circ_i \gra_2)(\mu_1, \ldots, \mu_{s+t-1}) \ .
	\]
	The left-hand side is obtained by first applying the monadic properad structure on $\mu_i,\ldots, \mu_{i+s-1}$ along all the directed connected graphs $\widetilde{\gra}_2$ of barber type $\gra_2$ and then applying the same procedure to the result  
	and to $\mu_1, \ldots, \mu_{i-1}, \mu_i,\ldots, \mu_{i+s-1}$ along all the 
	directed connected graphs $\widetilde{\gra}_1$ of barber type $\gra_1$~. 
	The set of directed connected graphs that appear here are the ones of barber type $\gra_1\circ_i \gra_2$ by the definition of the partial composition products of the operad $\Liegra$~. 
	The axiom defining the algebra structure of the properad $\P$ over the  graph monad $\G$ concludes this part of the proof. 
	
	The induced Lie-admissible product is given by the action of the 2-vertices graph 
	$\vcenter{\hbox{
			\begin{tikzpicture}[scale=0.3]
				\draw[thick]	(0,0)--(1,0)--(1,1)--(0,1)--cycle;
				\draw[thick]	(0,2)--(1,2)--(1,3)--(0,3)--cycle;
				\draw[thick]	(0.5,1)--(0.5,2);
				\draw (0.5,0.5) node {{\scalebox{0.8}{$2$}}} ; 
				\draw (0.5,2.5) node {{\scalebox{0.8}{$1$}}} ; 
	\end{tikzpicture}}}$~, which is equal to the product $\star$ defined in \cref{lem:TotLieAdm}. 
\end{proof}

\begin{theorem}\label{prop:conv algebras are dsGra}
	The convolution algebra $\g_{\mathcal{C}, A}$ associated to an infinitesimal dg coproperad $\oC$ and a dg module $A$ carries the following natural dg Lie-graph algebra structure 
	\[
	\gra(f_1, \ldots, f_k)\ : \ \oC 
\to
	\mathcal{G}^c\big(\oC\big) \twoheadrightarrow
	\bigoplus_{\widetilde{\gra}\in \mathsf{dcGra}^{(k)} \atop \trunc(\widetilde{\gra})=\gra}
	\widetilde{\gra}\big(\oC\big)
	\xrightarrow{\widetilde{\gra}(f_1, \ldots, f_k)}
	\mathcal{G}(\End_A)
	\xrightarrow{\gamma} \End_A\ ,
	\]
	where $k=|\gra|$, which extends the Lie-admissible product of \cref{prop:ConLiead}. 
\end{theorem}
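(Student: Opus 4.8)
The plan is to mirror the proof of \cref{prop:TotdsGra}, now transported to the convolution (dual) setting: the role of properad substitution is played here by the interaction between the comonadic decomposition of the coproperad $\oC$ on the source and the monadic composition $\gamma$ of the endomorphism properad $\End_A$ on the target. First I would settle well-definedness. The comonadic decomposition map $\oC\to\mathcal{G}^c(\oC)$ sends each element of $\oC$ to a finite sum, so its projection onto the $k$-vertex summands $\widetilde\gra(\oC)$ of barber type $\gra$ is a finite sub-sum (even though there are infinitely many such $\widetilde\gra$, differing by edge multiplicities, only finitely many occur for a fixed element); applying the $f_i$ and then the composition map $\gamma$ of $\End_A$ therefore yields a well-defined element of $\Hom_\Sy(\oC,\End_A)$.

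Next I would dispatch the straightforward axioms. For the unit, since $\barb^{-1}$ of the one-vertex directed simple graph is the family $\{c_{m,n}\}_{m,n}$ of all one-vertex graphs, the corresponding projection of $\mathcal{G}^c(\oC)$ recovers the trivial one-piece component, along which $\gamma$ and a single $f$ act as the identity; hence the one-vertex graph acts by $f\mapsto f$. Equivariance under the symmetric group action permuting the vertex labels is immediate, as both $\widetilde\Delta$ and $\gamma$ are equivariant. The dg compatibility follows because $\widetilde\Delta$, the $f_i$, the map $\gamma$, and the counit-induced barber projection are all morphisms of chain complexes. Finally, to see that this structure extends the Lie-admissible product of \cref{prop:ConLiead}, I would specialise to the two-vertex graph, whose barber preimage is precisely the two-vertex connected directed graphs collected by $\oC\ibt\oC$; the composite then reduces to $\Delta_{(1,1)}$ followed by $f\ibt g$ and $\gamma_{(1,1)}$, that is, to $f\star g$.

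The core is the partial composition compatibility
\[
\gra_1(f_1,\ldots,\gra_2(f_i,\ldots,f_{i+t-1}),\ldots,f_{s+t-1})=(\gra_1\circ_i\gra_2)(f_1,\ldots,f_{s+t-1}),
\]
where $s=|\gra_1|$ and $t=|\gra_2|$. Evaluated on an element of $\oC$, the left-hand side first decomposes it along all connected directed graphs $\widetilde\gra_1$ of barber type $\gra_1$, and then decomposes the factor sitting at vertex $i$ along all $\widetilde\gra_2$ of barber type $\gra_2$. By coassociativity of the comonadic decomposition map of $\oC$ (substitution of graphs in $\mathcal{G}^c$), this two-step decomposition coincides with the single decomposition along the graphs $\widetilde\gra_1\circ_i\widetilde\gra_2$ obtained by inserting $\widetilde\gra_2$ at vertex $i$ of $\widetilde\gra_1$; and by the very definition of the partial composition product of $\Liegra$ the barber types of these composite graphs run exactly through $\gra_1\circ_i\gra_2$. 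Applying the $f_j$ and then the composition $\gamma$ of $\End_A$, whose associativity is the algebra axiom over the graph monad $\mathcal{G}$, recomposes the two sides identically, which gives the required equality.

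The main obstacle is precisely this partial composition step: one must carefully track the genuinely two-sided interaction — comonadic coassociativity on the source $\oC$ against monadic associativity on the target $\End_A$ — and confirm that the barber bookkeeping (collapsing multi-edges after graph substitution) reproduces, edge by edge, the ``join with at least one edge in all possible ways'' rule defining $\circ_i$ in $\Liegra$. This is the same combinatorial matching already encountered in \cref{prop:TotdsGra}, but it now appears simultaneously on both sides of the convolution rather than on the substitution side alone.
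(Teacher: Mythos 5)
Your proposal is correct and follows essentially the same route as the paper, which itself simply transports the argument of \cref{prop:TotdsGra} to the convolution setting: well-definedness via finiteness of the relevant projections, the unit and equivariance axioms, the identification of the two-vertex graph with $\star$, and the partial-composition compatibility obtained by matching comonadic coassociativity on $\oC$ against monadic associativity on $\End_A$ through the barber map. The only phrasing to tighten is your appeal to a globally finite comonadic decomposition $\oC\to\mathcal{G}^c(\oC)$, which need not exist for a non-conilpotent infinitesimal coproperad; the paper instead defines each map $\oC\to\widetilde{\gra}(\oC)$ as a finite iteration of $\Delta_{(1,1)}$ and notes that all such iterations agree by the infinitesimal coproperad axioms — a point your parenthetical and your later use of coassociativity already essentially cover.
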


The map $\gamma$ stands for the monadic composition map of the properad $\End_A$ and 
the maps $\oC \to \widetilde{\gra}\big(\oC\big)$ are suitable iterations of the infinitesimal decomposition maps of $\oC$.
The notation $\widetilde{\gra}(f_1, \ldots, f_k)$ means that we apply $f_i : \oC \to \End_A$ to the label of the vertex $i$ of $\widetilde{\gra}$, for any $i=1,\ldots, k$~.

\begin{proof}
The computations are similar to that of \cref{prop:TotdsGra}.
Given any directed simple graph $\gra$, all the ways to iterate the infinitesimal decomposition maps of $\oC$ which produce graphs of type $\widetilde{\gra}$, such that $\barb(\widetilde{\gra})=\gra$, are equal by the axioms of an infinitesimal coproperad, and they produce finite sums, once applied to elements 
 $c\in \mathcal{C}(m,n)$.
 Therefore the action $\gra(f_1, \ldots, f_k)$ of the Lie-graph operations is well defined. 
\end{proof}

\begin{remark}\label{rem:minmax}
	If we were developing the same theory with associative algebras, that is with properads concentrated in arity $(1,1)$, we would first get  the structure of an associative algebra on the convolution algebra between an associative coalgebra and an a coassociative coalgebra. 
	This would be enough to write down the Maurer--Cartan equation. Then, the potentially bigger algebraic structure would here be that of an algebra encoded by the ladders operad. But in this case, both are isomorphic $\mathrm{Ass}\cong \mathrm{Lad}$, and one would not get more structure. 
	
	If we were working with operads, that is with properads concentrated in arities $(1,n)$, for $n \in \NN$, we would get
	first the operad of pre-Lie algebras and then the operad of rooted trees. But, again, they turn out to be isomorphic 
	$\Prelie \cong \mathrm{RT}$~, by \cite{ChapotonLivernet01}. So we would not get any additional structure in this case either. 
	
	\medskip
	
	In the present case of properads, there is significant gap between the minimal structure needed to write the Maurer--Cartan equation and the maximal structure made up of all the operations acting on the deformation complex: recall from \cref{subsec:DsGRAoperad} that the morphism of operads $\Lieadm \to \Liegra$ fails to be surjective. Thus the Lie-graph algebra structure on $\g_{\mathcal{C}, A}$ \emph{cannot} be obtained by iterating its Lie-admissible product. This key remark conceptually explains the discrepancy between the properadic calculus developed below and the operadic calculus developed in \cite{DotsenkoShadrinVallette16}.
	
	\medskip
	
	A similar phenomenon can be observed on the level of modular operads, see \cite{DSVV20}, where the operad of connected graphs $\mathrm{CGra}$ is the operad of all the natural operations acting on the totalisation of a modular operad while the operad $\Delta\Lie$ encodes the minimal structure required to write down the Maurer--Cartan equation. 
The totalisation of any dg prop, admits a Lie-admissible structure, via the usual vertical composition formula, which extends to a canonical $\Liencgra$-algebra structure. The first classical example is the $\Liencgra$-structure is given by the Weyl algebra $S(V\oplus V^*)$, see \cite[Section~1]{BV24} for instance, which is actually the totalisation of the endomorphism prop of the chain complex $V$. We refer the reader to \cref{Fig:AlgStrDefTh} for survey of these various cases. 
	
	\begin{figure*}[h!]
		\[\begin{array}{|c|c|c|c|}
			\cline{2-3}
			\multicolumn{1}{r|}{}  & \rule{0pt}{10pt} \textsc{minimal structure} &\textsc{maximal structure}  \\
			\cline{1-3}
			\textsc{algebras}
			&\rule{0pt}{10pt}  \mathrm{Ass}
			& \mathrm{Ass}\cong\mathrm{Lad}
			\\
			\cline{1-3}
			\textsc{operads} &\rule{0pt}{10pt} \Prelie
			&  \Prelie\cong \mathrm{RT}
			\\
			\cline{1-3}
			\textsc{modular operads} 
			&\rule{0pt}{10pt} \Delta\Lie
			& \mathrm{CGra} 
			\\
			\cline{1-3}
			\textsc{properads} 
			&\rule{0pt}{10pt} \Lieadm
			& \Liegra
			\\
			\cline{1-3}
			\textsc{props} 
			&\rule{0pt}{10pt} \Lieadm
			& \Liencgra
			\\
			\cline{1-3}
		\end{array}\]
		
		\caption{The various algebraic structures present in the operadic deformation theory.}
		\label{Fig:AlgStrDefTh}
	\end{figure*}
\end{remark}

\subsection{The complete setting}\label{subsec:complete}
Like the classical integration theory of Lie algebras, the integration theory of Lie-graph algebras, that we will develop in the next section~\ref{sec:IntDsGraAlg}, will require to consider infinite series of elements. In order to control their behavior, we need complete topologies on the underlying chain complexes. 

\begin{definition}[Complete differential graded module]
	A \emph{complete differential graded (dg) module} $(A, \F, \d)$ is a chain complex in the category of complete $\k$-modules. In other words, each degreewise component is equipped with a descending filtration 
	$$A_n=\F_0 A_n \supset \F_1 A_n \supset \F_2 A_n \supset \cdots \supset \F_k A_n \supset \F_{k+1}A_n \supset \cdots$$
	made up of  submodules  satisfying $\d(\F_k A_n)\subset \F_k A_{n-1}$ and 
	which is complete. That is, the canonical map 
	\[A\xrightarrow{\cong}\widehat{A}\coloneq \varprojlim_k A/\F_kA\] 
	is an isomorphism.
\end{definition}

We refer the reader for instance to \cite[Section~2]{DotsenkoShadrinVallette22} for a detailed exposition of the properties of complete differential graded modules. 
Let us quickly recall the main ones used throughout this text. 
The internal Hom structure is given by 
$$
\F_k \Hom(A, B)\coloneq
\big\{
f :A \to B\ | \ f(\F_l A)\subset \F_{k+l} B\ , \ \forall n\in \NN
\big\}\ .
$$
We denote with small letters the subspace of maps which preserve the respective filtrations: 
\[\hom(A,B)\coloneq \F_0 \Hom(A,B)\ .\] 
The induced filtration on  the tensor product of two filtered modules is given by 
\[{\F}_k (A\otimes B)\coloneq\sum_{j+l=k} \mathrm{Im} \big(\F_j A \otimes \mathrm{G}_l B\to A\otimes B\big)\]
and their complete tensor product is defined by 
$$ A\widehat{\otimes} B\coloneq\widehat{A\otimes B}\ . $$

\begin{lemma}[{\cite[Chapter~2, Lemma~2.19]{DotsenkoShadrinVallette22}}]\label{lem:smc}
	The category of complete dg modules forms a bicomplete closed symmetric  monoidal  category, whose monoidal product preserves colimits. 
\end{lemma}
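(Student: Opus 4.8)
The plan is to present the category of complete dg modules as a \emph{reflective localisation} of the category of filtered dg modules and to transport the closed symmetric monoidal structure along this reflection, following Day's reflection theorem.

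First I would record that filtered dg modules already form a bicomplete closed symmetric monoidal category. The monoidal product is the tensor product with the filtration $\F_k(A\otimes B)$ recalled above, the unit is $\k$ concentrated in weight zero ($\F_0\k=\k$, $\F_1\k=0$), and the internal hom is $\Hom(A,B)$ with the filtration $\F_k\Hom(A,B)$ from the text. All coherence isomorphisms are inherited from the underlying dg modules, the point being that the filtrations on $\otimes$ and $\Hom$ are defined precisely so that these isomorphisms, as well as the tensor--hom adjunction $\hom(A\otimes B,C)\cong\hom(A,\Hom(B,C))$, become filtration-preserving; indeed the condition $f(\F_lA)\subset\F_{k+l}B$ defining $\F_k\Hom$ is exactly the filtered transpose of a map out of a tensor product. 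Since differentials preserve filtrations by definition, the dg structure adds no difficulty. Limits and colimits are created by the forgetful functor to dg modules, equipped respectively with the limit and colimit filtrations.

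Next I would exhibit completion $\widehat{(-)}$ as the left adjoint to the inclusion $\iota$ of complete dg modules into filtered dg modules, with unit the canonical map $A\to\widehat A$; here completeness of $B$ is precisely the assertion that $\hom(A,B)\cong\hom(\widehat A,B)$ for all $A$. This realises complete dg modules as a reflective subcategory, from which bicompleteness follows formally. On the one hand, a limit of complete modules computed in filtered modules is again complete (a product of complete modules is complete since $\varprojlim_k$ commutes with products, and a kernel is a closed submodule of a complete separated module, hence complete), so limits are created by $\iota$. On the other hand, a reflective subcategory of a cocomplete category is cocomplete, with colimits obtained by applying $\widehat{(-)}$ to the colimit taken downstairs.

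The closed symmetric monoidal structure is then produced by Day's reflection theorem, which reduces the whole construction to one compatibility condition. Setting $A\wotimes B\coloneq\widehat{A\otimes B}$ and taking as internal hom the restriction of the filtered one, the theorem guarantees that all the coherence isomorphisms and the tensor--hom adjunction descend to complete dg modules, through identifications such as $\hom(\widehat{A\otimes B},C)\cong\hom(A\otimes B,C)\cong\hom(A,\Hom(B,C))$ for $C$ complete, as soon as one knows that $\Hom(A,B)$ is complete whenever $B$ is complete (equivalently, that $\widehat{X\otimes\widehat Y}\cong\widehat{X\otimes Y}$). I expect this completeness of the internal hom to be the technical heart of the proof: unravelling $\F_k\Hom(A,B)$, a compatible family in $\varprojlim_k\Hom(A,B)/\F_k\Hom(A,B)$ sends each $a\in\F_lA$ to a Cauchy family in $B$, whose successive differences lie in $\F_{k+l}B$, converging to a unique limit by completeness of $B$; one then checks that the resulting map is $\k$-linear, filtration-preserving, and recovers the given family. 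This is where all the care goes, since one must track the two indices in $f(\F_lA)\subset\F_{k+l}B$ simultaneously. Finally, preservation of colimits by $\wotimes$ is automatic once closedness is established, because $A\wotimes(-)$ is then a left adjoint, with right adjoint $\Hom(A,-)$.
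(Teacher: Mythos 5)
The paper does not prove this lemma: it is quoted verbatim from \cite[Chapter~2, Lemma~2.19]{DotsenkoShadrinVallette22}, so there is no in-text argument to compare against. Your proof is correct and follows essentially the same route as the cited reference: filtered dg modules as a bicomplete closed symmetric monoidal category, complete modules as a reflective subcategory via the completion functor, the completed tensor product $\wotimes$ transported along the reflection, and the key verification that $\Hom(A,B)$ is already complete whenever $B$ is (which you rightly identify as the technical heart, and sketch correctly by tracking the two filtration indices).
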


This allows one to perform operadic and properadic calculus in this setting, see \cite[Chapter~2]{DotsenkoShadrinVallette22} and \cite[Sections~1-2]{HLV19}. For instance a \emph{complete} dg Lie-graph algebra is a complete dg module 
$(A, \F, \d)$ equipped with operations 
\[\gra \ : \ A^{\otimes |\gra|} \to A~, \]
for any directed simple graph $\gra\in \dcGra$, satisfying the axioms of a dg Lie-graph algebra and preserving the filtration, that is 
\[\gra \ : \ \F_{i_1} A {\otimes} \cdots {\otimes} \F_{i_k} A \to \F_{i_1+\cdots+i_k} A~. \]

\medskip

Actually, the convolution algebras associated to conilpotent coproperads always admit a compatible complete filtration as follows. 
We denote by 
\[\mathcal{G}^c(\mathcal{M})^{(k)} \coloneq \bigoplus_{\gra \in \mathsf{dcGra}^{(k)}} \gra(\mathcal{M}) \ ,\]
the summand of the graphs comonad spanned by directed connected graphs with $k$ vertices.

\begin{definition}[Coradical filtration]
The \emph{coradical filtration} of the coaugmentation coideal $\oC$ of a  conilpotent coproperad is defined by 
\[\mathscr{R}_k \oC \coloneq \left\{c\in \oC \ |\ \widetilde{\Delta}(c) \in \bigoplus_{l=1}^{k}\mathcal{G}^c\Big(\oC\Big)^{(l)}\right\}
~. \]
\end{definition}
By definition, this filtration is increasing and exhaustive:
\[
0=\mathscr{R}_{0} \oC\subset 
\mathscr{R}_1 \oC\subset \mathscr{R}_2 \oC\subset \cdots \subset \mathscr{R}_k \oC\subset \mathscr{R}_{k+1} \oC\subset \cdots 
\quad \& \quad 
\bigcup_{k \geqslant 1} \mathscr{R}_k \oC =\oC
~. 
\]
The first non-trivial part of the coradical filtration is the space of primitive elements:
\[\mathscr{R}_1 \oC=\left\{c\in \oC \ |\ \widetilde{\Delta}(c)=c \right\}~.\]

\begin{definition}[Canonical filtration]
	The \emph{canonical filtration} of the convolution algebra $\g_{\C, A}$ associated to a conilpotent dg coproperad $\C$ and a dg module $A$ is defined by 
	\[\F_k \g_{\C, A} \coloneq \left\{f : \oC \to \End_A\ ; \ f|_{\mathscr{R}_{k-1} \oC}= 0
	\right\}~. \]
\end{definition}

\begin{proposition}\label{prop:CompleteDcGra}
	For any conilpotent dg coproperad $\C$ and any dg module $A$, the convolution algebra
	\[\left(\g_{\mathcal{C}, A}, \F, \partial, \{\gra\}_{\gra \in \dcGra}\right)~,\]
	equipped with the canonical filtration,
	forms a complete dg Lie-graph algebra. 
\end{proposition}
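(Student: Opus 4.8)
The plan is to check separately the three requirements of a complete dg Lie-graph algebra: that the canonical filtration is decreasing and preserved by the differential, that it is complete, and that each operation $\gra$ of \cref{prop:conv algebras are dsGra} is filtered with the expected weight $\gra\colon \F_{i_1}\g_{\C,A}\otimes\cdots\otimes\F_{i_k}\g_{\C,A}\to \F_{i_1+\cdots+i_k}\g_{\C,A}$. The dg Lie-graph \emph{axioms} are already supplied by \cref{prop:conv algebras are dsGra}, so only compatibility with the filtration is at stake. The filtration is decreasing since $\mathscr{R}_{k-1}\oC\subset \mathscr{R}_k\oC$ makes the vanishing condition $f|_{\mathscr{R}_{k-1}\oC}=0$ stronger as $k$ grows. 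For the differential $\partial$ of $\g_{\C,A}$, built from $\d_A$ on the target and $\d_{\oC}$ on the source, I would use that $\d_{\oC}$ is a coderivation for $\widetilde\Delta$ and therefore preserves the coradical filtration, $\d_{\oC}(\mathscr{R}_{k-1}\oC)\subset \mathscr{R}_{k-1}\oC$: changing neither arities nor the number of vertices in a decomposition, it cannot raise the coradical degree. Hence for $f\in \F_k\g_{\C,A}$ both summands of $\partial f=\d_A\circ f\pm f\circ\d_{\oC}$ vanish on $\mathscr{R}_{k-1}\oC$, so $\partial f\in \F_k\g_{\C,A}$.

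For completeness I would identify the quotient $\g_{\C,A}/\F_k\g_{\C,A}$ with $\Hom_\Sy(\mathscr{R}_{k-1}\oC,\End_A)$ through restriction; this restriction is surjective because, over our characteristic zero ground field, the sub-$\Sy$-bimodule $\mathscr{R}_{k-1}\oC$ is a direct summand of $\oC$, so equivariant maps extend. As the coradical filtration is exhaustive, $\oC=\varinjlim_k\mathscr{R}_k\oC$, and $\Hom_\Sy(-,\End_A)$ converts this filtered colimit into a limit; this yields $\varprojlim_k\g_{\C,A}/\F_k\g_{\C,A}\cong\Hom_\Sy(\varinjlim_k\mathscr{R}_{k-1}\oC,\End_A)=\g_{\C,A}$, which is exactly the completeness isomorphism (injectivity amounts to $\bigcap_k\F_k\g_{\C,A}=0$, again from exhaustiveness).

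The crux is the filtration weight of the operations. Write $N=i_1+\cdots+i_k$ and let $\deg_{\mathscr{R}}(c)$ be the coradical degree of $c\in\oC$, namely the largest number of vertices in a term of $\widetilde\Delta(c)$, so that $c\in\mathscr{R}_m\oC$ iff $\deg_{\mathscr{R}}(c)\le m$. The key lemma I would isolate is a \emph{subadditivity} statement: whenever the decomposition underlying $\gra(f_1,\ldots,f_k)$ presents $c$ on a $k$-vertex graph $\widetilde\gra$ with $\trunc(\widetilde\gra)=\gra$ and vertex labels $c_1,\ldots,c_k\in\oC$, one has $\deg_{\mathscr{R}}(c_1)+\cdots+\deg_{\mathscr{R}}(c_k)\le\deg_{\mathscr{R}}(c)$. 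I would derive it from coassociativity: substituting into $\widetilde\gra$ a finest term of each $\widetilde\Delta(c_j)$, which carries $\deg_{\mathscr{R}}(c_j)$ vertices, produces—by the infinitesimal coproperad axioms relating $\widetilde\Delta$ to its iterated infinitesimal decompositions—a genuine term of $\widetilde\Delta(c)$ with $\sum_j\deg_{\mathscr{R}}(c_j)$ vertices, which therefore is bounded by $\deg_{\mathscr{R}}(c)$. Granting this, the conclusion is a counting argument: for $c\in\mathscr{R}_{N-1}\oC$ we have $\deg_{\mathscr{R}}(c)\le N-1$, while a nonzero term of $\gra(f_1,\ldots,f_k)(c)$ with labels $c_1,\ldots,c_k$ needs $f_j(c_j)\ne0$ for all $j$; since $f_j\in\F_{i_j}\g_{\C,A}$ annihilates $\mathscr{R}_{i_j-1}\oC$, this forces $\deg_{\mathscr{R}}(c_j)\ge i_j$ and hence $\sum_j\deg_{\mathscr{R}}(c_j)\ge N$, contradicting the lemma. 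Thus $\gra(f_1,\ldots,f_k)$ vanishes on $\mathscr{R}_{N-1}\oC$, that is $\gra(f_1,\ldots,f_k)\in\F_N\g_{\C,A}$.

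The hard part is the key lemma, and more precisely making the vertex-count additivity rigorous inside the infinitesimal coproperad formalism: one must check carefully that the $k$-vertex projection of $\widetilde\Delta$ used in \cref{prop:conv algebras are dsGra} genuinely refines to the full comonadic decomposition, so that replacing the labels by their finest decompositions reproduces authentic terms of $\widetilde\Delta(c)$ and the vertex numbers add. Everything else is formal once the coradical filtration is known to be exhaustive and stable under the coderivation $\d_{\oC}$.
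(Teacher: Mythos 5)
Your proposal is correct and follows essentially the same route as the paper: the decreasing/complete properties come from the coradical filtration being increasing and exhaustive, the differential preserves the filtration because $\d_{\oC}$ is a coderivation, and the filtration weight of the operations is obtained by the same coassociativity argument for the graphs comonad (your subadditivity lemma is exactly the contrapositive the paper uses — if every label satisfied $\deg_{\mathscr{R}}(c_j)\geqslant i_j$, substituting their decompositions would produce a term of $\widetilde{\Delta}(c)$ with at least $i_1+\cdots+i_k$ vertices, contradicting $c\in\mathscr{R}_{i_1+\cdots+i_k-1}\oC$). Your extra detail on completeness via surjectivity of restriction to $\mathscr{R}_{k-1}\oC$ is a harmless elaboration of what the paper leaves implicit.
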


\begin{proof}
	First, the canonical filtration is obviously decreasing
	\[
	\g_{\mathcal{C}, A}=\F_0 \g_{\mathcal{C}, A}= 
	\F_1 \g_{\mathcal{C}, A} \supset \F_2 \g_{\mathcal{C}, A} \supset \cdots \supset \F_k \g_{\mathcal{C}, A}\subset 
	\F_{k+1} \g_{\mathcal{C}, A}\supset \cdots 
	\]
	and, since the coradical filtration is exhaustive, the canonical filtration is complete. 
	Then, since the underlying differential $d_{\oC}$ of $\oC$ is a coderivation with respect to the graphs comonadic coalgebra structure, it preserves the coradical filtration. This implies that the differential 
	\[\partial f = \partial_A \circ f - (-1)^{|f|} f \circ d_{\oC}\] 
	of the convolution algebra 
	preserves the canonical filtration. 
	Finally, let us show that the operations of the Lie-graph algebra structure preserve the canonical filtration. 
	For any $k\geqslant 1$, let $\gra\in \dcGra^{(k)}$ and let 
	$f_{j} \in \F_{i_j} \g_{\mathcal{C}, A}$~, for any $1\leqslant j \leqslant k$~. 
	We have to show that $\gra(f_1, \ldots, f_k)\in \F_{i_1+\cdots +i_k} \g_{\mathcal{C}, A}$~, that is 
	$\gra(f_1, \ldots, f_k)|_{\mathscr{R}_{i_1+\cdots+i_k-1}\oC}
	=0$~. 
	For any $c\in \mathscr{R}_{i_1+\cdots+i_k-1}\oC$, the projection of $\widetilde{\Delta}(c)$  onto $\mathcal{G}^c\left(\oC\right)^{(k)}$ is a sum of graphs with vertices labelled by elements of $\oC$ such that at least one of them lives in $\mathscr{R}_{i_j-1}\oC$~,  for $1\leqslant j\leqslant k$, by the coassociativity of the graphs comonad: otherwise $\widetilde{\Delta}(c)$ would have one non-trivial term in $\mathcal{G}^c\left(\oC\right)^{(i_1+\cdots+i_k)}$~, which would contradict $c\in \mathscr{R}_{i_1+\cdots+i_k-1}\oC$~.   Thus we get $\gra(f_1, \ldots, f_k)(c)=0$~. 
\end{proof}
	
One can suppress the conilpotency assumption of \cref{prop:CompleteDcGra} at the cost of requiring completeness from the dg module $A$. This is a pertinent assumption to drop, already in the operadic context, in which the deformation complex encoding curved $L_\infty$-algebra structures on $A$
is provided by the convolution algebra $\g_{\mathsf{uCom^*},A}$ associated to the infinitesimal cooperad $\mathsf{uCom^*}$ of counital cocommutative coalgebras which fails to be conilpotent, see \cite[Section~4.3]{DotsenkoShadrinVallette16} and  \cite{calaque2021lie} for instance. 
This point is crucial for the twisting procedure described in \cite[Section~4]{CV25II}. We denote
by $\hom$ the filtered space of continuous maps and 
 by $\eend_A$ the complete endomorphism properad made up of continuous maps. 

\begin{proposition}\label{prop:nonconilpotent convolution}
For any filtered infinitesimal dg coproperad $\oC$ and any complete dg module $A$, the convolution algebra
	\[\mathfrak{g}_{\mathcal{C}, A}\coloneq \left(\hom_{\Sy}\left(\oC, \eend_A\right), \F, \partial, \{\gra\}_{\gra \in \dcGra}\right)~,\]
	equipped with the filtration induced by $A$,
	forms a complete dg Lie-graph algebra.
\end{proposition}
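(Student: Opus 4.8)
The plan is to mimic the proof of \cref{prop:CompleteDcGra}, with the coradical filtration replaced by the filtration induced by $A$. The dg Lie-graph algebra structure itself is already provided by \cref{prop:conv algebras are dsGra}, which requires neither conilpotency of $\oC$ nor completeness of $A$; consequently each operation $\gra(f_1,\dots,f_k)$ is a well-defined finite sum exactly as there. The only genuinely new content is therefore threefold: the $A$-induced filtration is complete, the differential $\partial$ preserves it, and every operation $\gra$ preserves it. Throughout, recall that the filtration on $\eend_A(m,n)=\hom(A^{\otimes n},A^{\otimes m})$ is $\F_k \eend_A(m,n)=\{g \mid g(\F_l A^{\otimes n})\subset \F_{k+l}A^{\otimes m}\ \forall l\}$, and that the induced filtration on the convolution algebra is $\F_k \mathfrak{g}_{\mathcal{C}, A}=\hom_{\Sy}(\oC, \F_k \eend_A)$; thus $f\in \F_k$ means that $f(c)$ raises the $A$-filtration by at least $k$ for every $c\in\oC$.

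First I would establish completeness. Because $A$ is complete, \cref{lem:smc} ensures that every tensor power $A^{\otimes n}$ and every internal hom $\hom(A^{\otimes n},A^{\otimes m})$ is again complete; hence each component $\eend_A(m,n)$ is complete. The filtration is evidently decreasing, since $\F_{k+1}\eend_A\subset\F_k\eend_A$. The convolution algebra is the product of the $\Sy$-equivariant submodules $\hom_{\Sy_m\times\Sy_n^{\mathrm{op}}}(\oC(m,n),\eend_A(m,n))$, and since both the passage to an equivariant submodule and the formation of products are limits in the category of complete dg modules, $\mathfrak{g}_{\mathcal{C}, A}$ is complete.

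Next I would check the differential $\partial f=\partial_A\circ f-(-1)^{|f|}f\circ d_{\oC}$. The internal differential $d_A$ of the complete dg module $A$ satisfies $d_A(\F_k A)\subset \F_k A$, so the induced differential $\partial_A$ on $\eend_A$ is a filtration-degree-$0$ map; post-composing with it preserves every $\F_k\mathfrak{g}_{\mathcal{C}, A}$. Pre-composition with $d_{\oC}$ acts only on the source $\oC$ and does not change the $A$-filtration shift of $f$. Hence $\partial$ preserves the filtration.

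The hard part, and the only place where real work is required, is that the operations preserve the filtration, i.e.\ $\gra(f_1,\dots,f_k)\in \F_{i_1+\cdots+i_k}\mathfrak{g}_{\mathcal{C}, A}$ whenever $f_j\in \F_{i_j}\mathfrak{g}_{\mathcal{C}, A}$. Unravelling the formula of \cref{prop:conv algebras are dsGra}, the map $\gra(f_1,\dots,f_k)$ factors through an iterated infinitesimal decomposition of $\oC$ (which is insensitive to the $A$-filtration), followed by $\widetilde{\gra}(f_1,\dots,f_k)$ and the properadic composition $\gamma$ of $\eend_A$. The crux is the additivity of filtration shifts under $\gamma$: from the defining inclusions $\F_a\otimes\F_b\subset\F_{a+b}$ on tensor products and $\F_a\circ\F_b\subset\F_{a+b}$ on composites, both consequences of the closed symmetric monoidal structure of \cref{lem:smc}, one obtains that composing the $k$ maps $f_1,\dots,f_k$ along any connected graph $\widetilde{\gra}$ produces a map shifting the $A$-filtration by exactly $i_1+\cdots+i_k$, independently of the shape of $\widetilde{\gra}$; in other words $\eend_A$ is a filtered properad whose composition $\boxtimes$ is additive on shifts. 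Taking the finite sum over the $\widetilde{\gra}$ with $\barb(\widetilde{\gra})=\gra$ that defines $\gra(f_1,\dots,f_k)$ preserves this bound and yields the claim. I expect the genuine obstacle to be precisely this bookkeeping: verifying that the shift is additive uniformly over every graph shape $\widetilde{\gra}$, which is where the compatibility of the filtration with the monoidal product $\boxtimes$ from \cref{lem:smc} does the decisive work.
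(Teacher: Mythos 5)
Your proposal is correct and follows the same route as the paper, whose own proof is just a two-line remark that the formulas and arguments of Theorem \ref{prop:conv algebras are dsGra} carry over to the complete setting, the operations being well defined and filtration-preserving. You simply make explicit the three verifications (completeness of the induced filtration via Lemma \ref{lem:smc}, filtration-degree $0$ of $\partial$, and additivity of filtration shifts under composition along a graph) that the paper leaves implicit.
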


\begin{proof}
The formulas and the arguments \cref{prop:conv algebras are dsGra} still apply in the complete setting: the Lie-graph operations are well defined and they preserve the respective filtrations. 
\end{proof}

Let us conclude this section with a variant of the Lie-graph operad which takes into account the numbers of edges between vertices and 
which is complete. 

\begin{definition}[Directed multiple-edges graph] 
A \emph{directed multiple-edges graph} is a connected graph $\gra$, directed by a global flow from top to bottom, with at least one vertex, with no input, no output and an arbitrary number of edges between two vertices. 
We denote their set by $\dmGra$.
\end{definition}

\begin{figure*}[h]
	\begin{tikzpicture}[scale=0.8]
		\draw[thick]	(0.8,2)--(2.4,1);	
		\draw[thick]	(0.6,2)--(2.2,1);	
		\draw[draw=white,double=black,double distance=2*\pgflinewidth,thick]	(0.66,1)--(2.33,2);		
		\draw[thick]	(0,0)--(1,0)--(1,1)--(0,1)--cycle;
		\draw[thick]	(2,0)--(3,0)--(3,1)--(2,1)--cycle;
		\draw[thick]	(0,2)--(1,2)--(1,3)--(0,3)--cycle;
		\draw[thick]	(2,2)--(3,2)--(3,3)--(2,3)--cycle;	
		\draw[thick]	(0.25,1)--(0.25,2);
		\draw[thick]	(0.4,1)--(0.4,2);
		\draw[thick]	(2.66,1)--(2.66,2);
		\draw (0.5,2.5) node {{$1$}} ; 
		\draw (2.5,2.5) node {{$2$}} ; 	
		\draw (0.5,0.5) node {{$3$}} ; 
		\draw (2.5,0.5) node {{$4$}} ; 	
	\end{tikzpicture}
	\caption{An example of a directed multiple-edges graph.}
	\label{Fig:2LevDCGraph}
\end{figure*}

On the $\k$-linear $\Sy$-module $\k \dmGra$ spanned by $\dmGra$, we consider 
the filtration defined by the number of edges: 
\[\mathscr{F}_k \k \dmGra \coloneq \bigoplus_{e\geqslant k} \k \dmGra_{[e]}~,\]
where $\dmGra_{[e]}$ stands for the set of directed multiple-edges graphs with $e$ edges. 
We consider partial composition products $\gra_1 \circ_i \gra_2$ defined in the same way as for the operad $\Liegra$ replacing Point (2) by: 
\begin{enumerate}
	\item[(2')] for any edge joining a vertex $j$ to the vertex $i$ in $\g_1$, considering the sum of all the possible 
	ways to join with one edge the vertex $j$ to one vertex of $\gra_2$.
\end{enumerate}

So this time, the number of edges is preserved. 

\begin{lemma}\label{lem:Lie-mgra is an operad}
	The data $\Liemgra\coloneq  (\k \dmGra, \mathscr{F}, \{\circ_i\}, \vcenter{\hbox{\begin{tikzpicture}[scale=0.3]
				\draw[thick]	(0,0)--(1,0)--(1,1)--(0,1)--cycle;
				\draw (0.5,0.5) node {{\tiny$1$}} ; 
	\end{tikzpicture}}} 
	)$ forms a filtered algebraic operad. 
\end{lemma}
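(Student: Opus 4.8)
The plan is to establish the statement in two independent parts: that $(\k\dmGra,\{\circ_i\},\text{one-vertex graph})$ is an algebraic operad, for which I would reproduce the argument of \cref{lem:Lie-gra is an operad} with rule $(2')$ in place of rule $(2)$, and that the edge filtration $\mathscr{F}$ is compatible with this structure, which is the only genuinely new content. First I would record well-definedness: since $\gra_2$ has finitely many vertices and $\gra_1$ has finitely many edges incident to the vertex $i$, rule $(2')$ produces a finite sum of graphs, each of which is again connected, directed by the global flow, and free of global inputs and outputs, hence lies in $\dmGra$.

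The heart of the compatibility with $\mathscr{F}$ is the observation that $\k\dmGra$ carries a grading by the number of edges, $\k\dmGra=\bigoplus_{e\geqslant 0}\k\dmGra_{[e]}$, and that rule $(2')$ is engineered precisely so that no edge is ever created or destroyed. Indeed, the internal edges of $\gra_2$ and the edges of $\gra_1$ not incident to $i$ are left untouched, while each edge incident to $i$ is relocated to exactly one new edge pointing to a single vertex of $\gra_2$; in particular, several parallel edges at $i$ may independently choose the same target and form a multiple edge, but their total number is unchanged. Consequently the partial composition is homogeneous for the edge grading,
\[\circ_i\colon\ \k\dmGra_{[e_1]}\otimes\k\dmGra_{[e_2]}\longrightarrow\k\dmGra_{[e_1+e_2]}~,\]
so that $\Liemgra$ is in fact a \emph{graded} operad, with unit the one-vertex graph of degree $0$.

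Since $\mathscr{F}_k\,\k\dmGra=\bigoplus_{e\geqslant k}\k\dmGra_{[e]}$ is exactly the decreasing filtration associated with this grading, additivity of the edge count on composites immediately gives $\circ_i(\mathscr{F}_{k_1}\otimes\mathscr{F}_{k_2})\subseteq\mathscr{F}_{k_1+k_2}$, which is the required compatibility of $\{\circ_i\}$ with $\mathscr{F}$, and the unit lies in $\mathscr{F}_0$. It remains to check the operad axioms. The one-vertex graph is a two-sided unit, since inserting it at a vertex $i$ leaves each incident edge with a unique admissible target and recovers the original graph; equivariance is automatic, as the symmetric-group actions only permute the vertex labels whereas rule $(2')$ is defined independently of the labelling. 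For the parallel axiom, inserting $\gra_2$ at $i$ and $\gra_3$ at a distinct vertex $j$ redistributes the edges incident to $i$ and to $j$ over disjoint sets of edges, so the two insertions commute.

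The hard part will be the sequential axiom, where $\gra_3$ is inserted into a vertex produced by $\gra_2$: one must verify that the two admissible orders of composition yield the \emph{same} sum of directed multiple-edges graphs, each final configuration arising with the same multiplicity. I would organise this by tracking, for every edge originally incident to $i$, the pair consisting of its eventual target vertex together with, in case that vertex belongs to $\gra_3$, the intermediate vertex of $\gra_2$ through which it passes; computing $(\gra_1\circ_i\gra_2)\circ_{\bullet}\gra_3$ first routes such an edge onto $\gra_2$ and then, if it landed on the splitting vertex, onto $\gra_3$, while computing $\gra_1\circ_i(\gra_2\circ_j\gra_3)$ routes it directly onto $\gra_2\circ_j\gra_3$, and this bookkeeping exhibits a multiplicity-preserving bijection between the two sets of resulting graphs. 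This is the same combinatorics that underlies \cref{lem:Lie-gra is an operad} and ultimately the associativity of the graph monad, here simplified by the fact that each edge selects a single target rather than a nonempty set of vertices.
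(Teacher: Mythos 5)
Your proposal is correct and follows the same route as the paper, whose proof simply defers to the verification in \cref{lem:Lie-gra is an operad} and declares the axioms straightforward; you carry out that verification explicitly and correctly isolate the one genuinely new point, namely that rule $(2')$ makes the edge count additive under $\circ_i$, so the compositions are homogeneous for the edge grading and hence compatible with $\mathscr{F}$. Nothing in your argument deviates from or adds to what the paper intends, so no further comparison is needed.
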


\begin{proof}
The axioms are straightforward to check, like in the proof of \cref{lem:Lie-gra is an operad}. 
\end{proof}

\begin{definition}[Complete Lie-mgraph operad]\label{def:Lie-mgra}
The \emph{complete operad }$\widehat{\Liemgra}$ is defined as the completion of the operad $\Liemgra$: 
		\[\widehat{\Liemgra}\coloneq \left(\widehat{\k\dmGra}, \mathscr{F}, \{\circ_i\}, \vcenter{\hbox{\begin{tikzpicture}[scale=0.3]
					\draw[thick]	(0,0)--(1,0)--(1,1)--(0,1)--cycle;
					\draw (0.5,0.5) node {{\tiny$1$}} ; 
		\end{tikzpicture}}} 
		\right)~.\]
\end{definition}

Elements of $\widehat{\Liemgra}(n)$ are series of linear combination labelled by the number of edges 
of directed multiple-edges graphs with $n$ vertices. 
There is an injective morphism of operads 
\[\Liegra \rightarrowtail \widehat{\Liemgra}\]
 given by assigning a directed simple graph $\gra$ to the sum of all directed multiple-edges graphs with the same vertices as $\gra$ and an arbitrary positive number of edges between vertices $i$ and $j$, whenever there was an edge between $i$ and $j$ in $\gra$.

\begin{remark}
The complete operad $\widehat{\Liemgra}(n)$ is similar to the graphs operads considered in the study of the little discs operad, the configuration spaces of points, and the Grothendieck--Teichm\"uller Lie algebra  \cite{Kontsevich97, LambrechtsVolic14, Willwacher15, Idrissi19, CW16, DotsenkoShadrinVallette22} since its composition maps  preserve the number of edges. One can consider a version of it where internal edges receive a certain degree. 
\end{remark}

\section{Integration theory of Lie-graph algebras}\label{sec:IntDsGraAlg}

The Lie bracket of any complete Lie algebra $\g$, like the one associated the complete convolution algebra $\g_{\C, A}$, induces the following usual Hausdorff type group. Recall that the \emph{Baker--Campbell--Hausdorff formula} 
is  the element in the free associative algebra on two generators  $x$ and $y$, that is the algebra of formal power series on  $x$ and $y$, 
given by 
\[\BCH(x,y)\coloneq \ln\left(e^xe^y\right)~.\]
The celebrated theorem of Baker, Campbell, and Hausdorff, see \cite{BF12}, states that this formula can be written using only the Lie bracket $[a,b]=a\otimes b - b\otimes a$:
\[
\BCH(x,y)=x+y+\tfrac12[x,y]+\tfrac{1}{12}[x,[x,y]]+\tfrac{1}{12}[y,[x,y]]+\cdots~.\\
\]
In other words, the  Baker--Campbell--Hausdorff formula $\BCH(x,y)$ lives in the free complete Lie algebra on two generators. 

\begin{definition}[Gauge group]
	The \emph{gauge group} associated to a complete Lie algebra $(\g, \{\F_k \g\}_{k\geqslant 0}, [\ ,\, ])$ is the complete group
	defined by 
	\[\Gamma\coloneq\big( \F_1 \g_0,  \, \{\F_k \g_0\}_{k\geqslant 1},\,  \BCH, \,0\big)\ ,\]
\end{definition}

We refer the reader to \cite[Definition~3.8]{DotsenkoShadrinVallette22} for the notion of a complete group, which is a group equipped with a compatible and complete filtration. Since the $\BCH$ formula is quite involved, one would appreciate to be able to use another more amenable form for this gauge group.  
When the Lie bracket comes from a pre-Lie product, the BCH formula is actually given by a canonical closed formula based on 2-leveled rooted trees, see \cite{DotsenkoShadrinVallette16} and \cite[Chapter~3]{DotsenkoShadrinVallette22} in the complete case. 

\medskip

The purpose of this section is to simplify the $\BCH$-formula for Lie algebras arising from the particular structure of a \emph{Lie-graph algebra}. To this extent, we first introduce a deformation gauge group, which, in the case of the convolution algebra, is actually made up of the elements of the convolution monoid 
$\a_{\mathcal{C}, A}$ whose first component is the identity of $A$. Then, we coin a new exponential map, based on directed simple graphs, that provides us with an isomorphism between the gauge group and the deformation gauge group. Finally, we conclude this section with a closed graphical formula for their action on Maurer--Cartan elements.

\subsection{Deformation gauge group}\label{sec:gauge groups}

\begin{definition}[$k$-leveled directed simple graphs]\label{def:levGrap}
	For any $k\geqslant 1$, we consider the set $\multildcGra{k}_{\Sy}$ of 
	$k$\emph{-leveled directed simple graphs}, which are  
	directed simple graphs with the data of $k$ levels, along which sit the unlabelled vertices. The orientation of the graph should respect strictly the order of the levels: when one passes from an upper vertex to a lower vertex along one edge then one should go from an upper level to a strictly lower level. One vertex can sit on only one level and empty levels are allowed.	\end{definition}

\begin{figure}[h]
	$\vcenter{\hbox{\begin{tikzpicture}[scale=0.6]
				\draw[dotted] (-0.5, 0.5)--(3.5,0.5);
				\draw[dotted] (-0.5, 2.5)--(3.5,2.5);
				\draw[dotted] (-0.5, 4.5)--(3.5,4.5);		
				\draw[thick, fill=white]	(0,2)--(1,2)--(1,3)--(0,3)--cycle;
				\draw[thick, fill=white]	(2,2)--(3,2)--(3,3)--(2,3)--cycle;
				\draw[thick, fill=white]	(1,0)--(2,0)--(2,1)--(1,1)--cycle;
				\draw[thick, fill=white]	(1,4)--(2,4)--(2,5)--(1,5)--cycle;	
				\draw[thick]	(0.5,2)--(1.33,1);
				\draw[thick]	(2.5,2)--(1.66,1);
				\draw[thick]	(1.5,1)--(1.5,4);
	\end{tikzpicture}}}$
	\hspace{1cm}
	$\vcenter{\hbox{\begin{tikzpicture}[scale=0.6]
				\draw[dotted] (-0.5, 0.5)--(3.5,0.5);
				\draw[dotted] (-0.5, 2.5)--(3.5,2.5);
				\draw[dotted] (-0.5, 4.5)--(3.5,4.5);		
				\draw[thick, fill=white]	(0,4)--(1,4)--(1,5)--(0,5)--cycle;
				\draw[thick, fill=white]	(2,4)--(3,4)--(3,5)--(2,5)--cycle;
				\draw[thick, fill=white]	(1,0)--(2,0)--(2,1)--(1,1)--cycle;
				\draw[thick, fill=white]	(1,2)--(2,2)--(2,3)--(1,3)--cycle;	
				\draw[thick]	(0.5, 4)--(0.5,2)--(1.33,1);
				\draw[thick]	(2.5, 4)--(2.5,2)--(1.66,1);
				\draw[thick]	(1.5,1)--(1.5,2);
	\end{tikzpicture}}}$
	\hspace{1cm}
	$\vcenter{\hbox{\begin{tikzpicture}[scale=0.6]
				\draw[white] (0,5) node {};
				\draw[dotted] (-1, 0.5)--(4,0.5);
				\draw[dotted] (-1, 2.5)--(4,2.5);
				\draw[dotted] (-1, 4.5)--(4,4.5);	
				\draw[thick, fill=white]	(-0.5,2)--(0.5,2)--(0.5,3)--(-0.5,3)--cycle;
				\draw[thick, fill=white]	(2.5,2)--(3.5,2)--(3.5,3)--(2.5,3)--cycle;
				\draw[thick, fill=white]	(1,0)--(2,0)--(2,1)--(1,1)--cycle;
				\draw[thick, fill=white]	(1,2)--(2,2)--(2,3)--(1,3)--cycle;	
				\draw[thick]	(0,2)--(1.33,1);
				\draw[thick]	(3,2)--(1.66,1);
				\draw[thick]	(1.5,1)--(1.5,2);
	\end{tikzpicture}}}$
	\caption{Three different $3$-leveled graphs.}
	\label{Fig:leveledGra}
\end{figure}

\begin{definition}[Automorphisms of $k$-leveled directed simple graphs]\label{def:AutGra}
	The \emph{automorphism group} of  a $k$-leveled directed simple graphs $\gra$ is the group of permutations of vertices preserving both the adjacency condition and the level structure. We denote it by $\Aut(\gra)$~. 
\end{definition}

\begin{example} 
	In the three cases depicted on \cref{Fig:leveledGra}, the  automorphism groups are respective isomorphic to $\Sy_2$, $\Sy_2$, and $\Sy_3$~. 
\end{example}

\begin{definition}[$\cc$-product]
Given any complete Lie-graph algebra $(\g, \F, \{\gra\}_{\gra\in \dcGra})$, we consider the set 
\[\1+\F_1\g_0\coloneq \{\1+x, \, x \in \F_1\g_0\}~,\]
where $\1$ is a formal symbol. 
We equip it with the following product 
\[
(\1+x)\circledcirc (\1+y)\coloneq \1 +\ \sum_{\gra \in {\ldcGra}_{\Sy}} {\textstyle \frac{1}{|\Aut(\gra)|}}\,  \gra(x,y)\ , 
\]
where $\gra(x,y)$ stands for the Lie-graph action of $\gra$ with bottom vertices labelled by $x$ and top vertices labelled by $y$, see \cref{Fig:GrpProd}.
\end{definition}

\begin{figure*}[h]
	\[(\1+x)\circledcirc (\1+y)\ =\ \1 \  + \
	\vcenter{\hbox{\begin{tikzpicture}[scale=0.4]
				\draw[thick]	(0,0)--(1,0)--(1,1)--(0,1)--cycle;
				\draw (0.5,0.5) node {{$x$}} ; 
	\end{tikzpicture}}}
	\ + \
	\vcenter{\hbox{\begin{tikzpicture}[scale=0.4]
				\draw[thick]	(0,0)--(1,0)--(1,1)--(0,1)--cycle;
				\draw (0.5,0.5) node {{$y$}} ; 
	\end{tikzpicture}}}
	\ + \
	\vcenter{\hbox{\begin{tikzpicture}[scale=0.4]
				\draw[thick]	(0,0)--(1,0)--(1,1)--(0,1)--cycle;
				\draw[thick]	(0,2)--(1,2)--(1,3)--(0,3)--cycle;
				\draw[thick]	(0.5,1)--(0.5,2);
				\draw (0.5,0.5) node {{$x$}} ; 
				\draw (0.5,2.5) node {{$y$}} ; 
	\end{tikzpicture}}}
	\ + \
	\scalebox{1.1}{$\frac{1}{2}$}\ 
	\vcenter{\hbox{\begin{tikzpicture}[scale=0.4]
				\draw[thick]	(0,0)--(1,0)--(1,1)--(0,1)--cycle;
				\draw[thick]	(2,0)--(3,0)--(3,1)--(2,1)--cycle;
				\draw[thick]	(1,2)--(2,2)--(2,3)--(1,3)--cycle;
				\draw[thick]	(0.5,1)--(1.33,2);
				\draw[thick]	(2.5,1)--(1.66,2);
				\draw (1.5,2.5) node {{$y$}} ; 
				\draw (0.5,0.5) node {{$x$}} ; 
				\draw (2.5,0.5) node {{$x$}} ; 	
	\end{tikzpicture}}}
	\ + \ 
	\scalebox{1.1}{$\frac{1}{2}$}\ 
	\vcenter{\hbox{\begin{tikzpicture}[scale=0.4]
				\draw[thick]	(0,2)--(1,2)--(1,3)--(0,3)--cycle;
				\draw[thick]	(2,2)--(3,2)--(3,3)--(2,3)--cycle;
				\draw[thick]	(1,0)--(2,0)--(2,1)--(1,1)--cycle;
				\draw[thick]	(0.5,2)--(1.33,1);
				\draw[thick]	(2.5,2)--(1.66,1);
				\draw (1.5,0.5) node {{$x$}} ; 
				\draw (0.5,2.5) node {{$y$}} ; 
				\draw (2.5,2.5) node {{$y$}} ; 	
	\end{tikzpicture}}}
	\ + \ 
	\scalebox{1.1}{$\frac{1}{4}$}\ 
	\vcenter{\hbox{\begin{tikzpicture}[scale=0.4]
				\draw[thick]	(0.66,2)--(2.33,1);	
				\draw[draw=white,double=black,double distance=2*\pgflinewidth,thick]	(0.66,1)--(2.33,2);		
				\draw[thick]	(0,0)--(1,0)--(1,1)--(0,1)--cycle;
				\draw[thick]	(2,0)--(3,0)--(3,1)--(2,1)--cycle;
				\draw[thick]	(0,2)--(1,2)--(1,3)--(0,3)--cycle;
				\draw[thick]	(2,2)--(3,2)--(3,3)--(2,3)--cycle;	
				\draw[thick]	(0.33,1)--(0.33,2);
				\draw[thick]	(2.66,1)--(2.66,2);
				\draw (0.5,2.5) node {{$y$}} ; 
				\draw (2.5,2.5) node {{$y$}} ; 	
				\draw (0.5,0.5) node {{$x$}} ; 
				\draw (2.5,0.5) node {{$x$}} ; 	
	\end{tikzpicture}}}
	\ + \ \cdots\]
	\caption{The first terms of the  product $\cc$\ .}
	\label{Fig:GrpProd}
\end{figure*}

\begin{theorem}\label{thm:dcGraGp}
	The above assignment defines a functor from complete Lie-graph algebras to complete groups: 
	\[ \begin{array}{ccc}
		\textrm{complete}\  \textrm{Lie-graph algebras} & \to & \textrm{complete groups}\\
		\left(\g,\{\F_k \g\}_{k\geqslant 0}, \{\gra\}_{\gra \in \dcGra}\right) &\mapsto & \left(\1+\F_1\g_0, \{\1+\F_k \g_0\}_{k\geqslant 1}, \circledcirc, \1\right)\ .
	\end{array} \]
\end{theorem}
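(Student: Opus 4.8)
The plan is to verify directly that $\bigl(\1+\F_1\g_0,\circledcirc,\1\bigr)$ is a complete group and that the assignment is natural, the only substantial point being associativity.

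First I would settle well-definedness and the unit. Since the operad $\Liegra$ is concentrated in degree $0$ and its operations preserve the filtration, for $x,y\in\F_1\g_0$ every $\gra\in\ldcGra_\Sy$ with $n$ vertices produces a degree-$0$ element $\gra(x,y)\in\F_n\g_0$; as there are only finitely many $2$-leveled directed simple graphs with a given number of vertices, the defining series converges in the complete module $\g_0$ and lies in $\1+\F_1\g_0$ (indeed in $\1+\F_k\g_0$ whenever $x,y\in\F_k\g_0$). The key elementary observation is that a $2$-leveled directed simple graph all of whose vertices lie on a single level carries no edge, since edges strictly lower the level, and hence---being connected---reduces to a single vertex. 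Therefore the only contributions to $(\1+x)\circledcirc(\1+y)$ involving $x$ alone (resp.\ $y$ alone) come from the one-vertex graphs, giving \[(\1+x)\circledcirc(\1+y)=\1+x+y+R(x,y),\] where $R(x,y)$ is a sum over graphs with at least one vertex on each level, so that $R(x,y)\in\F_{j+k}\g_0$ when $x\in\F_j\g_0$ and $y\in\F_k\g_0$. Putting $y=0$ (resp.\ $x=0$) shows that $\1$ is a two-sided unit.

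The heart of the proof is associativity. Expanding $\bigl((\1+x)\circledcirc(\1+y)\bigr)\circledcirc(\1+z)$ and writing $u$ for $(\1+x)\circledcirc(\1+y)-\1$, multilinearity of the Lie-graph action together with its associativity ($\beta$ applied to inputs $\alpha_v(x,y)$ on its bottom vertices equals the operadic composite $\bigl(\beta\circ(\alpha_v)_v\bigr)$ evaluated on the $x$'s and $y$'s) rewrites this bracketing as \[\1+\sum_{\beta}\sum_{(\alpha_v)_v}\frac{1}{|\Aut(\beta)|}\prod_v\frac{1}{|\Aut(\alpha_v)|}\,\bigl(\beta\circ(\alpha_v)_v\bigr)(x,y,z),\] a sum over $3$-leveled directed simple graphs with top level labelled $z$ and the lower two by $y,x$. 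I would then prove the combinatorial lemma that this equals $\1+\sum_\gra\tfrac{1}{|\Aut(\gra)|}\,\gra(x,y,z)$, the sum running over all $3$-leveled directed simple graphs. The clean way is to rewrite each $1/|\Aut|$-weighted sum over isomorphism classes as a $1/(p!\,q!)$-weighted sum over vertex-\emph{labelled} graphs ($p,q$ the level sizes): by orbit--stabiliser a level-respecting relabelling orbit has cardinality $p!\,q!/|\Aut|$, and the evaluated term is constant along the orbit because all vertices of a level carry the same argument. On labelled graphs the operadic substitution carries no automorphism denominators, and the identity reduces to the elementary fact that a labelled $3$-leveled directed simple graph is recovered uniquely from the connected components of its restriction to the lower two levels (the $\alpha_v$), the $2$-leveled graph obtained by contracting each such component (the $\beta$), and the attachment of the top vertices to those components (the edge redistribution); matching the factorial weights is then the exponential, or species-composition, formula---equivalently invariance of groupoid cardinality. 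Performing the same computation for the opposite bracketing yields the identical $3$-leveled sum, establishing associativity. This weight-matching, which is exactly where the normalisation $1/|\Aut(\gra)|$ is forced, is the main obstacle.

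It remains to produce inverses and to verify the complete-group axioms and naturality, all routine. From $(\1+x)\circledcirc(\1+y)=\1+x+y+R(x,y)$ with $R$ raising the filtration, the equation $y=-x-R(x,y)$ is solved by successive approximation $y_{n+1}=-x-R(x,y_n)$; completeness of $\g_0$ guarantees convergence to a unique inverse, which lies in $\F_k\g_0$ whenever $x\in\F_k\g_0$, so each $\1+\F_k\g_0$ is a subgroup. The filtration $\{\1+\F_k\g_0\}_{k\geqslant1}$ is descending with trivial intersection and is complete because the bijection $\1+\F_1\g_0\leftrightarrow\F_1\g_0$ transports the completeness of $\g_0$; finally the congruence $(\1+x)\circledcirc(\1+y)\equiv\1+x+y\pmod{\F_{j+k}\g_0}$ from the first step shows the group is abelian modulo $\F_{j+k}\g_0$ for $x\in\F_j\g_0$, $y\in\F_k\g_0$, whence $[\,\1+\F_j\g_0,\1+\F_k\g_0\,]\subseteq\1+\F_{j+k}\g_0$; these are precisely the axioms of a complete group. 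Naturality is immediate: a morphism of complete Lie-graph algebras is a filtered degree-$0$ chain map commuting with every operation $\gra$, hence it sends $\1+x\mapsto\1+\phi(x)$ and, being continuous, commutes with the convergent series defining $\circledcirc$, giving a morphism of complete groups compatibly with identities and composites.
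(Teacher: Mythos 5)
Your proposal is correct and shares the paper's overall skeleton: reduce associativity to showing that both bracketings equal $\1+\sum_{\gra}\tfrac{1}{|\Aut(\gra)|}\,\gra(x,y,z)$ over $3$-leveled graphs, then dispose of units, inverses, and completeness separately. Where you genuinely diverge is in the execution of the key combinatorial lemma. The paper stays with isomorphism classes throughout: it fixes a $3$-leveled graph $\gra$, introduces \emph{marked} subgraphs (components of the lower two levels remembering the coloured leaves to the top level), derives an explicit formula for $|\Aut(\gra)|$ in terms of $|\Aut(\gra')|$, the multiplicities $j_m$, $t_h$ and the marked automorphism groups, and then multiplies four coefficients --- including a ``reconnection'' factor $|\Aut(\gamma_m)|/|\Aut^{\mathrm{mark}}(\dot\gamma_m)|$ accounting for distinct edge redistributions yielding isomorphic results --- to land on $1/|\Aut(\gra)|$. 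You instead pass to vertex-labelled graphs via orbit--stabiliser, trading each $1/|\Aut|$ for $1/(p!\,q!)$; in the labelled picture every term of the edge-redistribution sum is a \emph{distinct} graph and the decomposition of a labelled $3$-leveled graph into (components of the lower two levels, contracted $2$-leveled graph, attachment data) is a genuine bijection, so the reconnection and marked-automorphism factors never appear and only multinomial bookkeeping remains. Your route is arguably cleaner and less error-prone, at the cost of leaving the final species-composition weight match as a one-line invocation rather than an explicit computation; the paper's route is more laborious but produces the automorphism-counting formula for leveled graphs explicitly, which it then reuses almost verbatim in the proofs of \cref{prop:Inverse} and \cref{lem:EDexp}. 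Your treatment of units, the successive-approximation construction of inverses, the filtration axioms, and naturality supplies details the paper elides (it defers inverses to the explicit formula of \cref{prop:Inverse}), and is correct.
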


\begin{proof}
	We will show that, for the free complete Lie-graph algebra on three generators $x,y, z$, the following identity holds 
	\begin{equation}\tag{$\ast$}\label{eq:assoc}
		((\1+x)\circledcirc (\1+y)) \circledcirc (\1+z) = \1 + \sum_{\gra \in \multildcGra{3}_{\Sy}} {\textstyle \frac{1}{|\Aut(\gra)|}} \, \gra(x,y,z)~,
	\end{equation}
	where $\gra(x,y,z)$ denotes the action of $\gra$ with all the vertices of  the top level labelled by $z$, all the vertices of the middle level labelled by $y$~, and all the vertices of the bottom level labelled by $x$~. 
	The symmetry of the argument gives the same formula for the other parenthesization which guarantees the associativity of $\circledcirc$~.
	
	\medskip
	
	Since the operadic structure on $\Liegra$ is given by the insertion of directed graphs, we claim, from the definition of the product $\circledcirc$ given in terms of 2-leveled graphs, that we can express its right iteration 
	using graphs with three levels, that is 
	\[((\1+x)\circledcirc (\1+y)) \circledcirc (\1+z)=\1+\displaystyle\sum_{\gra \in \multildcGra{3}_{\Sy}} \chi_\gra\ \gra(x,y,z)\]for some coefficients $\chi_\gra$ that we wish to compute. Indeed the left-hand side is made up of terms of the form 
	$(\cdots((\gra'\circ_r\gra_r)\circ_{r-1} \gra_{r-1}) \cdots )\circ_1 \gra_1$, with $\gra_i$ being $2$-leveled graphs in $x$ and $y$, and $\gra'$ being a $2$-leveled graph with top level vertices labelled by $z$ and bottom level vertices labelled by $1,\dots,r$. Such terms have an obvious 3-leveled structure. 
	
	\medskip
	
	In the other way round, let us fix a $3$-leveled graph $\gra \in \multildcGra{3}_{\Sy}$. We start by noticing that for $\gra$ to appear as a summand in $(\cdots((\gra'\circ_r\gra_r)\circ_{r-1} \gra_{r-1}) \cdots )\circ_1 \gra_1$, both the graph $\gra'$ and the set of subgraphs $\{\gra_1,\dots ,\gra_r\}$ (accounting for multiplicities) are automatically determined. The latter graphs are obtained, up to permutation, by cutting out  the top level of $\gra$ and the former graph $\gra'$ is obtained from $\gra$ by contracting each sub-graph $\gra_1,\dots ,\gra_r$ into a vertex  and by 
	deleting parallel edges, see \cref{Fig:Dec3LevGra} for an example. 
	
	\begin{figure*}[h]
		\[
		\gra =
		\vcenter{\hbox{\begin{tikzpicture}[scale=0.6]
					\draw[dashed] (0.5, -0.5)--(4.5, -0.5)--(4.5,3.5)--(0.5, 3.5)--cycle;
					\draw[dashed] (5.5, -0.5)--(7.5, -0.5)--(7.5,3.5)--(5.5, 3.5)--cycle;	
					\draw[thick, fill=white]	(-0.5,4)--(0.5,4)--(0.5,5)--(-0.5,5)--cycle;
					\draw[thick, fill=white]	(4.5,4)--(5.5,4)--(5.5,5)--(4.5,5)--cycle;
					\draw[thick, fill=white]	(1,0)--(2,0)--(2,1)--(1,1)--cycle;
					\draw[thick, fill=white]	(3,0)--(4,0)--(4,1)--(3,1)--cycle;	
					\draw[thick, fill=white]	(6,0)--(7,0)--(7,1)--(6,1)--cycle;	
					\draw[thick, fill=white]	(1,2)--(2,2)--(2,3)--(1,3)--cycle;	
					\draw[thick, fill=white]	(3,2)--(4,2)--(4,3)--(3,3)--cycle;		
					\draw[thick]	(-0.17, 4) to[out=270,in=90] (-0.17,3) to[out=270,in=135] (1.33,1);		
					\draw[thick]	(0.16,4)--(1.5,3);
					\draw[thick]	(4.83,4)--(3.5,3);	
					\draw[thick]	(1.66,1)--(3.33,2);
					\draw[thick]	(1.5,1)--(1.5,2);
					\draw[thick]	(3.5,1)--(3.5,2);	
					\draw[thick]	(6.5,1)--(5.16,4);		
					\draw (1.5,0.5) node {{$x$}} ; 
					\draw (3.5,0.5) node {{$x$}} ; 	
					\draw (6.5,0.5) node {{$x$}} ; 		
					\draw (0,4.5) node {{$z$}} ; 
					\draw (5,4.5) node {{$z$}} ; 	
					\draw (1.5,2.5) node {{$y$}} ; 	
					\draw (3.5,2.5) node {{$y$}} ; 		
		\end{tikzpicture}}}
		\quad\mapsto\quad
		\gra' =
		\vcenter{\hbox{\begin{tikzpicture}[scale=0.6]
					\draw[thick, fill=white]	(3,0)--(4,0)--(4,1)--(3,1)--cycle;
					\draw[thick, fill=white]	(1,0)--(2,0)--(2,1)--(1,1)--cycle;
					\draw[thick, fill=white]	(0,2)--(1,2)--(1,3)--(0,3)--cycle;	
					\draw[thick, fill=white]	(2,2)--(3,2)--(3,3)--(2,3)--cycle;		
					\draw[thick]	(1.33,1)--(0.5,2);
					\draw[thick]	(3.5,1)--(2.66,2);		
					\draw[thick]	(1.66,1)--(2.33,2);			
					\draw (1.5,0.5) node {{$1$}} ; 		
					\draw (3.5,0.5) node {{$2$}} ; 		
					\draw (0.5,2.5) node {{$z$}} ; 		
					\draw (2.5,2.5) node {{$z$}} ; 			
		\end{tikzpicture}}}\ ,\
		\gra_1 =
		\vcenter{\hbox{\begin{tikzpicture}[scale=0.6]
					\draw[thick, fill=white]	(1,0)--(2,0)--(2,1)--(1,1)--cycle;
					\draw[thick, fill=white]	(3,0)--(4,0)--(4,1)--(3,1)--cycle;	
					\draw[thick, fill=white]	(1,2)--(2,2)--(2,3)--(1,3)--cycle;	
					\draw[thick, fill=white]	(3,2)--(4,2)--(4,3)--(3,3)--cycle;		
					\draw[thick]	(1.66,1)--(3.33,2);
					\draw[thick]	(1.5,1)--(1.5,2);
					\draw[thick]	(3.5,1)--(3.5,2);			
					\draw (1.5,0.5) node {{$x$}} ; 
					\draw (3.5,0.5) node {{$x$}} ; 		
					\draw (1.5,2.5) node {{$y$}} ; 	
					\draw (3.5,2.5) node {{$y$}} ; 		
		\end{tikzpicture}}}\ , \
		\gra_2=
		\vcenter{\hbox{\begin{tikzpicture}[scale=0.6]
					\draw[thick, fill=white]	(1,0)--(2,0)--(2,1)--(1,1)--cycle;
					\draw (1.5,0.5) node {{$x$}} ; 
		\end{tikzpicture}}}~.
		\]
		\caption{Example of a decomposition of 3-leveled graph.}
		\label{Fig:Dec3LevGra}
	\end{figure*}
	
	\medskip
	One can actually say a bit more about the subgraphs $\gra_i$ of $\gra$: they are actually more determined 
	when one considers the incident edges, called \emph{leaves} of $\gra_i$, connecting each of them to a top vertice labelled by $z$. Such graphs are  called  \emph{marked} graphs:  they are $2$-leveled directed simple graphs with at least one leaf, that is an edge going upward with no end vertex, and such that the leaves are colored according to the vertices they are attached to, see \cref{Fig:MarkedGra}. 
	Let us denote by $\Aut^\text{mark}$ the group of  marked automorphisms that 
	is, graph automorphisms that 
	respect the configuration of colored leaves.

	\begin{figure*}[h]
		\[
		\dot{\gamma}_1 =
		\vcenter{\hbox{\begin{tikzpicture}[scale=0.6]
					\draw[very thick, dashed]	(-0.17, 4) to[out=270,in=90] (-0.17,3) to[out=270,in=135] (1.33,1);		
					\draw[very thick, dashed]	(1.5,4)--(1.5,3);
					\draw[thick, dotted]	(3.5,4)--(3.5,3);	
					\draw[thick, fill=white]	(1,0)--(2,0)--(2,1)--(1,1)--cycle;
					\draw[thick, fill=white]	(3,0)--(4,0)--(4,1)--(3,1)--cycle;	
					\draw[thick, fill=white]	(1,2)--(2,2)--(2,3)--(1,3)--cycle;	
					\draw[thick, fill=white]	(3,2)--(4,2)--(4,3)--(3,3)--cycle;		
					\draw[thick]	(1.66,1)--(3.33,2);
					\draw[thick]	(1.5,1)--(1.5,2);
					\draw[thick]	(3.5,1)--(3.5,2);	
					\draw (1.5,0.5) node {{$x$}} ; 
					\draw (3.5,0.5) node {{$x$}} ; 		
					\draw (1.5,2.5) node {{$y$}} ; 	
					\draw (3.5,2.5) node {{$y$}} ; 		
		\end{tikzpicture}}}\]
		\caption{Example of a marked graph associated to the graph $\gra$~.}
		\label{Fig:MarkedGra}
	\end{figure*}

	\medskip
	
	Let us suppose that there are $l$ different types of marked graphs appearing among the $\gra_1,\dots ,\gra_r$ in $\gra$ and let us denote them by $\dot{\gamma}_m$, for $m=1,\dots, l$~. Suppose furthermore that each marked graph $\dot{\gamma}_m$ appears $j_m$ times in $\gra_1,\dots, \gra_r$, so that $j_1+\dots+j_l = r$~.
	One can regroup them even further as follows: consider the various configurations $\c_1, \ldots, \c_k$ of colors of the leaves with multiplicity of the marked graphs once one forgets the internal graph structure. For any $1\leqslant h\leqslant k$, we denote by $t_h$ the number of marked graphs appearing  among the $\gra_1,\dots ,\gra_r$ of colored leaves of configuration $\c_h$, so that $t_1+\cdots+t_k=r$ is a coarser partition of $r$~. 
	
	\medskip
	
	Any automorphism of the 3-leveled graph $\gra$ gives rise to an automorphism of the 2-leveled graph $\gra'$, but this latter one fails to distinguish the $t_1$ (resp. $t_2, \ldots, t_k$) marked graphs of colored leaves of type $\c_1$ (resp. $\c_2, \ldots, \c_k$). Such an automorphism of $\gra$ also permutes identical marked subgraphs and applies to each  of them a marked automorphism. In the other way round, this procedure fully characterises automorphisms of   3-leveled graphs, so we get 
	\[
	|\Aut(\gra)|= 
	\frac{|\Aut(\gra')|
		\prod_{m=1}^l j_m! \, \left|\Aut^\text{mark}\left(\dot{\gamma}_m\right)\right|^{j_m}}{t_1!\ldots t_k!}~. 
	\]
	
	Let us now compute the coefficient $\chi_\gamma$: there are 4 different coefficients.
	
	\begin{enumerate}[i)]
		\item The coefficient of $\gra'$, that is  
		\[\frac{1}{|\Aut(\gra')|}~.\]
		
		\item The coefficient of the subgraphs $\gra_i$ which is equal to 
		\[\prod_{i=1}^r \frac{1}{|\Aut(\gra_i)|}=\prod_{m=1}^l \frac{1}{|\Aut(\gamma_m)|^{j_m}}~,\]
		where the $\gamma_m$ are the underlying 2-leveled graphs of the marked graphs $\dot{\gamma}_m$ obtained by removing the leaves. 
		
		\item A multinomial coefficient corresponding to the places where the  marked graphs can be put. 
		For each $1\leqslant h \leqslant k$, we denote by $j_{m^h_1}, \ldots, j_{m^h_{n(h)}}$ the respective numbers of marked graphs that have a configuration of colored leaves of type $\c_h$, so that 
		$j_{m^h_1}+ \cdots+ j_{m^h_{n(h)}}=t_h$ and $\left\{m^1_1, \ldots, m^k_{n(k)}\right\}=\{1, \ldots, l\}$~. 
		On the $t_h$ bottom vertices of $\gra'$, there are 
		\[\frac{t_h!}{j_{m^h_1}! \ldots j_{m^h_{n(h)}}!}\] ways to place the various marked graphs of type $\c_h$~. In the end, this produces the coefficient 		
		\[
		\prod_{h=1}^k \frac{t_h!}{j_{m^h_1}!  \ldots j_{m^h_{n(h)}}!} = \frac{t_1!\dots t_k!}{j_1!\dots j_l!}~.
		\]		
		
		\item A \emph{reconnection} coefficient $\mathrm{R}\left(\dot{\gamma}_m\right)$ for each marked subgraph $\dot{\gamma}_m$, corresponding to different reconnections that yield the same final 3-leveled graph $\gamma$, see \cref{fig:same graph2} for an example. A straightforward application of the orbit-stabiliser theorem shows that 
		\[
		\mathrm{R}\left(\dot{\gamma}_m\right)=\frac{\left|\Aut(\gamma_m)\right|}{\left|\Aut^\text{mark}\left(\dot{\gamma}_m\right)\right|}~.
		\]
		
		\begin{figure}[h]
			\[\vcenter{\hbox{\begin{tikzpicture}[scale=0.6]
						\draw[thick, fill=white]	(2,4)--(3,4)--(3,5)--(2,5)--cycle;
						\draw[thick, fill=white]	(2,0)--(3,0)--(3,1)--(2,1)--cycle;		
						\draw[thick, fill=white]	(1,2)--(2,2)--(2,3)--(1,3)--cycle;	
						\draw[thick, fill=white]	(3,2)--(4,2)--(4,3)--(3,3)--cycle;		
						\draw[thick]	(2.33,1)--(1.5,2);
						\draw[thick]	(2.66,1)--(3.5,2);		
						\draw[thick]	(1.5,3)--(2.5,4);	
						\draw (2.5,0.5) node {{$x$}} ; 
						\draw (2.5,4.5) node {{$z$}} ; 	
						\draw (1.5,2.5) node {{$y$}} ; 	
						\draw (3.5,2.5) node {{$y$}} ; 		
			\end{tikzpicture}}}
			\quad = \quad  
			\vcenter{\hbox{\begin{tikzpicture}[scale=0.6]
						\draw[thick, fill=white]	(2,4)--(3,4)--(3,5)--(2,5)--cycle;
						\draw[thick, fill=white]	(2,0)--(3,0)--(3,1)--(2,1)--cycle;		
						\draw[thick, fill=white]	(1,2)--(2,2)--(2,3)--(1,3)--cycle;	
						\draw[thick, fill=white]	(3,2)--(4,2)--(4,3)--(3,3)--cycle;		
						\draw[thick]	(2.33,1)--(1.5,2);
						\draw[thick]	(2.66,1)--(3.5,2);		
						\draw[thick]	(3.5,3)--(2.5,4);	
						\draw (2.5,0.5) node {{$x$}} ; 
						\draw (2.5,4.5) node {{$z$}} ; 	
						\draw (1.5,2.5) node {{$y$}} ; 	
						\draw (3.5,2.5) node {{$y$}} ; 		
			\end{tikzpicture}}}
			\quad \text{appear in} \quad 
			\vcenter{\hbox{\begin{tikzpicture}[scale=0.6]
						\draw[thick, fill=white]	(2,4)--(3,4)--(3,5)--(2,5)--cycle;
						\draw[thick, fill=white]	(2,2)--(3,2)--(3,3)--(2,3)--cycle;		
						\draw[thick]	(2.5,3)--(2.5,4);	
						\draw (2.5,4.5) node {{$z$}} ; 	
						\draw (2.5,2.5) node {{$1$}} ; 		
			\end{tikzpicture}}}		
			\ \circ_1 \
			\vcenter{\hbox{\begin{tikzpicture}[scale=0.6]
						\draw[thick, fill=white]	(2,0)--(3,0)--(3,1)--(2,1)--cycle;		
						\draw[thick, fill=white]	(1,2)--(2,2)--(2,3)--(1,3)--cycle;	
						\draw[thick, fill=white]	(3,2)--(4,2)--(4,3)--(3,3)--cycle;		
						\draw[thick]	(2.33,1)--(1.5,2);
						\draw[thick]	(2.66,1)--(3.5,2);		
						\draw (2.5,0.5) node {{$x$}} ; 
						\draw (1.5,2.5) node {{$y$}} ; 	
						\draw (3.5,2.5) node {{$y$}} ; 		
			\end{tikzpicture}}}~. 
			\]
			\caption{These two graphs are identical but they appear twice in the $\circledcirc$ product.}
			\label{fig:same graph2}
		\end{figure}
	\end{enumerate}
	
	Multiplying all the coefficients, we get 
	\[
	\chi_\gamma = 
	\underbrace{\frac{1}{|\Aut(\gra')|}}_\text{i)} \times 
	\underbrace{\prod_{m=1}^l \frac{1}{|\Aut(\gamma_m)|^{j_m}}}_\text{ii)}\times 
	\underbrace{\frac{t_1!\dots t_k!}{j_1!\dots j_l!}}_\text{iii)}\times 
	\underbrace{\prod_{m=1}^l  \frac{\left|\Aut(\gamma_m)\right|^{j_m}}{\left|\Aut^\text{mark}\left(\dot{\gamma}_m\right)\right|^{j_m}}}_\text{iv)}
	=\frac{1}{|\Aut(\gra)|}~.
	\]
	
	\medskip
	
	It remains to show that every element admits an inverse, which can be easily done  by an inductive argument on the number of vertices. We skip this part of the proof since we will provide the reader with an explicit formula for the inverse in the following proposition.
\end{proof}

In a way similar to \cref{def:levGrap}, we consider the set $\dcGra_{\Sy}$ of directed simple graphs with unlabelled vertices and no level structure this time. The group $\Aut(\gra)$ of automorphisms of such a graph $\gra$ is made up of permutations of vertices which respect the  adjacency condition. 

\begin{proposition}\label{prop:Inverse}
	The inverse of an element $\1+x$ in the complete group associated to a complete Lie-graph algebra is given by 
	\[(\1+x)^{-1}=\1+\displaystyle \sum_{\gra \in \dcGra_{\Sy}} \frac{(-1)^{|\gra|}}{|\mathrm{Aut}(\gra)|} \, \gra(x)~. \]
\end{proposition}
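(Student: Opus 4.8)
The plan is to reduce to the universal case and then verify the formula by a direct inclusion--exclusion over the sinks of a graph. Since every operation involved is natural in $\g$ and the element $x\in\F_1\g_0$ is the image of the single generator under a morphism out of the free complete Lie-graph algebra on one generator, it suffices to establish the identity $(\1+x)\circledcirc(\1+y)=\1$ in that free algebra, where $y\coloneqq\sum_{\gra\in\dcGra_{\Sy}}\tfrac{(-1)^{|\gra|}}{|\Aut(\gra)|}\gra(x)$ is the proposed inverse. All series are controlled by the filtration (a graph with $k$ vertices lands in $\F_k$), so the identity may be checked one homogeneous component at a time in the grading by number of vertices, each such component being a finite sum; completeness then yields the full statement.

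First I would unwind the left-hand side. By the definition of $\circledcirc$, the product $(\1+x)\circledcirc(\1+y)$ is a sum over $2$-leveled graphs $\Gamma\in{\ldcGra}_{\Sy}$ with bottom vertices labelled by $x$ and top vertices labelled by $y$; substituting the series $y$ into each top vertex and expanding the partial compositions of $\Liegra$ produces, after reconnection of edges, a sum of directed simple graphs $G\in\dcGra_{\Sy}$ applied to $x$. I would then collect, for each fixed $G$ with $n\geq 1$ vertices, the total coefficient of $G(x)$. The crucial structural observation is that the buildings of $G$ are rigid: a term of the expansion yielding $G$ is exactly a bottom vertex set $B\subseteq V(G)$ together with the top blocks, and these data are forced by $B$. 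Indeed the bottom vertices carry only incoming edges in $\Gamma$, hence in $G$, so $B$ must consist of sinks of $G$ (vertices of out-degree $0$); the top blocks are then necessarily the connected components of the induced subgraph $G[V(G)\setminus B]$, since there are no edges between distinct blocks, each component being a valid graph summed in $y$, and both $\Gamma$ and the reconnection data are reconstructed uniquely from $B$. Thus valid buildings of $G$ are in bijection with the subsets $B\subseteq\mathrm{Sinks}(G)$.

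Carrying out the weight bookkeeping exactly as in the proof of \cref{thm:dcGraGp} --- the automorphism factors $\tfrac1{|\Aut(\Gamma)|}$ and $\tfrac1{|\Aut(\gamma_m)|}$ together with the reconnection multiplicities reorganise, by orbit--stabiliser, into a single global factor $\tfrac1{|\Aut(G)|}$ on a fixed labelled representative --- the coefficient of $G(x)$ becomes $\tfrac1{|\Aut(G)|}$ times the signed count of buildings. Since a block $\delta$ contributes the sign $(-1)^{|\delta|}$ coming from $y$, and the block vertices total $n-|B|$, this signed count is
\[
\sum_{B\subseteq\mathrm{Sinks}(G)}(-1)^{\,n-|B|}=(-1)^{n}\sum_{B\subseteq\mathrm{Sinks}(G)}(-1)^{|B|}=(-1)^{n}(1-1)^{|\mathrm{Sinks}(G)|}.
\]
Because the edges of a directed simple graph induce a partial order on its vertices, $G$ is a finite nonempty acyclic graph and therefore has at least one sink, so $|\mathrm{Sinks}(G)|\geq 1$ and the sum vanishes. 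Hence every nonempty $G$ has coefficient $0$ and $(\1+x)\circledcirc(\1+y)=\1$. The identity $(\1+y)\circledcirc(\1+x)=\1$ follows from the symmetric argument, in which the top vertices labelled by $x$ are single vertices and the inclusion--exclusion runs instead over the sources of $G$ (every finite nonempty acyclic graph also has a source); this shows $\1+y$ is a two-sided inverse, which simultaneously supplies the inverses left unproved in \cref{thm:dcGraGp}.

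The main obstacle I anticipate is the third paragraph's weight bookkeeping: making rigorous that, after expanding $y$ and performing all reconnections, the product of the local automorphism weights and reconnection coefficients collapses to the single factor $1/|\Aut(G)|$ on a labelled representative. This is precisely the orbit--stabiliser computation already carried out for the $\circledcirc$-product in \cref{thm:dcGraGp}, where the reconnection coefficient is $|\Aut(\gamma_m)|/|\Aut^{\text{mark}}(\dot\gamma_m)|$, so I would import that analysis verbatim; once it is in place, the cancellation $(1-1)^{|\mathrm{Sinks}(G)|}=0$ is immediate and the rest is formal.
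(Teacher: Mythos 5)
Your proposal is correct and follows essentially the same route as the paper's proof: both reduce to the free complete Lie-graph algebra on one generator, identify the ways a fixed graph $G$ can arise in the expansion with subsets of its extremal vertices, import the orbit–stabiliser weight computation from the proof of Theorem 2.7 to get the uniform factor $1/|\Aut(G)|$, and conclude by the vanishing of $\sum_B(-1)^{|B|}$. The only (immaterial) difference is that you verify the right-inverse identity $(\1+x)\circledcirc(\1+y)=\1$ with the inclusion–exclusion running over sinks, whereas the paper verifies the left-inverse identity $(\1+y)\circledcirc(\1+x)=\1$ over the sources (top vertices).
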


\begin{figure*}[h]
	\[(\1+x)^{-1}\ =\ \1 \  - \
	\vcenter{\hbox{\begin{tikzpicture}[scale=0.4]
				\draw[thick]	(0,0)--(1,0)--(1,1)--(0,1)--cycle;
				\draw (0.5,0.5) node {{$x$}} ; 
	\end{tikzpicture}}}
	\ + \ 
	\vcenter{\hbox{\begin{tikzpicture}[scale=0.4]
				\draw[thick]	(0,0)--(1,0)--(1,1)--(0,1)--cycle;
				\draw[thick]	(0,2)--(1,2)--(1,3)--(0,3)--cycle;
				\draw[thick]	(0.5,1)--(0.5,2);
				\draw (0.5,0.5) node {{$x$}} ; 
				\draw (0.5,2.5) node {{$x$}} ; 
	\end{tikzpicture}}}
	\ - \
	\scalebox{1.1}{$\frac{1}{2}$}\ 
	\vcenter{\hbox{\begin{tikzpicture}[scale=0.4]
				\draw[thick]	(0,0)--(1,0)--(1,1)--(0,1)--cycle;
				\draw[thick]	(2,0)--(3,0)--(3,1)--(2,1)--cycle;
				\draw[thick]	(1,2)--(2,2)--(2,3)--(1,3)--cycle;
				\draw[thick]	(0.5,1)--(1.33,2);
				\draw[thick]	(2.5,1)--(1.66,2);
				\draw (1.5,2.5) node {{$x$}} ; 
				\draw (0.5,0.5) node {{$x$}} ; 
				\draw (2.5,0.5) node {{$x$}} ; 	
	\end{tikzpicture}}}
	\ - \ 
	\scalebox{1.1}{$\frac{1}{2}$}\ 
	\vcenter{\hbox{\begin{tikzpicture}[scale=0.4]
				\draw[thick]	(0,2)--(1,2)--(1,3)--(0,3)--cycle;
				\draw[thick]	(2,2)--(3,2)--(3,3)--(2,3)--cycle;
				\draw[thick]	(1,0)--(2,0)--(2,1)--(1,1)--cycle;
				\draw[thick]	(0.5,2)--(1.33,1);
				\draw[thick]	(2.5,2)--(1.66,1);
				\draw (1.5,0.5) node {{$x$}} ; 
				\draw (0.5,2.5) node {{$x$}} ; 
				\draw (2.5,2.5) node {{$x$}} ; 	
	\end{tikzpicture}}}
	\ - \ 
	\vcenter{\hbox{\begin{tikzpicture}[scale=0.4]
				\draw[thick]	(0,0)--(1,0)--(1,1)--(0,1)--cycle;
				\draw[thick]	(0,2)--(1,2)--(1,3)--(0,3)--cycle;
				\draw[thick]	(0,4)--(1,4)--(1,5)--(0,5)--cycle;
				\draw[thick]	(0.5,1)--(0.5,2);
				\draw[thick]	(0.5,3)--(0.5,4);	
				\draw (0.5,0.5) node {{$x$}} ; 
				\draw (0.5,2.5) node {{$x$}} ; 
				\draw (0.5,4.5) node {{$x$}} ; 
	\end{tikzpicture}}}
	\ - \ 
	\vcenter{\hbox{\begin{tikzpicture}[scale=0.4]
				\draw[thick]	(0,0)--(1,0)--(1,1)--(0,1)--cycle;
				\draw[thick]	(0,2)--(1,2)--(1,3)--(0,3)--cycle;
				\draw[thick]	(0,4)--(1,4)--(1,5)--(0,5)--cycle;
				\draw[thick]	(0.33,1)--(0.33,2);
				\draw[thick]	(0.33,3)--(0.33,4);
				\draw[thick]	(0.66,1) to[out=60,in=270] (1.5,2.5) to[out=90,in=300] (0.66,4);	
				\draw (0.5,0.5) node {{$x$}} ; 
				\draw (0.5,2.5) node {{$x$}} ; 
				\draw (0.5,4.5) node {{$x$}} ; 
	\end{tikzpicture}}}
	\ + \
	\vcenter{\hbox{\begin{tikzpicture}[scale=0.4]
				\draw[thick]	(0,0)--(1,0)--(1,1)--(0,1)--cycle;
				\draw[thick]	(2,0)--(3,0)--(3,1)--(2,1)--cycle;
				\draw[thick]	(1,2)--(2,2)--(2,3)--(1,3)--cycle;
				\draw[thick]	(0,-2)--(1,-2)--(1,-1)--(0,-1)--cycle;	
				\draw[thick]	(0.5,1)--(1.33,2);
				\draw[thick]	(2.5,1)--(1.66,2);
				\draw[thick]	(0.5,0)--(0.5,-1);	
				\draw (0.5,-1.5) node {{$x$}} ; 
				\draw (1.5,2.5) node {{$x$}} ; 
				\draw (0.5,0.5) node {{$x$}} ; 
				\draw (2.5,0.5) node {{$x$}} ; 	
	\end{tikzpicture}}}
	\ + \ 
	\scalebox{1.1}{$\frac{1}{4}$}\ 
	\vcenter{\hbox{\begin{tikzpicture}[scale=0.4]
				\draw[thick]	(0.66,2)--(2.33,1);	
				\draw[draw=white,double=black,double distance=2*\pgflinewidth,thick]	(0.66,1)--(2.33,2);		
				\draw[thick]	(0,0)--(1,0)--(1,1)--(0,1)--cycle;
				\draw[thick]	(2,0)--(3,0)--(3,1)--(2,1)--cycle;
				\draw[thick]	(0,2)--(1,2)--(1,3)--(0,3)--cycle;
				\draw[thick]	(2,2)--(3,2)--(3,3)--(2,3)--cycle;	
				\draw[thick]	(0.33,1)--(0.33,2);
				\draw[thick]	(2.66,1)--(2.66,2);
				\draw (0.5,2.5) node {{$x$}} ; 
				\draw (2.5,2.5) node {{$x$}} ; 	
				\draw (0.5,0.5) node {{$x$}} ; 
				\draw (2.5,0.5) node {{$x$}} ; 	
	\end{tikzpicture}}}
	\ + \ \cdots\]
	\caption{The first terms of the inverse.}
	\label{Fig:Inv}
\end{figure*}

\begin{proof}We use the same method as in the previous proof: we work in the 
	free complete Lie-graph algebra on one generator $x$, where we show that 
	\begin{equation}\tag{$\circledast$}\label{eq:Inv}
		\left(\displaystyle \1+ \sum_{\gra \in \dcGra_{\Sy}} \frac{(-1)^{|\gra|}}{|\Aut(\gra)|} \, \gra(x)\right) \circledcirc (\mathbb{1}+x) = \mathbb 1~.
	\end{equation}
	
	Expanding the left-hand side of \cref{eq:Inv}, one can see that it is equal to $\1+\sum_{\gra \in \dcGra_{\Sy}} \chi_\gra\,  \gra(x)$, for some coefficients $\chi_\gra$ that we need to compute. 
	
	\medskip
	
	Let us fix a graph $\gra \in \dcGra_{\Sy}$ and let us denote by $T$ its non-empty set of top vertices, i.e. vertices with no incoming edges. 
	Notice that the left-hand side of \cref{eq:Inv} is computed by picking graphs in $x$ and adding top vertices, labeled again by $x$, along a 2-leveled graph. 
	In other words, for every subset $S\subset T$ of top vertices of $\gra$, the term $\gra(x)$ will appear as a summand of the form $(\cdots((\gra'\circ_r\gra_r)\circ_{r-1} \gra_{r-1}) \cdots )\circ_1 \gra_1$, where the $\gra_i$ are graphs in $\dcGra_{\Sy}$ obtained from $\gra$ after deleting the vertices in $S$ and where $\gra'\in \multildcGra{2}$ is the 2-leveled graph obtained by taking the top vertices to be the ones of $S$, by contracting each of the graphs $\gra_1,\dots,\gra_r$ into bottom vertices, and by merging parallel edges. 
	
	\medskip
	
	Let us denote by $\chi_S$ the coefficient associated to graph $\gra(x)$ appearing in this way. 
	The computation of this coefficient is  similar to the computation performed in the above proof of \cref{thm:dcGraGp}: one has just to consider \emph{marked graphs} non-necessarily leveled together with their canonical notion of automorphisms. This way, we get 
	\[|\chi_S|=\frac{1}{|\Aut(\gra)|}\ .\]
	This allows us to conclude the computation of the coefficient $\chi_\gra$:
	\[\chi_\gra=\sum_{S\subset T} \chi_S =\frac{1}{|\Aut(\gra)|} \sum_{S\subset T}  (-1)^{|\gra|-|S|}  =0~.\]
\end{proof}

\begin{definition}[Deformation gauge group]
	We call \emph{deformation gauge group} associated to any complete Lie-graph algebra, the complete group 
	\[
	\G\coloneq \big(\1+\F_1\g_0, \{\1+\F_k \g_0\}_{k\geqslant 1}, \circledcirc, \1\big)~.
	\]
\end{definition}

\begin{remark}
	This group generalises the ``group of formal flows'' of \cite{AgrachevGamkrelidze80} associated to a complete pre-Lie algebra and used in numerical analysis \cite{Brouder00, CHV10, LMK15}, renormalisation theory \cite{CEFM11, BHZ19}, and geometry \cite{Bandiera16}, for instance. 
	More precisely, the summand of the above formulae made up of rooted trees only gives the group associated to a complete pre-Lie algebra described in \cite{DotsenkoShadrinVallette16}. 
\end{remark}

In the case of the convolution algebra $\g_{\C,A}$, the deformation gauge group, denoted by $\G_{\C, A}$, 
actually sits inside the convolution monoid. 

\begin{proposition}\label{prop:DefGaugIna}
	The deformation gauge group of the convolution algebra $\g_{\C,A}$ is isomorphic to the following complete sub-group of the convolution monoid $\a_{\C, A}$:
	\begin{align*}
		\G_{\C,A} 
		\cong \left(\1+\Hom_{\Sy}\left(\oC, \End_A\right)_0, \left\{\1+\F_k \Hom_{\Sy}\left(\oC, \End_A\right)_0\right\}_{k\geqslant 1}, \circledcirc, \1\right) \subset \a_{\mathcal{C}, A}~. 
	\end{align*}
\end{proposition}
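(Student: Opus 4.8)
The plan is to exhibit the isomorphism in three movements: match the underlying filtered sets, reduce the comparison of the two products to a single universal identity, and verify that identity through the barber map; the subgroup property and the completeness will then follow with little effort.

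First I would identify the underlying sets. The coaugmentation $\C=\I\oplus\oC$ shows that a degree $0$ map $\varphi\in\Hom_\Sy(\C,\End_A)_0$ with $\varphi|_\I=\1$ (equivalently, whose ``first component'' is $\id_A$) is exactly the datum of its restriction $x\coloneq\varphi|_{\oC}\in\Hom_\Sy(\oC,\End_A)_0$, so that $\varphi=\1+x$. Since the canonical filtration satisfies $\F_1\g_{\C,A}=\g_{\C,A}$ (see the proof of \cref{prop:CompleteDcGra}), we have $\F_1(\g_{\C,A})_0=\Hom_\Sy(\oC,\End_A)_0$, and the filtration $\{\1+\F_k(\g_{\C,A})_0\}_{k\geqslant1}$ defining $\G_{\C,A}$ is literally the one appearing on the right-hand side. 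Hence $\1+x\mapsto\1+x$ is a filtration-preserving bijection onto $\1+\Hom_\Sy(\oC,\End_A)_0$, and it only remains to prove that the latter is a subgroup of $\a_{\C,A}$ on which the convolution-monoid product agrees with the graphical product $\circledcirc$ of \cref{thm:dcGraGp}.

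Next I would reduce the comparison of products. The convolution-monoid product $f\circledcirc g=\gamma\circ(f\boxtimes g)\circ\Delta$ is bilinear and, by \cref{def:ConMono}, admits $\1$ as a two-sided unit; expanding $(\1+x)\circledcirc(\1+y)$ and using $\1\circledcirc\1=\1$, $\1\circledcirc y=y$, $x\circledcirc\1=x$ therefore gives
\[(\1+x)\circledcirc(\1+y)=\1+x+y+x\circledcirc y\ .\]
On the graphical side, the connected $2$-leveled directed simple graphs with a single level occupied are forced to be the one-vertex graphs, contributing exactly $x$ (bottom) and $y$ (top), so the graphical product equals $\1+x+y+\sum_{\gra}\tfrac{1}{|\Aut(\gra)|}\gra(x,y)$, the sum running over those $\gra\in{\ldcGra}_{\Sy}$ with both levels nonempty. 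The whole statement thus reduces to the universal identity
\[x\circledcirc y=\sum_{\substack{\gra\in{\ldcGra}_{\Sy}\\ \text{both levels nonempty}}}\tfrac{1}{|\Aut(\gra)|}\,\gra(x,y)\ ,\]
which I would check in the free complete Lie-graph algebra on two generators, exactly as in the proofs of \cref{thm:dcGraGp} and \cref{prop:Inverse}. To prove it I would unfold $x\circledcirc y=\gamma\circ(x\boxtimes y)\circ\Delta$ along the $2$-leveled connected directed graphs underlying $\C\boxtimes\C$. Since $x$ and $y$ vanish on $\I$, only the graphs whose top and bottom vertices are all labelled by nontrivial elements of $\oC$ survive, with $x$ on the bottom level and $y$ on the top; these are precisely the directed connected graphs $\widetilde\gra$ (with multiple edges and global inputs/outputs allowed) whose barber truncation $\barb(\widetilde\gra)$ is a $2$-leveled directed simple graph with both levels nonempty. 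Grouping the surviving terms by their barber type $\gra$ and invoking \cref{prop:conv algebras are dsGra}, the inner sum $\sum_{\barb(\widetilde\gra)=\gra}\gamma\big(\widetilde\gra(x,y)\big)$ is by definition the Lie-graph operation $\gra(x,y)$.

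The remaining point is that the coefficient produced by the full decomposition $\Delta$ for each isomorphism class $\gra$ equals $1/|\Aut(\gra)|$; this is the crux and the step I expect to be the main obstacle, since it requires reconciling the $\Sy$-equivariant (labelled) decomposition of the coproperad with the unlabelled-graph-plus-automorphism-factor presentation of the graphical product. I would settle it by an orbit–stabiliser count entirely parallel to the one carried out for $3$-leveled graphs in the proof of \cref{thm:dcGraGp}, now in the simpler $2$-leveled situation. Once the two products coincide, the subgroup and completeness statements are immediate: closure holds because the first component of $(\1+x)\circledcirc(\1+y)=\1+x+y+x\circledcirc y$ is again $\id_A$, the explicit inverse of \cref{prop:Inverse} again lies in $\1+\Hom_\Sy(\oC,\End_A)_0$, and the coincidence of the filtrations noted above upgrades the bijection to an isomorphism of complete groups.
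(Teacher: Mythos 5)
Your proposal is correct and follows essentially the same route as the paper: split off the counit component of $\Hom_{\Sy}(\C,\End_A)$ to match the underlying filtered sets, then identify the convolution-monoid product with the graphical product by unfolding $\Delta$ along $2$-leveled connected graphs and grouping by barber type, which is exactly how the paper's proof invokes \cref{prop:conv algebras are dsGra}. One wobble worth fixing: the identity $x\circledcirc y=\sum_\gra\tfrac{1}{|\Aut(\gra)|}\gra(x,y)$ cannot be ``checked in the free complete Lie-graph algebra on two generators'', since its left-hand side is $\gamma\circ(x\boxtimes y)\circ\Delta$, which is not an expression in the Lie-graph operations — this is a comparison of two structures on the specific algebra $\g_{\C,A}$, not a universal Lie-graph identity, so the free-algebra trick used for \cref{thm:dcGraGp} and \cref{prop:Inverse} does not apply here. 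Your subsequent unfolding of $\Delta$ is the correct way to proceed, and the deferred orbit--stabiliser count is indeed where the factor $\tfrac{1}{|\Aut(\gra)|}$ arises, coming from the symmetrisation built into the composite product $\boxtimes$; the paper leaves this step equally implicit.
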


\begin{proof}
	The underlying space of the deformation gauge group is given by 
	\[\1+\F_1\left(\g_{\C,A}\right)_0=
	\1+\Hom_{\Sy}\left(\oC, \End_A \right)_0~.\] 
	Since the coproperad $\C$ is coaugmented, we have 
	\[\Hom_{\Sy}(\C, \End_A)\cong \Hom_{\Sy}(\I, \End_A)\oplus \Hom_{\Sy}\left(\oC, \End_A\right)~.\]
	Using the actual element $\1\in \a_{\C, A}$ defined by sending $\id\in \I$ to $\id_A$ and zero elsewhere, we get the underlying isomorphism. It remains to check that the group product given by \cref{thm:dcGraGp} on this example coincides with 
	the associative product of \cref{def:ConMono}: given $f_1, f_2 : \oC \to \End_A$, we indeed have 
	\begin{align*}
		\1 +\ \sum_{\gra \in {\ldcGra}} {\textstyle \frac{1}{|\Aut(\gra)|}}\,  \gra(f_1,f_2) \ :\  
		\C \xrightarrow{\Delta} 
		\C \boxtimes \C \xrightarrow{f_1\boxtimes f_2}  
		\End_A\boxtimes \End_A \xrightarrow{\gamma}
		\End_A
		~,
	\end{align*}
	by \cref{prop:conv algebras are dsGra}.
\end{proof}

\paragraph*{\bf A digression on the non-connected case}
	
Motivated by the study of props, see \cref{subsec:LiegraConv} and \cref{Fig:AlgStrDefTh}, one is led to try to generalise the results of this section to the non-connected setting.
Given a complete Lie-ncgraph algebra $\mathfrak g$, we equip $\1 + \F_1\mathfrak g_0$ with the analogous product 
	\[
	(\1+x)\circledcirc_\mathrm{nc} (\1+y)\coloneq \1 +\ \sum_{\gra \in {\ldcncGra_{\Sy}}} {\textstyle \frac{1}{|\Aut(\gra)|}}\,  \gra(x,y)\ , 
	\]
where  the sum runs over $2$-leveled non-necessarily connected graphs this time.
Unfortunately, this product cannot give rise to any meaningful ``deformation gauge group'' as it is not associative, see the following counterexample.
	
	\begin{cexample}
		On the free complete Lie-ncgraph algebra on three generators $x,y,z$, the  graph
\[
\vcenter{\hbox{\begin{tikzpicture}[scale=0.35]
	\coordinate (A) at (0, 0);
	\draw[thick]	($(A)$)--($(A)+(0, 1.5)$)--($(A)+(1.5,1.5)$)--($(A)+(1.5, 0)$)--cycle;
	\draw ($(A)+(0.75,0.75)$) node {\scalebox{1}{$z$}} ; 	
	\coordinate (A) at (4, 0);
	\draw[thick]	($(A)$)--($(A)+(0, 1.5)$)--($(A)+(1.5,1.5)$)--($(A)+(1.5, 0)$)--cycle;
	\draw ($(A)+(0.75,0.75)$) node {\scalebox{1}{$z$}} ; 	
	\coordinate (A) at (7, 0);
	\draw[thick]	($(A)$)--($(A)+(0, 1.5)$)--($(A)+(1.5,1.5)$)--($(A)+(1.5, 0)$)--cycle;
	\draw ($(A)+(0.75,0.75)$) node {\scalebox{1}{$z$}} ; 	
	\coordinate (A) at (5.5, -3);
	\draw[thick]	($(A)$)--($(A)+(0, 1.5)$)--($(A)+(1.5,1.5)$)--($(A)+(1.5, 0)$)--cycle;
	\draw ($(A)+(0.75,0.75)$) node {\scalebox{1}{$y$}} ; 	
	\coordinate (A) at (1.5, -6);
	\draw[thick]	($(A)$)--($(A)+(0, 1.5)$)--($(A)+(1.5,1.5)$)--($(A)+(1.5, 0)$)--cycle;
	\draw ($(A)+(0.75,0.75)$) node {\scalebox{1}{$x$}} ; 	
	\coordinate (A) at (-1.5, -6);
	\draw[thick]	($(A)$)--($(A)+(0, 1.5)$)--($(A)+(1.5,1.5)$)--($(A)+(1.5, 0)$)--cycle;
	\draw ($(A)+(0.75,0.75)$) node {\scalebox{1}{$x$}} ; 	
	\draw[thick] (0.5, 0)--(-0.75,-4.5);		
	\draw[thick] (1, 0)--(2.25,-4.5);			
	\draw[thick] (4.75, 0)--(6,-1.5);			
	\draw[thick] (7.75, 0)--(6.5,-1.5);				
\end{tikzpicture}}}
\]
	appears with coefficient $\frac{5}{4}$ in $((\1+x)\circledcirc_\mathrm{nc} (\1+y))\circledcirc_\mathrm{nc} (\1+z)$ and 
		appears with coefficient $\frac{1}{2}$ in $(\1+x)\circledcirc_\mathrm{nc} ((\1+y)\circledcirc_\mathrm{nc} (\1+z))$.
	\end{cexample}
	
	On the other hand, there is a variation of associativity that is satisfied by $\circledcirc_\mathrm{nc}$ together with $\circledcirc$, as the following proposition shows.
	
	\begin{proposition}\label{prop:nc}
		Given a complete $\Liencgra$-algebra, we have 
		\[
		((\1+x)\circledcirc (\1+y))\circledcirc_\mathrm{nc} (\1+z)=(\1+x)\circledcirc_\mathrm{nc} ((\1+y)\circledcirc (\1+z))
		=\sum_{\gra \in \3ldcncGra_{\Sy}} {\textstyle \frac{1}{|\Aut(\gra)|}} \, \gra(x,y,z)~,
		\]
where  the sum runs over $3$-leveled non-necessarily connected graphs.
	\end{proposition}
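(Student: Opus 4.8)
The plan is to deduce both equalities from the combinatorial bookkeeping already carried out in the proof of \cref{thm:dcGraGp}, the only new ingredient being a check that the relevant connectivity conditions hold automatically. I would work in the free complete $\Liencgra$-algebra on the three generators $x,y,z$ and first establish the equality $((\1+x)\circledcirc (\1+y))\circledcirc_\mathrm{nc} (\1+z)=\1+\sum_{\gra \in \3ldcncGra_{\Sy}} \tfrac{1}{|\Aut(\gra)|}\,\gra(x,y,z)$; the other equality would then follow by the mirror argument described below.

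First I would expand the left-hand side. The inner product $(\1+x)\circledcirc (\1+y)$ contributes connected $2$-leveled graphs in $x$ (bottom level) and $y$ (top level), while the outer product $\circledcirc_\mathrm{nc}$ inserts these subgraphs into the bottom vertices of a $2$-leveled \emph{non-necessarily connected} graph $\gra'$ whose top vertices carry $z$. Every resulting term is a $3$-leveled non-necessarily connected graph, so the left-hand side equals $\1+\sum_{\gra}\chi_\gra\,\gra(x,y,z)$ for coefficients $\chi_\gra$ to be determined. Conversely, given $\gra\in \3ldcncGra_{\Sy}$, I would recover the decomposition by cutting off the top ($z$) level: the graph induced on levels $1$ and $2$ splits into its connected components $\gra_1,\dots,\gra_r$, each a connected $2$-leveled graph, and contracting each $\gra_i$ to a vertex while merging parallel edges produces $\gra'$. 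The essential point is that, because the \emph{inner} product is $\circledcirc$ rather than $\circledcirc_\mathrm{nc}$, the subgraphs $\gra_i$ coincide with the connected components of the lower two levels, so this decomposition is canonical. This is exactly the feature that breaks down when both products are non-connected---disconnected inner subgraphs no longer match connected components---and is the conceptual reason behind the non-associativity exhibited in the preceding counterexample.

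Next I would compute $\chi_\gra$ by transporting verbatim the four-factor computation of \cref{thm:dcGraGp}: the factor $1/|\Aut(\gra')|$ coming now from the outer $\circledcirc_\mathrm{nc}$, the factor $\prod_{m}1/|\Aut(\gamma_m)|^{j_m}$ from the inner $\circledcirc$, the multinomial $t_1!\cdots t_k!/(j_1!\cdots j_l!)$ recording the placements of the marked subgraphs, and the reconnection factor $\prod_{m}\big(|\Aut(\gamma_m)|/|\Aut^{\text{mark}}(\dot\gamma_m)|\big)^{j_m}$. None of these steps, nor the structural identity $|\Aut(\gra)|=|\Aut(\gra')|\prod_{m}j_m!\,|\Aut^{\text{mark}}(\dot\gamma_m)|^{j_m}/(t_1!\cdots t_k!)$, uses connectivity of $\gra'$ or of $\gra$; they only use that the $\gra_i$ are connected. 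Multiplying the four factors gives $\chi_\gra=1/|\Aut(\gra)|$, exactly as before.

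Finally, for the second product $(\1+x)\circledcirc_\mathrm{nc} ((\1+y)\circledcirc (\1+z))$ the inner $\circledcirc$ supplies connected $2$-leveled graphs in $y$ (bottom) and $z$ (top), while the outer $\circledcirc_\mathrm{nc}$ attaches $x$ at the bottom. I would run the identical argument after decomposing $\gra$ by cutting off its \emph{bottom} ($x$) level: the graph induced on levels $2$ and $3$ splits into connected components---again matching the connected subgraphs delivered by the inner $\circledcirc$---and contracting them yields a $2$-leveled non-connected graph acted on by $\circledcirc_\mathrm{nc}$. The same bookkeeping yields $\chi_\gra=1/|\Aut(\gra)|$, so both products equal $\sum_{\gra \in \3ldcncGra_{\Sy}}\tfrac{1}{|\Aut(\gra)|}\,\gra(x,y,z)$. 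I expect the only delicate step to be this connectivity compatibility---verifying that cutting the appropriate outer level produces connected components in canonical bijection with the inner subgraphs, so that the coefficient computation of \cref{thm:dcGraGp} transfers unchanged; everything else is a routine reprise of that proof.
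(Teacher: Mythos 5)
Your proposal is correct, but it takes a genuinely different route from the paper. You redo the full three-level coefficient computation of \cref{thm:dcGraGp} in the mixed connected/non-connected setting, arguing that the canonical decomposition (cut the outer level, read off the connected components as the inner subgraphs, contract to get $\gra'$), the automorphism identity, and the four coefficient factors never use connectivity of $\gra$ or $\gra'$ — only connectivity of the inserted pieces $\gra_i$, which is guaranteed because the \emph{inner} product is $\circledcirc$. That claim is correct (one can check it, e.g., on the graph of the paper's counterexample, where your recipe gives $1/4=1/|\Aut(\gra)|$), and you rightly identify it as exactly the point that fails for $\circledcirc_\mathrm{nc}\circ\circledcirc_\mathrm{nc}$. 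The paper instead exploits the distributive law $\Liencgra\cong\Com\circ\Liegra$: it first proves the ``exponential'' identity $(\1+x)\circledcirc_\mathrm{nc}(\1+y)=\sum_{n}\tfrac{1}{n!}\,m_n\bigl(((\1+x)\circledcirc(\1+y))^{\otimes n}\bigr)$ by a one-level count of automorphisms of a disjoint union in terms of its connected components, and then deduces both equalities and the closed formula purely formally from the associativity of $\circledcirc$ already established in \cref{thm:dcGraGp}. The paper's route confines the new combinatorics to a much simpler count, makes transparent that associativity of the connected product is the real input, and produces the reusable identity $(\circleddash)$ as a byproduct; your route is self-contained at the level of the three-level analysis but rests on the global assertion that every step of that longer computation is connectivity-agnostic — true, but asserted rather than re-verified factor by factor, and it would be worth at least spelling out why the identity $|\Aut(\gra)|=|\Aut(\gra')|\prod_m j_m!\,|\Aut^{\text{mark}}(\dot\gamma_m)|^{j_m}/(t_1!\cdots t_k!)$ survives when $\gra$ and $\gra'$ are disconnected.
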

	
	\begin{proof}
		Recall from \cref{prop:LiencgraDistLaw} that the operad $\Liencgra$ is given by a distributive law, so 
		$\Liencgra\cong \mathrm{Com}\circ \Liegra$~. 
		For $n\geqslant 1$, let $m_n$ be the generator of $\mathrm{Com}(n)$. We start by showing that 
		\begin{equation}\tag{$\circleddash$}\label{eq:nc}
			(\1+x)\circledcirc_\mathrm{nc} (\1+y) = \sum_{n\geqslant 1}\frac{1}{n!} m_n\left(\big((\1+x)\circledcirc (\1+y)\big)^{\otimes n}\right)~.
		\end{equation}
		Let us focus on the coefficient of a graph $\gra \in \dsncGra$. We write 
		\[\gra = \big(\gra^1_1 \sqcup \dots \sqcup \gra_{i_1}^1\big) \sqcup \dots \sqcup \big(\gra_1^k\sqcup \dots \sqcup \gra_{i_k}^k\big)~,\]
		
		where $\gra^j_1,\dots,\gra_{i_j}^j$ are isomorphic connected components, whose isomorphism class will by denoted by $\gra^j$. An automorphism of $\gra$ is given by automorphism of its connected components and permutations of isomorphic connected components, from which we deduce 
		\[
		| \Aut (\gra)| = \left|\Aut \big(\gra^1\big)\right|^{i_1} \dots \left|\Aut \big(\gra^k\big)\right|^{i_k} i_1! \dots i_k!~.
		\]
		The contributions of $\gra$ on the right-hand side of \cref{eq:nc} can only come when we take $m_{i_1+\dots + i_k}$ of any permutation of $i_1$ times the graph $\gra^1$, ..., and $i_k$ times the graph $\gra^k$. There are $\binom{i_1+\dots + i_k}{i_i,\dots,i_k}$ of such permutations and therefore the total contribution of the right-hand side is equal to 
		\[
		\frac{1}{n!}\frac{1}{\left|\Aut \big(\gra^1\big)\right|^{i_1} \dots \left|\Aut \big(\gra^k\big)\right|^{i_k}}\binom{i_1+\dots + i_k}{i_i,\dots,i_k}= \frac{1}{\left|\Aut \big(\gra^1\big)\right|^{i_1} \dots \left|\Aut \big(\gra^k\big)\right|^{i_k}i_i!\dots i_k! } = \frac{1}{| \Aut (\gra)|}~.
		\]
		
		The statement of the proposition now follows from the associativity of $\circledcirc$:
		
		\begin{align*}
			((\1+x)\circledcirc (\1+y))\circledcirc_\mathrm{nc} (\1+z)=  
			\sum_{n\geq 1}\frac{1}{n!} m_n\left( \big(((\1+x)\circledcirc (\1+y))\circledcirc (\1+z)\big)^{\otimes n}\right) =  \\ \notag
			=\sum_{n\geq 1}\frac{1}{n!} m_n\left( \big((\1+x)\circledcirc ((\1+y)\circledcirc (\1+z))\big)^{\otimes n}\right) = (\1+x)\circledcirc_\mathrm{nc} ((\1+y)\circledcirc (\1+z))~.
		\end{align*}

\medskip	

		As shown in the proof of Theorem \ref{thm:dcGraGp}, the product $(\1+x)\circledcirc (\1+y)\circledcirc (\1+z)$ is given by the sum over all $3$-leveled connected graphs, divided by the order of their automorphism groups. 
		Applying that result to the manipulation from the equation above, we obtain
		\[((\1+x)\circledcirc (\1+y))\circledcirc_\mathrm{nc} (\1+z)= 
		\sum_{\gra \in \3ldcncGra_{\Sy}} {\textstyle \frac{1}{|\Aut(\gra)|}} \, \gra(x,y,z)~.\]
	\end{proof}

\subsection{Graph exponential map} The following new kind of exponential map relates the above two notions of gauge groups. 

\begin{definition}[Total levelisation]
	A \emph{total levelisation} of a directed simple graph $\gra\in \dcGra_{\Sy}$  is the data of $|\gra|$-ordered levels each of which carrying only one vertex and which respects the adjacency condition. We denote the number of total levelisations of a graph by $\ell_\gra$~. 
\end{definition}

\begin{remark}
	In the case of rooted trees, this coefficient is called the \emph{Connes--Moscovici coefficient}, see \cite{Kreimer99, Brouder00}. 
\end{remark}

\begin{definition}[Graph exponential map]\label{def:exp}
	For any complete Lie-graph algebra $(\g, \F, \{\gra\}_{\gra\in \dcGra})$, the \emph{graph exponential map} is defined by 
	\[ \begin{array}{lccc}
		\exp\ : & \F_1 \g_0 & \to &\1 +\F_1 \g_0\\
		&x&\mapsto& \displaystyle\1 + \sum_{\gra \in \dcGra_{\Sy}} \frac{\ell_\gra}{|\gra|!}\,  \gra(x)\ .
	\end{array} \]
\end{definition}
\begin{figure*}[h]
	\[\exp(x)\ =\ \1 \  + \
	\vcenter{\hbox{\begin{tikzpicture}[scale=0.4]
				\draw[thick]	(0,0)--(1,0)--(1,1)--(0,1)--cycle;
				\draw (0.5,0.5) node {{$x$}} ; 
	\end{tikzpicture}}}
	\ + \
	\scalebox{1.1}{$\frac{1}{2}$}\ 
	\vcenter{\hbox{\begin{tikzpicture}[scale=0.4]
				\draw[thick]	(0,0)--(1,0)--(1,1)--(0,1)--cycle;
				\draw[thick]	(0,2)--(1,2)--(1,3)--(0,3)--cycle;
				\draw[thick]	(0.5,1)--(0.5,2);
				\draw (0.5,0.5) node {{$x$}} ; 
				\draw (0.5,2.5) node {{$x$}} ; 
	\end{tikzpicture}}}
	\ + \
	\scalebox{1.1}{$\frac{1}{6}$}\ 
	\vcenter{\hbox{\begin{tikzpicture}[scale=0.4]
				\draw[thick]	(0,0)--(1,0)--(1,1)--(0,1)--cycle;
				\draw[thick]	(2,0)--(3,0)--(3,1)--(2,1)--cycle;
				\draw[thick]	(1,2)--(2,2)--(2,3)--(1,3)--cycle;
				\draw[thick]	(0.5,1)--(1.33,2);
				\draw[thick]	(2.5,1)--(1.66,2);
				\draw (1.5,2.5) node {{$x$}} ; 
				\draw (0.5,0.5) node {{$x$}} ; 
				\draw (2.5,0.5) node {{$x$}} ; 	
	\end{tikzpicture}}}
	\ + \ 
	\scalebox{1.1}{$\frac{1}{6}$}\ 
	\vcenter{\hbox{\begin{tikzpicture}[scale=0.4]
				\draw[thick]	(0,2)--(1,2)--(1,3)--(0,3)--cycle;
				\draw[thick]	(2,2)--(3,2)--(3,3)--(2,3)--cycle;
				\draw[thick]	(1,0)--(2,0)--(2,1)--(1,1)--cycle;
				\draw[thick]	(0.5,2)--(1.33,1);
				\draw[thick]	(2.5,2)--(1.66,1);
				\draw (1.5,0.5) node {{$x$}} ; 
				\draw (0.5,2.5) node {{$x$}} ; 
				\draw (2.5,2.5) node {{$x$}} ; 	
	\end{tikzpicture}}}
	\ + \ 
	\scalebox{1.1}{$\frac{1}{6}$}\ 
	\vcenter{\hbox{\begin{tikzpicture}[scale=0.4]
				\draw[thick]	(0,0)--(1,0)--(1,1)--(0,1)--cycle;
				\draw[thick]	(0,2)--(1,2)--(1,3)--(0,3)--cycle;
				\draw[thick]	(0,4)--(1,4)--(1,5)--(0,5)--cycle;
				\draw[thick]	(0.5,1)--(0.5,2);
				\draw[thick]	(0.5,3)--(0.5,4);	
				\draw (0.5,0.5) node {{$x$}} ; 
				\draw (0.5,2.5) node {{$x$}} ; 
				\draw (0.5,4.5) node {{$x$}} ; 
	\end{tikzpicture}}}
	\ + \ 
	\scalebox{1.1}{$\frac{1}{6}$}\ 
	\vcenter{\hbox{\begin{tikzpicture}[scale=0.4]
				\draw[thick]	(0,0)--(1,0)--(1,1)--(0,1)--cycle;
				\draw[thick]	(0,2)--(1,2)--(1,3)--(0,3)--cycle;
				\draw[thick]	(0,4)--(1,4)--(1,5)--(0,5)--cycle;
				\draw[thick]	(0.33,1)--(0.33,2);
				\draw[thick]	(0.33,3)--(0.33,4);
				\draw[thick]	(0.66,1) to[out=60,in=270] (1.5,2.5) to[out=90,in=300] (0.66,4);	
				\draw (0.5,0.5) node {{$x$}} ; 
				\draw (0.5,2.5) node {{$x$}} ; 
				\draw (0.5,4.5) node {{$x$}} ; 
	\end{tikzpicture}}}
	\ + \
	\scalebox{1.1}{$\frac{1}{8}$}\ 
	\vcenter{\hbox{\begin{tikzpicture}[scale=0.4]
				\draw[thick]	(0,0)--(1,0)--(1,1)--(0,1)--cycle;
				\draw[thick]	(2,0)--(3,0)--(3,1)--(2,1)--cycle;
				\draw[thick]	(1,2)--(2,2)--(2,3)--(1,3)--cycle;
				\draw[thick]	(0,-2)--(1,-2)--(1,-1)--(0,-1)--cycle;	
				\draw[thick]	(0.5,1)--(1.33,2);
				\draw[thick]	(2.5,1)--(1.66,2);
				\draw[thick]	(0.5,0)--(0.5,-1);	
				\draw (0.5,-1.5) node {{$x$}} ; 
				\draw (1.5,2.5) node {{$x$}} ; 
				\draw (0.5,0.5) node {{$x$}} ; 
				\draw (2.5,0.5) node {{$x$}} ; 	
	\end{tikzpicture}}}
	\ + \ 
	\scalebox{1.1}{$\frac{1}{24}$}\ 
	\vcenter{\hbox{\begin{tikzpicture}[scale=0.4]
				\draw[thick]	(0.66,2)--(2.33,1);	
				\draw[draw=white,double=black,double distance=2*\pgflinewidth,thick]	(0.66,1)--(2.33,2);		
				\draw[thick]	(0,0)--(1,0)--(1,1)--(0,1)--cycle;
				\draw[thick]	(2,0)--(3,0)--(3,1)--(2,1)--cycle;
				\draw[thick]	(0,2)--(1,2)--(1,3)--(0,3)--cycle;
				\draw[thick]	(2,2)--(3,2)--(3,3)--(2,3)--cycle;	
				\draw[thick]	(0.33,1)--(0.33,2);
				\draw[thick]	(2.66,1)--(2.66,2);
				\draw (0.5,2.5) node {{$x$}} ; 
				\draw (2.5,2.5) node {{$x$}} ; 	
				\draw (0.5,0.5) node {{$x$}} ; 
				\draw (2.5,0.5) node {{$x$}} ; 	
	\end{tikzpicture}}}
	\ + \ \cdots\]
	\caption{The first terms of the graph exponential map.}
	\label{Fig:GraphExp}
\end{figure*}
So the first terms of the graph exponential are equal to $\1+x+x\star x+\cdots$\ . Notice that the other terms \emph{cannot} be obtained by the iteration of the Lie-admissible product and thus require the use of the Lie-graph algebra structure. This key observation is the central point of the present paper. 
A source of inspiration for the introduction of the  present exponential map is the work 
\cite{Chapoton02ArXiv} of F. Chapoton.

\begin{lemma}\label{lem:ExpLnBij}
	For any complete Lie-graph algebra, the graph exponential map is bijective, 
	\[\exp\ :  \F_1 \g_0 \xrightarrow{\cong}\1 +\F_1 \g_0\ . \]
\end{lemma}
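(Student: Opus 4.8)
The plan is to use that, in the complete filtered setting, the graph exponential map has the shape ``identity plus strictly higher-order perturbation'', and that any such map between complete filtered modules is invertible by a contraction argument.

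First I would isolate the leading term. The unique directed simple graph with one vertex has a single vertex and exactly one total levelisation, so its coefficient $\ell_\gra/|\gra|!$ equals $1$ and it contributes precisely $x$. Every other graph $\gra\in\dcGra_{\Sy}$ has $|\gra|\geqslant 2$ vertices; since each operation satisfies $\gra\colon \F_{i_1}\g_0\otimes\cdots\otimes\F_{i_k}\g_0\to\F_{i_1+\cdots+i_k}\g_0$ and $x\in\F_1\g_0$, the contribution $\gra(x)$ lies in $\F_{|\gra|}\g_0\subset\F_2\g_0$. Writing
\[
R(x)\coloneq\sum_{\gra\in\dcGra_{\Sy},\,|\gra|\geqslant 2}\frac{\ell_\gra}{|\gra|!}\,\gra(x)\in\F_2\g_0,
\]
we obtain $\exp(x)=\1+x+R(x)$. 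As $\dcGra_{\Sy}$ contains only finitely many graphs with a fixed number of vertices (there is at most one edge between two vertices), the series $R(x)$ converges in the complete topology and defines a map $\F_1\g_0\to\F_2\g_0$.

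The key technical step is the contraction estimate: if $x\equiv x'\pmod{\F_k\g_0}$ then $R(x)\equiv R(x')\pmod{\F_{k+1}\g_0}$. I would prove this using the multilinearity of each operadic action: for a graph with $m=|\gra|\geqslant 2$ vertices, the telescoping identity
\[
\gra(x)-\gra(x')=\sum_{i=1}^{m}\gra(x',\dots,x',\,x-x',\,x,\dots,x)
\]
expresses the difference as a sum of terms whose $i$-th slot is filtered by $k$ and whose remaining $m-1$ slots are filtered by $1$, so each term lies in $\F_{k+m-1}\g_0\subset\F_{k+1}\g_0$. Summing over all graphs and using that $\F_{k+1}\g_0$ is closed gives the estimate (and, in particular, the continuity of $R$).

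Injectivity and surjectivity then follow formally. For injectivity, $\exp(x)=\exp(x')$ gives $x-x'=R(x')-R(x)$; if $x\neq x'$, then since $\bigcap_k\F_k\g_0=0$ by completeness, there is a finite $k$ with $x-x'\in\F_k\g_0\setminus\F_{k+1}\g_0$, contradicting $x-x'\in\F_{k+1}\g_0$. For surjectivity, given $\1+y$ I would solve $x=y-R(x)$ via the iteration $x_0=y$, $x_{n+1}=y-R(x_n)$; the contraction estimate yields $x_{n+1}-x_n\in\F_{n+2}\g_0$, so the sequence is Cauchy and, by completeness, converges to a fixed point $x_\infty$ with $\exp(x_\infty)=\1+y$. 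I expect the only genuine subtlety to be the contraction estimate, namely the careful tracking of filtration degrees through the multilinear graph operations; the rest is the standard inverse-function-theorem argument for complete filtered modules.
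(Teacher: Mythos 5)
Your proof is correct and follows essentially the same route as the paper: both exploit that $\exp(x)=\1+x+(\text{terms in }\F_2\g_0)$ and run the classical successive-approximation argument for complete filtered modules, the paper constructing the inverse degree by degree where you package the same induction as a contraction estimate plus a fixed-point iteration. The one step the paper leaves implicit and you rightly make explicit is the estimate that $x\equiv x'\pmod{\F_k\g_0}$ implies $R(x)\equiv R(x')\pmod{\F_{k+1}\g_0}$, which follows from multilinearity exactly as you say.
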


\begin{proof}
	The arguments are classical and similar to the proof that a formal power series with unit constant term is invertible. For the sake of completeness, we adapt it here. 
	
	\medskip
	
	First, we prove that the graph exponential map is injective. Let $x$ and $y$ in $\F_1\g_0$ such that $\exp(x)=\exp(y)$~. 
	We will  prove, by induction on $k\geqslant 2$, that $x\equiv y \mod\F_k\g_0$, that is their projections onto 
	$\F_1\g_0/\F_k\g_0$ agree; this will imply that $x=y$ since $\g_0$ is complete. The image of the graph exponential map is equal to $\exp(x)=\1+\sum_{k\geqslant 1} x^{(k)}$, where $x^{(k)}\in \F_k\g_0$ is obtained by the action on $x$ of a finite sum of directed simple graphs with $k$ vertices. 
	For instance, $x^{(1)}=x$~. 
	The projection of 
	$\exp(x)-\1=x+\sum_{k\geqslant 2} x^{(k)}= \exp(y)-\1=y+\sum_{k\geqslant 2} y^{(k)}$ on $\F_1\g_0/\F_2\g_0$ shows that there exists $f^{(2)}\in \F_2 \g_0$ such that $x=y+f^{(2)}$~. Suppose now that there exists $f^{(k)}\in \F_k \g_0$ such that $x=y+f^{(k)}$~. The equation $\exp(x)=\exp(y)$ implies then 
	$x+y^{(2)}+\cdots+y^{(k)}\equiv y+y^{(2)}+\cdots+y^{(k)} \mod 
	\F_{k+1}\g_0$, which concludes the induction. 
	
	\medskip
	
	We prove now that the graph exponential map is surjective. Let $x\in \F_1\g_0$. We claim that there exists a convergent series $y=x+f^{(2)}+f^{(3)}+\cdots$, with $f^{(k)}\in \F_k\g_0$, for any $k\geqslant 2$, satisfying 
	$\exp(y)\equiv\exp\big(x+f^{(2)}+\cdots+f^{(k-1)}\big)\equiv \1+x \mod \F_k\g_0$. Such a series can be iteratively found as follows. For $k=2$, we have $\exp(x)\equiv\1+x \mod \F_2\g_0$. Let us now suppose that there exists $y=x+f^{(2)}+\cdots+f^{(k-1)}$, with $f^{(i)}\in \F_i\g_0$,  such that 
	$\exp(y)\equiv\exp\big(x+f^{(2)}+\cdots+f^{(j-1)}\big)\equiv \1+x \mod \F_j\g_0$, for any $2\leqslant j \leqslant k$.
	For any $f^{(k)}\in \F_k\g_0$, we have 
	$\exp\big(x+f^{(2)}+\cdots+f^{(k)}\big)\equiv \1+x+f^{(k)}+z^{(k)} \mod \F_{k+1}\g_0$, where $z^{(k)}\in \F_k\g_0$ is given by 
	the action of a finite sum of directed simple graphs with at most $k$ vertices  on $x, f^{(2)}, \ldots, f^{(k-1)}$~. It is thus enough to set $f^{(k)}\coloneq -z^{(k)}$ for the equation to hold modulo $\F_{k+1}\g_0$~. 
	
\end{proof}

\begin{definition}[Graph logarithm map]
	For any complete Lie-graph algebra, 
	the inverse of the graph exponential map is called the \emph{graph logarithm map}:
	\[\ln\ :  \1+\F_1 \g_0 \xrightarrow{\cong} \F_1 \g_0\ . \]
\end{definition}

\begin{figure*}[h]
	\[\ln(\1+x)\ =
	\vcenter{\hbox{\begin{tikzpicture}[scale=0.4]
				\draw[thick]	(0,0)--(1,0)--(1,1)--(0,1)--cycle;
				\draw (0.5,0.5) node {{$x$}} ; 
	\end{tikzpicture}}}
	\ - \
	\scalebox{1.1}{$\frac{1}{2}$}\ 
	\vcenter{\hbox{\begin{tikzpicture}[scale=0.4]
				\draw[thick]	(0,0)--(1,0)--(1,1)--(0,1)--cycle;
				\draw[thick]	(0,2)--(1,2)--(1,3)--(0,3)--cycle;
				\draw[thick]	(0.5,1)--(0.5,2);
				\draw (0.5,0.5) node {{$x$}} ; 
				\draw (0.5,2.5) node {{$x$}} ; 
	\end{tikzpicture}}}
	\ + \
	\scalebox{1.1}{$\frac{1}{12}$}\ 
	\vcenter{\hbox{\begin{tikzpicture}[scale=0.4]
				\draw[thick]	(0,0)--(1,0)--(1,1)--(0,1)--cycle;
				\draw[thick]	(2,0)--(3,0)--(3,1)--(2,1)--cycle;
				\draw[thick]	(1,2)--(2,2)--(2,3)--(1,3)--cycle;
				\draw[thick]	(0.5,1)--(1.33,2);
				\draw[thick]	(2.5,1)--(1.66,2);
				\draw (1.5,2.5) node {{$x$}} ; 
				\draw (0.5,0.5) node {{$x$}} ; 
				\draw (2.5,0.5) node {{$x$}} ; 	
	\end{tikzpicture}}}
	\ + \ 
	\scalebox{1.1}{$\frac{1}{12}$}\ 
	\vcenter{\hbox{\begin{tikzpicture}[scale=0.4]
				\draw[thick]	(0,2)--(1,2)--(1,3)--(0,3)--cycle;
				\draw[thick]	(2,2)--(3,2)--(3,3)--(2,3)--cycle;
				\draw[thick]	(1,0)--(2,0)--(2,1)--(1,1)--cycle;
				\draw[thick]	(0.5,2)--(1.33,1);
				\draw[thick]	(2.5,2)--(1.66,1);
				\draw (1.5,0.5) node {{$x$}} ; 
				\draw (0.5,2.5) node {{$x$}} ; 
				\draw (2.5,2.5) node {{$x$}} ; 	
	\end{tikzpicture}}}
	\ + \ 
	\scalebox{1.1}{$\frac{1}{3}$}\ 
	\vcenter{\hbox{\begin{tikzpicture}[scale=0.4]
				\draw[thick]	(0,0)--(1,0)--(1,1)--(0,1)--cycle;
				\draw[thick]	(0,2)--(1,2)--(1,3)--(0,3)--cycle;
				\draw[thick]	(0,4)--(1,4)--(1,5)--(0,5)--cycle;
				\draw[thick]	(0.5,1)--(0.5,2);
				\draw[thick]	(0.5,3)--(0.5,4);	
				\draw (0.5,0.5) node {{$x$}} ; 
				\draw (0.5,2.5) node {{$x$}} ; 
				\draw (0.5,4.5) node {{$x$}} ; 
	\end{tikzpicture}}}
	\ + \ 
	\scalebox{1.1}{$\frac{1}{3}$}\ 
	\vcenter{\hbox{\begin{tikzpicture}[scale=0.4]
				\draw[thick]	(0,0)--(1,0)--(1,1)--(0,1)--cycle;
				\draw[thick]	(0,2)--(1,2)--(1,3)--(0,3)--cycle;
				\draw[thick]	(0,4)--(1,4)--(1,5)--(0,5)--cycle;
				\draw[thick]	(0.33,1)--(0.33,2);
				\draw[thick]	(0.33,3)--(0.33,4);
				\draw[thick]	(0.66,1) to[out=60,in=270] (1.5,2.5) to[out=90,in=300] (0.66,4);	
				\draw (0.5,0.5) node {{$x$}} ; 
				\draw (0.5,2.5) node {{$x$}} ; 
				\draw (0.5,4.5) node {{$x$}} ; 
	\end{tikzpicture}}}
	\ + \ \cdots\]
	\caption{The first terms of the graph logarithm map.}
	\label{Fig:GraphLog}
\end{figure*}

\begin{remark}
	The present graph exponential map generalises the pre-Lie exponential map and 
	the present graph logarithm map generalises the Magnus expansion map, see \cite{AgrachevGamkrelidze80, Manchon11} for instance. 
	Like in the pre-Lie case \cite{EFM09}, it would be interesting to unravel a closed formula for the graph logarithm map.
\end{remark}

In Lie theory, the exponential map is a key notion, which recovers the local group structure from the infinitesimal Lie structure on on the tangent space; it is characterized by a certain differential equation. The present graph exponential map plays the same role for Lie-graph algebras; we describe its characterizing differential equation below. This Lie theoretical property satisfied by the  exponential graph map will play a major role in the proof of the forthcoming \cref{thm:ExpIsoMorph}. 

\begin{definition}[$\rhd$-product and $\lhd$-product]
		Given  a complete Lie-graph algebra $\g$ and two elements $x,y\in \F_1\mathfrak g_0$, the elements $(\1 + x)\rhd y$ and $x\lhd (\1 +y)$ of $\F_1\mathfrak g_0$ are defined respectively by
		
		\[
		(\1 + x)\rhd y \coloneq \vcenter{\hbox{\begin{tikzpicture}[scale=0.4]
					\draw[thick]	(0,0)--(1,0)--(1,1)--(0,1)--cycle;
					\draw (0.5,0.5) node {{$y$}} ; 
		\end{tikzpicture}}}
		\ + \
		\sum_{r\geqslant 1}\scalebox{1.1}{$\frac{1}{r!}$}\ 
		\vcenter{\hbox{\begin{tikzpicture}[scale=0.4]
					\draw[thick]	(0,0)--(1,0)--(1,1)--(0,1)--cycle;
					\draw[thick]	(1.5,0)--(2.5,0)--(2.5,1)--(1.5,1)--cycle;
					\draw[thick]	(4.5,0)--(5.5,0)--(5.5,1)--(4.5,1)--cycle;	
					\draw[thick]	(2,2)--(3,2)--(3,3)--(2,3)--cycle;
					\draw[thick]	(0.5,1)--(2.33,2);
					\draw[thick]	(2,1)--(2.5,2);
					\draw[thick]	(5,1)--(2.66,2);	
					\draw (2.5,2.5) node {{$y$}} ; 
					\draw (0.5,0.5) node {{$x$}} ; 
					\draw (2,0.5) node {{$x$}} ; 	
					\draw (5,0.5) node {{$x$}} ; 		
					\draw (3.58,0.4) node {{$\cdots$}} ; 		
		\end{tikzpicture}}} \quad \text{and}\quad 
		x\lhd (\1 + y) \coloneq \vcenter{\hbox{\begin{tikzpicture}[scale=0.4]
					\draw[thick]	(0,0)--(1,0)--(1,1)--(0,1)--cycle;
					\draw (0.5,0.5) node {{$x$}} ; 
		\end{tikzpicture}}}
		\ + \
		\sum_{r\geqslant 1}\scalebox{1.1}{$\frac{1}{r!}$}\ 
		\vcenter{\hbox{\begin{tikzpicture}[scale=0.4]	
	\coordinate (A) at (0, 2);
	\draw[thick]	($(A)$)--($(A)+(0, 1)$)--($(A)+(1,1)$)--($(A)+(1, 0)$)--cycle;	
	\draw ($(A)+(0.5,0.5)$) node {$y$}; 
	\coordinate (A) at (1.5, 2);
	\draw[thick]	($(A)$)--($(A)+(0, 1)$)--($(A)+(1,1)$)--($(A)+(1, 0)$)--cycle;
	\draw ($(A)+(0.5,0.5)$) node {$y$}; 
	\coordinate (A) at (4.5, 2);
	\draw[thick]	($(A)$)--($(A)+(0, 1)$)--($(A)+(1,1)$)--($(A)+(1, 0)$)--cycle;
	\draw ($(A)+(0.5,0.5)$) node {$y$}; 	
	\coordinate (A) at (2,0);
	\draw ($(A)+(0.5,0.5)$) node {$x$}; 	
	\draw[thick]	($(A)$)--($(A)+(0, 1)$)--($(A)+(1,1)$)--($(A)+(1, 0)$)--cycle;
	\draw ($(A)+(0.5,0.5)$) node {$x$}; 
	\draw[thick]	(0.5,2)--(2.33,1);
	\draw[thick]	(2,2)--(2.5,1);
	\draw[thick]	(5,2)--(2.66,1);	
	\draw (3.58,2.4) node {{$\cdots$}} ; 							
		\end{tikzpicture}}}~.		
		\]
\end{definition}

\begin{remark}
The product $(\1 + x)\rhd y$ is made up of the summands of $(\1 + x)\cc (\1 +y)$ with only one vertex labelled by $y$, that is its linear part in $y$. 
The product $x\lhd (\1 +y)$ is made up of the summands of $(\1 + x)\cc (\1 +y)$ with only one vertex labelled by $x$, that is its linear part in $x$. 
This explains the choice of notation, which also coincides with the convention of \cite[Section~3.3]{HLV19}. 
\end{remark}

	\begin{lemma}\label{lem:EDexp}
		For any complete Lie-graph algebra $\mathfrak g$, any $z \in \F_1 \mathfrak g_0$ and any $t \in \k$, we have
		\[
		\frac{d}{dt}\exp(tz)=\exp(tz)\rhd z = z \lhd \exp(tz)~ .
		\]		
	\end{lemma}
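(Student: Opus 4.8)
The plan is to reduce everything to the free complete Lie-graph algebra on one generator $z$, placed in degree $0$ as in the gauge setting, where the elements $\gra(z)$ for $\gra \in \dcGra_{\Sy}$ are linearly independent, so that it suffices to match the coefficient of each $\gra(z)$ in the three expressions. For the left-hand side this is immediate: since each operation $\gra$ is $|\gra|$-linear we have $\gra(tz)=t^{|\gra|}\gra(z)$, so by \cref{def:exp}
\[
\frac{d}{dt}\exp(tz)=\frac{d}{dt}\Big(\1+\sum_{\gra}\tfrac{\ell_\gra}{|\gra|!}\,t^{|\gra|}\gra(z)\Big)=\sum_{\gra}\frac{\ell_\gra}{(|\gra|-1)!}\,t^{|\gra|-1}\gra(z).
\]
Thus I must show that the coefficient of $\gra(z)$ in $\exp(tz)\rhd z$, and symmetrically in $z\lhd\exp(tz)$, equals $\tfrac{\ell_\gra}{(|\gra|-1)!}t^{|\gra|-1}$.

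First I would unfold the right-hand side. Writing $w\coloneq\exp(tz)-\1\in\F_1\g_0$, the defining formula of the $\rhd$-product reads $\exp(tz)\rhd z=\sum_{r\geqslant0}\tfrac1{r!}\,\mathrm{cor}_r(w,\dots,w;z)$, where $\mathrm{cor}_r\in\dcGra$ is the corolla with one (top) source joined by a single edge to each of its $r$ (bottom) sinks. Expanding each slot as $w=\sum_\beta\tfrac{\ell_\beta}{|\beta|!}t^{|\beta|}\beta(z)$ and using that the action is that of an algebra over the operad $\Liegra$, each composite $\mathrm{cor}_r(\beta_1(z),\dots,\beta_r(z);z)$ is computed by the partial compositions of $\Liegra$: one inserts $\beta_i$ at the $i$-th bottom vertex and, following Point (2) of the definition of $\circ_i$, redistributes the edge issued from the source over all nonempty subsets of the vertices of $\beta_i$. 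Hence $\exp(tz)\rhd z$ becomes a sum, over all connected directed simple graphs $\gra$ equipped with a distinguished source vertex $v_0$ whose deletion produces the components $\beta_1,\dots,\beta_r$, of suitably weighted copies of $\gra(z)$; the power of $t$ is automatically $t^{|\beta_1|+\cdots+|\beta_r|}=t^{|\gra|-1}$.

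It then remains to check the rational coefficient, and the combinatorial engine here is the source-removal recursion for total levelisations: a topmost level must carry a maximal vertex, i.e. a source, so for a labelled representative one has $L_\gra=\sum_{v_0\in\mathrm{Src}(\gra)}L_{\gra\setminus v_0}$, where $L_\gra$ denotes the number of linear extensions of the underlying poset. Combined with the shuffle identity $L_{\beta_1\sqcup\cdots\sqcup\beta_r}=\binom{|\gra|-1}{|\beta_1|,\dots,|\beta_r|}\prod_i L_{\beta_i}$, the global prefactor $\tfrac1{(|\gra|-1)!}$ turns into the product $\prod_i\tfrac1{|\beta_i|!}$ that matches the weights coming from the $w$-slots. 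Finally, since $\Aut(\gra)$ acts freely on linear extensions (an automorphism fixing a bijective level assignment is the identity), one has $\ell_\gra=L_\gra/|\Aut(\gra)|$, which lets one pass from the labelled count to the coefficient $\tfrac{\ell_\gra}{(|\gra|-1)!}$ of the isomorphism class $\gra$. The $\lhd$-case is identical after reversing the global top-to-bottom flow: $z\lhd\exp(tz)$ collects graphs with a distinguished \emph{sink} $v_0$, and the analogous sink-removal recursion $\ell_\gra=\sum_{\text{sinks}}\ell_{\gra\setminus v_0}$ yields the same value; hence all three expressions agree.

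The hard part will be precisely this coefficient-matching step: reconciling the corolla symmetry factor $\tfrac1{r!}$, the per-component weights $\tfrac{\ell_{\beta_i}}{|\beta_i|!}$, the multiplicities created by grouping isomorphic components, and the edge-redistribution multiplicities into the single clean factor $\tfrac{\ell_\gra}{(|\gra|-1)!}$. This requires the same careful orbit–stabiliser bookkeeping, and the marked-graph formalism, already used in the proof of \cref{thm:dcGraGp}, which I would invoke rather than redo from scratch.
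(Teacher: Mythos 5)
Your proposal is correct and follows essentially the same route as the paper: both proofs reduce the identity to matching the coefficient of each graph $\gra$ in $\exp(tz)\rhd z$ against $\tfrac{\ell_\gra}{(|\gra|-1)!}t^{|\gra|-1}$ by decomposing $\gra$ at a distinguished source vertex and tracking the symmetry factors via the marked-graph/orbit--stabiliser bookkeeping of \cref{thm:dcGraGp}. The only (cosmetic) difference is that you organise the count through linear extensions of a labelled representative, the source-removal recursion and the identity $\ell_\gra=L_\gra/|\Aut(\gra)|$, whereas the paper fixes a total levelisation and shows directly that each one contributes $\tfrac{1}{(|\gra|-1)!}$ via an explicit product of five coefficients.
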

	
	\begin{proof}
		
		We will only show the first equality, as the second one follows by symmetry. Given a graph $\gra$ with $|\gra|$ vertices, we have $\gra(tz) = t^{|\gra|}\gra(z)$. It follows that
		\[\frac{d}{dt}\exp(tz)=\displaystyle \sum_{\gra \in {\dcGra}_{\Sy}} \frac{\ell_\gra}{(|\gra|-1)!} t^{|\gra|-1}\, \gra(z)\ .\]
		We will give a combinatorial proof that
		\begin{equation}\label{Eq:ddtexp}\tag{$\diamond$}
			\exp(tz)\rhd z=\displaystyle \sum_{\gra \in {\dcGra}_{\Sy}} \frac{\ell_\gra}{(|\gra|-1)!} t^{|\gra|-1}\, \gra(z)\ .
		\end{equation}
		Indeed, the left-hand side is equal to 
		$\sum_{\gra \in {\dcGra}_{\Sy}} \allowbreak\rho_\gra t^{|\gra|-1}\, \gra(z)$~,
		for some coefficients $\rho_\gra$ that we will now compute. 
		
		\medskip 
		
		Let $\gra\in {\dcGra}_{\Sy} $ be a directed simple graph 
		and consider one of its total levelisation, say $\gra_{\mathrm{lev}}$, and  let us see how it can appear on the left-hand side of \cref{Eq:ddtexp}, using arguments similar to the proof of \cref{thm:dcGraGp}. 
		The totally leveled graph $\gra_{\mathrm{lev}}$ admits a top vertex; cutting out its bottom edges attached to it produces $r$ totally leveled graphs $\gra_1, \ldots \gra_r$, with multiplicity, after forgetting the trivial levels without any vertex. These graphs are actually more determined when one considers at which vertices the edges coming from the top vertex is attached to. As above, we call \emph{marked} such totally leveled graphs. In the present case, let us say that the leveled graph $\gra_{\mathrm{lev}}$ induces $l$ different marked leveled graphs $\dot{\gamma}_m$, for $1\leqslant m\leqslant l$~. 
		Let us denote by $j_m$ the number of occurrences of the 
		marked leved graph $\dot{\gamma}_m$ among $\gra_1, \ldots \gra_r$, so that $j_1+\dots+j_l = r$~.
		One can regroup them as follows. Suppose that there are $k$ types of totally leveled graphs among $\gra_1, \ldots \gra_r$~; for any $1\leqslant h\leqslant k$, we denote by $t_h$ the number of marked leveled graphs appearing  among the $\gra_1,\dots ,\gra_r$ when one forget about the extra edges, so that $t_1+\cdots+t_k=r$ is a coarser partition of $r$~. 
		
		\medskip
		
		In the other way around, given such a data, let us see how many times one can get $\gra_{\mathrm{lev}}$ from contributions on the left-hand side. There are 5 different coefficients to take into account.
		
		\begin{enumerate}[i)]
			\item The overall coefficient $\frac{1}{r!}$~. 
			
			\item The coefficient of the totally leveled graphs $\gra_i$ which is equal to 
			$\prod_{i=1}^r \frac{1}{|\gra_i|!}$~.
			
			\item A multinomial coefficient corresponding to the places where the leveled graphs $\gra_1, \ldots \gra_r$ can be put: 
			$ \frac{r!}{t_1!\ldots t_k!}$~.

			\item A multinomial coefficient corresponding to the places where the marked leveled graphs can be put among the leveled graphs $\gra_1, \ldots, \gra_r$~. 
			For each $1\leqslant h \leqslant k$, we denote by $j_{m^h_1}, \ldots, j_{m^h_{n(h)}}$ the respective numbers of marked leveled graphs that give the same leveled graph after forgetting the extra incoming edge, so that 
			$j_{m^h_1}+ \cdots+ j_{m^h_{n(h)}}=t_h$ and $\left\{m^1_1, \ldots, m^k_{n(k)}\right\}=\{1, \ldots, l\}$~. This produces the coefficient 		
			\[
			\prod_{h=1}^k \frac{t_h!}{j_{m^h_1}!  \ldots j_{m^h_{n(h)}}!} = \frac{t_1!\dots t_k!}{j_1!\dots j_l!}~.
			\]		
			
			\item A shuffling coefficient corresponding to the various ways of obtaining the total levelization on $\gra_{\mathrm{lev}}$ from the ones of $\dot{\gamma}_1, \ldots, \dot{\gamma}_l$: 
			\[\frac{|\gra_1|!\ldots |\gra_r|!}{(|\gra|-1)!}j_1!\ldots, j_l!~.\]
		\end{enumerate}
		
		Multiplying all the contributions, we get $\frac{1}{(|\gra|-1)!}$ and thus 
		\[\rho_\gra=\frac{\ell_\gra}{(|\gra|-1)!}~.\]
	\end{proof}

\begin{theorem}\label{thm:ExpIsoMorph}
	For any complete Lie-graph algebra, the graph exponential and logarithm maps are  filtered isomorphisms, thus homeomorphisms, 
	between the gauge group and the deformation gauge group:
	\[\exp \ : \Gamma=\big( \F_1 \g_0,  \BCH, 0\big)
	\stackrel{\cong}{\leftrightarrows}
	\left( \1 + \F_1 \g_0, \circledcirc, \1\right)=\G\ : \ \ln
	\ .\]
\end{theorem}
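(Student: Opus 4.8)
The plan is to reduce the homomorphism identity $\exp(\BCH(x,y)) = \exp(x)\circledcirc \exp(y)$ to a statement in classical finite-dimensional Lie theory. By the functoriality of all the constructions involved, it suffices to establish this identity in the free complete Lie-graph algebra $\g$ on two generators $x$ and $y$: every coefficient appearing is then a universal rational number, and specialising along the unique morphism to an arbitrary complete Lie-graph algebra yields the general case. Since both sides are convergent series, it is enough to check the identity modulo $\F_N \g_0$ for every $N$. First I would record that, by \cref{thm:dcGraGp}, the truncation $\g^{(N)}\coloneq \F_1\g_0/\F_N\g_0$ is a finite-dimensional nilpotent Lie algebra and that $\1 + \g^{(N)}$ carries a group structure under $\circledcirc$ whose law is polynomial; extending scalars from $\QQ$ to $\RR$, this becomes a connected, simply connected nilpotent real Lie group $\G^{(N)}$ with Lie algebra $\g^{(N)}$.

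Next I would identify the two pieces of classical data. For the bracket: expanding the product of \cref{Fig:GrpProd} to second order shows that the bilinear part of $(\1+x)\circledcirc(\1+y)$ is the two-vertex ladder, and its antisymmetrisation recovers exactly the Lie-admissible commutator $[\ ,\,]$; hence the Lie bracket of $\G^{(N)}$ is the very bracket entering the gauge group $\Gamma$. For the exponential: I claim that the graph exponential map agrees with the Lie-theoretic exponential $\exp_{\G^{(N)}}$. This is where \cref{lem:EDexp} does the essential work. Reading off $\frac{d}{dt}\exp(tz) = \exp(tz)\rhd z$ and noting that $(\1+u)\rhd z = \left.\tfrac{d}{ds}\right|_{s=0}(\1+u)\circledcirc(\1+sz)$ is precisely the value at $\1+u$ of the left-invariant vector field generated by $z$, the curve $t \mapsto \exp(tz)$ is seen to be the integral curve through $\1$ of that left-invariant field. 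By uniqueness of solutions of this ordinary differential equation, $\exp(tz)$ coincides with the one-parameter subgroup $\exp_{\G^{(N)}}(tz)$, so the graph exponential equals $\exp_{\G^{(N)}}$.

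With these identifications in hand, the classical Baker--Campbell--Hausdorff theorem applied to the nilpotent Lie group $\G^{(N)}$ gives $\exp_{\G^{(N)}}(x)\circledcirc\exp_{\G^{(N)}}(y) = \exp_{\G^{(N)}}(\BCH(x,y))$, where $\BCH$ is computed with the bracket of $\G^{(N)}$, which we identified with the gauge bracket. Translating through the equality of exponentials, this is exactly the desired identity modulo $\F_N \g_0$; letting $N \to \infty$ proves it in the completed free algebra, and hence in every complete Lie-graph algebra. Finally, \cref{lem:ExpLnBij} already provides that $\exp$ is a bijection with inverse $\ln$, so the homomorphism property upgrades it to a group isomorphism $\Gamma \xrightarrow{\cong} \G$; both $\exp$ and $\ln$ visibly preserve the filtrations $\{\F_k \g_0\}$ and $\{\1 + \F_k \g_0\}$, making them mutually inverse filtered isomorphisms and therefore homeomorphisms.

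The main obstacle, and the conceptual heart of the argument, is the step identifying the graph exponential with the genuine Lie-group exponential: one must recognise \cref{lem:EDexp} as the defining flow equation of the one-parameter subgroup, and carefully justify the passage to the finite-dimensional real setting — rationality of the universal coefficients, base change to $\RR$, and the nilpotent truncation — so that the full machinery of classical Lie theory, in particular the $\BCH$ theorem and the uniqueness of integral curves, may be legitimately invoked.
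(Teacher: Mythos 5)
Your argument is correct and follows essentially the same route as the paper's proof: reduction to the free complete Lie-graph algebra over $\QQ$, truncation modulo $\F_N$ and base change to $\RR$ to obtain a finite-dimensional Lie group, identification of the bracket as the skew-symmetrisation of the two-vertex operation, identification of the graph exponential with the Lie-theoretic exponential via the flow equation of \cref{lem:EDexp}, and the classical $\BCH$ theorem. Your added remark that the truncated group is nilpotent (hence simply connected, making the exponential a global rather than merely local isomorphism) is a small sharpening of the paper's ``at least locally'' but does not change the argument.
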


\begin{proof}
	The graph exponential map and the graph logarithm map preserve the respective filtrations by construction. \cref{lem:ExpLnBij} showed that they are bijective. It  remains to show that they preserve the respective group structures; to this extent, we will show that 
	\begin{equation}\label{Eq:ExpBCH}\tag{$\circ$}
		\exp(\BCH(x,y))=\exp(x)\cc\exp(y)\ .
	\end{equation}
	Since we are working over a ground field of characteristic $0$, it is enough to prove this formula for the free complete Lie-graph algebra $\widehat{\Liegra}_{\mathbb{Q}}(x,y)$ on two elements $x,y$ over the field $\mathbb{Q}$ of rational numbers. 
	This free complete Lie-graph algebra admits for underlying space the product 
	$\prod_{\gra\in {\dcGra}} \mathbb{Q}\gra(x,y)$, where $\g(x,y)$ stands for all the possible ways to label the vertices of the simple graph $\g$ with $x$ and $y$. 
	The filtration $\F_k\widehat{\Liegra}_{\QQ}(x,y)$ is made of up products of directed simple graphs with at least $k$ vertices labelled by $x$ and $y$. Since this Lie-graph algebra is complete, it is enough to prove \cref{Eq:ExpBCH} modulo $\F_k$, for any $k \geqslant 1$~. Let us fix $k$ and work with the $\mathbb{R}$-extension of the $\mathbb{Q}$-vector space 
	$\widehat{\Liegra}_{\QQ}(x,y)/\F_k\widehat{\Liegra}_{\QQ}(x,y)$. This way, the data 
	\[\left(\1+\widehat{\Liegra}_{\mathbb{R}}(x,y)/\F_k\widehat{\Liegra}_{\mathbb{R}}(x,y), \cc, \1\right)\]
	becomes a finite dimensional Lie group so we can apply to it the classical methods of Lie groups as follows. 
	
	\medskip
	
	Notice now that its tangent space at the unit is $\widehat{\Liegra}_{\mathbb{R}}(x,y)/\F_k\widehat{\Liegra}_{\mathbb{R}}(x,y)$~. It is straightforward to see that the Lie bracket induced by $\cc$ is equal to  the skew-symmetrisation of the Lie-admissible product $\star$ given by the action of the 2-vertices graph. 
	In this real truncated version of the deformation gauge group, the differential of the left multiplication 
	$\Lambda_{\1+v} :  \1+w\mapsto (\1+v)\circledcirc (\1+w)$ is equal to 
	\[\mathrm{D}_{\1}\Lambda_{\1+v}(z)=
	\vcenter{\hbox{\begin{tikzpicture}[scale=0.4]
				\draw[thick]	(0,0)--(1,0)--(1,1)--(0,1)--cycle;
				\draw (0.5,0.5) node {{$z$}} ; 
	\end{tikzpicture}}}
	\ + \
	\sum_{r\geqslant 1}\scalebox{1.1}{$\frac{1}{r!}$}\ 
	\vcenter{\hbox{\begin{tikzpicture}[scale=0.4]
				\draw[thick]	(0,0)--(1,0)--(1,1)--(0,1)--cycle;
				\draw[thick]	(1.5,0)--(2.5,0)--(2.5,1)--(1.5,1)--cycle;
				\draw[thick]	(4.5,0)--(5.5,0)--(5.5,1)--(4.5,1)--cycle;	
				\draw[thick]	(2,2)--(3,2)--(3,3)--(2,3)--cycle;
				\draw[thick]	(0.5,1)--(2.33,2);
				\draw[thick]	(2,1)--(2.5,2);
				\draw[thick]	(5,1)--(2.66,2);	
				\draw (2.5,2.5) node {{$z$}} ; 
				\draw (0.5,0.5) node {{$v$}} ; 
				\draw (2,0.5) node {{$v$}} ; 	
				\draw (5,0.5) node {{$v$}} ; 		
				\draw (3.58,0.5) node {{$\cdots$}} ; 		
	\end{tikzpicture}}}
	=(\1+v)\rhd z~,
	\]
	which is the part of $(\1+v)\circledcirc (\1+z)$ made up of only one $z$ labelling the top level of the underlying 2-levels graphs. 
	
	\medskip

The (classical) exponential map of this Lie group  is the map integrating the flow induced by the vector field $\1+v\mapsto (\1+v)\rhd z$ and starting at the unit $\1$~. 
\cref{lem:EDexp} shows that the graph exponential map $\exp(tz)$ satisfies this differential equation, that is 
		$\frac{d}{dt}\varphi(t)=\varphi(t)\rhd z$~, with initial condition $\varphi(0)=\1$~.
	We conclude with the fact that the exponential map of a Lie group is (at least locally) a group morphism from the Hausdorff group defined by the BCH-formula to the Lie group. 
\end{proof}

\begin{remark}
	This proof shows that the graph exponential map is \emph{the} expected exponential map from Lie theory. 
\end{remark}

\subsection{Action on Maurer--Cartan elements}
In any dg Lie algebra $(\g, \d, [\, ,])$, one can use the \emph{differential trick} \cite[Section~1.2]{DotsenkoShadrinVallette22} to reduce  
involved computations to similar ones but without the differential map appearing. 
More precisely, this amounts to working in the one-dimensional abelian extension Lie algebra 
\[\g^+\coloneq(\k\delta\oplus \g, [\, ,])\] 
defined by $[\delta, x]\coloneq \d x$ and by $[\delta, \delta]\coloneq 0$~. 
Since the differential of $\g$ has been made internal in $\g^+$, the various formulas of $\g$ have a simpler form in $\g^+$. 
For instance, Maurer--Cartan elements of $\g$ are viewed as follows in $\g^+$: 
\[
\bar\alpha \in \g \mapsto  \alpha \coloneq \delta+\bar\alpha
\in  \g^+~. \]
In this way, the Maurer--Cartan equation in $\g$ 
is equivalent to the square-zero equation in $\g^+$:
\[[\alpha, \alpha]=0\ .\]
In the extended complete Lie algebra $\g^+$, the gauge group $\Gamma$ acts on the set of Maurer--Cartan elements $\MC(\g)$ via the shorter formula 
\[\lambda.\alpha=e^{\ad_\lambda}(\alpha)\ .\]
Going back to the original complete dg Lie $\g$, one recovers this way the classical but more intricate formula for the gauge group action:
\[
\lambda.\bar\alpha \coloneq e^{\ad_\lambda}(\bar\alpha)+\frac{\id-e^{\ad_\lambda}}{\ad_{\lambda}}(\d\lambda)~. 
\]
\medskip

In order to perform the same kind of differential trick in complete dg Lie-graph algebras, we cannot simply consider a one-dimensional abelian extension; the relevant extension is conceptually defined as follows. 

{We consider the category of \emph{internal dg Lie-graph algebras}, which is made up of dg Lie-graph algebras $(A, \d, \{\gra\}_{\gra \in \dcGra})$ equipped with a degree $-1$ element $\delta$ satisfying, for any $x\in A$:
	\[\d x=\ad_\delta(x)=\delta \star x - (-1)^{|x|}x \star \delta \quad \text{and} \quad \d \delta=0~,\] 
	where $\star$ denote the binary operation in $\g$ coming from the directed simple graph with two vertices.
	Notice that the first relation applied to $x=\delta$ gives $\d \delta = 2 \delta \star \delta$, so the second relation is equivalent to $\delta \star \delta=0$~. 
	The morphisms of internal dg Lie-graph algebras are the one of dg Lie-graph algebras preserving the respective extra degree $-1$ elements. Over the category of graded modules, the category of dg Lie-graph algebras is encoded by the operad $\mathrm{d}\Liegra$ with the differential included in it: 
	\[\mathrm{d}\Liegra\coloneq \frac{\Liegra\vee \mathcal{T}(\d)}{\left(\d \ \text{derivation}, \, \d^2 \right)}~,\]
	where $\d$ is a derivation for any operation in $\gra\in \Liegra(n)$, that is 
	$\d \circ_1 \gra - \sum_{i=1}^n \gra \circ_i \d$~. 
	Similarly, the category of internal dg Lie-graph algebras is encoded by the following operad $\mathrm{d}\Liegra^+$ in graded modules which is defined with an extra degree $-1$ and arity $0$ element:
	\[\mathrm{d}\Liegra^+\coloneq \frac{\Liegra\vee \mathcal{T}(\d, \delta)}{\left(\d \ \text{derivation},\,  \d^2~,\,  
		\d-
		\begin{tikzpicture}[scale=0.4, baseline=(n.base)]
			\node (n) at (0.5,1.3) {};
			\draw[thick]	(0,0)--(1,0)--(1,1)--(0,1)--cycle;
			\draw[thick]	(0,2)--(1,2)--(1,3)--(0,3)--cycle;
			\draw[thick]	(0.5,1)--(0.5,2);
			\draw (0.5,0.5) node {{$\delta$}} ; 
		\end{tikzpicture}
		+
		\begin{tikzpicture}[scale=0.4, baseline=(n.base)]
			\node (n) at (0.5,1.3) {};
			\draw[thick]	(0,0)--(1,0)--(1,1)--(0,1)--cycle;
			\draw[thick]	(0,2)--(1,2)--(1,3)--(0,3)--cycle;
			\draw[thick]	(0.5,1)--(0.5,2);
			\draw (0.5,2.5) node {{$\delta$}} ; 
		\end{tikzpicture}\ , \d(\delta)
		\right)}~.\]
	The  morphism of graded operads $\rho \colon \mathrm{d}\Liegra \to \mathrm{d}\Liegra^+$, preserving $\Liegra$ and $\d$, induces a pair of adjoint functors by \cite[Section~5.2.12]{LodayVallette12}: the right adjoint functor $\rho^*$  forgets the choice of  internal element and the left adjoint $\rho_!$ produces the differential extension for dg Lie-graph algebras.}

{
	\begin{definition}[Differential extension]
		The \emph{differential extension} of a complete dg Lie-graph algebra $\g$ is the complete internal dg Lie-graph algebra 
		$\g^+\coloneq \rho_!(\g)$~.
	\end{definition}
	
	By a slight abuse of notation, we will simply denote by $\g^+$, and not $\rho^*(\g^+)$, the complete dg Lie-graph algebra structure on the differential extension. 
	
	\begin{proposition}\label{prop:inclusion}
		The unit of the $(\rho_!, \rho^*)$-adjunction is an embedding of complete dg Lie-graph algebras: 
		$\g \rightarrowtail \g^+$~.
	\end{proposition}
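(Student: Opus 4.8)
The plan is to reduce the statement to a purely operadic assertion about the morphism $\rho$ and then to analyse $\mathrm{d}\Liegra^+$ by eliminating the internal differential. First I would make the unit explicit. Since $\g$ is a $\mathrm{d}\Liegra$-algebra one has the identification $\g\cong \mathrm{d}\Liegra\circ_{\mathrm{d}\Liegra}\g$, whereas $\rho_!(\g)=\mathrm{d}\Liegra^+\circ_{\mathrm{d}\Liegra}\g$ by definition of the left adjoint; under these identifications the unit $\g\to \rho^*\rho_!(\g)$ is nothing but the map $\rho\circ_{\mathrm{d}\Liegra}\g$ induced by $\rho$. As the relative composite product $(-)\circ_{\mathrm{d}\Liegra}\g$ is additive and carries retractions to retractions, it therefore suffices to prove that $\rho\colon \mathrm{d}\Liegra\to \mathrm{d}\Liegra^+$ is a \emph{split monomorphism of right $\mathrm{d}\Liegra$-modules}: the embedding $\g\rightarrowtail\g^+$ would then be the inclusion of the corresponding direct summand, and injectivity would follow.

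To get a grip on $\mathrm{d}\Liegra^+$ I would use the defining relation $\d=\ad_\delta$ to eliminate the generator $\d$ altogether: propagating it through the derivation relation, every operation of $\mathrm{d}\Liegra^+$ is represented by a $\Liegra$-operation a number of whose inputs have been plugged by the nullary degree $-1$ element $\delta$. This exhibits a weight grading of $\mathrm{d}\Liegra^+$ by the number $b$ of occurrences of $\delta$, for which $\rho$ is homogeneous with $\rho(\d)=\ad_\delta$ of weight $1$; the surviving relations $\delta\star\delta=0$ and $\ad_\delta^2=0$ are both homogeneous for this grading.

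The crux is the construction of the right $\mathrm{d}\Liegra$-module splitting. As a right $\mathrm{d}\Liegra$-module, $\mathrm{d}\Liegra^+$ is filled along its non-$\delta$ inputs by $\mathrm{d}\Liegra$, so it is generated by the ``$\delta$-saturated'' operations, namely those in which every non-$\delta$ input is a bare leaf. I would then pass to the associated graded for the $\delta$-weight and identify it with the operad freely generated by $\Liegra$ together with a square-zero nullary $\delta$; a distributive-law / PBW-type argument over the characteristic $0$ field $\k$ should produce a basis adapted to the right $\mathrm{d}\Liegra$-action and a homogeneous complement $K$ to the image of $\rho$, yielding $\mathrm{d}\Liegra^+\cong \rho(\mathrm{d}\Liegra)\oplus K$ as right $\mathrm{d}\Liegra$-modules. \textbf{This is the step I expect to be the main obstacle}: the relation $\d=\ad_\delta$ intertwines the generator $\d$ of $\mathrm{d}\Liegra$ with the new generator $\delta$ across different weights, so one must check carefully that the relations $\delta\star\delta=0$ and $\ad_\delta^2=0$ do not manufacture elements of $\rho(\mathrm{d}\Liegra)$ out of genuinely $\delta$-decorated ones, i.e. that the proposed splitting is compatible with \emph{all} the operadic relations and not merely with the leading $\delta$-weight.

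Finally I would transport the splitting through $(-)\circ_{\mathrm{d}\Liegra}\g$ to obtain a direct-sum decomposition $\rho_!(\g)\cong \g\oplus (K\circ_{\mathrm{d}\Liegra}\g)$ of graded modules in which the unit is the inclusion of the first summand, hence injective. Because the whole construction is carried out inside the complete operads of \cref{subsec:complete} and respects the filtration by number of vertices, $\g^+$ is a complete internal dg Lie-graph algebra and the unit is filtered; being the unit of the $(\rho_!,\rho^*)$-adjunction it is automatically a morphism of complete dg Lie-graph algebras, which produces the desired embedding $\g\rightarrowtail\g^+$.
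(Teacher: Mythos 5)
Your overall architecture matches the paper's: identify the unit with $\rho\,\widehat{\circ}_{\mathrm{d}\Liegra}\,\g$, reduce the statement to the assertion that $\rho\colon \mathrm{d}\Liegra\to\mathrm{d}\Liegra^+$ is a split monomorphism, and transport the splitting through the relative composite product (your insistence that the splitting be one of \emph{right} $\mathrm{d}\Liegra$-modules is, if anything, more precise than what the paper records). The problem is that the one step carrying all the content --- the injectivity of $\rho$ together with the existence of a complement compatible with the right action --- is exactly the step you do not prove: you describe a ``distributive-law / PBW-type argument'' that \emph{should} produce an adapted basis, and then flag it yourself as the main obstacle. As written this is a genuine gap, not a routine verification, and your chosen normal form makes it harder to close rather than easier: after eliminating $\d$ via $\d=\ad_\delta$ and grading $\mathrm{d}\Liegra^+$ by the number of occurrences of $\delta$, the image of $\rho$ is smeared across all weights (its weight-$0$ part is only $\Liegra$), so there is no homogeneous copy of $\mathrm{d}\Liegra$ to complement and the grading gives no direct handle on injectivity.

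The paper closes the gap with two concrete ingredients you would need to substitute for your conjectural step. First, keeping $\d$ as a generator, it takes the \emph{increasing filtration} of $\mathrm{d}\Liegra^+$ by the number of $\delta$'s and observes that in the associated graded operad the inhomogeneous relation $\d-\ad_\delta$ degenerates to $\ad_\delta=0$, so that $\mathrm{gr}_0\,\mathrm{d}\Liegra^+\cong\mathrm{d}\Liegra$ while the underlying graded $\Sy$-modules of $\mathrm{gr}\,\mathrm{d}\Liegra^+$ and $\mathrm{d}\Liegra^+$ coincide; this is what forces $\rho$ to be injective. Second, the splitting is obtained not by a PBW analysis but simply from the semisimplicity of the group algebras $\k[\Sy_n]$ over the characteristic-$0$ ground field, which yields $\mathrm{d}\Liegra^+\cong\mathrm{d}\Liegra\oplus\mathrm{M}$ and hence $\g^+\cong\g\oplus V$. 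Until you either carry out your PBW-type argument in full or replace it by these two points, the proposal does not establish the embedding.
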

	
	\begin{proof}
		We first claim that the morphism $\rho \colon \mathrm{d}\Liegra \rightarrowtail \mathrm{d}\Liegra^+$ of graded operads is injective. 
		We consider the increasing filtration on the operad $\mathrm{d}\Liegra^+$ given by the number of elements $\delta$ involved. 
		The associated graded operad  admits the following presentation 
		\[\mathrm{gr}\, \mathrm{d}\Liegra^+\cong \frac{\Liegra\vee \mathcal{T}(\d, \delta)}{\left(\d \ \text{derivation},\,  \d^2~,\,  
			\begin{tikzpicture}[scale=0.4, baseline=(n.base)]
				\node (n) at (0.5,1.3) {};
				\draw[thick]	(0,0)--(1,0)--(1,1)--(0,1)--cycle;
				\draw[thick]	(0,2)--(1,2)--(1,3)--(0,3)--cycle;
				\draw[thick]	(0.5,1)--(0.5,2);
				\draw (0.5,0.5) node {{$\delta$}} ; 
			\end{tikzpicture}
			-
			\begin{tikzpicture}[scale=0.4, baseline=(n.base)]
				\node (n) at (0.5,1.3) {};
				\draw[thick]	(0,0)--(1,0)--(1,1)--(0,1)--cycle;
				\draw[thick]	(0,2)--(1,2)--(1,3)--(0,3)--cycle;
				\draw[thick]	(0.5,1)--(0.5,2);
				\draw (0.5,2.5) node {{$\delta$}} ; 
			\end{tikzpicture}\ , \d(\delta)
			\right)}~.\]
		This shows that its part of weight $0$ in $\delta$ is isomorphic to 
		$\mathrm{gr}_0\, \mathrm{d}\Liegra^+\cong \mathrm{d}\Liegra$~.
		Since the underlying two $\Sy$-modules of the operads $\mathrm{gr}\, \mathrm{d}\Liegra^+$ and $\mathrm{d}\Liegra^+$ are isomorphic, this proves the claim. 
		
		\medskip 
		
		We recall then that the functor $\rho_!$ is given by the relative composite product 
		\[\rho_!(\g)\cong \mathrm{d}\Liegra^+ \,\widehat{\circ}_{\mathrm{d}\Liegra}\, \g~. \]
		Since we are working over a field $\k$ of characteristic $0$, every symmetry group algebra $\k[\Sy_n]$ is semisimple, so the underlying graded $\Sy$-module $\mathrm{d}\Liegra^+$ is isomorphic to $\mathrm{d}\Liegra \oplus \mathrm{M}$~. This implies that 
		$\g^+\cong \g \oplus V$~,
		which concludes the proof. 
	\end{proof}
			
	The above arguments apply as well to the Lie and pre-Lie cases. 
	The Lie case is more simple than the Lie-graph case, since we can make this relative composite product explicit in this context:  
	\[
	\mathrm{d}\Lie^+\, \widehat{\circ}_{\mathrm{d}\Lie}\, \g \cong \frac{\g\vee \widehat{\mathrm{Lie}}(\delta)}{\big([\delta, \delta], [\delta, x]=\d x \big)}
	\cong \k\delta \oplus \g\ ,
	\]
	which recovers the one-dimensional abelian extension mentioned above. (One can prove directly that this new complete internal dg Lie algebra satisfies the universal property of the left adjoint functor.) 
}

\begin{remark}
In contrast to the Lie case, it is difficult to give an expression for 
		$\g^+$ more explicit than  $\mathrm{d}\Liegra^+ \,\widehat{\circ}_{\mathrm{d}\Liegra}\, \g$~. In what comes next, we do not actually  need any explicit form nor basis for the differential extension, only the fact that the original dg Lie-graph algebra embeds in it. 
\end{remark}

\medskip

The following construction will allow us to make explicit the action of the deformation gauge group on Maurer--Cartan elements.

\begin{definition}[Bowtie graph]
	A \emph{bowtie graph} is a 3-leveled directed simple graphs with only one vertex on the middle level and possibly empty top and bottom levels. We denote the set of bowtie graphs by $\btGra$~.
\end{definition}

\begin{definition}[Bowtie element]
	For any elements $x,y \in\F_1\g$ and $\alpha\in\g$ in a complete Lie-graph algebra, we consider the associated \emph{bowtie element}
	of $\g$~:
	\[\pap{(\1+x)}{\alpha}{(\1+y)}\coloneq \sum_{\gra \in \btGra} {\textstyle \frac{1}{|\Aut(\gra)|}} \, \gra(x,\alpha,y)\ ,\]
	where the notation $\gra(x,\alpha,y)$ stands for the action in $\g$ of  the bowtie graph $\gra$ with 
	$x$ labelling the bottom vertices, $\alpha$ the middle vertex, and $y$ the top vertices, see \cref{Fig:Pap}.
	\begin{figure*}[h]
		\begin{align*}
			\pap{(\1+x)}{\alpha}{(\1+y)}\ = \ &
			\vcenter{\hbox{\begin{tikzpicture}[scale=0.4]
						\draw[thick]	(0,0)--(1,0)--(1,1)--(0,1)--cycle;
						\draw (0.5,0.5) node {{$\alpha$}} ; 
			\end{tikzpicture}}}
			\ + \
			\vcenter{\hbox{\begin{tikzpicture}[scale=0.4]
						\draw[thick]	(0,0)--(1,0)--(1,1)--(0,1)--cycle;
						\draw[thick]	(0,2)--(1,2)--(1,3)--(0,3)--cycle;
						\draw[thick]	(0.5,1)--(0.5,2);
						\draw (0.5,0.5) node {{$x$}} ; 
						\draw (0.5,2.5) node {{$\alpha$}} ; 
			\end{tikzpicture}}}
			\ + \
			\vcenter{\hbox{\begin{tikzpicture}[scale=0.4]
						\draw[thick]	(0,0)--(1,0)--(1,1)--(0,1)--cycle;
						\draw[thick]	(0,2)--(1,2)--(1,3)--(0,3)--cycle;
						\draw[thick]	(0.5,1)--(0.5,2);
						\draw (0.5,0.5) node {{$\alpha$}} ; 
						\draw (0.5,2.5) node {{$y$}} ; 
			\end{tikzpicture}}}
			\ + \
			\scalebox{1.1}{$\frac{1}{2}$}\ 
			\vcenter{\hbox{\begin{tikzpicture}[scale=0.4]
						\draw[thick]	(0,0)--(1,0)--(1,1)--(0,1)--cycle;
						\draw[thick]	(2,0)--(3,0)--(3,1)--(2,1)--cycle;
						\draw[thick]	(1,2)--(2,2)--(2,3)--(1,3)--cycle;
						\draw[thick]	(0.5,1)--(1.33,2);
						\draw[thick]	(2.5,1)--(1.66,2);
						\draw (1.5,2.5) node {{$\alpha$}} ; 
						\draw (0.5,0.5) node {{$x$}} ; 
						\draw (2.5,0.5) node {{$x$}} ; 	
			\end{tikzpicture}}}
			\ + \ 
			\scalebox{1.1}{$\frac{1}{2}$}\ 
			\vcenter{\hbox{\begin{tikzpicture}[scale=0.4]
						\draw[thick]	(0,2)--(1,2)--(1,3)--(0,3)--cycle;
						\draw[thick]	(2,2)--(3,2)--(3,3)--(2,3)--cycle;
						\draw[thick]	(1,0)--(2,0)--(2,1)--(1,1)--cycle;
						\draw[thick]	(0.5,2)--(1.33,1);
						\draw[thick]	(2.5,2)--(1.66,1);
						\draw (1.5,0.5) node {{$\alpha$}} ; 
						\draw (0.5,2.5) node {{$y$}} ; 
						\draw (2.5,2.5) node {{$y$}} ; 	
			\end{tikzpicture}}}
			\ + \ \\&
			\vcenter{\hbox{\begin{tikzpicture}[scale=0.4]
						\draw[thick]	(0,0)--(1,0)--(1,1)--(0,1)--cycle;
						\draw[thick]	(0,2)--(1,2)--(1,3)--(0,3)--cycle;
						\draw[thick]	(0,4)--(1,4)--(1,5)--(0,5)--cycle;
						\draw[thick]	(0.5,1)--(0.5,2);
						\draw[thick]	(0.5,3)--(0.5,4);	
						\draw (0.5,0.5) node {{$x$}} ; 
						\draw (0.5,2.5) node {{$\alpha$}} ; 
						\draw (0.5,4.5) node {{$y$}} ; 
			\end{tikzpicture}}}
			\ + \  
			\vcenter{\hbox{\begin{tikzpicture}[scale=0.4]
						\draw[thick]	(0,0)--(1,0)--(1,1)--(0,1)--cycle;
						\draw[thick]	(0,2)--(1,2)--(1,3)--(0,3)--cycle;
						\draw[thick]	(0,4)--(1,4)--(1,5)--(0,5)--cycle;
						\draw[thick]	(0.33,3)--(0.33,4);
						\draw[thick]	(0.66,1) to[out=60,in=270] (1.5,2.5) to[out=90,in=300] (0.66,4);	
						\draw (0.5,0.5) node {{$x$}} ; 
						\draw (0.5,2.5) node {{$\alpha$}} ; 
						\draw (0.5,4.5) node {{$y$}} ; 
			\end{tikzpicture}}}
			\ + \
			\vcenter{\hbox{\begin{tikzpicture}[scale=0.4]
						\draw[thick]	(0,0)--(1,0)--(1,1)--(0,1)--cycle;
						\draw[thick]	(0,2)--(1,2)--(1,3)--(0,3)--cycle;
						\draw[thick]	(0,4)--(1,4)--(1,5)--(0,5)--cycle;
						\draw[thick]	(0.33,1)--(0.33,2);
						\draw[thick]	(0.66,1) to[out=60,in=270] (1.5,2.5) to[out=90,in=300] (0.66,4);	
						\draw (0.5,0.5) node {{$x$}} ; 
						\draw (0.5,2.5) node {{$\alpha$}} ; 
						\draw (0.5,4.5) node {{$y$}} ; 
			\end{tikzpicture}}}
			\ + \  
			\vcenter{\hbox{\begin{tikzpicture}[scale=0.4]
						\draw[thick]	(0,0)--(1,0)--(1,1)--(0,1)--cycle;
						\draw[thick]	(0,2)--(1,2)--(1,3)--(0,3)--cycle;
						\draw[thick]	(0,4)--(1,4)--(1,5)--(0,5)--cycle;
						\draw[thick]	(0.33,1)--(0.33,2);
						\draw[thick]	(0.33,3)--(0.33,4);
						\draw[thick]	(0.66,1) to[out=60,in=270] (1.5,2.5) to[out=90,in=300] (0.66,4);	
						\draw (0.5,0.5) node {{$x$}} ; 
						\draw (0.5,2.5) node {{$\alpha$}} ; 
						\draw (0.5,4.5) node {{$y$}} ; 
			\end{tikzpicture}}}
			\ + \ 
			\scalebox{1.1}{$\frac{1}{2}$}\ 
			\vcenter{\hbox{\begin{tikzpicture}[scale=0.4]
						\draw[thick]	(0,0)--(1,0)--(1,1)--(0,1)--cycle;
						\draw[thick]	(2,0)--(3,0)--(3,1)--(2,1)--cycle;
						\draw[thick]	(1,2)--(2,2)--(2,3)--(1,3)--cycle;
						\draw[thick]	(1,4)--(2,4)--(2,5)--(1,5)--cycle;	
						\draw[thick]	(0.5,1)--(1.33,2);
						\draw[thick]	(2.5,1)--(1.66,2);
						\draw[thick]	(1.5,3)--(1.5,4);		
						\draw (1.5,2.5) node {{$\alpha$}} ; 
						\draw (0.5,0.5) node {{$x$}} ; 
						\draw (2.5,0.5) node {{$x$}} ; 	
						\draw (1.5,4.5) node {{$y$}} ; 	
			\end{tikzpicture}}}
			\ + \ 
			\vcenter{\hbox{\begin{tikzpicture}[scale=0.4]
						\draw[thick] (0.66,1)--(1.33,2);
						\draw[thick] (2.33,1)--(1.66,2);
						\draw[thick] (2.66,1) to (2.66,2.5) to[out=90,in=300]  (1.66,4);
						\draw[thick]	(0,0)--(1,0)--(1,1)--(0,1)--cycle;
						\draw[thick]	(2,0)--(3,0)--(3,1)--(2,1)--cycle;
						\draw[thick]	(1,2)--(2,2)--(2,3)--(1,3)--cycle;
						\draw[thick]	(1,4)--(2,4)--(2,5)--(1,5)--cycle;	
						\draw[thick]	(1.5,3)--(1.5,4);	
						\draw (1.5,4.5) node {{$y$}} ; 
						\draw (1.5,2.5) node {{$\alpha$}} ; 
						\draw (0.5,0.5) node {{$x$}} ; 
						\draw (2.5,0.5) node {{$x$}} ; 	
			\end{tikzpicture}}}
			\ + \ \cdots
		\end{align*}
		\caption{The first terms of the bowtie element 
			$(\1+x)\stackrel{\alpha}{\Join} (\1+y)$~.
		}
		\label{Fig:Pap}
	\end{figure*}
\end{definition}

\begin{theorem}\label{thm:Action}
	For any complete dg Lie-graph algebra $\g$, the deformation gauge group $\G$ acts on the set $\MC(\g)$ of Maurer--Cartan elements via the formula
	\[\exp(\lambda)\cdot \alpha\coloneq \lambda.\alpha =
	\pap{\exp(\lambda)}{\alpha}{\exp(-\lambda)} \ ,\]
	in the  differential extension $\g^+$, and via the formula 
\begin{align*}
\exp(\lambda)\cdot \ba \coloneq\lambda. \ba&=
	\pap{\exp(\lambda)}{\ba}{\exp(-\lambda)}
	-\left(\exp(\lambda); \d \big(\exp(\lambda)\big)\right)\cc \exp(-\lambda)
\\&=	
		\pap{\exp(\lambda)}{\ba}{\exp(-\lambda)}
	+\exp(\lambda)   \cc \left(\exp(-\lambda); \d \big(\exp(-\lambda)\big)\right)\ ,
\end{align*}

	in $\g$, where the notation $\left(x; y\right)$ means that we label all the vertices of the level by $x$ except for one which is labelled by $y$. 
\end{theorem}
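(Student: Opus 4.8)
The plan is to prove the two displayed formulas separately, regarding the formula in the differential extension $\g^+$ as the core statement and deducing the two formulas in $\g$ from it by the differential trick. By the definition of the action through the isomorphism $\exp\colon\Gamma\xrightarrow{\cong}\G$ of \cref{thm:ExpIsoMorph}, and since the text has already recorded that in $\g^+$ the gauge group acts by $\lambda.\alpha=e^{\ad_\lambda}(\alpha)$, everything reduces to the single \emph{universal} identity
\[
\pap{\exp(\lambda)}{\alpha}{\exp(-\lambda)}=e^{\ad_\lambda}(\alpha)
\]
in any complete internal dg Lie-graph algebra.

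First I would reinterpret the bowtie element combinatorially. By \eqref{eq:assoc}, the triple product $(\1+x)\cc(\1+s\alpha)\cc(\1+y)$ is the sum over $\multildcGra{3}_{\Sy}$ weighted by $1/|\Aut(\gra)|$, with the middle level labelled by $s\alpha$; a graph with $m$ middle vertices carries a factor $s^m$, so extracting the linear part in $s$ retains exactly the graphs with a single middle vertex, namely the bowtie graphs, and the automorphism weights agree since the unique middle vertex is fixed by every admissible automorphism. Hence $\pap{(\1+x)}{\alpha}{(\1+y)}=\frac{d}{ds}\big|_{s=0}(\1+x)\cc(\1+s\alpha)\cc(\1+y)$. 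Specialising $\1+x=\exp(\lambda)$ and $\1+y=\exp(-\lambda)=\exp(\lambda)^{-1}$, the last equality being $\BCH(\lambda,-\lambda)=0$, exhibits the bowtie element as the adjoint action $\Ad_{\exp(\lambda)}(\alpha)$, i.e. the differential at the unit of conjugation by $\exp(\lambda)$ in $\G$. To finish, I would run the finite-dimensional real reduction from the proof of \cref{thm:ExpIsoMorph}: because $\lambda$ has degree $0$, the single factor $\alpha$ produces no Koszul signs and may be treated as an even generator, so one works in $\widehat{\Liegra}_{\QQ}(\lambda,\alpha)$, base-changes to $\RR$ and truncates modulo $\F_k$ to obtain a finite-dimensional real Lie group. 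There the classical $\Ad_{\exp(\lambda)}=e^{\ad_\lambda}$ holds, with bracket identified in that proof with the skew-symmetrisation of $\star$; passing to the limit $k\to\infty$ by completeness and invoking naturality of both sides yields the universal identity.

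For the formulas in $\g$, I would use the embedding $\g\rightarrowtail\g^+$ of \cref{prop:inclusion} and write a Maurer--Cartan element as $\alpha=\delta+\ba$ in $\g^+$, the action on $\ba$ in $\g$ being obtained by subtracting the internal element $\delta$. Linearity of the middle slot gives $\pap{\exp(\lambda)}{\delta+\ba}{\exp(-\lambda)}=\pap{\exp(\lambda)}{\ba}{\exp(-\lambda)}+\pap{\exp(\lambda)}{\delta}{\exp(-\lambda)}$, and applying the universal identity to the middle slot $\delta$ together with $\ad_\lambda(\delta)=[\lambda,\delta]=-\d\lambda$ yields $\pap{\exp(\lambda)}{\delta}{\exp(-\lambda)}=e^{\ad_\lambda}(\delta)=\delta-\tfrac{e^{\ad_\lambda}-\id}{\ad_\lambda}(\d\lambda)$. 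It then remains to recognise the right-logarithmic-derivative identity $\tfrac{e^{\ad_\lambda}-\id}{\ad_\lambda}(\d\lambda)=\left(\exp(\lambda);\d(\exp(\lambda))\right)\cc\exp(-\lambda)$, whose left factor is $\d(\exp(\lambda))$ with the differential, being a derivation, landing on a single vertex; the two equivalent forms in the statement are then interchanged by differentiating $\exp(\lambda)\cc\exp(-\lambda)=\1$, giving $\d(\exp(\lambda))\cc\exp(-\lambda)=-\exp(\lambda)\cc\d(\exp(-\lambda))$.

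The main obstacle I anticipate is the passage to the finite-dimensional real Lie group together with the clean matching of its adjoint action to the bowtie element: one must ensure that the possibly infinite graph sums are genuinely captured by $\Ad_{\exp(\lambda)}=e^{\ad_\lambda}$ after truncation, that the degree $-1$ of $\alpha$ and of $\delta$ (circumvented above by the even-generator remark, valid because $\lambda$ is even) introduces no hidden signs, and that the right-logarithmic-derivative identity is established intrinsically from the Lie-graph structure—most naturally via the differential equation $\tfrac{d}{dt}\exp(t\lambda)=\exp(t\lambda)\rhd\lambda$ of \cref{lem:EDexp} combined with the Leibniz rule for $\d$—rather than merely by analogy with the classical Lie-group computation.
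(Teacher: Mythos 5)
Your overall architecture matches the paper's: reduce to the free complete Lie-graph algebra on two generators over $\QQ$, truncate modulo $\F_k$, extend scalars to $\RR$, and invoke finite-dimensional Lie theory; then handle the formulas in $\g$ through the differential extension $\g^+$. For the first formula your route is a clean repackaging of the paper's. You identify the bowtie element as the $s$-linear part of $\exp(\lambda)\cc(\1+s\alpha)\cc\exp(-\lambda)$ — which is correct, since by the $3$-leveled-graph expansion of the triple product a graph with $m$ middle vertices carries $s^m$, and the automorphism weights agree because the unique middle vertex is fixed — and then invoke $\Ad_{\exp(\lambda)}=e^{\ad_\lambda}$; the paper instead verifies directly that $t\mapsto\pap{\exp(t\lambda)}{\alpha}{\exp(-t\lambda)}$ solves $\dot\psi=\ad_\lambda(\psi)$, which requires the two auxiliary identities \eqref{proof:action} obtained by projecting the $4$-leveled associativity. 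These are two faces of the same classical fact, and your even-generator remark for the degree $-1$ slot is the same formality argument the paper uses implicitly.

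For the second formula your route is genuinely different and, if completed, shorter: writing $\alpha=\delta+\ba$, using linearity of the middle slot, and applying the universal identity to $\delta$ reduces everything to
\[
\frac{e^{\ad_\lambda}-\id}{\ad_\lambda}(\d\lambda)=\left(\exp(\lambda);\d\big(\exp(\lambda)\big)\right)\cc\exp(-\lambda)\ ,
\]
i.e.\ the formula for the right logarithmic derivative of the graph exponential. This would bypass the paper's projection of the identity $(\1+\delta)\cc\big(\exp(\lambda)\cc(\1-\delta)\cc\exp(-\lambda)\big)=\big((\1+\delta)\cc\exp(\lambda)\cc(\1-\delta)\big)\cc\exp(-\lambda)$ onto the one-$\delta$ summand, and in particular the identity $\delta\lhd\exp(\lambda)-\exp(\lambda)\rhd\delta=\d\big(\exp(\lambda)\big)$, whose proof (the induction in $\mathrm{d}\Liencgra^+$ showing that $\delta$ attached below all vertices of a disjoint union equals $\delta$ attached above) is by far the longest part of the paper's argument. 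The gap in your proposal is exactly here: you assert the right-logarithmic-derivative identity and only gesture at how to prove it. It does not follow from \cref{lem:EDexp} and the Leibniz rule alone; you must either prove it by the same $\QQ\to\RR$ truncation applied to the classical formula $D_\lambda\exp(v)\cc\exp(-\lambda)=\tfrac{e^{\ad_\lambda}-\id}{\ad_\lambda}(v)$ for the differential of the exponential of the finite-dimensional group $\big(\1+\F_1\g_0,\cc\big)$ (extended from degree $0$ tangent vectors $v$ to $v=\d\lambda$ by the same linearity-in-$v$, no-Koszul-sign argument, and using that $\d$ is a derivation of all graph operations so that $\d(\exp(\lambda))=D_\lambda\exp(\d\lambda)$), or by an ODE argument for $F(t)=\d(\exp(t\lambda))\cc\exp(-t\lambda)$ versus $F'(t)=\ad_\lambda(F(t))+\d\lambda$, which again needs associativity projections of the type \eqref{proof:action}. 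Since this step is the counterpart of the paper's heaviest computation, it cannot be left as an anticipated difficulty; once it is supplied, your argument is complete and arguably more economical than the published one.
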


\begin{proof}
	The deformation gauge group action comes from the gauge group action via the exponential map, which is a morphism by \cref{thm:ExpIsoMorph}, so it defines a genuine group action. 
	
	\medskip
	
	The first formula amounts to prove 
	\[e^{\ad_\lambda}(\alpha)=\pap{\exp(\lambda)}{\alpha}{\exp(-\lambda)}~.\]
	To this extent, we use the same method as in the proof of \cref{thm:ExpIsoMorph}: 
	it is enough to prove this for the free complete Lie-graph algebra 
	$\widehat{\Liegra}_{\mathbb{Q}}(\alpha, \lambda)$
	over $\mathbb{Q}$ on two generators of respective degrees $|\alpha|=-1$ and $|\lambda|=0$.
	Since this latter one is complete, it is enough to prove it for its truncated version 
	$\widehat{\Liegra}_{\QQ}(\alpha, \lambda)/\F_k\widehat{\Liegra}_{\QQ}(\alpha, \lambda)$~, 
	for any $k\geqslant 1$. For any fixed $k$, we work with its real extension 
	$\widehat{\Liegra}_{\RR}(\alpha, \lambda)/\F_k\widehat{\Liegra}_{\RR}(\alpha, \lambda)$, which is a finite-dimensional real graded Lie algebra. 
	
	\medskip
	
	For any finite dimensional real graded Lie algebra $\mathfrak{h}$, the element $e^{\ad_\lambda}(\alpha)$ is the unique solution at $t=1$ of the differential equation 
	$\frac{d}{dt}\psi(t)=\ad_\lambda(\psi(t))$, with initial condition $\psi(0)=\alpha$, in the algebraic variety 
	$\MC(\mathfrak{h})$~. To conclude, it is enough to show that 
	$\pap{\exp(t\lambda)}{\alpha}{\exp(-t\lambda)}$
	satisfies this differential equation, that is 
	\[
	\frac{d}{dt} \pap{\exp(t\lambda)}{\alpha}{\exp(-t\lambda)}=
	\lambda\star \left( \pap{\exp(t\lambda)}{\alpha}{\exp(-t\lambda)}\right) 
	- \left( \pap{\exp(t\lambda)}{\alpha}{\exp(-t\lambda)}\right)\star \lambda~.
	\]
	First, the bowtie element at $t=0$ is equal to $\pap{\exp(0)}{\alpha}{\exp(0)}=\alpha$. 
	Then, we recall, from the proof of 
	\cref{thm:ExpIsoMorph}, that 
	\[\frac{d}{dt}\exp(t\lambda) =\exp(t\lambda)\rhd \lambda = \lambda \lhd \exp(t\lambda)~,\]
	where the latter notion $\lhd$ is vertical mirror to the former notion $\rhd$~. 
	This implies  that 
	\begin{align*}
		\frac{d}{dt} \pap{\exp(t\lambda)}{\alpha}{\exp(-t\lambda)}=& 
		\pap{\big(\exp(t\lambda); \lambda\lhd  \exp(t\lambda) \big)}
		{\alpha}
		{\exp(-t\lambda)}-
		\\& 
		\pap
		{\exp(t\lambda)}
		{\alpha}
		{\big(\exp(-t\lambda); \exp(-t\lambda) \rhd \lambda \big)}\ .
	\end{align*}
	Finally we claim that 
	\begin{align}\label{proof:action}
		&\pap{\big(\exp(t\lambda); \lambda\lhd  \exp(t\lambda) \big)}
		{\alpha}
		{\exp(-t\lambda)}
		= 
		\lambda\star \left( \pap{\exp(t\lambda)}{\alpha}{\exp(-t\lambda)}\right)
		\qquad \text{and} \\ \notag
		&\pap
		{\exp(t\lambda)}
		{\alpha}
		{\big(\exp(-t\lambda); \exp(-t\lambda) \rhd \lambda \big)}
		= \left( \pap{\exp(t\lambda)}{\alpha}{\exp(-t\lambda)}\right)\star \lambda
		~,
	\end{align}
	
	which would conclude the proof. 
	\medskip
	
	Let us now show the first equation of \eqref{proof:action}; the proof of the other one being similar. The associativity relation (\cref{thm:dcGraGp}) of 
	the product $\cc$  gives 
	\[
	\big((\1+\lambda)\cc \exp(t\lambda)\big) \cc \big((\1+\alpha)\cc \exp(-t\lambda)\big)=
	(\1+\lambda)\cc \big(\exp(t\lambda)\big) \cc (\1+\alpha)\cc \exp(-t\lambda)\big)~, 
	\]
	which is given by the sum over the set of 4-leveled directed simple graphs. 
	The projection of this equation onto the sum over the directed simple graphs with only one vertex on the first and on the third level gives 
	
	\begin{align*}
		\pap{\big(\exp(t\lambda); \lambda\lhd  \exp(t\lambda) \big)}
		{\alpha}
		{\exp(-t\lambda)}
		&=\lambda \lhd  \big(\exp(t\lambda)\big) \cc  \exp(-t\lambda),\pap{\exp(t\lambda)}
		{\alpha}
		{\exp(-t\lambda)}
		\big)\\&=\lambda\star \left( \pap{\exp(t\lambda)}{\alpha}{\exp(-t\lambda)}\right)
		~, 
	\end{align*}
	since $\exp(t\lambda)\big) \cc  \exp(-t\lambda)=\1$~. 
	
	\medskip 
	
	In order to settle the second formula, it remains to show that 
	\[\delta-e^{\ad_\lambda}(\delta)=\delta-\pap{\exp(\lambda)}{\delta}{\exp(-\lambda)}=
	\frac{e^{\ad_\lambda}-\id}{\ad_{\lambda}}(\d\lambda)=\left(\exp(\lambda); \d \big(\exp(\lambda)\big)\right)\cc \exp(-\lambda)~,\]
	where the first equalities are already known to hold in $\g^+$. To this extent, we use the same kind of idea as above: we consider the projection of the associativity relation (\cref{thm:dcGraGp}) 
	\[
	(\1+\delta)\cc \big(\exp(\lambda) \cc (\1-\delta)\cc \exp(-\lambda)\big)
	=
	\big((\1+\delta)\cc \exp(\lambda) \cc (\1-\delta)\big)\cc \exp(-\lambda)~, 
	\]
	onto the summand made up of only one $\delta$~. The left-hand side gives 
	\[\delta-\pap{\exp(\lambda)}{\delta}{\exp(-\lambda)}\]
	and the right-hand side gives  
	\[\left(\exp(\lambda); \delta \lhd \exp{\lambda} - \exp(\lambda)\rhd \delta \right)\cc \exp(-\lambda)~.\]
	We claim that $\delta \lhd \exp{\lambda} - \exp(\lambda)\rhd = \delta \star \exp{\lambda} - \exp(\lambda)\star \delta=\d \big(\exp(\lambda)\big)$, which would conclude the proof. 
	
	\medskip
	
	{
		This relation comes from the fact that 
		\begin{align*}
			\vcenter{\hbox{\begin{tikzpicture}[scale=0.35]
						\coordinate (A) at (0, 0);
						\draw[thick]	($(A)$)--($(A)+(0, 1.5)$)--($(A)+(1.5,1.5)$)--($(A)+(1.5, 0)$)--cycle;
						\draw ($(A)+(0.75,0.75)$) node {\scalebox{1}{$x_1$}} ; 	
						\coordinate (A) at (3, 0);
						\draw[thick]	($(A)$)--($(A)+(0, 1.5)$)--($(A)+(1.5,1.5)$)--($(A)+(1.5, 0)$)--cycle;
						\draw ($(A)+(0.75,0.75)$) node {\scalebox{1}{$x_i$}} ; 	
						\coordinate (A) at (6, 0);
						\draw[thick]	($(A)$)--($(A)+(0, 1.5)$)--($(A)+(1.5,1.5)$)--($(A)+(1.5, 0)$)--cycle;
						\draw ($(A)+(0.75,0.75)$) node {\scalebox{1}{$x_n$}} ; 	
						\node at (2.3, 0.7) {\scalebox{0.8}{$\cdots$}} ; 	
						\node at (5.3, 0.7) {\scalebox{0.8}{$\cdots$}} ; 		
						\coordinate (A) at (3, -3);
						\draw[thick]	($(A)$)--($(A)+(0, 1.5)$)--($(A)+(1.5,1.5)$)--($(A)+(1.5, 0)$)--cycle;
						\draw ($(A)+(0.75,0.75)$) node {\scalebox{1}{$\delta$}} ; 	
						\draw[thick] (3.5, -1.5)--(0.75,0);	
						\draw[thick] (3.75, -1.5)--(3.75,0);
						\draw[thick] (4, -1.5)--(6.75,0);		
			\end{tikzpicture}}}
			=
			\vcenter{\hbox{\begin{tikzpicture}[scale=0.35]
						\coordinate (A) at (0, 0);
						\draw[thick]	($(A)$)--($(A)+(0, 1.5)$)--($(A)+(1.5,1.5)$)--($(A)+(1.5, 0)$)--cycle;
						\draw ($(A)+(0.75,0.75)$) node {\scalebox{1}{$x_1$}} ; 	
						\coordinate (A) at (3, 0);
						\draw[thick]	($(A)$)--($(A)+(0, 1.5)$)--($(A)+(1.5,1.5)$)--($(A)+(1.5, 0)$)--cycle;
						\draw ($(A)+(0.75,0.75)$) node {\scalebox{1}{$x_i$}} ; 	
						\coordinate (A) at (6, 0);
						\draw[thick]	($(A)$)--($(A)+(0, 1.5)$)--($(A)+(1.5,1.5)$)--($(A)+(1.5, 0)$)--cycle;
						\draw ($(A)+(0.75,0.75)$) node {\scalebox{1}{$x_n$}} ; 	
						\node at (2.3, 0.7) {\scalebox{0.8}{$\cdots$}} ; 	
						\node at (5.3, 0.7) {\scalebox{0.8}{$\cdots$}} ; 		
						\coordinate (A) at (3, 3);
						\draw[thick]	($(A)$)--($(A)+(0, 1.5)$)--($(A)+(1.5,1.5)$)--($(A)+(1.5, 0)$)--cycle;
						\draw ($(A)+(0.75,0.75)$) node {\scalebox{1}{$\delta$}} ; 		
						\draw[thick] (3.5, 3)--(0.75,1.5);
						\draw[thick] (3.75, 3)--(3.75,1.5);
						\draw[thick] (4, 3)--(6.75,1.5);		
			\end{tikzpicture}}}
		\end{align*}
		in $\g^+$, for any $n\geqslant 2$ and for any $x_1, \ldots, x_n\in \g$.
		In order to prove it, we consider the operad 
		\[\mathrm{d}\Liencgra^+\coloneq \frac{\Liencgra\vee \mathcal{T}(\d, \delta)}{\left(\d \ \text{derivation},\,  \d^2~,\,  
			\d-
			\begin{tikzpicture}[scale=0.4, baseline=(n.base)]
				\node (n) at (0.5,1.3) {};
				\draw[thick]	(0,0)--(1,0)--(1,1)--(0,1)--cycle;
				\draw[thick]	(0,2)--(1,2)--(1,3)--(0,3)--cycle;
				\draw[thick]	(0.5,1)--(0.5,2);
				\draw (0.5,0.5) node {{$\delta$}} ; 
			\end{tikzpicture}
			+
			\begin{tikzpicture}[scale=0.4, baseline=(n.base)]
				\node (n) at (0.5,1.3) {};
				\draw[thick]	(0,0)--(1,0)--(1,1)--(0,1)--cycle;
				\draw[thick]	(0,2)--(1,2)--(1,3)--(0,3)--cycle;
				\draw[thick]	(0.5,1)--(0.5,2);
				\draw (0.5,2.5) node {{$\delta$}} ; 
			\end{tikzpicture}\ , \d(\delta)
			\right)}\]
		encoding internal dg $\Liencgra$-algebras, together with the canonical morphism 
		$\eta \colon \mathrm{d}\Liegra^+ \hookrightarrow \mathrm{d}\Liencgra^+$ that we claim to be injective.
		The derivation relation endows the operad $\mathrm{d}\Liegra$ with a distributive law $\mathrm{D} \circ \Liegra \to \Liegra \circ \mathrm{D}$, where $\mathrm{D}\coloneq \mathcal{T}(\d)/\big(\d^2\big)$ stands for the algebra of dual numbers. This proves that its underlying graded $\Sy$-module is isomorphic to $\mathrm{d}\Liegra\cong \Liegra \circ \mathrm{D}$~. The same property holds for the operad $\mathrm{d}\Liencgra$, whose underlying graded $\Sy$-module is isomorphic to 
		\[\mathrm{d}\Liencgra\cong \mathrm{Com}\circ \Liegra \circ \mathrm{D}\cong  \Liegra \circ \mathrm{D}\oplus \overline{\mathrm{Com}}\circ \Liegra \circ \mathrm{D}~,\]
		where $\overline{\mathrm{Com}}\cong \mathrm{Com}/\mathrm{I}$~. This shows that the canonical morphism of operads 
		$\bar{\eta} \colon \mathrm{d}\Liegra \hookrightarrow \allowbreak \mathrm{d}\Liencgra$ is injective. It remains to show that its extension $\eta$ with a internal degree $-1$ arity $0$ element is still injective. To this extent, we consider the increasing filtration on $\mathrm{d}\Liencgra^+$ defined by the number of connected components or equivalently by the arity of the sub-operad $\mathrm{Com}$. 
		We claim that the underlying $\Sy$-module of the associated graded operad is isomorphic to 
		\[\mathrm{gr}\, \mathrm{d}\Liencgra^+\cong  \Com \circ \mathrm{d}\Liegra^+\cong 
		\mathrm{d}\Liegra^+\oplus \overline{\Com} \circ \mathrm{d}\Liegra^+~, \]
		which would conclude the injectivity of the morphism $\eta$ since the underlying $\Sy$-modules satisfy $\mathrm{d}\Liencgra^+\cong \mathrm{gr}\, \mathrm{d}\Liencgra^+$~. 
		The operadic ideal 
		\[\left(  
		\d-
		\begin{tikzpicture}[scale=0.4, baseline=(n.base)]
			\node (n) at (0.5,1.3) {};
			\draw[thick]	(0,0)--(1,0)--(1,1)--(0,1)--cycle;
			\draw[thick]	(0,2)--(1,2)--(1,3)--(0,3)--cycle;
			\draw[thick]	(0.5,1)--(0.5,2);
			\draw (0.5,0.5) node {{$\delta$}} ; 
		\end{tikzpicture}
		+
		\begin{tikzpicture}[scale=0.4, baseline=(n.base)]
			\node (n) at (0.5,1.3) {};
			\draw[thick]	(0,0)--(1,0)--(1,1)--(0,1)--cycle;
			\draw[thick]	(0,2)--(1,2)--(1,3)--(0,3)--cycle;
			\draw[thick]	(0.5,1)--(0.5,2);
			\draw (0.5,2.5) node {{$\delta$}} ; 
		\end{tikzpicture}\ , \d(\delta)
		\right)\]
		inside $\mathrm{Com}\circ \Liegra \circ \mathrm{D}\vee \mathcal{T}(\delta)$ is made up of two types of terms. 
		The first ones are given by non-necessarily connected graphs with at least one relation above inserted at one vertex. This preserves the connected components. 
		The second ones are given by non-necessarily connected graphs inserted at the only vertex of the first displayed relation above. 
		Since $\d$ is a derivation, it distributes over the connected components of the graph. The other term 
		\[ \begin{tikzpicture}[scale=0.4, baseline=(n.base)]
			\node (n) at (0.5,1.3) {};
			\draw[thick]	(0,0)--(1,0)--(1,1)--(0,1)--cycle;
			\draw[thick]	(0,2)--(1,2)--(1,3)--(0,3)--cycle;
			\draw[thick]	(0.5,1)--(0.5,2);
			\draw (0.5,0.5) node {{$\delta$}} ; 
		\end{tikzpicture}
		+
		\begin{tikzpicture}[scale=0.4, baseline=(n.base)]
			\node (n) at (0.5,1.3) {};
			\draw[thick]	(0,0)--(1,0)--(1,1)--(0,1)--cycle;
			\draw[thick]	(0,2)--(1,2)--(1,3)--(0,3)--cycle;
			\draw[thick]	(0.5,1)--(0.5,2);
			\draw (0.5,2.5) node {{$\delta$}} ; 
		\end{tikzpicture}\]
		fails to be a derivation in $\mathrm{d}\Liencgra^+$ because it might create terms where several connected components are linked to a $\delta$, but such terms are killed in $\mathrm{gr}\, \mathrm{d}\Liencgra^+$ by the definition of the filtration. 
		So this gives rise to  a derivation in 
		$\mathrm{gr}\, \mathrm{d}\Liencgra^+$ which distributes too over the connected components of the graph. 
		In the end, this shows that the above ideal preserves the connected components of the graphs in the operad $\mathrm{gr}\, \mathrm{d}\Liencgra^+$, which implies the isomorphism 
		$\mathrm{gr}\, \mathrm{d}\Liencgra^+\cong  \Com \circ \mathrm{d}\Liegra^+$~. 
		
		\medskip 
		
		Using the same arguments as above, namely \cref{prop:inclusion}, this gives rise to a pair of adjoint functors $\eta_!$ and $\eta^*$ whose unit 
		$\g^+ \hookrightarrow \eta^*\big( \eta_!\big(\g^+\big)\big)$ is an embedding of complete internal dg Lie-graph algebras. 
		We work by induction on $n\geqslant 2$ in the latter one. For $n=2$, we compute the following term in two ways: 
		\begin{eqnarray*}
			\d\left(
			\vcenter{\hbox{\begin{tikzpicture}[scale=0.35]
						\coordinate (A) at (0, 0);
						\draw[thick]	($(A)$)--($(A)+(0, 1.5)$)--($(A)+(1.5,1.5)$)--($(A)+(1.5, 0)$)--cycle;
						\draw ($(A)+(0.75,0.75)$) node {\scalebox{0.8}{$x_1$}} ; 	
						\coordinate (A) at (2, 0);
						\draw[thick]	($(A)$)--($(A)+(0, 1.5)$)--($(A)+(1.5,1.5)$)--($(A)+(1.5, 0)$)--cycle;
						\draw ($(A)+(0.75,0.75)$) node {\scalebox{0.8}{$x_2$}} ; 	
			\end{tikzpicture}}}
			\right)=
			\vcenter{\hbox{\begin{tikzpicture}[scale=0.35]
						\coordinate (A) at (0, 0);
						\draw[thick]	($(A)$)--($(A)+(0, 1.5)$)--($(A)+(1.5,1.5)$)--($(A)+(1.5, 0)$)--cycle;
						\draw ($(A)+(0.75,0.75)$) node {\scalebox{0.8}{$\d x_1$}} ; 	
						\coordinate (A) at (2, 0);
						\draw[thick]	($(A)$)--($(A)+(0, 1.5)$)--($(A)+(1.5,1.5)$)--($(A)+(1.5, 0)$)--cycle;
						\draw ($(A)+(0.75,0.75)$) node {\scalebox{0.8}{$x_2$}} ; 	
			\end{tikzpicture}}}
			+
			\vcenter{\hbox{\begin{tikzpicture}[scale=0.35]
						\coordinate (A) at (0, 0);
						\draw[thick]	($(A)$)--($(A)+(0, 1.5)$)--($(A)+(1.5,1.5)$)--($(A)+(1.5, 0)$)--cycle;
						\draw ($(A)+(0.75,0.75)$) node {\scalebox{0.8}{$x_1$}} ; 	
						\coordinate (A) at (2, 0);
						\draw[thick]	($(A)$)--($(A)+(0, 1.5)$)--($(A)+(1.5,1.5)$)--($(A)+(1.5, 0)$)--cycle;
						\draw ($(A)+(0.75,0.75)$) node {\scalebox{0.8}{$\d x_2$}} ; 	
			\end{tikzpicture}}}=
			\vcenter{\hbox{\begin{tikzpicture}[scale=0.5]
						\coordinate (A) at (0, 0);
						\draw[thick]	($(A)$)--($(A)+(0, 1)$)--($(A)+(1,1)$)--($(A)+(1,0)$)--cycle;
						\draw ($(A)+(0.5,0.5)$) node {\scalebox{0.8}{$x_1$}} ; 	
						\coordinate (A) at (1.5, -1);
						\draw[thick]	($(A)$)--($(A)+(0, 1)$)--($(A)+(1,1)$)--($(A)+(1,0)$)--cycle;
						\draw ($(A)+(0.5,0.5)$) node {\scalebox{0.8}{$x_2$}} ; 	
						\coordinate (A) at (0, -2);
						\draw[thick]	($(A)$)--($(A)+(0, 1)$)--($(A)+(1,1)$)--($(A)+(1,0)$)--cycle;
						\draw ($(A)+(0.5,0.5)$) node {\scalebox{0.8}{$\delta$}} ; 	
						\draw[thick] (0.5,0)--(0.5,-1);
			\end{tikzpicture}}}
			-\vcenter{\hbox{\begin{tikzpicture}[scale=0.5]
						\coordinate (A) at (0, 0);
						\draw[thick]	($(A)$)--($(A)+(0, 1)$)--($(A)+(1,1)$)--($(A)+(1,0)$)--cycle;
						\draw ($(A)+(0.5,0.5)$) node {\scalebox{0.8}{$\delta$}} ; 	
						\coordinate (A) at (1.5, -1);
						\draw[thick]	($(A)$)--($(A)+(0, 1)$)--($(A)+(1,1)$)--($(A)+(1,0)$)--cycle;
						\draw ($(A)+(0.5,0.5)$) node {\scalebox{0.8}{$x_2$}} ; 	
						\coordinate (A) at (0, -2);
						\draw[thick]	($(A)$)--($(A)+(0, 1)$)--($(A)+(1,1)$)--($(A)+(1,0)$)--cycle;
						\draw ($(A)+(0.5,0.5)$) node {\scalebox{0.8}{$x_1$}} ; 	
						\draw[thick] (0.5,0)--(0.5,-1);
			\end{tikzpicture}}}
			+
			\vcenter{\hbox{\begin{tikzpicture}[scale=0.5]
						\coordinate (A) at (0, -1);
						\draw[thick]	($(A)$)--($(A)+(0, 1)$)--($(A)+(1,1)$)--($(A)+(1,0)$)--cycle;
						\draw ($(A)+(0.5,0.5)$) node {\scalebox{0.8}{$x_1$}} ; 	
						\coordinate (A) at (1.5, 0);
						\draw[thick]	($(A)$)--($(A)+(0, 1)$)--($(A)+(1,1)$)--($(A)+(1,0)$)--cycle;
						\draw ($(A)+(0.5,0.5)$) node {\scalebox{0.8}{$x_2$}} ; 	
						\coordinate (A) at (1.5, -2);
						\draw[thick]	($(A)$)--($(A)+(0, 1)$)--($(A)+(1,1)$)--($(A)+(1,0)$)--cycle;
						\draw ($(A)+(0.5,0.5)$) node {\scalebox{0.8}{$\delta$}} ; 	
						\draw[thick] (2,0)--(2,-1);
			\end{tikzpicture}}}
			-\vcenter{\hbox{\begin{tikzpicture}[scale=0.5]
						\coordinate (A) at (1.5, 0);
						\draw[thick]	($(A)$)--($(A)+(0, 1)$)--($(A)+(1,1)$)--($(A)+(1,0)$)--cycle;
						\draw ($(A)+(0.5,0.5)$) node {\scalebox{0.8}{$\delta$}} ; 	
						\coordinate (A) at (1.5, -2);
						\draw[thick]	($(A)$)--($(A)+(0, 1)$)--($(A)+(1,1)$)--($(A)+(1,0)$)--cycle;
						\draw ($(A)+(0.5,0.5)$) node {\scalebox{0.8}{$x_2$}} ; 	
						\coordinate (A) at (0, -1);
						\draw[thick]	($(A)$)--($(A)+(0, 1)$)--($(A)+(1,1)$)--($(A)+(1,0)$)--cycle;
						\draw ($(A)+(0.5,0.5)$) node {\scalebox{0.8}{$x_1$}} ; 	
						\draw[thick] (2,0)--(2,-1);
			\end{tikzpicture}}}
		\end{eqnarray*}
		and 
		\begin{align*}
			\d\left(
			\vcenter{\hbox{\begin{tikzpicture}[scale=0.35]
						\coordinate (A) at (0, 0);
						\draw[thick]	($(A)$)--($(A)+(0, 1.5)$)--($(A)+(1.5,1.5)$)--($(A)+(1.5, 0)$)--cycle;
						\draw ($(A)+(0.75,0.75)$) node {\scalebox{0.8}{$x_1$}} ; 	
						\coordinate (A) at (2, 0);
						\draw[thick]	($(A)$)--($(A)+(0, 1.5)$)--($(A)+(1.5,1.5)$)--($(A)+(1.5, 0)$)--cycle;
						\draw ($(A)+(0.75,0.75)$) node {\scalebox{0.8}{$x_2$}} ; 	
			\end{tikzpicture}}}
			\right)&=\ 
			\vcenter{\hbox{\begin{tikzpicture}[scale=0.5]
						\coordinate (A) at (0, 0);
						\draw[thick]	($(A)$)--($(A)+(0, 1)$)--($(A)+(1,1)$)--($(A)+(1,0)$)--cycle;
						\draw ($(A)+(0.5,0.5)$) node {\scalebox{0.8}{$x_1$}} ; 	
						\coordinate (A) at (1.5, 0);
						\draw[thick]	($(A)$)--($(A)+(0, 1)$)--($(A)+(1,1)$)--($(A)+(1,0)$)--cycle;
						\draw ($(A)+(0.5,0.5)$) node {\scalebox{0.8}{$x_2$}} ; 	
						\coordinate (A) at (0.75, -2.5);
						\draw[thick]	($(A)$)--($(A)+(0, 1)$)--($(A)+(1,1)$)--($(A)+(1,0)$)--cycle;
						\draw ($(A)+(0.5,0.5)$) node {\scalebox{0.8}{$\delta$}} ; 	
						\draw[thick] (-0.5,-0.5)--(3,-0.5)--(3,1.5)--(-0.5,1.5)--cycle;
						\draw[thick] (1.25,-0.5)--(1.25,-1.5);
			\end{tikzpicture}}}
			\ -\ 
			\vcenter{\hbox{\begin{tikzpicture}[scale=0.5]
						\coordinate (A) at (0, 0);
						\draw[thick]	($(A)$)--($(A)+(0, 1)$)--($(A)+(1,1)$)--($(A)+(1,0)$)--cycle;
						\draw ($(A)+(0.5,0.5)$) node {\scalebox{0.8}{$x_1$}} ; 	
						\coordinate (A) at (1.5, 0);
						\draw[thick]	($(A)$)--($(A)+(0, 1)$)--($(A)+(1,1)$)--($(A)+(1,0)$)--cycle;
						\draw ($(A)+(0.5,0.5)$) node {\scalebox{0.8}{$x_2$}} ; 	
						\coordinate (A) at (0.75, 2.5);
						\draw[thick]	($(A)$)--($(A)+(0, 1)$)--($(A)+(1,1)$)--($(A)+(1,0)$)--cycle;
						\draw ($(A)+(0.5,0.5)$) node {\scalebox{0.8}{$\delta$}} ; 	
						\draw[thick] (-0.5,-0.5)--(3,-0.5)--(3,1.5)--(-0.5,1.5)--cycle;
						\draw[thick] (1.25,2.5)--(1.25,1.5);
			\end{tikzpicture}}}\\
			&=
			\vcenter{\hbox{\begin{tikzpicture}[scale=0.5]
						\coordinate (A) at (0, 0);
						\draw[thick]	($(A)$)--($(A)+(0, 1)$)--($(A)+(1,1)$)--($(A)+(1,0)$)--cycle;
						\draw ($(A)+(0.5,0.5)$) node {\scalebox{0.8}{$x_1$}} ; 	
						\coordinate (A) at (1.5, -1);
						\draw[thick]	($(A)$)--($(A)+(0, 1)$)--($(A)+(1,1)$)--($(A)+(1,0)$)--cycle;
						\draw ($(A)+(0.5,0.5)$) node {\scalebox{0.8}{$x_2$}} ; 	
						\coordinate (A) at (0, -2);
						\draw[thick]	($(A)$)--($(A)+(0, 1)$)--($(A)+(1,1)$)--($(A)+(1,0)$)--cycle;
						\draw ($(A)+(0.5,0.5)$) node {\scalebox{0.8}{$\delta$}} ; 	
						\draw[thick] (0.5,0)--(0.5,-1);
			\end{tikzpicture}}}
			+
			\vcenter{\hbox{\begin{tikzpicture}[scale=0.5]
						\coordinate (A) at (0, -1);
						\draw[thick]	($(A)$)--($(A)+(0, 1)$)--($(A)+(1,1)$)--($(A)+(1,0)$)--cycle;
						\draw ($(A)+(0.5,0.5)$) node {\scalebox{0.8}{$x_1$}} ; 	
						\coordinate (A) at (1.5, 0);
						\draw[thick]	($(A)$)--($(A)+(0, 1)$)--($(A)+(1,1)$)--($(A)+(1,0)$)--cycle;
						\draw ($(A)+(0.5,0.5)$) node {\scalebox{0.8}{$x_2$}} ; 	
						\coordinate (A) at (1.5, -2);
						\draw[thick]	($(A)$)--($(A)+(0, 1)$)--($(A)+(1,1)$)--($(A)+(1,0)$)--cycle;
						\draw ($(A)+(0.5,0.5)$) node {\scalebox{0.8}{$\delta$}} ; 	
						\draw[thick] (2,0)--(2,-1);
			\end{tikzpicture}}}
			+
			\vcenter{\hbox{\begin{tikzpicture}[scale=0.5]
						\coordinate (A) at (0, 0);
						\draw[thick]	($(A)$)--($(A)+(0, 1)$)--($(A)+(1,1)$)--($(A)+(1,0)$)--cycle;
						\draw ($(A)+(0.5,0.5)$) node {\scalebox{0.8}{$x_1$}} ; 	
						\coordinate (A) at (1.5, 0);
						\draw[thick]	($(A)$)--($(A)+(0, 1)$)--($(A)+(1,1)$)--($(A)+(1,0)$)--cycle;
						\draw ($(A)+(0.5,0.5)$) node {\scalebox{0.8}{$x_2$}} ; 	
						\coordinate (A) at (0.75, -2);
						\draw[thick]	($(A)$)--($(A)+(0, 1)$)--($(A)+(1,1)$)--($(A)+(1,0)$)--cycle;
						\draw ($(A)+(0.5,0.5)$) node {\scalebox{0.8}{$\delta$}} ; 	
						\draw[thick] (2,0)--(1.42,-1);
						\draw[thick] (0.5,0)--(1.08,-1);	
			\end{tikzpicture}}}
			-\vcenter{\hbox{\begin{tikzpicture}[scale=0.5]
						\coordinate (A) at (0, 0);
						\draw[thick]	($(A)$)--($(A)+(0, 1)$)--($(A)+(1,1)$)--($(A)+(1,0)$)--cycle;
						\draw ($(A)+(0.5,0.5)$) node {\scalebox{0.8}{$\delta$}} ; 	
						\coordinate (A) at (1.5, -1);
						\draw[thick]	($(A)$)--($(A)+(0, 1)$)--($(A)+(1,1)$)--($(A)+(1,0)$)--cycle;
						\draw ($(A)+(0.5,0.5)$) node {\scalebox{0.8}{$x_2$}} ; 	
						\coordinate (A) at (0, -2);
						\draw[thick]	($(A)$)--($(A)+(0, 1)$)--($(A)+(1,1)$)--($(A)+(1,0)$)--cycle;
						\draw ($(A)+(0.5,0.5)$) node {\scalebox{0.8}{$x_1$}} ; 	
						\draw[thick] (0.5,0)--(0.5,-1);
			\end{tikzpicture}}}
			-\vcenter{\hbox{\begin{tikzpicture}[scale=0.5]
						\coordinate (A) at (1.5, 0);
						\draw[thick]	($(A)$)--($(A)+(0, 1)$)--($(A)+(1,1)$)--($(A)+(1,0)$)--cycle;
						\draw ($(A)+(0.5,0.5)$) node {\scalebox{0.8}{$\delta$}} ; 	
						\coordinate (A) at (1.5, -2);
						\draw[thick]	($(A)$)--($(A)+(0, 1)$)--($(A)+(1,1)$)--($(A)+(1,0)$)--cycle;
						\draw ($(A)+(0.5,0.5)$) node {\scalebox{0.8}{$x_2$}} ; 	
						\coordinate (A) at (0, -1);
						\draw[thick]	($(A)$)--($(A)+(0, 1)$)--($(A)+(1,1)$)--($(A)+(1,0)$)--cycle;
						\draw ($(A)+(0.5,0.5)$) node {\scalebox{0.8}{$x_1$}} ; 	
						\draw[thick] (2,0)--(2,-1);
			\end{tikzpicture}}}
			-\vcenter{\hbox{\begin{tikzpicture}[scale=0.5]
						\coordinate (A) at (0, 0);
						\draw[thick]	($(A)$)--($(A)+(0, 1)$)--($(A)+(1,1)$)--($(A)+(1,0)$)--cycle;
						\draw ($(A)+(0.5,0.5)$) node {\scalebox{0.8}{$x_1$}} ; 	
						\coordinate (A) at (1.5, 0);
						\draw[thick]	($(A)$)--($(A)+(0, 1)$)--($(A)+(1,1)$)--($(A)+(1,0)$)--cycle;
						\draw ($(A)+(0.5,0.5)$) node {\scalebox{0.8}{$x_2$}} ; 	
						\coordinate (A) at (0.75, 2);
						\draw[thick]	($(A)$)--($(A)+(0, 1)$)--($(A)+(1,1)$)--($(A)+(1,0)$)--cycle;
						\draw ($(A)+(0.5,0.5)$) node {\scalebox{0.8}{$\delta$}} ; 	
						\draw[thick] (2,1)--(1.42,2);
						\draw[thick] (0.5,1)--(1.08,2);	
			\end{tikzpicture}}}~. 
		\end{align*}
		This shows 
		\[
		\vcenter{\hbox{\begin{tikzpicture}[scale=0.5]
					\coordinate (A) at (0, 0);
					\draw[thick]	($(A)$)--($(A)+(0, 1)$)--($(A)+(1,1)$)--($(A)+(1,0)$)--cycle;
					\draw ($(A)+(0.5,0.5)$) node {\scalebox{0.8}{$x_1$}} ; 	
					\coordinate (A) at (1.5, 0);
					\draw[thick]	($(A)$)--($(A)+(0, 1)$)--($(A)+(1,1)$)--($(A)+(1,0)$)--cycle;
					\draw ($(A)+(0.5,0.5)$) node {\scalebox{0.8}{$x_2$}} ; 	
					\coordinate (A) at (0.75, -2);
					\draw[thick]	($(A)$)--($(A)+(0, 1)$)--($(A)+(1,1)$)--($(A)+(1,0)$)--cycle;
					\draw ($(A)+(0.5,0.5)$) node {\scalebox{0.8}{$\delta$}} ; 	
					\draw[thick] (2,0)--(1.42,-1);
					\draw[thick] (0.5,0)--(1.08,-1);	
		\end{tikzpicture}}}
		=\vcenter{\hbox{\begin{tikzpicture}[scale=0.5]
					\coordinate (A) at (0, 0);
					\draw[thick]	($(A)$)--($(A)+(0, 1)$)--($(A)+(1,1)$)--($(A)+(1,0)$)--cycle;
					\draw ($(A)+(0.5,0.5)$) node {\scalebox{0.8}{$x_1$}} ; 	
					\coordinate (A) at (1.5, 0);
					\draw[thick]	($(A)$)--($(A)+(0, 1)$)--($(A)+(1,1)$)--($(A)+(1,0)$)--cycle;
					\draw ($(A)+(0.5,0.5)$) node {\scalebox{0.8}{$x_2$}} ; 	
					\coordinate (A) at (0.75, 2);
					\draw[thick]	($(A)$)--($(A)+(0, 1)$)--($(A)+(1,1)$)--($(A)+(1,0)$)--cycle;
					\draw ($(A)+(0.5,0.5)$) node {\scalebox{0.8}{$\delta$}} ; 	
					\draw[thick] (2,1)--(1.42,2);
					\draw[thick] (0.5,1)--(1.08,2);	
		\end{tikzpicture}}}~. 
		\]
		Suppose now that the result holds up to $n-1$ and let us prove it for $n$~. We apply the same idea as follows:
		\begin{align*}
			\d\left(
			\vcenter{\hbox{\begin{tikzpicture}[scale=0.35]
						\coordinate (A) at (0, 0);
						\draw[thick]	($(A)$)--($(A)+(0, 1.5)$)--($(A)+(1.5,1.5)$)--($(A)+(1.5, 0)$)--cycle;
						\draw ($(A)+(0.75,0.75)$) node {\scalebox{0.8}{$x_1$}} ; 	
						\coordinate (A) at (3, 0);
						\draw[thick]	($(A)$)--($(A)+(0, 1.5)$)--($(A)+(1.5,1.5)$)--($(A)+(1.5, 0)$)--cycle;
						\draw ($(A)+(0.75,0.75)$) node {\scalebox{0.8}{$x_i$}} ; 	
						\coordinate (A) at (6, 0);
						\draw[thick]	($(A)$)--($(A)+(0, 1.5)$)--($(A)+(1.5,1.5)$)--($(A)+(1.5, 0)$)--cycle;
						\draw ($(A)+(0.75,0.75)$) node {\scalebox{0.8}{$x_n$}} ; 	
						\node at (2.3, 0.7) {\scalebox{0.8}{$\cdots$}} ; 	
						\node at (5.3, 0.7) {\scalebox{0.8}{$\cdots$}} ; 		
			\end{tikzpicture}}}
			\right)&=
			\sum_{i=1}^n
			\vcenter{\hbox{\begin{tikzpicture}[scale=0.35]
						\coordinate (A) at (0, 0);
						\draw[thick]	($(A)$)--($(A)+(0, 1.5)$)--($(A)+(1.5,1.5)$)--($(A)+(1.5, 0)$)--cycle;
						\draw ($(A)+(0.75,0.75)$) node {\scalebox{0.8}{$x_1$}} ; 	
						\coordinate (A) at (3, 0);
						\draw[thick]	($(A)$)--($(A)+(0, 1.5)$)--($(A)+(1.5,1.5)$)--($(A)+(1.5, 0)$)--cycle;
						\draw ($(A)+(0.75,0.75)$) node {\scalebox{0.8}{$\d x_i$}} ; 	
						\coordinate (A) at (6, 0);
						\draw[thick]	($(A)$)--($(A)+(0, 1.5)$)--($(A)+(1.5,1.5)$)--($(A)+(1.5, 0)$)--cycle;
						\draw ($(A)+(0.75,0.75)$) node {\scalebox{0.8}{$x_n$}} ; 	
						\node at (2.3, 0.7) {\scalebox{0.8}{$\cdots$}} ; 	
						\node at (5.3, 0.7) {\scalebox{0.8}{$\cdots$}} ; 		
			\end{tikzpicture}}}\\
			&=
			\sum_{i=1}^n
			\vcenter{\hbox{\begin{tikzpicture}[scale=0.35]
						\coordinate (A) at (0, 0);
						\draw[thick]	($(A)$)--($(A)+(0, 1.5)$)--($(A)+(1.5,1.5)$)--($(A)+(1.5, 0)$)--cycle;
						\draw ($(A)+(0.75,0.75)$) node {\scalebox{0.8}{$x_1$}} ; 	
						\coordinate (A) at (3, 1.5);
						\draw[thick]	($(A)$)--($(A)+(0, 1.5)$)--($(A)+(1.5,1.5)$)--($(A)+(1.5, 0)$)--cycle;
						\draw ($(A)+(0.75,0.75)$) node {\scalebox{0.8}{$x_i$}} ; 	
						\coordinate (A) at (3, -1.5);
						\draw[thick]	($(A)$)--($(A)+(0, 1.5)$)--($(A)+(1.5,1.5)$)--($(A)+(1.5, 0)$)--cycle;
						\draw ($(A)+(0.75,0.75)$) node {\scalebox{0.8}{$\delta$}} ; 	
						\coordinate (A) at (6, 0);
						\draw[thick]	($(A)$)--($(A)+(0, 1.5)$)--($(A)+(1.5,1.5)$)--($(A)+(1.5, 0)$)--cycle;
						\draw ($(A)+(0.75,0.75)$) node {\scalebox{0.8}{$x_n$}} ; 	
						\node at (2.3, 0.7) {\scalebox{0.8}{$\cdots$}} ; 	
						\node at (5.3, 0.7) {\scalebox{0.8}{$\cdots$}} ; 
						\draw[thick] (3.75,0)--(3.75,1.5);		
			\end{tikzpicture}}}
			-
			\sum_{i=1}^n
			\vcenter{\hbox{\begin{tikzpicture}[scale=0.35]
						\coordinate (A) at (0, 0);
						\draw[thick]	($(A)$)--($(A)+(0, 1.5)$)--($(A)+(1.5,1.5)$)--($(A)+(1.5, 0)$)--cycle;
						\draw ($(A)+(0.75,0.75)$) node {\scalebox{0.8}{$x_1$}} ; 	
						\coordinate (A) at (3, 1.5);
						\draw[thick]	($(A)$)--($(A)+(0, 1.5)$)--($(A)+(1.5,1.5)$)--($(A)+(1.5, 0)$)--cycle;
						\draw ($(A)+(0.75,0.75)$) node {\scalebox{0.8}{$\delta$}} ; 	
						\coordinate (A) at (3, -1.5);
						\draw[thick]	($(A)$)--($(A)+(0, 1.5)$)--($(A)+(1.5,1.5)$)--($(A)+(1.5, 0)$)--cycle;
						\draw ($(A)+(0.75,0.75)$) node {\scalebox{0.8}{$x_i$}} ; 	
						\coordinate (A) at (6, 0);
						\draw[thick]	($(A)$)--($(A)+(0, 1.5)$)--($(A)+(1.5,1.5)$)--($(A)+(1.5, 0)$)--cycle;
						\draw ($(A)+(0.75,0.75)$) node {\scalebox{0.8}{$x_n$}} ; 	
						\node at (2.3, 0.7) {\scalebox{0.8}{$\cdots$}} ; 	
						\node at (5.3, 0.7) {\scalebox{0.8}{$\cdots$}} ; 
						\draw[thick] (3.75,0)--(3.75,1.5);		
			\end{tikzpicture}}}
		\end{align*}
		and
		\begin{align*}
			\d\left(
			\vcenter{\hbox{\begin{tikzpicture}[scale=0.35]
						\coordinate (A) at (0, 0);
						\draw[thick]	($(A)$)--($(A)+(0, 1.5)$)--($(A)+(1.5,1.5)$)--($(A)+(1.5, 0)$)--cycle;
						\draw ($(A)+(0.75,0.75)$) node {\scalebox{0.8}{$x_1$}} ; 	
						\coordinate (A) at (3, 0);
						\draw[thick]	($(A)$)--($(A)+(0, 1.5)$)--($(A)+(1.5,1.5)$)--($(A)+(1.5, 0)$)--cycle;
						\draw ($(A)+(0.75,0.75)$) node {\scalebox{0.8}{$x_i$}} ; 	
						\coordinate (A) at (6, 0);
						\draw[thick]	($(A)$)--($(A)+(0, 1.5)$)--($(A)+(1.5,1.5)$)--($(A)+(1.5, 0)$)--cycle;
						\draw ($(A)+(0.75,0.75)$) node {\scalebox{0.8}{$x_n$}} ; 	
						\node at (2.3, 0.7) {\scalebox{0.8}{$\cdots$}} ; 	
						\node at (5.3, 0.7) {\scalebox{0.8}{$\cdots$}} ; 		
			\end{tikzpicture}}}
			\right)\ = \ 
			\vcenter{\hbox{\begin{tikzpicture}[scale=0.35]
						\coordinate (A) at (0, 0);
						\draw[thick]	($(A)$)--($(A)+(0, 1.5)$)--($(A)+(1.5,1.5)$)--($(A)+(1.5, 0)$)--cycle;
						\draw ($(A)+(0.75,0.75)$) node {\scalebox{0.8}{$x_1$}} ; 	
						\coordinate (A) at (3, 0);
						\draw[thick]	($(A)$)--($(A)+(0, 1.5)$)--($(A)+(1.5,1.5)$)--($(A)+(1.5, 0)$)--cycle;
						\draw ($(A)+(0.75,0.75)$) node {\scalebox{0.8}{$x_i$}} ; 	
						\coordinate (A) at (6, 0);
						\draw[thick]	($(A)$)--($(A)+(0, 1.5)$)--($(A)+(1.5,1.5)$)--($(A)+(1.5, 0)$)--cycle;
						\draw ($(A)+(0.75,0.75)$) node {\scalebox{0.8}{$x_n$}} ; 	
						\node at (2.3, 0.7) {\scalebox{0.8}{$\cdots$}} ; 	
						\node at (5.3, 0.7) {\scalebox{0.8}{$\cdots$}} ; 		
						\coordinate (A) at (3, -3.5);
						\draw[thick]	($(A)$)--($(A)+(0, 1.5)$)--($(A)+(1.5,1.5)$)--($(A)+(1.5, 0)$)--cycle;
						\draw ($(A)+(0.75,0.75)$) node {\scalebox{0.8}{$\delta$}} ; 	
						\draw[thick] (-0.5,-0.5)--(8,-0.5)--(8,2)--(-0.5,2)--cycle;	
						\draw[thick] (3.75, -2)--(3.75,-0.5);
			\end{tikzpicture}}}
			\ -\ 
			\vcenter{\hbox{\begin{tikzpicture}[scale=0.35]
						\coordinate (A) at (0, 0);
						\draw[thick]	($(A)$)--($(A)+(0, 1.5)$)--($(A)+(1.5,1.5)$)--($(A)+(1.5, 0)$)--cycle;
						\draw ($(A)+(0.75,0.75)$) node {\scalebox{0.8}{$x_1$}} ; 	
						\coordinate (A) at (3, 0);
						\draw[thick]	($(A)$)--($(A)+(0, 1.5)$)--($(A)+(1.5,1.5)$)--($(A)+(1.5, 0)$)--cycle;
						\draw ($(A)+(0.75,0.75)$) node {\scalebox{0.8}{$x_i$}} ; 	
						\coordinate (A) at (6, 0);
						\draw[thick]	($(A)$)--($(A)+(0, 1.5)$)--($(A)+(1.5,1.5)$)--($(A)+(1.5, 0)$)--cycle;
						\draw ($(A)+(0.75,0.75)$) node {\scalebox{0.8}{$x_n$}} ; 	
						\node at (2.3, 0.7) {\scalebox{0.8}{$\cdots$}} ; 	
						\node at (5.3, 0.7) {\scalebox{0.8}{$\cdots$}} ; 		
						\coordinate (A) at (3, 3.5);
						\draw[thick]	($(A)$)--($(A)+(0, 1.5)$)--($(A)+(1.5,1.5)$)--($(A)+(1.5, 0)$)--cycle;
						\draw ($(A)+(0.75,0.75)$) node {\scalebox{0.8}{$\delta$}} ; 	
						\draw[thick] (-0.5,-0.5)--(8,-0.5)--(8,2)--(-0.5,2)--cycle;	
						\draw[thick] (3.75, 3.5)--(3.75,2);
			\end{tikzpicture}}} \ \ .
		\end{align*}
		The development of these last two terms produce graphs with one edge, which coincide with the ones on the line just above. By the induction hypothesis, the alternate sum of all the graphs with $k$ edges for $2\leqslant k \leqslant n-1$ vanish. It remains only 
		\begin{align*}
			\vcenter{\hbox{\begin{tikzpicture}[scale=0.35]
						\coordinate (A) at (0, 0);
						\draw[thick]	($(A)$)--($(A)+(0, 1.5)$)--($(A)+(1.5,1.5)$)--($(A)+(1.5, 0)$)--cycle;
						\draw ($(A)+(0.75,0.75)$) node {\scalebox{1}{$x_1$}} ; 	
						\coordinate (A) at (3, 0);
						\draw[thick]	($(A)$)--($(A)+(0, 1.5)$)--($(A)+(1.5,1.5)$)--($(A)+(1.5, 0)$)--cycle;
						\draw ($(A)+(0.75,0.75)$) node {\scalebox{1}{$x_i$}} ; 	
						\coordinate (A) at (6, 0);
						\draw[thick]	($(A)$)--($(A)+(0, 1.5)$)--($(A)+(1.5,1.5)$)--($(A)+(1.5, 0)$)--cycle;
						\draw ($(A)+(0.75,0.75)$) node {\scalebox{1}{$x_n$}} ; 	
						\node at (2.3, 0.7) {\scalebox{0.8}{$\cdots$}} ; 	
						\node at (5.3, 0.7) {\scalebox{0.8}{$\cdots$}} ; 		
						\coordinate (A) at (3, -3);
						\draw[thick]	($(A)$)--($(A)+(0, 1.5)$)--($(A)+(1.5,1.5)$)--($(A)+(1.5, 0)$)--cycle;
						\draw ($(A)+(0.75,0.75)$) node {\scalebox{1}{$\delta$}} ; 	
						\draw[thick] (3.5, -1.5)--(0.75,0);	
						\draw[thick] (3.75, -1.5)--(3.75,0);
						\draw[thick] (4, -1.5)--(6.75,0);		
			\end{tikzpicture}}}
			=
			\vcenter{\hbox{\begin{tikzpicture}[scale=0.35]
						\coordinate (A) at (0, 0);
						\draw[thick]	($(A)$)--($(A)+(0, 1.5)$)--($(A)+(1.5,1.5)$)--($(A)+(1.5, 0)$)--cycle;
						\draw ($(A)+(0.75,0.75)$) node {\scalebox{1}{$x_1$}} ; 	
						\coordinate (A) at (3, 0);
						\draw[thick]	($(A)$)--($(A)+(0, 1.5)$)--($(A)+(1.5,1.5)$)--($(A)+(1.5, 0)$)--cycle;
						\draw ($(A)+(0.75,0.75)$) node {\scalebox{1}{$x_i$}} ; 	
						\coordinate (A) at (6, 0);
						\draw[thick]	($(A)$)--($(A)+(0, 1.5)$)--($(A)+(1.5,1.5)$)--($(A)+(1.5, 0)$)--cycle;
						\draw ($(A)+(0.75,0.75)$) node {\scalebox{1}{$x_n$}} ; 	
						\node at (2.3, 0.7) {\scalebox{0.8}{$\cdots$}} ; 	
						\node at (5.3, 0.7) {\scalebox{0.8}{$\cdots$}} ; 		
						\coordinate (A) at (3, 3);
						\draw[thick]	($(A)$)--($(A)+(0, 1.5)$)--($(A)+(1.5,1.5)$)--($(A)+(1.5, 0)$)--cycle;
						\draw ($(A)+(0.75,0.75)$) node {\scalebox{1}{$\delta$}} ; 		
						\draw[thick] (3.5, 3)--(0.75,1.5);
						\draw[thick] (3.75, 3)--(3.75,1.5);
						\draw[thick] (4, 3)--(6.75,1.5);		
			\end{tikzpicture}}}~.
		\end{align*}
	}
\medskip 
	
Finally, the symmetric formula 
\begin{align*}
\exp(\lambda)\cdot \ba=
		\pap{\exp(\lambda)}{\ba}{\exp(-\lambda)}
	+\exp(\lambda)   \cc \left(\exp(-\lambda); \d \big(\exp(-\lambda)\big)\right)
\end{align*}
can either been proved by the same arguments applied from the beginning in a symmetrical way or as follows. 
We apply the differential $\d$ to the element 
\[\exp(\lambda)   \cc \exp(-\lambda)=\1~, \]
which gives 
\[\left(\exp(\lambda); \d \big(\exp(\lambda)\big)\right)\cc \exp(-\lambda)+\exp(\lambda)   \cc \left(\exp(-\lambda); \d \big(\exp(-\lambda)\big)\right)=0~.\]
\end{proof}

\begin{remark}\leavevmode
	If we restrict the first formula of \cref{thm:Action} to rooted trees, we recover precisely the formula $(\exp(\lambda)\star \alpha) \circledcirc \exp(-\lambda)$ obtained in \cite{DotsenkoShadrinVallette16} for pre-Lie algebras.
\end{remark}

\appendix

\section{Proof of Proposition \ref{prop:nofinitepresentation}}\label{AppA}

\cref{prop:nofinitepresentation} states that the operad $\Liegra$ is not finitely generated. 
Recall that a presentation by generators and relations of an operad $\P$ is an isomorphism of the form $\P\cong\mathcal T(E)/(R)$~. The operad $\P$ is said to be \emph{finitely generated} if the generating space $E$ can be chosen to be of total finite dimension. 
Since we are dealing with the operad $\Liegra$ that is trivial in arities $0$ and $1$ and is arity-wise finite dimensional, the only potential source of infinite dimensionality phenomena is that $E(n)$ stays non-trivial as $n$ goes to infinity. Concretely, we will use the following two general propositions.

\begin{proposition}\label{prop:truncated means f.g.}
	Let $\P$ be an operad such that $\dim \P(n)<+\infty$~, for all $n$.  Suppose that $\P$ can be generated by an 
	$\Sy$-module $E$ for which there exists $N\in \NN$ such that $E(n)=0$~, for $n> N$~. Then, the operad $\P$ is finitely generated.
\end{proposition}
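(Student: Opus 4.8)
The plan is to replace the a priori possibly infinite-dimensional generating $\Sy$-module $E$ by its image in $\P$, which will automatically be finite-dimensional. Recall that $\P$ being generated by $E$ means there is a morphism of $\Sy$-modules $\phi \colon E \to \P$ whose canonical extension to an operad morphism $\Phi \colon \mathcal{T}(E) \twoheadrightarrow \P$ is surjective. First I would set $\overline{E} \coloneq \mathrm{im}(\phi) \subseteq \P$, which is a sub-$\Sy$-module of $\P$.

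The key step is to verify that $\overline{E}$ still generates $\P$. Factoring $\phi$ as $E \xrightarrow{\pi} \overline{E} \xrightarrow{\iota} \P$ and invoking the universal property of the free operad, both the defining surjection $\Phi$ and the composite $\mathcal{T}(E) \xrightarrow{\mathcal{T}(\pi)} \mathcal{T}(\overline{E}) \xrightarrow{\Psi} \P$, where $\Psi$ is the operad morphism extending the inclusion $\iota$, restrict to $\phi$ on the weight-one part $E$. By the uniqueness clause of the universal property they coincide, so $\Phi = \Psi \circ \mathcal{T}(\pi)$. Since $\Phi$ is surjective, so is $\Psi$, which is precisely the statement that $\overline{E}$ generates $\P$.

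It then remains to bound the total dimension of $\overline{E}$. By construction $\overline{E}(n) \subseteq \P(n)$, so $\dim \overline{E}(n) \leqslant \dim \P(n) < +\infty$ for every $n$; moreover $\overline{E}(n) = \mathrm{im}(\phi_n) = 0$ whenever $E(n) = 0$, in particular for all $n > N$. Hence $\dim \overline{E} = \sum_{n=0}^{N} \dim \overline{E}(n) < +\infty$. Thus $\P$ is generated by the finite-dimensional $\Sy$-module $\overline{E}$ and is, by definition, finitely generated.

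The argument is entirely formal, and I do not anticipate a genuine obstacle. The only point that requires care is the passage from $E$ to $\overline{E}$, namely checking that restricting the generators to their image inside $\P$ does not destroy the generating property; this is exactly what the uniqueness in the universal property of $\mathcal{T}$ provides. Once that is in place, finiteness is a one-line dimension count resting on the two hypotheses $\dim \P(n) < +\infty$ and $E(n) = 0$ for $n > N$.
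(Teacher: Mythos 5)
Your proof is correct and follows essentially the same strategy as the paper's: replace $E$ by a finite-dimensional sub-$\Sy$-module of $\P$ concentrated in arities $\leqslant N$ and transfer surjectivity through the universal property of the free operad. The only cosmetic difference is that you take the image $\overline{E}=\mathrm{im}(\phi)$ of the generating map, whereas the paper takes the full truncation $F(n)=\P(n)$ for $n\leqslant N$ (and $0$ otherwise); both choices are finite dimensional by the hypotheses and the factorization argument is identical.
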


\begin{proof}
	We can take as a new $\Sy$-module of generators $F$ defined by  $F(n) \coloneq \P(n)$,  for $n\leqslant N$, and  $F(n)\coloneq0$ otherwise. 
	Using the isomorphism $\P\cong \mathcal T(E)/(R)$, we consider the canonical map of $\Sy$-modules $g \colon F \to 
	\mathcal{T}(E)/(R)$ which is an isomorphism in arities $n\leqslant N$~. The morphism of $\Sy$-modules defined by 
	\[f(n)\ \colon E(n) \to \mathcal{T}(E)(n) \twoheadrightarrow \mathcal{T}(E)/(R)(n) \xrightarrow{g(n)^{-1}}F(n) \]
	for $n\leqslant N$, and $f(n)\coloneq 0$ otherwise, induces a morphism of operads 
	$\mathcal{T}(f)\ \colon \mathcal{T}(E) \to \mathcal{T}(F)$ which satisfies the commutative diagram 
\begin{center}
\begin{tikzcd}[column sep=large]
\mathcal{T}(E) \arrow[r,"\mathcal{T}(f)"]  \arrow[d, two heads]& \mathcal{T}(F)  \arrow[dl]\\
\mathcal{T}(E)/(R) &  F~. \arrow[l, "g"] \arrow[u]
\end{tikzcd}
\end{center}
	This shows that the morphism of operads $\mathcal{T}(F) \twoheadrightarrow \P$ is surjective. 
\end{proof}

\begin{proposition}\label{prop:get rid of arities 0 and 1}
	Let $\P=\mathcal T(E)/(R)$ be a finitely generated presentation of an operad $\P$ such that $\P(0)=0$ and $\P(1)=\k I$~. Then, there exists a finitely generated presentation $\P\cong\mathcal T(F)/(S)$ with $F(0)=F(1)=0$~.
\end{proposition}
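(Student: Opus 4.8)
The plan is to obtain the new generating module simply by \emph{truncating} $E$ below arity $2$. Concretely, I would set $F(n) \coloneq E(n)$ for $n \geqslant 2$ and $F(0) \coloneq F(1) \coloneq 0$; since $E$ is finite-dimensional, so is $F$, and $F(0)=F(1)=0$ by construction. The inclusion of $\Sy$-modules $F \hookrightarrow E$ followed by $E \to \mathcal{T}(E) \twoheadrightarrow \P$ extends by freeness to a morphism of operads $\Phi \colon \mathcal{T}(F) \to \P$, and the whole statement reduces to showing that $\Phi$ is surjective: once this is established, taking $S \coloneq \ker \Phi$ (which is already an operadic ideal, so $(S)=S$) gives the desired finitely generated presentation $\P \cong \mathcal{T}(F)/(S)$ with $F(0)=F(1)=0$.

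So the heart of the matter is to prove that the arity $\geqslant 2$ generators alone still generate $\P$. Since $\P \cong \mathcal{T}(E)/(R)$, every element of $\P$ is a linear combination of images under $\mathcal{T}(E) \twoheadrightarrow \P$ of trees whose vertices are decorated by elements of $E$. I would analyse such a decorated tree $T$ according to the arities of its vertices. If some vertex of $T$ carries an arity-$0$ generator $e \in E(0)$, then its image in $\P$ lies in $\P(0)=0$, hence is zero; because operadic composition is multilinear, any composite involving a vanishing factor vanishes, so the entire image of $T$ in $\P$ is $0$. Such trees may therefore be discarded.

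It remains to treat the arity-$1$ vertices of the surviving trees. Each arity-$1$ generator $u \in E(1)$ has image in $\P(1) = \k I$, that is $\Phi(u) = \lambda_u\, I$ for some scalar $\lambda_u \in \k$. Using the unit axioms $x \circ_i I = x$ and $I \circ_1 x = x$, every arity-$1$ vertex can be contracted along its unique internal edge, contributing only the scalar $\lambda_u$. After discarding the trees carrying an arity-$0$ vertex and contracting all arity-$1$ vertices, what remains is, up to a scalar, the image of a tree all of whose vertices carry generators of arity $\geqslant 2$, together with the operadic unit $I$ which is automatically contained in the sub-operad generated by $F$. This exhibits every element of $\P$ as lying in the image of $\Phi$, proving surjectivity. (For $n \geqslant 2$ no degeneracy occurs, since at least one vertex of arity $\geqslant 2$ is needed to reach output arity $\geqslant 2$; in arities $0$ and $1$ one uses directly $\P(0)=0$ and $\P(1)=\k I$, both visibly in the image.)

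The main obstacle — really the only point requiring care — is the bookkeeping in this last reduction: one must check that discarding arity-$0$ decorations and contracting arity-$1$ decorations genuinely lands the result inside the sub-operad generated by the arity-$\geqslant 2$ part of $E$, rather than producing spurious low-arity contributions. This is guaranteed precisely by the two hypotheses $\P(0)=0$ and $\P(1)=\k I$, which force every low-arity generator to act either as $0$ or as a scalar multiple of the unit.
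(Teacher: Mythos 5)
Your proposal is correct and follows essentially the same route as the paper: the paper also takes $F$ to be the truncation of $E$ to arities $\geqslant 2$ and asserts that the composite $\mathcal T(F)\to\mathcal T(E)\twoheadrightarrow \P$ is surjective. Your discussion of why surjectivity holds (arity-$0$ decorations kill the tree since $\P(0)=0$, arity-$1$ decorations contract to scalars since $\P(1)=\k I$) simply fills in the verification the paper leaves as ``one readily checks.''
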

\begin{proof}
The arguments are the same as above: we consider the generating $\Sy$-module $F$ defined 
 to be the truncation of $E$ to arities $n \geqslant 2$. 
One readily checks that the composite 
	\[
	\mathcal T (E') \to \mathcal T(E) \twoheadrightarrow {\mathcal T(E)}/{(R)}\]
	is surjective. 
\end{proof}

Thanks to the previous propositions, it suffices to show the following variant of Proposition \ref{prop:nofinitepresentation}.

\begin{proposition}\label{prop:bounded}
Suppose that there exists a surjective morphism of operads $\mathcal T(E) \twoheadrightarrow \Liegra$, with $E$ an  $\Sy$-module 
concentrated in arities $\geqslant 2$. Then, there exist arbitrarily large $n$ such that $E(n)\ne 0$~.
\end{proposition}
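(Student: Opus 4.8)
The plan is to establish the contrapositive by a dimension count: if $E$ is concentrated in arities between $2$ and some fixed $N$, then no surjection $\mathcal{T}(E)\twoheadrightarrow\Liegra$ can exist. The guiding principle is that a free operad generated in \emph{bounded} arity is arity-wise far too small. Concretely, I will show that $\dim\mathcal{T}(E)(n)$ grows at most like $n!\,C^{\,n}=2^{O(n\log n)}$, whereas $\dim\Liegra(n)$ grows like $2^{\Theta(n^2)}$. Since a surjection of vector spaces forces $\dim\mathcal{T}(E)(n)\geq\dim\Liegra(n)$, this becomes impossible for $n$ large, which is the desired contradiction.

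For the lower bound on $\dim\Liegra(n)$, recall that $\Liegra(n)$ has as a basis the directed simple graphs on the labelled vertex set $\{1,\dots,n\}$. I will exhibit an explicit sub-family of such graphs: fix the total order $1<\cdots<n$ as the global flow, keep all the edges $i\to i+1$ (a directed Hamiltonian path, which guarantees both connectedness and acyclicity), and then make an \emph{independent} choice of presence or absence for each of the remaining $\binom{n}{2}-(n-1)$ forward pairs $i\to j$ with $j>i+1$. Each choice produces a distinct connected directed simple graph, so $\dim\Liegra(n)\geq 2^{\binom{n}{2}-n+1}$, which is super-exponential in $n$.

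For the upper bound on $\dim\mathcal{T}(E)(n)$, I will use that the dimension is bounded by the number of triples consisting of a rooted tree shape with $n$ leaves, a labelling of the leaves by $\{1,\dots,n\}$, and a decoration of each internal vertex by a basis element of $E$ of matching arity. Because $E$ is concentrated in arities $\geq 2$, every internal vertex has at least two inputs, so such a tree has at most $n-1$ internal vertices and hence at most $2n-1$ vertices in total; as the arities are also bounded by $N$, the number of unlabelled shapes is at most a genuine exponential $C_0^{\,n}$ (bounded, say, by a Catalan-type count of plane trees with $\leq 2n$ nodes), and each of the $\leq n-1$ internal vertices can be decorated in at most $D:=\sum_{k=2}^{N}\dim E(k)$ ways. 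Multiplying by the $n!$ leaf-labellings yields $\dim\mathcal{T}(E)(n)\leq n!\,C_0^{\,n}D^{\,n}=:n!\,C^{\,n}$.

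Comparing the two estimates, $2^{\binom{n}{2}-n+1}$ eventually dominates $n!\,C^{\,n}$, contradicting surjectivity of $\mathcal{T}(E)(n)\twoheadrightarrow\Liegra(n)$; hence $E(n)\neq 0$ for arbitrarily large $n$. The only genuinely delicate point is the upper bound on $\dim\mathcal{T}(E)(n)$: one must use the hypothesis that $E$ lives in arities $\geq 2$, since this is precisely what bounds the number of internal vertices by $n-1$ and rules out the infinite unary chains that would otherwise inflate the count, and one must bound the number of tree shapes by an honest exponential rather than by something super-exponential. Everything else is a routine comparison of growth rates.
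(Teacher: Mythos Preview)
Your argument is essentially the same as the paper's: both give the lower bound $2^{\binom{n}{2}-n+1}$ for $\dim\Liegra(n)$ via the Hamiltonian-path construction, and both bound $\dim\mathcal{T}(E)(n)$ by $n!\cdot C^{n}$ by counting tree shapes (the paper uses Schr\"oder--Hipparchus numbers, you use a Catalan-type estimate; either works) times leaf labellings times vertex decorations, then compare growth rates.

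There is one small gap you should patch. You set $D:=\sum_{k=2}^{N}\dim E(k)$ and treat it as finite, but the hypothesis does not say $E$ is arity-wise finite-dimensional; nothing prevents $\dim E(k)=\infty$ for some $k\leqslant N$, in which case your upper bound on $\dim\mathcal{T}(E)(n)$ collapses. The fix is a one-line reduction: given a surjection $\mathcal{T}(E)\twoheadrightarrow\Liegra$, let $E'\subset\Liegra$ be the image of $E\to\mathcal{T}(E)\to\Liegra$; then $E'(k)\subset\Liegra(k)$ is finite-dimensional, still concentrated in arities $2,\dots,N$, and $\mathcal{T}(E')\to\Liegra$ is again surjective since $E'$ contains the images of all generators. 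The paper makes this reduction explicitly (its Proposition~A.1), replacing $E$ by $\Liegra$ itself in arities $\leqslant N$. With that reduction in place, your proof is complete and matches the paper's.
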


\begin{proof}[Proof of {\cref{prop:nofinitepresentation}}]
Recall the operad $\Liegra$ satisfies $\Liegra(0)=0$, $\Liegra(1)=\k I$. Furthermore  $\dim \Liegra(n)\leq |\text{oriented simple graphs on }n \text{ vertices}| = 3^{\binom{n}{2}}<+\infty$~, for all $n\geqslant 1$.
Suppose that the operad $\Liegra$ is finitely generated. By \cref{prop:get rid of arities 0 and 1}, we can suppose that the 
generating $\Sy$-module $E$ of total finite dimension 
 is trivial in arities $0$ and $1$. Then \cref{prop:bounded} ensures that 
there exist arbitrarily large $n$ such that $E(n)\ne 0$~, which violates the initial finiteness assumption.
\end{proof}

We start by estimating a lower bound for the dimension of $\Liegra(n)$, which is to say, the cardinality of $\dcGra(n)$. 

\begin{lemma}\label{lem:dim of LieGra}
There exists a constant $c>1$ such that $|\dcGra(n)| \geqslant c^{n^2}$, for all sufficiently large $n$.
\end{lemma}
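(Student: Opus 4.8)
I need to show that the number of directed simple (connected) graphs on $n$ vertices grows at least like $c^{n^2}$ for some constant $c>1$. Since $\dim\Liegra(n)=|\dcGra(n)|$, this is a pure counting estimate.

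**The basic idea.** The total number of labelled directed simple graphs (allowing each pair of distinct vertices to carry an edge in one of the two orientations, or no edge at all) is $3^{\binom{n}{2}}$, as noted in the finiteness bound in the excerpt. So I have a clean upper bound $|\dcGra(n)|\le 3^{\binom n2}$. For the *lower* bound I want something of the same shape $c^{n^2}$. The two constraints that could spoil a naive lower bound are: (i) connectedness — I am only counting connected graphs; and (ii) the acyclicity/partial-order condition — a directed simple graph is directed by a global flow, so the orientation must be acyclic (it induces a partial order on vertices). Both of these are "global" conditions that could in principle kill a constant fraction of the exponent, so I must check they only cost a subexponential (in $n^2$) factor.

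**Plan of the estimate.** The cleanest route is to restrict attention to a large, easily-counted subfamily of $\dcGra(n)$ and show even *that* subfamily has size $\ge c^{n^2}$. Here is the construction I would use. Fix the vertex set $\{1,\dots,n\}$ and only consider orientations compatible with the \emph{fixed} linear order, i.e. every edge points from the higher-numbered vertex to the lower-numbered one. This automatically makes the orientation acyclic, so the global-flow/partial-order condition is satisfied for free, and it removes the choice of orientation entirely: an edge between $i<j$ is either present (oriented $j\to i$) or absent. Thus I am now counting a subset of the $2^{\binom n2}$ labelled DAGs-on-a-fixed-linear-order. To guarantee connectedness cheaply, I force all edges incident to vertex $1$ to be present: put an edge $j\to 1$ for every $j\ge 2$. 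This makes $1$ a common sink adjacent to every other vertex, so every such graph is connected regardless of the remaining edges. The remaining $\binom{n-1}{2}$ pairs (among $\{2,\dots,n\}$) may each be freely included or not, giving at least $2^{\binom{n-1}{2}}$ \emph{labelled} connected directed simple graphs.

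**From labelled to isomorphism classes, and the constant.** The set $\dcGra(n)$ consists of graphs with vertices labelled bijectively by $1,\dots,|\gra|$, so in fact I am already counting the right objects and no division by $n!$ is strictly needed; but to be safe (in case one wants isomorphism classes) I note that passing to unlabelled graphs costs at most a factor $n!$, and $n!\le n^n=2^{n\log_2 n}$, which is $2^{o(n^2)}$ and hence absorbed. Therefore
\[
|\dcGra(n)|\ \ge\ \frac{2^{\binom{n-1}{2}}}{n!}\ \ge\ 2^{\binom{n-1}{2}-n\log_2 n}.
\]
Since $\binom{n-1}{2}=\tfrac{(n-1)(n-2)}2=\tfrac{n^2}2(1+o(1))$, the exponent is $\tfrac{n^2}{2}(1+o(1))$, so for any fixed $c$ with $1<c<2^{1/2}=\sqrt2$ the inequality $|\dcGra(n)|\ge c^{n^2}$ holds for all sufficiently large $n$. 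I would state the lemma with, say, $c=\sqrt[3]{2}$ or simply "some $c>1$", and close by choosing $n$ large enough that $\binom{n-1}2 - n\log_2 n \ge (\log_2 c)\,n^2$.

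**Where the difficulty lies.** There is essentially no hard analytic step; the only thing to be careful about is making sure the imposed conditions (fixed orientation + forced edges to a universal sink) genuinely land inside $\dcGra(n)$ — i.e. that these graphs are connected, directed by a global flow, and simple — and that the "cost" factors (orientation, connectedness, and possibly labelling) are each $2^{o(n^2)}$ so they do not erode the quadratic exponent. The orientation cost is eliminated by the fixed-order trick, the connectedness cost is a single deterministic construction (the universal sink), and the labelling cost is the benign $n!$ factor. So the main—but minor—obstacle is bookkeeping the exponent carefully enough to extract an honest constant $c>1$; everything else is routine binomial asymptotics.
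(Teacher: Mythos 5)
Your proof is correct and follows essentially the same strategy as the paper's: fix an acyclic orientation via a linear order on the vertices, include a connected spanning backbone to guarantee connectedness, and then freely add the remaining $\Theta(n^2)$ order-compatible edges to get a $2^{(1/2+o(1))n^2}$ lower bound. The only (cosmetic) difference is that the paper uses the linear path $1\to 2\to\cdots\to n$ as the backbone, whereas you use the star on vertex $1$; both yield the bound with any $1<c<\sqrt{2}$, and your remark that no division by $n!$ is needed (since elements of $\dcGra(n)$ carry vertex labels) is accurate.
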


\begin{proof}
	Consider the linear graph 
	\begin{equation*}
\begin{tikzpicture}[scale=0.6,baseline=1.4cm]
	\draw[thick]	(0,-1)--(1,-1)--(1,0)--(0,0)--cycle;
	\draw[thick]	(0,2)--(1,2)--(1,3)--(0,3)--cycle;
	\draw[thick]	(0,4)--(1,4)--(1,5)--(0,5)--cycle;
	\draw[thick,dotted]	(0.5,0.5)--(0.5,1.5);
	\draw[thick]	(0.5,2)--(0.5,1.5);	
	\draw[thick]	(0.5,0.5)--(0.5,0);		
	\draw[thick]	(0.5,3)--(0.5,4);	
	\draw (0.5,-0.5) node {{$n$}} ; 
	\draw (0.5,2.5) node {{$2$}} ; 
	\draw (0.5,4.5) node {{$1$}} ; 
\end{tikzpicture}\quad \in \dcGra(n)
	\end{equation*}
and notice that we can add any single edge from the vertex $i$ to $j$, for $i\leqslant j-2$, and produce a valid graph in $\dcGra(n)$. Since there are $\binom{n}{2} - n+1$ such possible edges, we have 
\[|\dcGra(n)| > 2^{\binom{n}{2} - n+1}~,\] 
which is eventually bigger than $c^{n^2}$, for $c < \sqrt{2}$~.	
\end{proof}

\begin{proof}[Proof of Proposition \ref{prop:bounded}]
	The strategy of the proof is a dimension argument. Assume by contradiction that there exists a generating space $E$ and $N> 0$ such that $E(n)=0$, for $n> N$. 
	By \cref{prop:truncated means f.g.}, we can further assume that $E(n) =\Liegra(n)$, for any $n\leqslant N$. 
	We will show that for $n$ large enough $\dim \mathcal T(E)(n)$ will be smaller than $\dim \Liegra(n)$, thus establishing the result. 
	
\medskip	

Since $E(0)=0$, a basis of the free operad on $E$ is parameterized by shuffle trees: concretely,
	\[
	\mathcal T(E)(n) \cong \bigoplus_{t\in \mathsf{ShTr}(n)} E^{\otimes |t|},
	\]
where $\mathsf{ShTr}(n)$ denotes the set of shuffle trees with $n$ leaves and where $|t|$ is the number of  the vertices of $t$.
	 Let $b \coloneq \max\{\dim \Liegra(n)\ |\ n\leqslant  N\}$ and such maximum is attained in arity $k$. Denoting 
	 ${\mathsf{ShTr}^{\leqslant N}(n)}$ the set of shuffle trees in which all vertices have arity between $2$ and $N$, we deduce
\begin{equation}\label{eq:dim T(E)}
		\dim 	\mathcal T(E)(n) \leqslant \dim \bigoplus_{\mathsf{ShTr}^{\leqslant N}(n)} \Liegra(k)^{\otimes n-1} \leqslant 
		\left|\mathsf{ShTr}^{\leqslant N}(n)\right|\times  b^{n-1}
\end{equation}
from $E(n)=0$, for $n> N$.   

\medskip

To show that this value is eventually smaller than the one from Lemma \ref{lem:dim of LieGra}, we need to find an upper bound for the cardinal of ${\mathsf{ShTr}^{\leqslant N}(n)}$. Recall that a shuffle tree is a rooted planar tree together with a permutation of $\{1,\dots,n\}$ satisfying some condition.
The number of rooted planar trees with $n$ leaves 
and with vertices of arity greater or equal to 2
is given by the Schr\"oder--Hipparchus numbers $s_n$, which satisfy the following recurrence formula 
\[
s_n=\frac{1}{n}\left((6n-9)s_{n-1}-(n-3)s_{n-2}\right)~,
\]
see \cite{Stanleyplanartrees}. By induction, we get that $s_n\leqslant 6^n$, for all $n$. 
The permutations indexing a shuffle tree are a certain subset of $\mathbb S_n$, so we the  following upper bound 
\[		
\dim 	\mathcal T(E)(n) \leqslant 6^n \times n! \times b^{n-1}~. 
\]

 Stirling's formula provides us with the following asymptotical estimation $n!\sim \sqrt{2\pi n} \left(\frac{n}{e}\right)^n$.
It follows that 
\begin{equation}
	\dim 	\mathcal T(E)(n) \leqslant  6^n \times  \sqrt{2\pi n} \left(\frac{n}{e}\right)^n\times  b^{n} \leqslant (an)^{n+\frac{1}{2}}~, 
\end{equation}
for large enough n and  for some constant $a>1$.
Taking logarithms, we see that the estimate $c^{n^2}$ from Lemma \ref{lem:dim of LieGra} eventually overtakes  $(an)^{n+\frac{1}{2}}$ which gives us the desired contradiction.
\end{proof}

\begin{remark}
	The proof above by dimension counting boils down to the fact that while trees grow at most like $n^n$, the present operad of graphs grows at least like $c^{n^2}$. The same kind of argument shows that other types of graph operads appearing in the literature cannot be finitely generated.
\end{remark}

\bibliographystyle{alpha}
\bibliography{bib}

\end{document}